\documentclass[a4paper,11pt,legno]{amsart}
\usepackage{a4wide}
\usepackage{tikz,tikz-cd,amsmath,amsfonts,amssymb,graphicx}
\usepackage[autostyle]{csquotes} 
\usepackage{enumerate}
\usepackage[mathscr]{euscript}
\usepackage{mathtools}
\usepackage{hyperref}
\usepackage{url}
\usepackage{scalerel}
\usepackage{dsfont}
\usepackage{stackengine,wasysym}

\usepackage[noabbrev,nameinlink,capitalize]{cleveref}

\theoremstyle{plain}
\newtheorem{theorem}{Theorem}[section]
\newtheorem{corollary}[theorem]{Corollary}
\newtheorem{lemma}[theorem]{Lemma}
\newtheorem{proposition}[theorem]{Proposition}

\theoremstyle{definition}
\newtheorem{definition}[theorem]{Definition}
\newtheorem{example}[theorem]{Example}
\newtheorem{remark}[theorem]{Remark}
\newtheorem*{Rmk}{Remark}
\newtheorem{construction}[theorem]{Construction}
\newtheorem{terminology}[theorem]{Terminology}
\newtheorem{notation}[theorem]{Notation}

\makeatletter
\renewcommand{\varprojlim}{%
	\mathop{\mathpalette\varlim@{\leftarrowfill@\scriptscriptstyle}}\nmlimits@
}
\renewcommand{\varinjlim}{%
	\mathop{\mathpalette\varlim@{\rightarrowfill@\scriptscriptstyle}}\nmlimits@
}
\makeatother
\newcommand{\cate}[1]{\mathscr{#1}}
\newcommand{\s}[1]{\mathbb{S}_{\scalebox{1}{$\scriptscriptstyle #1$}}}

\newcommand{\Sh}[2]{\mathrm{Shv}({#1}; {#2})}

\newcommand{\sHom}[3]{\mathrm{\underline{Hom}}_{\scalebox{1}{$\scriptscriptstyle #1$}}(#2, #3)}
\newcommand{\Hom}[3]{\mathrm{Hom}_{\scalebox{1}{$\scriptscriptstyle #1$}}(#2, #3)}
\newcommand{\homsp}[3]{\mathrm{hom}_{\scalebox{1}{$\scriptscriptstyle #1$}}(#2, #3)}
\newcommand{\Op}[1]{\cate{U}(#1)}

\newcommand{\Sec}[2]{\Gamma(#1; #2)}

\newcommand{\cSec}[2]{\Gamma_{\scalebox{1}{$\scriptscriptstyle c$}}(#1; #2)}

\newcommand{\pf}[1]{#1_{\ast}}
\newcommand{\pb}[1]{#1^{\ast}}
\newcommand{\pfp}[1]{#1_{!}}
\newcommand{\pbp}[1]{#1^{!}}
\newcommand{\pfs}[1]{#1_{\sharp}}

\DeclareMathOperator{\Fun}{Fun}
\DeclareMathOperator*{\colim}{colim}
\DeclareMathOperator{\Pro}{Pro}
\DeclareMathOperator{\opposite}{op}
\DeclareMathOperator{\lex}{lex}

\DeclareMathOperator{\singularset}{Sing}

\DeclareMathOperator{\Corr}{Corr}

\newcommand{\op}{^{\opposite}}
\newcommand{\C}{\cate{C}}
\newcommand{\D}{\cate{D}}
\newcommand{\E}{\cate{E}}

\newcommand{\Cat}{\cate{C}\mathrm{at}}
\newcommand{\rnum}{\mathbb{R}}

\newcommand{\sing}[1]{\singularset({#1})}

\newcommand{\cohcorr}[1]{\underline{\Corr}_{{#1}}}
\newcommand{\Sing}{\mathrm{Sing}}
\newcommand{\Sp}{\mathrm{Sp}}
\newcommand{\Ab}{\mathrm{Ab}}

\newcommand{\Z}{\mathbb{Z}}

\newcommand{\Ani}{\mathrm{An}}
\newcommand{\Pic}{\mathrm{Pic}}
\newcommand{\LCH}{\mathrm{LCH}}
\newcommand{\Mod}{\mathrm{Mod}}
\newcommand{\bE}{\mathbb{E}}
\newcommand{\acc}{\mathrm{acc}}
\newcommand{\CAlg}{\mathrm{CAlg}}
\newcommand{\comp}{\mathrm{comp}}
\newcommand{\sh}{\mathrm{sh}}
\newcommand{\bS}{\mathbb{S}}
\newcommand{\Tor}{\mathrm{Tor}}

\providecommand{\Set}{\mathrm{Set}}

\providecommand{\infcat}{\Cat_{\scalebox{1}{$\scriptscriptstyle \infty$}}}
\providecommand{\inftwocat}{\Cat_{\scalebox{1}{$\scriptscriptstyle (\infty,2)$}}}

\providecommand{\spectra}{\cate{S}\text{p}}

\providecommand{\shape}{\sh}
\newsavebox{\pullback}
\sbox\pullback{%
	\begin{tikzpicture}%
		\draw (0,0) -- (1ex,0ex);%
		\draw (1ex,0ex) -- (1ex,1ex);%
\end{tikzpicture}}
\newsavebox{\pushout}
\sbox\pushout{%
	\begin{tikzpicture}%
		\draw (0ex,0ex) -- (0ex,1ex);%
		\draw (0ex,1ex) -- (1ex,1ex);%
\end{tikzpicture}}
\tikzset{%
	symbol/.style={%
		draw=none,
		every to/.append style={%
			edge node={node [sloped, allow upside down, auto=false]{$#1$}}}
	}
}

\let\S\s{}
\let\o\circ

\renewcommand{\Cat}{\mathrm{Cat}}
\newcommand{\fp}{\mathrm{pb}}

\newcommand{\bD}{\mathbb{D}}
\newcommand{\id}{\mathrm{id}}
\newcommand{\fib}{\mathrm{fib}}
\renewcommand{\L}{\mathrm{L}}
\newcommand{\q}{\mathrm{q}}
\newcommand{\Baut}{\mathrm{Baut}}
\newcommand{\aut}{\mathrm{aut}}
\newcommand{\BTop}{\mathrm{BTop}}

\newcommand{\PD}{\mathrm{PD}}

\newcommand{\BHomeo}{\mathrm{BHomeo}}

\theoremstyle{plain}
\newcounter{zaehler}

\newtheorem{introthm}[zaehler]{Theorem}

\newtheorem*{introdef}{Definition}

\title{Homology manifolds via six functor formalisms}

\author[M.~Land]{Markus Land}
\address{JGU Mainz, Institute of Mathematics, Mainz, Germany}
\email{mland@uni-mainz.de}

\author[M.~Volpe]{Marco Volpe}
\address{Universit\"at Regensburg, Universit\"atsstraße 31, 93053 Regensburg, Germany}
\email{Marco.Volpe@mathematik.uni-regensburg.de}

\date{\today}

\begin{document}

\begin{abstract}
We study homology manifolds through the eyes of the six functor formalism of spectral sheaves on locally compact Hausdorff spaces. As main results, we characterize cohomologically smooth objects by adapting an argument of Scholze, deduce that any hypercomplete locally compact ANR homology manifold is cohomologically smooth, show that compact ANR homology manifolds $X$ are Poincar\'e duality complexes whose Spivak tangent fibration identifies with the dualizing sheaf of $X$, and prove a generalization of Wilder's monotone mapping theorem about cell-like maps. Moreover, we introduce the notion of homotopy manifolds for which we prove an unstable analog of Wilder's orientability conjecture and show that hypercomplete ANR homology manifolds are homotopy manifolds. As a consequence, we show that for a compact $d$-dimensional ANR homology manifold, the Spivak tangent fibration of its associated Poincar\'e duality complex canonically destabilizes to a pointed $S^d$-fibration.  Finally, we introduce homotopy manifolds with conical singularities, a generalization of Cohen's triangulated homotopy manifolds, and show that such objects are in fact topological manifolds, generalizing a result of Siebenmann.

Along the way, we obtain comparisons between sheaf and singular cohomology and between the shape and the weak homotopy type of a topological space, explore the relation between various notions of cohomological dimension and hypercompleteness, and study six functor formalisms satisfying the K\"unneth formula.
\end{abstract}
	
\maketitle

\setcounter{tocdepth}{1}
\tableofcontents

\section{Introduction}

\subsection*{Motivation}
Homology manifolds are generalizations of topological manifolds that have been studied since at least the late 1940s, notably by Wilder under the name \textit{generalized manifolds} \cite{Wilder}. Roughly speaking, ($d$-dimensional) homology manifolds are locally compact Hausdorff spaces whose local homology groups at each point look like the ones of a ($d$-dimensional) topological manifold. These local homology groups are the stalks of the so-called \textit{orientation sheaf}, and a big open question raised by Wilder at the time was whether this orientation sheaf is locally constant (or equivalently invertible), a problem solved some 20 years later by Bredon \cite{Bredon} making use of his development of sheaf (co)homology for locally compact Hausdorff spaces. 

Special kinds of homology manifolds, namely those that are also absolute neighborhood retracts (ANRs) of finite covering dimension, have then played a prominent role in the topological surgery classification of manifolds as we explain next. To that end, recall first that for a Poincar\'e duality complex $X$, topological surgery theory can be used to describe the \textit{surgery structure space}, denoted by $\widetilde{\mathcal{S}}(X)$. This appears
classically as an intermediate step in the aim of investigating $\mathcal{S}(X)$, the \textit{manifold structure space} of $X$. The difference between the manifold structure space $\mathcal{S}(X)$ and the surgery structure space $\widetilde{S}(X)$ is still not fully understood. The best general understanding of their difference is via pseudoisotopy theory; in a range of degrees depending linearly on the dimension of $X$, it can be described by (some form of) Waldhausen's A-theory.

Unlike $\mathcal{S}(X)$, the surgery structure space $\widetilde{\mathcal{S}}(X)$ can be fully computed via geometric and algebraic surgery theory of Wall, Kirby--Siebenmann, and Ranicki \cite{Wall, KS, Ranicki-blue} to be an explicit infinite loop space, closely related to the fibre of the assembly map in (quadratic) L-theory. For oriented $X$, there is a map 
\[ \widetilde{\mathcal{S}}(X) \to \fib(X \otimes \L^\q(\Z) \xrightarrow{\mathrm{ass}} \L^\q(X)) \]
which turns out to be an inclusion of path components if $\dim(X) \geq 5$. In particular, since all L-spectra appearing here are 4-periodic, as is the assembly map, this result implies \textit{Siebenmann's periodicity theorem}, asserting that there is a preferred periodicity map
\[ \pi_*(\widetilde{\mathcal{S}}(X)) \to \pi_{*+4}(\widetilde{\mathcal{S}}(X)) \]
which is an isomorphism for $*>0$. But, as indicated above, for $*=0$ this map is a priori only injective, not bijective: this fact is sometimes referred to as \textit{Siebenmann's periodicity mistake}, in that his periodicity theorem does not quite imply that the surgery structure space is actually 4-periodic. However, this periodicity mistake has interesting geometric implications. Elements in $\pi_0(\widetilde{\mathcal{S}}(X))$ determine homeomorphism types of manifolds homotopy equivalent to $X$\footnote{To make this statement true as written, one should use a version of the above with simple homotopy equivalences provided $X$ is equipped with a simple structure.}. It is then natural to ask for the geometric meaning of elements in $\pi_4(\widetilde{\mathcal{S}}(X)))$ that are \emph{not} contained in the image of Siebenmann's periodicity map. Do they correspond to some form of geometric structure on $X$? This question has been coined the \emph{missing manifolds problem} and was solved in the landmark work of Bryant--Ferry--Mio--Weinberger \cite{BFMW}, which makes essential use of homology manifolds. 

Indeed, a compact finite dimensional ANR homology manifold is known to be a Poincar\'e duality complex (we also give a proof of this statement in the body of the text). If one replaces the notion of topological manifolds in the above discussion by such homology manifolds, one may analogously define $\widetilde{\mathcal{S}}^H(X)$, the \textit{homology manifold surgery structure space} of $X$. The main result in \cite{BFMW} identifies $\widetilde{\mathcal{S}}^H(X)$ with the fibre of the assembly map in quadratic L-theory. In particular, the evident map $\widetilde{\mathcal{S}}(X) \to \widetilde{\mathcal{S}}^H(X)$ induces an inclusion on path components, so that homology manifolds may be viewed as the ``missing manifolds''. Moreover, from the assembly map perspective, there is a canonical exact sequence 
\[0 \to \pi_0(\widetilde{\mathcal{S}}(X)) \to \pi_0(\widetilde{\mathcal{S}}^H(X)) \to 1+8\Z. \]
The latter map takes an ANR homology manifold to its Quinn invariant \cite{Quinn1, Quinn2, Quinn3}, which determines whether an ANR homology manifold admits a resolution, that is, a cell-like map from a topological manifold, a fact also shown in \cite{BFMW}. 

The results in \cite{BFMW} mentioned above are based on the assumption that the PD complex $X$ has a topological normal invariant, or equivalently, that the Spivak tangent fibration of $X$ admits a stable euclidean bundle representative. In the case where $X$ is a compact ANR homology manifold, the existence of such a euclidean bundle representative was claimed in \cite{FP}. However, a slight strengthening of this result has been disproven recently \cite{hebestreit2024homology}, see also \cite{BFMW-erratum}, and one believes the result in \cite{FP} to be incorrect; see \cite{hebestreit2024homology} for details. In view of these developments new ideas may be necessary to clarify the role of homology manifolds in surgery theory.

There are still many other interesting open questions about (finite dimensional) ANR homology manifolds. Perhaps the most prominent are the $s$-cobordism theorem and the question whether they are homogenuous, at least in the presence of a mild transversality assumption called the DDP (disjoint disk property). Moreover, many basic results about homology manifolds are not yet treated in modern language, and partly, proofs are not easy to follow (at least for the authors of this paper). We therefore believe that it is desirable to revisit the basic theory of homology manifolds from a modern perspective. In this paper we do so by reinterpreting the theory of homology manifolds through the lens of the six functor formalism on locally compact Hausdorff spaces given by $X \mapsto \Sh{X}{\Sp}$ \cite{volpe2023operations}. As a consequence of a quite general analysis, we obtain for instance the following theorem, summarizing some state-of-the-art about ANR homology manifolds.

\begin{introthm}\label[introthm]{Main-Theorem}
Let $X$ be a $d$-dimensional ANR homology manifold which is $\mathbb{F}_p$-hypercomplete\footnote{That is, $\Sh{X}{\Mod_{\mathbb{F}_p}}$ is hypercomplete.} for all primes $p$. Then 
\begin{enumerate}
\item\label{item:thma1} $X$ is cohomologically smooth in the sense of Scholze. In particular, its dualizing sheaf $\omega_X$ is invertible and hence equivalently described by a map $X \to \Pic(\S{})$.
\item\label{item:thma2} If $X$ is compact, then its underlying homotopy type\footnote{Equivalently, its shape.} is a $d$-dimensional Poincar\'e duality complex whose Spivak tangent fibration $T_X$ canonically identifies with $\omega_X$.
\item\label{item:thma3} If $X$ is hypercomplete, there exists a pointed $S^d$-fibration $\omega_X^{\Ani_{\ast}}\colon X \to \Baut_*(S^d)$ refining the dualizing sheaf $\omega_X$ via composition with $\Baut_*(S^d) \to \Pic(\S{})$. Moreover, $\omega_X^{\Ani_{\ast}}$ is functorial in homeomorphisms, meaning that the canonical map $\BHomeo(X) \to \Baut(X)$ factors through as a composite 
	\[\BHomeo(X) \to \Baut^{\omega_X^{\Ani_*}}(X) \to \Baut(X)\] 
	where $\aut^{\omega_X^{\Ani_*}}(X)$ denotes the anima of automorphims of $\omega^{\Ani_*}_X$ seen as an object in $\Ani_{/\Baut_*(S^d)}$ and the latter map is the forgetful map.
\item if $X$ is hypercomplete and compact, the map $\BHomeo(X) \to \Baut^{\omega_X^{\Ani_*}}(X)$ fits in a commutative square
\[\begin{tikzcd}
	\BHomeo(X) \ar[r] \ar[d] & \Baut^{\omega_X^{\Ani_*}}(X) \ar[d] \\
	\Baut(X) \ar[r] & \Baut^{T_X}(X)
\end{tikzcd}\]
	where similarly as above $\aut^{T_X}(X)$ denotes the anima of automorphisms of the Spivak tangent fibration $T_X$ seen as an object in $\Ani_{/\Pic(\bS)}$.
\end{enumerate}
\end{introthm}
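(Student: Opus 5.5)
The proof assembles results that the body of the paper establishes; the plan below treats the four items in order.

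\emph{Item (1).} First I characterize cohomological smoothness in the six functor formalism $X\mapsto\Sh{X}{\Sp}$, following Scholze's argument. The target criterion is: $X$ is smooth if and only if $\omega_X=\pbp{f}\S{}$ is invertible and the natural map $\pb{f}M\otimes\omega_X\to\pbp{f}M$ is an equivalence for all $M$. For a locally compact ANR, $\pbp{f}$ commutes with colimits, so it suffices to check this on $M=\S{}$ together with a Künneth-type compatibility. To verify invertibility of $\omega_X$, I compute the stalks of $\omega_X\otimes\mathbb{F}_p$. These are the local homology groups $H_*(X,X\setminus x;\mathbb{F}_p)$, which are $\mathbb{F}_p$ concentrated in degree $d$ by the homology manifold hypothesis. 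Bredon's theorem, reproved here sheaf-theoretically, then shows that the $\mathbb{F}_p$-orientation sheaf is locally constant. Hypercompleteness enters precisely at this point: a map of $\Sh{X}{\Mod_{\mathbb{F}_p}}$-sheaves that is a stalkwise equivalence is an equivalence. Combining all primes $p$ with the rational case (which follows from the integral homology hypothesis), I conclude that $\omega_X$ is locally equivalent to $\Sigma^d\S{}$. This uses that the ANR property gives locally constant shape, so that invertibility can be detected after $p$-completion and rationalization. The same stalkwise argument upgrades $\pb{f}M\otimes\omega_X\to\pbp{f}M$ to an equivalence. Finally, an invertible locally constant sheaf on an ANR is classified by a map $X\to\Pic(\S{})$.

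\emph{Item (2).} For compact $X$, the map $f\colon X\to *$ satisfies $\pfp{f}=\pf{f}$. Smoothness then gives Verdier duality $\Sec{X}{\pbp{f}M}\simeq\homsp{}{\ldots}{}$, which is a twisted Poincaré duality between sheaf cohomology and $\pfp{f}$. The ANR hypothesis and compactness let me identify the shape of $X$ with its homotopy type, which is a finite complex, and identify locally constant sheaves with parametrized spectra over that homotopy type. Under this identification $\pfp{f}\omega_X$ is the Thom spectrum, and the counit supplies a fundamental class in degree $d$ exhibiting the Poincaré duality complex structure. The Spivak fibration is characterized as the unique invertible parametrized spectrum $\xi$ for which $X^{-\xi}$ admits a collapse class that induces duality. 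Since $\omega_X$ satisfies this property, uniqueness of the Spivak fibration gives $T_X\simeq\omega_X$.

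\emph{Items (3) and (4).} Here I use the unstable notion of a homotopy manifold. I show that hypercomplete ANR homology manifolds are homotopy manifolds, meaning the local homotopy types $X\setminus x\subset X$ near each $x$ behave like $S^{d-1}\subset D^d$ up to homotopy. The unstable orientability result then provides a locally constant sheaf of pointed anima with fiber $S^d$, whose stalk at $x$ is the cofiber of $X\setminus x\to X$ computed locally. This sheaf gives a map $X\to\Baut_*(S^d)$, and stabilization recovers $\omega_X$ by the stalkwise computation from item (1). Functoriality in homeomorphisms is automatic because the construction is natural in open embeddings and homeomorphisms of the space, which yields the factorization of $\BHomeo(X)\to\Baut(X)$. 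For item (4), the square commutes because the identification $T_X\simeq\omega_X$ from item (2) is itself natural in homeomorphisms.

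I expect the main obstacle to be the unstable step: proving that hypercomplete ANR homology manifolds are homotopy manifolds. Homology information has to be promoted to a homotopy equivalence with $S^{d-1}$ for the local complements, and I would first attempt this via a Whitehead-type argument using simple connectivity of small punctured neighborhoods when $d\geq 3$.
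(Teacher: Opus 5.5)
Your proposal has two genuine gaps, one in item (1) and one in the unstable part (items (3) and (4)).

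\textbf{Item (1): from mod $p$ to spectra.} You get from mod $p$ information to the sheaf of spectra $\omega_X$ in a way that does not work. Only $\Sh{X}{\Mod_{\mathbb{F}_p}}$ is assumed hypercomplete. So the fibre of a comparison map $\Sigma^d\bS_U\to\omega_U$ is only known to have vanishing stalks, and that does not make it zero. Your phrase ``invertibility can be detected after $p$-completion and rationalization'' is unsupported, for three reasons:
\begin{enumerate}
\item $H\mathbb{F}_p$-acyclicity of a sheaf of spectra does not give $\bS/p$-acyclicity without boundedness.
\item The rational case cannot be run through Bredon's argument, since no $\mathbb{Q}$-hypercompleteness is assumed.
\item The same objection kills the stalkwise upgrade of $\pb{f}M\otimes\omega_X\to\pbp{f}M$.
\end{enumerate}

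The missing idea is to use local contractibility seriously.
\begin{itemize}
\item First prove that locally compact ANRs are $\Ani$-locally contractible: embed as a closed subset of $\rnum^\omega$, take a retract of an open subset, and apply \cref{LCANRloccontrlocsingshape}.
\item By K\"unneth this is equivalent to $1_X$ being suave (\cref{kunneth=>1ULA=loccontr}). That gives condition (i) and $\pbp{f}(-)\simeq\pb{f}(-)\otimes\omega_X$ with no stalk computation. It also gives $\sHom{}{\omega_U}{\omega_U}\simeq\bS_U$ and hypercompleteness of constant sheaves.
\item Take a section $s\colon\Sigma^d\bS_U\to\omega_U$ on a connected $U$. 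Stalkwise invertibility reduces to $\mathbb{F}_p$ for every $p$, handled by the support argument on that same $U$. No rational input is needed, since a self-map of $\Sigma^d\bS$ is a degree.
\item The $\infty$-connective fibre $F$ of $s$ maps trivially to the hypercomplete $\Sigma^d\bS_U$. Hence $\omega_U\simeq\Sigma^d\bS_U\oplus\Sigma F$. Connectedness of $U$ together with $\sHom{}{\omega_U}{\omega_U}\simeq\bS_U$ forces $F=0$ (\cref{prop:locally-constant}).
\end{itemize}

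\textbf{Items (3) and (4): the unstable step.} Your proposed strategy is false. Small punctured neighbourhoods of an ANR homology manifold need be neither simply connected nor homotopy equivalent to $S^{d-1}$. Take the suspension $\Sigma P$ of the Poincar\'e homology sphere. It is a compact polyhedral $4$-dimensional homology manifold, hence an ANR, finite dimensional and hypercomplete. Yet a suspension point has arbitrarily small punctured cone neighbourhoods homotopy equivalent to $P$, with local fundamental group the binary icosahedral group. The theorem still holds there because the relevant object is the local shape, $\sh(X|x)\simeq\Sigma P\simeq S^4$.

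What one actually proves (\cref{homology=>htpyman}) is $\sh(X|x)\simeq\Sigma\,\mathrm{holink}(x,X)$ in $\Pro(\Ani)$. Here $\mathrm{holink}(x,X)$ is pro-connected, because $U\setminus\{x\}$ is connected for connected $U\ni x$: this follows from $H_1(\sh(U|x);\Z)=0$ for $d\geq 2$ and local connectedness. Hence $\sh(X|x)$ is simply connected, and Hurewicz plus Whitehead give $\sh(X|x)\simeq S^d$.

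\textbf{Functoriality.} Functoriality is not automatic. Naturality in individual homeomorphisms only gives a map out of the classifying space of the \emph{discrete} homeomorphism group. To get a map out of $\BHomeo(X)$, you must do the following:
\begin{itemize}
\item Construct $\omega_p^{\Ani_*}$ for every $X$-fibre bundle $p\colon E\to B$ over a CW complex.
\item Show it is locally constant with fibres $\omega_X^{\Ani_*}$, using local triviality and base change (\cref{lemma:pullback-unstable-dualizing-sheaf}).
\item For (4), prove the relative identification $\Sigma^\infty\omega_p^{\Ani_*}\simeq\omega_p\simeq D_p^{-1}$ by a relative Morita argument. Naturality of $T_X\simeq\omega_X$ for a single $X$ is not enough.
\end{itemize}

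\textbf{Minor point on (2).} $\pfp{f}\omega_X\simeq\pfs{f}\bS_X\simeq\Sigma^\infty_+\sh(X)$; the Thom spectrum is $\pfs{f}\omega_X$. Your Spivak-uniqueness route also differs from the paper's. The paper reads off $\omega_X\otimes D_{\sh(X)}\simeq\bS$ directly from $r_!(-)\simeq r_!(-\otimes\omega_X\otimes D_{\sh(X)})$, which needs only compactness of $\sh(X)$, not finiteness.
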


\begin{Rmk}
\begin{enumerate}
\item If $X$ is an ENR, \cref{Main-Theorem}~\eqref{item:thma1} is essentially due to Scholze \cite{scholze2022six}; we offer a precise characterisation of the cohomologically smooth objects below.
\item If $X$ is of finite covering dimension, a common assumption in the literature on ANR homology manifolds, then $X$ is $\mathbb{F}_p$-hypercomplete for all primes $p$. In this situation, the first part of \cref{Main-Theorem}~\eqref{item:thma2} is well-known.
\item Even the existence part of \cref{Main-Theorem}~\eqref{item:thma3} seems to be novel and is orthogonal to the earlier mentioned claim of Ferry--Pedersen that the Spivak tangent fibration admits a refinement to a stable euclidean bundle, that is, that it is classified by a composite $X \to \BTop \to \Pic(\S{})$. 
\item For a topological manifold $X$, there is even a canonical unstable unpointed spherical fibration, i.e.\ $\omega_X^{\Ani_*} \colon X \to \Baut_*(S^d)$ in fact lifts along $\Baut(S^{d-1}) \to \Baut_*(S^d)$, simply because a topological manifold has a topological tangent bundle $TX\colon X \to \BTop(d)$ refining all of the above. While we almost surely know, as explained above, that homology manifolds do not have such euclidean bundles, we do not know whether $\omega_X^{\Ani_*}$ is the fibrewise suspension of a canonical map $X \to \Baut(S^{d-1})$, but we expect that this is not the case. 
\end{enumerate}
\end{Rmk}

\subsection*{More details}
We now give a more technical overview of the contents of this paper. As indicated earlier, our methods lie within the general framework of \textit{six functor formalisms}, which may be viewed as a categorified version of cohomology theories. Roughly speaking, a six functor formalism assigns to each space (or geometric object) a stable $\infty$-category of coefficients together with functorial operations $ f^*, f_*, f_!, f^! $, tensor products, and internal Homs, encoding in a unified way the fundamental functoriality and base-change properties expected of (co)homology in any reasonable geometric context. The main example studied in this paper is the six functor formalism of \textit{sheaves of $R$-modules on locally compact Hausdorff spaces}, where $R$ is a $\mathbb{E}_\infty$-ring. We refer to \cite{volpe2023operations} for a general account of the theory. Other prominent examples in different geometric settings include \textit{\'etale sheaves} \cite{artinGrothendieckVerdierSGA4} and \textit{motivic spectra} \cite{ayoub2007sixopsI} in algebraic geometry.

Recent work by many authors, including \cite{lu2022categorical, scholze2022six, heyer20246, gaitsgory2017study, cnossen2025universality}, has led to a precise formalization of six functor formalisms in terms of \textit{$\infty$-categories of correspondences}. We briefly summarize the approach of \cite{lu2022categorical, scholze2022six, heyer20246}. To any $\infty$-category $\C$ with finite limits, one associates a new $\infty$-category $\Corr(\C)$, called the $\infty$-category of correspondences (or spans). Its objects are those of $\C$, a morphism from $X$ to $Y$ is given by a span
\[
X \xleftarrow{} Z \xrightarrow{} Y,
\]
and composition is given by taking pullbacks.
The cartesian monoidal structure on $\C$ induces a symmetric monoidal structure on $\Corr(\C)$. A six functor formalism on $\C$ is then defined to be a lax symmetric monoidal functor
\[
D \colon \Corr(\C) \to \infcat
\]
such that $D$ sends every morphism to a left adjoint functor and, for each $X \in \C$, the induced symmetric monoidal structure on $D(X)$ is closed. From such a functor $D$, one formally recovers the usual operations of the six functor formalism, including (exceptional) pullbacks and pushforwards, projection formulas, and base change isomorphisms.

In the general setting of a six functor formalism \( D \colon \Corr(\C) \to \infcat \), one can axiomatize when a morphism \( f \) in \( \C \) satisfies Poincar\'e duality with respect to \( D \). This property is known as \textit{cohomological smoothness}, or \textit{\(D\)-smoothness}, and has been studied in \cite{zavyalov2023poincar, scholze2022six}. For the reader's convenience, we briefly recall the relevant definitions. Given a morphism \( f \colon X \to Y \) in \( \C \), we write \( \omega_f \) for its \textit{dualizing object} (or, depending on context, \textit{dualizing sheaf}), defined by
\[
\omega_f := f^!(1_Y) \in D(X),
\]
where \( 1_Y \in D(Y) \) denotes the monoidal unit. When \( \C = \mathrm{LCH} \), the category of locally compact Hausdorff spaces, \( f \colon X \to * \) is the terminal morphism, and \( D = \Sh{-}{\Mod_R} \), we write \( \omega_X^R \) for the corresponding dualizing sheaf.

\begin{introdef}\label{introdefcohsmooth}
    Let $f\colon X\rightarrow Y$ be a morphism in a geometric setup $\C$ and $D$ a six functor formalism on $\C$. We say that $f$ is $D$-\emph{smooth} if for any pullback square in $\C$ as in \eqref{pb} the following conditions hold.
\begin{enumerate}[(i)]
	\item\label{itemdef:i} The map $\pb{v}\omega_f\rightarrow\omega_g$ is invertible.\footnote{Equivalently, $1_X \in D(X)$ is $f$-suave in the sense of \cite[Def.\ 6.1]{scholze2022six}, see \cref{remark:f-suave}.}
	\item\label{itemdef:ii} The object $\omega_f \in D(X)$ is $\otimes$-invertible.
\end{enumerate}
When $\C=\LCH$ and $D=\Sh{-}{\E}$, we say that $f$ is $\E$-smooth rather than $\Sh{-}{\E}$-smooth. If moreover $\E=\Mod_R$ for an $\bE_{\infty}$-ring $R$, we say that $f$ is $R$-smooth.
\end{introdef}

One of the main objectives of this paper is to unpack the notion of $D$-smoothness in the special case where $X$ is a LCH space, $f$ is the unique map $X\rightarrow\ast$, and $D=\Sh{-}{\spectra}$. Much of this introduction is devoted to illustrate our efforts to relate this abstract notion to more classical concepts from geometric topology. 

\subsection*{Local contractibility}
Our first step in this direction is to offer an interpretation of condition \eqref{itemdef:i} above in terms of \textit{local contractibility}. For this we work in the generality of categorical \textit{K\"unneth formulas}. Let $\C$ a $\infty$-category with finite limits, $f$ any morphism in $\C$ and $D\colon \C\op\rightarrow\Cat$ \textit{any} functor (not necessarily extending to a six functor formalism) and write $\pb{f}$ for $D(f)$. 
\begin{enumerate}
\item[-] We say that $D$ satisfies the K\"unneth formula if, for any cartesian square in $\C$, the comparison map $ D(X)\otimes_{D(Z)} D(Y)\rightarrow D(X\times_Z Y)$ is an equivalence. For instance, for any presentable $\infty$-category $\E$, the functor $D=\Sh{-}{\E}$ on LCH satisfies the K\"unneth formula.
\item[-] We say $f$ is \textit{$D$-locally contractible} if $\pb{f}$ admits a left adjoint $\pfs{f}$ satisfying the projection formula. We say $f$ is \textit{universally $D$-locally contractible} if any base change of $f$ is $D$-locally contractible. We say that an object $X$ in $\C$ is (universally) $D$-locally contractible if the unique map $X \to \ast$ is. When $D=\Sh{-}{\E}$, we say that $f$ is $\E$-locally contractible rather than $\Sh{-}{\E}$-locally contractible. Iff moreover $\E=\Mod_R$ for an $\bE_{\infty}$-ring $R$, we say that $f$ is $R$-locally contractible.

\end{enumerate}
In this setup, we provide the following characterization.

\begin{introthm}[\cref{kunneth=>1ULA=loccontr}]\label[introthm]{introthm:1ulaandloccontr}
Let $D$ be a cocomplete six functor formalism on a geometric setup $\C$ which satisfies the K\"unneth formula and $f\colon X \to Y$ a morphism in $\C$.
Then the following assertions are equivalent:
\begin{enumerate}[(i)]
	\item\label{item:i} $f$ is $D$-locally contractible
	\item\label{item:ii} the monoidal unit ${1}_X\in D(X)$ is $f$-suave
	\item\label{item:iii} $f$ is $D$-universally locally contractible.
\end{enumerate}
\end{introthm}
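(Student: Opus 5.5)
The plan is to prove the cycle \eqref{item:iii}$\Rightarrow$\eqref{item:i}$\Rightarrow$\eqref{item:ii}$\Rightarrow$\eqref{item:iii}. The first implication is trivial: $f$ is a base change of itself along the identity.

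\textbf{\eqref{item:i}$\Rightarrow$\eqref{item:ii}.} Assume $\pb{f}$ has a left adjoint $\pfs{f}$ satisfying the projection formula. Recall Scholze's criterion: $1_X$ is $f$-suave if and only if the map
\[ \pi_1^*\omega_f \otimes \pi_2^* 1_X \to \Delta_! 1_X \]
exhibits a duality datum in the $(\infty,2)$-category of kernels over $Y$, with $\omega_f = \pbp{f}1_Y$. Equivalently, the natural transformation $\omega_f\otimes \pb{f}(-)\to \pbp{f}(-)$ is an equivalence and is compatible with base change. I would produce the candidate inverse from the left adjoint. The projection formula for $\pfs{f}$ makes the adjunction $\pfs{f}\dashv\pb{f}$ $D(Y)$-linear, so $\pb{f}$ is given by a kernel admitting a left adjoint in the $2$-category of $D(Y)$-linear categories. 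By the Künneth formula, $D(X\times_Y X)\simeq D(X)\otimes_{D(Y)}D(X)$, so $D(Y)$-linear functors $D(X)\to D(X')$ are identified with objects of $D(X\times_Y X')$, i.e.\ with kernels. Under this identification the linear left adjoint of $\pb{f}$ becomes a left adjoint in the kernel category, and the two notions of left adjoint coincide. Transporting the unit and counit then gives the suaveness duality datum, with dual $\pfs{f}$ corresponding to $\pfp{f}(\omega_f\otimes-)$.

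\textbf{\eqref{item:ii}$\Rightarrow$\eqref{item:iii}.} Suaveness of $1_X$ is stable under base change: this is the formal base-change property of suave objects in \cite{scholze2022six}. So for any $g\colon X'\to Y'$ pulled back from $f$, the object $1_{X'}$ is $g$-suave. For a $g$-suave unit, the functor $\pfp{g}(\mathrm{SD}_g(1)\otimes-)$ is left adjoint to $\pb{g}$. The projection formula for $\pfp{g}$ then gives the projection formula for this left adjoint. Hence $g$ is $D$-locally contractible.

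\textbf{Main obstacle.} The key step is \eqref{item:i}$\Rightarrow$\eqref{item:ii}, specifically matching $D(Y)$-linear adjunctions with adjunctions in the kernel $2$-category. This requires that composition of kernels agrees with composition of linear functors under Künneth, including coherence of the $2$-cells. I would handle this by constructing a $2$-functor from kernels over $Y$ to $\Mod_{D(Y)}(\Pr^L)$. Cocompleteness of $D$ together with Künneth should make it fully faithful on the relevant mapping categories, and fully faithful $2$-functors reflect adjunctions.
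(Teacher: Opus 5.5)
Your cycle and tools match the paper's proof. (iii)$\Rightarrow$(i) is trivial. (ii)$\Rightarrow$(iii) is \cref{1ULA=>loccontr}. (i)$\Rightarrow$(ii) uses the 2-functor $\Phi$ from $D$-based correspondences over $Y$ to $D(Y)$-linear cocomplete categories, which is fully faithful on mapping categories and therefore reflects adjunctions (\cref{lemma:embedding}). The gap is that you never justify this full faithfulness. The sentence meant to justify it (``By the K\"unneth formula \ldots, so $D(Y)$-linear functors $D(X)\to D(X')$ are identified with objects of $D(X\times_Y X')$'') is a non sequitur. K\"unneth identifies $D(X\times_Y X')$ with $D(X)\otimes_{D(Y)}D(X')$, not with $\Fun_{D(Y)}(D(X),D(X'))$. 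The two agree only if $D(X)$ is dualizable over $D(Y)$ with dual $D(X)$. You genuinely need this in two places. For $X'=Y$, it is what tells you that $\pfs{f}$ is represented by a kernel at all (essential surjectivity of $\Phi_{X,Y}$). For $X'=X$, it is what lets you lift the unit $\id\to\pb{f}\pfs{f}$ to a $2$-cell between kernels (fullness of $\Phi_{X,X}$). ``Cocompleteness plus K\"unneth should make it fully faithful'' is therefore the entire content of the step, not a detail.

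The missing idea is self-duality. The object $X$ of $\Corr(\C_{/Y})$ is self-dual, with evaluation the span $X\times_Y X\xleftarrow{\Delta}X\to Y$ and coevaluation the reversed span. K\"unneth says $D\colon\Corr(\C_{/Y})\to\Mod_{D(Y)}(\Cat_\infty^L)$ is strong, not merely lax, symmetric monoidal. Hence it carries this datum to one exhibiting $D(X)$ as self-dual over $D(Y)$, with evaluation $f_!\Delta^*\colon D(X)\otimes_{D(Y)}D(X)\simeq D(X\times_Y X)\to D(Y)$. It follows that $\Fun_{D(Y)}(D(X),D(X'))\simeq D(X)\otimes_{D(Y)}D(X')\simeq D(X\times_Y X')$. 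The commutative square in the proof of \cref{lemma:embedding} identifies this composite with $K\mapsto (p_2)_!(K\otimes p_1^*(-))$, i.e.\ with $\Phi_{X,X'}$. Since $D$ is only assumed cocomplete, the target should be $\Mod_{D(Y)}(\Cat_\infty^L)$ rather than presentable categories. Finally, the body's definition of universal local contractibility also requires the Beck--Chevalley map $\pfs{g}\pb{v}\to\pb{u}\pfs{f}$ to be invertible, which you do not check. It follows from three facts: $\pfs{g}\simeq\pfp{g}(\omega_g\otimes-)$, the equivalence $\pb{v}\omega_f\simeq\omega_g$ supplied by suaveness, and base change for $(-)_!$.
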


\begin{Rmk}
\begin{enumerate}
\item In practice, it is easier to check that a map $f$ is $D$-local contractible (e.g.\ using adjoint functor theorem in presentable situations) than to check that $1_X$ is $f$-suave, but it is much more useful to know that $1_X$ is $f$-suave. Note also that the notion of $D$-local contractibility does not require $D$ to be part of a six functor formalism.
\item For $\C$ the category of topological spaces, $D$-local contractibility is quite close to familiar concepts from topology. For instance, $X$ is $\Ani$-locally contractible if and only if $X$ is \textit{locally of constant shape} in the sense of Lurie \cite[Appendix A]{lurie2017higher}. Indeed, recall that for $f\colon X \to \ast$, the functor $\pb{f}$ admits a pro-left adjoint $\Sh{X}{\Ani} \to \Pro(\Ani)$ sending the terminal sheaf to the \textit{shape} of $X$, denoted by $\sh(X)$. If $X$ is $\Ani$-locally contractible, we find that this pro-left adjoint is given by the composite of the left adjoint $\pfs{f}$ with the inclusion $\Ani \subseteq \Pro(\Ani)$ as constant pro objects. The same remains true for any open subset of $X$, so that $X$ is locally of constant shape. 
\item Still for $\C$ the category of topological spaces, and for a field $K$, requiring $X$ to be $K$-locally contractible is closely related to $X$ being \textit{cohomologically locally $\infty$-connected} as studied classically by Wilder, Bredon, and many others, see \cref{lemma:clcooandloccontrK} for details. 
\end{enumerate}
\end{Rmk}

A reader coming from point-set topology is probably more familiar with the following notion of local weak contractibility of a space $X$. Namely, the condition that for each point $x\in X$ and open $U\subseteq X$ with $x\in U$, there is a neighbourhood $V\subseteq U$ of $x$ and a homotopy between $\Sing(V)\rightarrow\Sing(U)$ and $\Sing(V)\rightarrow\Sing(\{x\})\rightarrow\Sing(U)$. Here, $\Sing(X)\in\Ani$ denotes the weak homotopy type of a topological space $X$. For a complete $\infty$-category $\D$, one may define similarly the notion of \textit{$\D$-local weak contractibility} using cotensors with the anima $\Sing(U)$ for each open $U$ in $X$, see \cref{singcontractibility} for a precise definition. In general, local weak contractibility does not imply $\Ani$-local contractibility, as we argue in \cref{rmk:counterexampleforlocwc=>shvloccontr}. Nevertheless, we prove the following result.

\begin{introthm}[\cref{equivsheafcohhom}, \cref{equivshandsing}]
Let $\D$ be $\text{Mod}_R$ with $R$ a connective $\bE_{\infty}$-ring spectrum and $X$ be a $\D$-locally weakly contractible space. Then $X$ is $\D$-locally contractible if and only if all $\D$-valued constant sheaves are hypercomplete. In this case, we have isomorphisms
\begin{align*}
	\sing{X}\otimes M\simeq\pfs{a}^{\D}\pb{a}_{\D}M  \quad \text{ and } \quad  M^{\sing{X}} \simeq\pf{a}^{\D}\pb{a}_{\D}M
\end{align*}
for any $M\in\D$. The same result as above holds when $\D=\Ani$ and $X$ is additionally required to be first countable. When $X$ is LCH and $\D = \Mod_R$, we furthermore have
\[\sing{X}\otimes M\simeq\pfp{a}^{\D}\pbp{a}_{\D}M.\]
\end{introthm}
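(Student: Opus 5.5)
We compare two functors $\D^{\opposite}$-valued on the poset $\Op{X}$ of opens of $X$. For $M\in\D$ these are the constant presheaf $U\mapsto M$ and the cotensor presheaf $F_M\colon U\mapsto M^{\sing{U}}$. The latter is a sheaf: $\Sing$ sends open covers to colimits (hypercovers are realized, by the Seifert--van Kampen/Dugger--Isaksen theorem), so $F_M$ is a hypersheaf on $X$. The plan has three steps.

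\textbf{Step 1: a comparison map.} Since $M\simeq M^{\sing{\mathrm{pt}}}$ and maps $\sing{U}\to \ast$ are functorial, we get a presheaf map $M\to F_M$. It therefore induces a sheaf map $c_M\colon \pb{a}M\to F_M$.

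\textbf{Step 2: stalks.} Local weak contractibility says that for $x\in V\subseteq U$ the restriction $M^{\sing{U}}\to M^{\sing{V}}$ factors through $M$. Hence the stalk of $F_M$ at $x$, a filtered colimit over neighbourhoods, is $M$. For $\D=\Mod_R$ one uses that cotensors commute with these factorizations up to homotopy, and that filtered colimits of ind-systems in which every map factors through $M$ compute $M$. So $c_M$ is an equivalence on stalks, i.e.\ an $\infty$-connective morphism.

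Consequently $c_M$ is an equivalence iff $\pb{a}M$ is hypercomplete, since $F_M$ is a hypersheaf. In that case
\[\pf{a}\pb{a}M\simeq F_M(X)=M^{\sing{X}}.\]
Applying the same argument to each open $U$ gives $\Gamma(U;\pb{a}M)\simeq M^{\sing{U}}$.

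\textbf{Step 3: the equivalence with local contractibility.} Suppose first that the constant sheaves are hypercomplete. Define $\pfs{a}N:=\sing{X}\otimes N$ on representables; more precisely, the candidate left adjoint of $\pb{a}$ sends $\Sigma^\infty_+ U\otimes N$, i.e.\ the sheafified $N_U$, to $\sing{U}\otimes N$. The adjunction follows from the identification above together with
\[\Hom{}{\sing{U}\otimes N}{M}=(\mathrm{map}(N,M))^{\sing{U}},\]
after extending by colimits, since $\Sh{X}{\D}$ is generated under colimits by the $N_U$. The projection formula is checked on generators and holds because $\otimes$ commutes with colimits.

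Conversely, suppose $X$ is $\D$-locally contractible. Then $\pf{a}\pb{a}M$ is the dual of $\pfs{a}1$, and the analogous statement holds over every open, so $\Gamma(U;\pb{a}M)\simeq\mathrm{map}(\pfs{a_U}1,M)$. Sheaves of this form on a locally contractible space are hypercomplete: $\pb{a}$ preserves limits, and $\pfs{}$ commutes with hypercover colimits because it is a left adjoint and $\sing{}$ sends hypercovers to colimits. Concretely, I would show that $\pfs{a_U}1\simeq \sing{U}\otimes R$, by comparing the stalk-wise contractibility, and then use the hyperdescent of $F_M$.

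\textbf{The $\Ani$ case.} Here one needs connectivity control, because in $\Ani$ $\infty$-connective maps are detected via homotopy sheaves at points. First countability makes the stalk colimits sequential, so they commute with $\pi_n$ and Whitehead-type arguments apply.

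\textbf{The $\pfp{a}\pbp{a}$ formula.} For $X$ LCH, use the previous theorem: local contractibility makes $1_X$ suave, and we have
\[\pfp{a}\pbp{a}M\simeq\pfp{a}(\omega_X\otimes\pb{a}M).\]
Alternatively, one computes compactly supported sections of $\pbp{a}M$ as $\colim_K$ over compacts and compares with $\pfs{a}$ via the canonical map $\pfs{a}\to\pfp{a}\pbp{a}\pb{a}$-type transformation. I expect this step to be the main obstacle: identifying $\pfs{a}\simeq\pfp{a}\pbp{a}$ on constant coefficients, which I would attack by showing that both are colimit-preserving, that they agree on $R$ (by $R$-linearity), and that the comparison is an equivalence after mapping into arbitrary $M$, via Verdier duality.
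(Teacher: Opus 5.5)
Your overall architecture matches the paper's: the comparison map $\pb{a}M\to F_M$, a stalkwise equivalence from local weak contractibility, and a left adjoint built from the $\Sing$-cosheaf. However, the implication ``$\D$-locally contractible $\Rightarrow$ constant sheaves hypercomplete'' has a genuine gap.

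You argue that $U\mapsto (a_U)_{\sharp}1$ turns hypercovers into colimits \emph{because} $\pfs{a}$ is a left adjoint. But $\pfs{a}$ only preserves colimits in $\Sh{X}{\D}$. A hypercover $U_\bullet\to U$ exhibits the free sheaf on $U$ as the colimit over $U_\bullet$ only after hypercompletion, so being a left adjoint gives you \v{C}ech descent and nothing more. Your fallback, proving $(a_U)_{\sharp}1\simeq\sing{U}\otimes R$ by comparing stalks, is circular. The comparison map $\pb{a}M\to F_M$ is a stalkwise equivalence, and upgrading it to an equivalence is exactly the hypercompleteness of $\pb{a}M$ you are trying to prove.

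The missing idea is that $\pfs{a}$ is left adjoint to the $t$-exact (for $\D=\Ani$: left exact) functor $\pb{a}$. Hence it preserves $n$-connectivity for every $n$, so it sends $\infty$-connective maps to $\infty$-connective maps in $\D$, and those are equivalences. Therefore $\mathrm{Hom}(-,\pb{a}M)\simeq\mathrm{Hom}(\pfs{a}(-),M)$ inverts $\infty$-connective maps. This direction uses no local weak contractibility at all.

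Your Step 2 also asserts without argument that a filtered colimit whose transition maps factor through $M$ is $M$. In an $\infty$-category this is not formal. The triangles from local weak contractibility are chosen independently for each pair $V\subseteq U$, with no coherence in $U$. So you only learn that $M$ is a retract of the stalk, via an idempotent that agrees with the identity on each $M^{\sing{U}}$ up to unrelated homotopies.

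The paper handles this by applying a conservative, filtered-colimit-preserving functor $\Phi\colon\D\to\Ab$ (built from homotopy groups for $\Mod_R$) and running a strictly $1$-categorical cofinality argument there. For $\Ani$, first countability is used to extract a cofinal sequence along which the triangles can be spliced into a coherent sequential diagram. It is not used, as you suggest, to make the colimit commute with $\pi_n$; every filtered colimit in $\Ani$ already does that.

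Finally, the step you flag as the main obstacle is immediate. Suaveness of $1_X$ gives both $\pbp{a}\simeq\pb{a}(-)\otimes\omega_X$ and $\pfs{a}\simeq\pfp{a}(-\otimes\omega_X)$, so $\pfp{a}\pbp{a}M\simeq\pfs{a}\pb{a}M\simeq\sing{X}\otimes M$.
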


\begin{Rmk}
For $\Ani$-locally contractible spaces $X$, the displayed equivalences in particular identify the weak homotopy type $\sing{X}$ with the shape $\sh(X)$ and singular (co)homology with sheaf (co)homology with arbitrary coefficients $M$. In case $M$ is a bounded above spectrum, for instance the Eilenberg--Mac Lane spectrum associated to an abelian group, the equivalence $M^{\sing{X}} \simeq \pf{a}^{\Sp}\pb{a}_{\Sp}M$ holds without the assumption that constant sheaves are hypercomplete, see \cref{sheafvssingcohwithboundedcoeff}, \cref{Clocwcontr=>sheafvssinginftyconn}.
\end{Rmk}

For a $\Ani$-locally contractible space $X$, Lurie gives an elegant account of the \textit{monodromy equivalence}, identifying locally constant sheaves on $X$ and spectra parametrised over the shape $\sh(X)$ of $X$, see \cite[Appendix A.1]{lurie2017higher}. We make use of this monodromy equivalence and the well-known characterization of Poincar\'e duality complexes in terms of parametrized spectra summarized e.g.\ in \cite{Land-MJM} to show:

\begin{introthm}[\cref{thm:loccontrlocconstdualisPD}]
Let $X$ be a compact Hausdorff space and $R$ a connective $\bE_\infty$-ring spectrum. Assume that $X$ is $\Ani$-locally contractible and that $\omega_X^R$ is locally constant.
Then $\omega_X^R$ is in fact invertible, $D_{\sh(X)}^R = (\omega_X^R)^{-1}$, and in particular, $\sh(X)$ is $R$-Poincar\'e duality complex.
\end{introthm}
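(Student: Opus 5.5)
We reduce everything to parametrized spectra over the anima $\sh(X)$ by means of Lurie's monodromy equivalence. Write $a\colon X\to\ast$. Because $X$ is $\Ani$-locally contractible, it is locally of constant shape, and $\sh(X)$ is an anima. Monodromy identifies locally constant sheaves of $R$-modules on $X$ with $\Fun(\sh(X),\Mod_R)$. Under this equivalence, $\pb{a}$ corresponds to pullback along $\sh(X)\to\ast$, and the left adjoint $\pfs{a}$ restricted to locally constant sheaves corresponds to the colimit functor. We should also use that $X$ is $\Mod_R$-locally contractible: $\Ani$-local contractibility gives this after tensoring with $R$, since $\Sh{X}{\Mod_R}\simeq\Sh{X}{\Ani}\otimes\Mod_R$ and the left adjoint with the projection formula base changes. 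Then \cref{introthm:1ulaandloccontr} shows that $1_X$ is $a$-suave. Suaveness gives a natural equivalence $\pbp{a}M\simeq\omega_X^R\otimes\pb{a}M$ for $M\in\Mod_R$, together with a natural equivalence $\pfp{a}(\omega_X^R\otimes F)\simeq \pfs{a}F$ for suitable $F$. Concretely, the left adjoint of $\pbp{a}(-)\simeq \omega\otimes\pb{a}(-)$ is $\pfp{a}$, while the left adjoint of $\pb{a}$ is $\pfs{a}$. Combining these via the projection formula gives $\pfs{a}F\simeq\pfp{a}(F\otimes\omega)$ for $\omega$-twisted objects.

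Next we use compactness, which gives $\pfp{a}=\pf{a}$. Since $\omega_X^R$ is locally constant, it corresponds to a functor $\omega\colon\sh(X)\to\Mod_R$. We then compute both sides in parametrized terms. First, $\pf{a}$ on locally constant sheaves is the limit over $\sh(X)$; this uses that constant sheaves compute cohomology correctly, which holds on locally contractible spaces by the comparison results already cited. Second, the adjunction identity above becomes: for every local system $L$ on $\sh(X)$,
\[ \colim_{\sh(X)} L \simeq \lim_{\sh(X)} (L\otimes\omega). \]
The key step is to take $L$ to be the constant local system and, more generally, to deduce that $\omega$ is invertible. The formula says that $\lim_{\sh(X)}(-\otimes\omega)$ preserves colimits. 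Since it agrees with a colimit, it is also compatible with the $\Mod_R$-linear structure. By the characterization of Poincaré duality complexes in terms of parametrized spectra (e.g.\ \cite{Land-MJM}), this implies that $\sh(X)$ is $R$-Poincaré: the parametrized limit is then a twisted colimit, $\lim(L)\simeq\colim(L\otimes D_{\sh(X)}^R)$, where $D^R_{\sh(X)}$ is the Costenoble--Waner dualizing spectrum, and it is invertible. Comparing the two formulas, $\colim(L)\simeq\colim(L\otimes\omega\otimes D^R)$ naturally in $L$. Evaluating on representables $L=R[\Omega_x]$, i.e.\ left Kan extensions from points, we recover $\omega\otimes D^R\simeq 1$. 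Hence $\omega$ is invertible with inverse $D^R$, and by monodromy $\omega_X^R$ is invertible in $\Sh{X}{\Mod_R}$.

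The main obstacle is rigorously establishing that $\lim_{\sh(X)}$ on local systems agrees with $\pf{a}$. This requires hypercompleteness or a local-contractibility argument: $\pf{a}$ of a locally constant sheaf should equal the limit over the shape. The proof of this would follow Lurie's appendix, using that $\pf{a}\pb{a}$ computes the shape's cotensor on $\Ani$-locally contractible spaces, and then passing to twisted coefficients by working locally. A further point is the naturality needed to identify $D^R$, which I would handle by checking on representable local systems.
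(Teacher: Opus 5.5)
Your skeleton is the paper's: monodromy, $\pfs{a}\simeq\pfp{a}(-\otimes\omega_X^R)$ from suaveness of $1_X$, and $\pfp{a}=\pf{a}$ by compactness. These give $r_!(L)\simeq r_*(L\otimes\omega)$. Then $r_*\simeq r_!(-\otimes D^R_{\sh(X)})$ and uniqueness of the Morita kernel, evaluated on representables, yield $\omega\otimes D^R_{\sh(X)}\simeq R$. But the step where you introduce $D^R_{\sh(X)}$ has a genuine gap.

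You argue that because $\lim_{\sh(X)}(-\otimes\omega)$ preserves colimits, the characterization in \cite{Land-MJM} shows that $\sh(X)$ is $R$-Poincar\'e with invertible $D^R$. That inference is invalid: over any compact anima, $\lim(-\otimes\omega)$ preserves colimits for every local system $\omega$. It is also circular, since invertibility of $D^R$ is exactly what your next sentence derives. What you need at that point is only the existence of some $D^R$ with $r_*\simeq r_!(-\otimes D^R)$. That requires $r_*$ to preserve colimits, i.e.\ that $\sh(X)$ is a compact anima. You never establish this, even though it is also built into the definition of an $R$-Poincar\'e duality complex. The paper proves it from compactness of $X$:
\begin{itemize}
\item $\sh(X)=\pfs{a}\pb{a}(\ast)$;
\item $\pb{a}$ preserves compact objects because $\pf{a}$ commutes with filtered colimits for compact $X$;
\item $\pfs{a}$ preserves compact objects because its right adjoint $\pb{a}$ preserves filtered colimits.
\end{itemize}
Alternatively, $r_*\simeq\pf{a}\pb{\eta}\simeq\pfp{a}\pb{\eta}$ visibly preserves colimits, which gives $D^R$ by Morita theory. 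With this in place, your comparison $\colim(L)\simeq\colim(L\otimes\omega\otimes D^R)$ finishes the argument as in the paper. Only then do invertibility of $\omega$ and of $D^R$ follow.

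Two smaller corrections. First, what you call the main obstacle is formal and needs no hypercompleteness or local argument. From $\pb{a}\simeq\pb{\eta}r^*$ you get $\pf{a}\simeq r_*\pf{\eta}$, and full faithfulness of $\pb{\eta}$ gives $\pf{a}\pb{\eta}\simeq r_*\pf{\eta}\pb{\eta}\simeq r_*$. Second, your ``concrete'' derivation of $\pfs{a}\simeq\pfp{a}(-\otimes\omega_X^R)$ from left adjoints of $\pbp{a}\simeq\omega_X^R\otimes\pb{a}$ is muddled. As you admit, it only yields the identity for some objects, but you need it for every locally constant sheaf. It does hold for all sheaves: it is \cref{1ULA=>loccontr}, a direct consequence of $1_X$ being $a$-suave, so cite that instead.
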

We note that this theorem in particular applies to $\Ani$-locally contractible and $R$-smooth CH spaces $X$.

\subsection*{$R$-homology manifolds}
We now explain how condition \eqref{itemdef:ii} in the definition of $D$-smoothness, i.e.\ the invertibility of $\omega_f$, relates to homology manifolds. The first observation is that when $X$ is $R$-locally contractible, we have the following computation for the stalks of the dualizing sheaf
\[ (\omega_X^R)_x = \text{cofib}(\sh(X\setminus\{x\})\rightarrow\sh(X)),\]
see \cref{lemma:stalkdualizingshapeatx}. Motivated from the classical definition of homology manifolds, 
we call a LCH space a \textit{$R$-homology manifold} if all stalks of $\omega_X^R$ are isomorphic to shifts of $R$. When $R$ is connective and $\pi_0(R)$ is a PID (e.g. $R=\s{}$), this is equivalent to requiring such stalks to be invertible. The following theorem shows that, under some mild additional assumptions, invertibility on stalks is sufficient for $\omega_X^R$ to be invertible. This generalizes Bredon's proof of Wilder's local orientability conjecture.

\begin{introthm}[\cref{prop:locally-constant}]\label[introthm]{introthm:cohsmoothchar}
Let $R$ be a connective $\bE_\infty$-ring and $X$ a $R$-locally contractible LCH space. Then $X$ is $R$-smooth if and only if the following conditions hold. 
\begin{enumerate}
    \item For every maximal ideal $\mathfrak{m} \subseteq \pi_0(R)$, $\omega_X^K$ is hypercomplete, where $K=\pi_0(R)/\mathfrak{m}$;
    \item For all $x \in X$, the stalk $x^*(\omega_X^{\pi_0(R)})$ is invertible;
    \item For all $y,z\in X$ lying in the same connected component, we have $y^*(\omega_X^R)\simeq z^*(\omega_X^R)$.
\end{enumerate}
\end{introthm}

Thanks to \cref{introthm:1ulaandloccontr}, the proof of \cref{introthm:cohsmoothchar} amounts to showing that the listed conditions are equivalent to the invertibility of $\omega_X^R$. To do so, we adapt an argument of Scholze \cite[Prop.\ 7.9]{scholze2022six}. When $K$ is a field, we further show that a LCH space $X$ is $K$-smooth if and only if it is a generalized manifold in the sense of Wilder. This is an immediate consequence of considerations on the \textit{$!$-cohomological dimension}, as defined in \cref{section:cohdim}. We refer to \cref{rmk:wilder=assumptioninprop} and \cref{cohsmoothcharPID} for details.

A useful consequence of \cref{introthm:cohsmoothchar} is the stability under \textit{cell-like maps} of cohomological smoothness. The next result is a modern reformulation and a generalization of \textit{Wilder's monotone mapping theorem} (see \cite[Theorem 16.33]{bredon2012sheaf}). 

\begin{introthm}[\cref{thm:wildermonotone}]
Let $R$ be a connective $\bE_{\infty}$-ring, and let $f\colon X\rightarrow Y$ be an $R$-cell-like map, where $X$ is $R$-smooth. Suppose that there exists a $\otimes$-invertible $R$-module $M$ such that one of the following two conditions hold
	\begin{enumerate}
		\item $\omega_X^R \simeq M_X$ is equivalent to the constant sheaf on $M$, or
		\item $\pi_0(R)\cong\mathbb{Z}$ and all stalks of $\omega_X^{\mathbb{Z}}$ are isomorphic to $M\otimes\mathbb{Z}$.
	\end{enumerate}
    Then $Y$ is $R$-smooth.
\end{introthm}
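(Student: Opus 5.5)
We would deduce the statement from the characterization of smoothness in \cref{introthm:cohsmoothchar}, applied to $Y$. The key input is the definition of an $R$-cell-like map. We take it to mean that $f$ is proper and surjective, and that each fibre $f^{-1}(y)$ has the $R$-shape of a point, i.e.\ $\pb{f}$ is fully faithful on $\Sh{-}{\Mod_R}$, equivalently the unit $\mathrm{id}\to \pf{f}\pb{f}$ is an equivalence. Since $f$ is proper we have $\pf{f}=\pfp{f}$, so the projection formula and proper base change give $\pfp{f}\pb{f}F\simeq F$ for every sheaf $F$ on $Y$.

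\textbf{Local contractibility of $Y$.} First we show that $Y$ is $R$-locally contractible. Write $a_X, a_Y$ for the terminal maps. Then $\pb{a_X}\simeq \pb{f}\pb{a_Y}$, and $\pb{a_X}$ has a left adjoint $\pfs{(a_X)}$ satisfying the projection formula. We set $\pfs{(a_Y)}:=\pfs{(a_X)}\pb{f}$ and check adjunction:
\[\Hom{}{\pfs{(a_X)}\pb{f}F}{M}\simeq \Hom{}{\pb{f}F}{\pb{f}\pb{a_Y}M}\simeq\Hom{}{F}{\pb{a_Y}M},\]
where the last step uses full faithfulness of $\pb{f}$. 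The projection formula is inherited because $\pb{f}$ is symmetric monoidal. Hence \cref{introthm:1ulaandloccontr} applies to $Y$.

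\textbf{Computing $\omega_Y$.} Next we compute $\omega_Y^R$. Applying $\pfp{f}$ to $\omega_X\simeq\pbp{f}\omega_Y$ gives, via the counit, a map $\pfp{f}\omega_X\to\omega_Y$. We claim it is an equivalence whenever the smoothness hypotheses hold. In case (1), $\omega_X\simeq M_X=\pb{f}M_Y$, so $\pfp{f}\omega_X\simeq M_Y$. To identify this with $\omega_Y$, we argue by duality. For $F$ on $Y$,
\[\Hom{}{F}{\pf{f}\pbp{f}N}\simeq\Hom{}{\pfp{f}\pb{f}F}{N}\simeq \Hom{}{F}{N},\]
so $\pf{f}\pbp{f}\simeq\mathrm{id}$. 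We would also like $\pbp{f}\simeq \pb{f}$ (up to twist). The cleanest route is to verify it on stalks. By the stalk formula of \cref{lemma:stalkdualizingshapeatx}, $(\omega_Y)_y=\mathrm{cofib}(\sh(Y\setminus y)\to\sh(Y))$ with $R$-coefficients, and cell-likeness identifies this with $\mathrm{cofib}(\sh(X\setminus f^{-1}y)\to \sh(X))$. This in turn equals $\Gamma_c(f^{-1}(y);\omega_X)^{\vee}$-type data: it is compactly supported cohomology of the fibre with coefficients in $\omega_X|_{f^{-1}y}\simeq M$. Because the fibre is $R$-acyclic (its shape is a point), this gives $M$. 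So every stalk of $\omega_Y^R$ is $M$, hence invertible. In case (2) we run the same stalk computation with $\Z$-coefficients. Since $\omega_X^{\Z}$ has all stalks $M\otimes\Z$ and $X$ is $\Z$-smooth, $\omega_X^\Z$ is a locally constant rank-one sheaf. Restricted to a fibre (whose $\Z$-shape is a point, hence simply connected in the relevant sense), it is constant. So the computation again yields $(\omega_Y^\Z)_y\simeq M\otimes\Z$, and similarly with $R$-coefficients after base change.

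\textbf{Checking the conditions of \cref{introthm:cohsmoothchar}.} Condition (2) and condition (3) now follow: all stalks of $\omega_Y^R$ are isomorphic to $M$, independently of $y$. For condition (1), hypercompleteness of $\omega_Y^K$, we use $\omega_Y^K\simeq \pf{f}\omega_X^K$, identified via the counit map, whose stalks we now know agree. Hypercompleteness of $\omega_X^K$ is given, since $X$ is smooth, and $\pf{f}$ preserves hypercomplete objects as a right adjoint. If the counit identification is only known stalkwise, we can still argue this way: we realise $\omega_Y^K$ as $\pf{f}$ of something hypercomplete through a map that is a stalkwise iso between sheaves with one hypercomplete, and we need the other side to be hypercomplete too. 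This is the main obstacle. Stalkwise comparisons only detect equivalences between hypercomplete sheaves. So we must produce an honest map $\omega_Y\to\pf{f}\pb{f}\omega_Y\simeq \pf{f}\pbp{f}\omega_Y\otimes\omega_X^{-1}\dots$ and show it is an equivalence without assuming hypercompleteness of $\omega_Y$. We would try to do this via the identity $\pf{f}\pbp{f}\simeq\mathrm{id}$ above together with $\pbp{f}\omega_Y\simeq\omega_X\simeq \pb{f}$ of an invertible object, which in case (1) exhibits $\omega_Y\simeq \pf{f}\pb{f}M_Y\simeq M_Y$ directly. Case (2) would then be reduced to case (1) locally on $Y$, using a neighbourhood over which $\omega_X^\Z$ is trivial.
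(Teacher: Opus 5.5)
Your proposal is in substance the paper's proof. It has the same steps: $R$-local contractibility of $Y$ via $a_{Y\sharp}=a_{X\sharp}f^*$ (this is \cref{loccontrundercelllikeimage}), then $\omega_Y\simeq f_*\omega_X$, hypercompleteness of $\omega_Y^K$, the stalks $(\omega_Y)_y\simeq\Gamma(f^{-1}(y);\omega_X|_{f^{-1}(y)})$, and finally \cref{introthm:cohsmoothchar}.

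The step you single out as ``the main obstacle'' is not an obstacle, and you had already resolved it. Your computation $f_*f^!\simeq\mathrm{id}$ says exactly that the counit $f_!f^!\to\mathrm{id}$ is an equivalence, i.e.\ $f^!$ is fully faithful (\cref{lemma:vietoris}(1)). It holds for \emph{every} object, so $\omega_Y\simeq f_*f^!\omega_Y=f_*\omega_X$ globally, with no hypothesis on $\omega_X$ and no case distinction.

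Apply this with $K$-coefficients. Note that $f$ is $K$-cell-like: for any $R$-module $N$, the projection formula gives $f_*f^*N_Y\simeq f_!(1_X)\otimes N_Y\simeq N_Y$. Hence $\omega_Y^K\simeq f_*\omega_X^K$, which is hypercomplete because $f^*$ preserves $\infty$-connective maps. This is precisely the paper's argument. Proper base change then gives the stalk formula directly, so the detour through local shapes is unnecessary. Also, no dual appears there: one gets $a_{X\sharp}(i_*R)\simeq\Gamma(f^{-1}(y);i^*\omega_X)$.

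In case (1), your observation $\omega_Y^R\simeq f_*f^*M_Y\simeq M_Y$ is a slight improvement on the paper. Together with $R$-local contractibility of $Y$ and \cref{introthm:1ulaandloccontr}, it proves $R$-smoothness outright, without \cref{introthm:cohsmoothchar} or any hypercompleteness.

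In case (2), ``simply connected'' is the wrong justification, since acyclic compacta need not be simply connected. What you need is the following:
\begin{itemize}
\item locally constant sheaves with stalk $\mathbb{Z}$ are torsors under the constant sheaf $\mathbb{Z}/2$, so they are classified by $H^1(f^{-1}(y);\mathbb{Z}/2)$;
\item this group vanishes because the fibre is $\mathbb{Z}/2$-acyclic by cell-likeness;
\item hence $\Omega^n\omega_X^{\mathbb{Z}}$ is constant on the fibre, and $(\omega_Y^{\mathbb{Z}})_y\simeq M\otimes\mathbb{Z}$.
\end{itemize}

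Your alternative of reducing case (2) to case (1) locally on $Y$ has two problems. It would additionally require extending the trivialisation from $f^{-1}(y)$ to some $f^{-1}(V)$. It would also only handle $\mathbb{Z}$-coefficients, not $R$. So close case (2) as the paper does: use the stalk computation together with \cref{introthm:cohsmoothchar} (or \cref{cohsmoothcharPID}). The $R$-stalks are then all $\Sigma^nR$ by the Hurewicz argument of \cref{PIDhommanonpi0}.
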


\subsection*{$\Ani_*$-homotopy manifolds}
We next consider the following special kinds of homology manifolds, that we call \textit{$\Ani_\ast$-homotopy manifolds}, or sometimes just homotopy manifolds for short. These are $\Ani$-locally contractible LCH spaces $X$ with the property that, for each point $x\in X$, we have an equivalence
\[\mathrm{cofib}(\sh(X\setminus\{x\})\rightarrow\sh(X)) \simeq S^n\] 
for some $n\geq0$. We note that topological manifolds are natural examples of $\Ani_*$-homotopy manifolds, and warn the reader that this notion differs from that of homotopy manifolds introduced by Griffiths \cite{Griffiths}, see \cref{rmk:homotopymannotgriffiths}. We then show that in sufficiently favorable situations, $\Ani_\ast$-homotopy manifolds are essentially the same as $\spectra$-homology manifolds. 

\begin{introthm}[\cref{homology=>htpyman}]\label[introthm]{introthm:homology=>htpyman}
    Let $X$ be a LCH space which is $\Ani$-locally contractible. Assume that $X$ is a $\spectra$-homology manifold of dimension $>1$. Then $X$ is a $\Ani_*$-homotopy manifold.
\end{introthm}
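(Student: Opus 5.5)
Fix a point $x \in X$ and write $C_x := \mathrm{cofib}(\sh(X\setminus\{x\}) \to \sh(X))$, a pointed anima. By the stalk computation of \cref{lemma:stalkdualizingshapeatx}, the stalk of the dualizing sheaf is the suspension spectrum of $C_x$:
\[
(\omega_X^{\S{}})_x \simeq \Sigma^\infty C_x .
\]
The homology manifold hypothesis says $(\omega_X^{\S{}})_x \simeq \Sigma^n \S{}$ with $n = d > 1$. So the task is to show that a pointed anima whose suspension spectrum is $\Sigma^d\S{}$ with $d\ge 2$ is equivalent to $S^d$.

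By the Hurewicz and Whitehead theorems this reduces to one unstable input: $C_x$ is simply connected. Granting that, $\widetilde H_*(C_x;\Z) \cong \widetilde H_*(S^d;\Z)$, so $C_x$ is $(d-1)$-connected and $\pi_d(C_x) \cong H_d(C_x) \cong \Z$. A generator gives a map $S^d \to C_x$ inducing an isomorphism on integral homology between simply connected anima, hence an equivalence. The case $d = 1$ is excluded precisely because simple connectivity of $C_x$ can fail there.

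The main obstacle is proving $\pi_1(C_x) = 0$. I would work locally, using $\Ani$-local contractibility (locally constant shape):
\begin{enumerate}
\item By excision for the cofibre (the open cover $\{U, X\setminus\{x\}\}$ of $X$), replace $X$ by a small open neighbourhood $U$ of $x$, so that $C_x \simeq \mathrm{cofib}(\sh(U\setminus\{x\}) \to \sh(U))$.
\item Choose $U$ so that $\sh(U)$ is connected.
\item By van Kampen for the cofibre, $\pi_1(C_x)$ is the quotient of $\pi_1(\sh(U))$ by the normal subgroup generated by the images of $\pi_1$ of the components of $\sh(U\setminus\{x\})$. The sets of path components of these shapes are read off from $\pi_0$, which is detected by $H^0$ of constant sheaves. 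The homological condition $\widetilde H_0(C_x)=0=H_1(C_x)$ (since $d>1$) then shows that $U\setminus\{x\}$ and $U$ have the same components locally.
\item To kill $\pi_1(\sh(U))$ itself, use $\Ani$-local contractibility to pick nested neighbourhoods $V \subseteq U$ with $\sh(V)\to\sh(U)$ null. Compare the cofibres for $V$ and $U$: they agree by step 1, while the map factors through $\mathrm{cofib}(\sh(V\setminus x)\to *) = \Sigma\, \sh(V\setminus x)$ after composing with the null map.
\end{enumerate}

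Concretely, step 4 realises the identity of $C_x$ as factoring through a suspension. Every element of $\pi_1$ of a suspension of a disconnected anima comes from $\pi_0$ data, which vanishes by step 3 together with the homology computation. I expect the bookkeeping of this factorisation, namely checking that the comparison maps between cofibres for $V\subseteq U$ are the excision equivalences, to be the delicate point.
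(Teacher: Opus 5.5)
Your proposal is correct and is essentially the paper's argument. The paper also reduces to simple connectivity of $\sh(X|x)$ and shows that $\sh(X|x)$ is a retract of $\Sigma\,\sh(V\setminus\{x\})$ for a small connected neighbourhood $V$; there this is packaged as the equivalence $\Sigma\,\mathrm{holink}(x,X)\simeq\sh(X|x)$ in $\Pro(\Ani)$ rather than your explicit null-homotopy. It then gets connectedness of $\sh(V\setminus\{x\})$ from $H_1(\sh(V|x);\Z)=0$ together with local connectedness (\cref{toposlocconn=locconn}), which is also what you need to choose $U$ and $V$ connected.

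The one claim you should justify is the existence of $V\subseteq U$ with $\sh(V)\to\sh(U)$ null. It follows from $x^*a^*\simeq\id$, i.e.\ $\colim_{V\ni x}\mathrm{Map}(\sh(V),M)\simeq M$ via constant maps. Apply this to $M=\sh(U)$ and the germ of $\id_{\sh(U)}$, and use that $\pi_0$ commutes with filtered colimits. The van Kampen description in your step 3 is not needed.
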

For instance, an ANR homology manifold as classically defined is an $\Ani$-locally weakly contractible $\Sp$-homology manifold, and therefore by \cref{introthm:homology=>htpyman} a $\Ani_*$-homotopy manifold. 

For $\Ani_\ast$-homotopy manifolds, we then provide an unstable counterpart of \cref{introthm:cohsmoothchar}. A crucial ingredient is that for any $\Ani$-locally contractible space $X$, the dualizing sheaf $\omega_X$ canonically destabilizes to a sheaf of pointed anima $\omega_X^{\Ani_\ast}$, see \cref{lemma:destabdualsheaf}. The unstable analogue of Wilder's local orientability conjecture is then:

\begin{introthm}[\cref{thm:unstablelocalorient},\cref{thm:functorialunstabledual}]\label[introthm]{introthm:unstablelocalorient}
    Let $X$ be a hypercomplete $\Ani_*$-homotopy manifold of dimension $d$. Then 
    \begin{enumerate}
    \item $\omega_X^{\Ani_{\ast}}$ is locally constant, that is, a pointed spherical fibration over $\sh(X)$. Moreover, $\omega_X^{\Ani_{\ast}}$ is functorial in homeomorphisms, meaning there is a canonical map $\BHomeo(X) \to \Baut^{\omega_X^{\Ani_*}}(X)$, where $\Baut^{\omega_X^{\Ani_*}}(X)$ denotes the full subgroupoid of $\Ani_{/\Baut_*(S^d)} \subseteq \Ani_{/\Ani_*}$ containing the object $(\sh(X),\omega_X^{\Ani_*})$, lifting the canonical map $\BHomeo(X) \to \Baut(X)$. 
    \item If $X$ is in addition compact, the just described map participates in a commutative diagram
    	\[\begin{tikzcd}
	\BHomeo(X) \ar[r] \ar[d] & \Baut^{\omega_X^{\Ani_*}}(X) \ar[d] \\
	\Baut(X) \ar[r] & \Baut^{T_X}(X).
\end{tikzcd}\]
    \end{enumerate}

\end{introthm}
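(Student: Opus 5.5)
The plan has three parts: construct $\omega_X^{\Ani_*}$ with a usable description of its stalks, prove local constancy by a Bredon-style argument in the unstable setting, and then get functoriality from naturality of the construction.

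\textbf{Construction and stalks.} By \cref{lemma:destabdualsheaf}, $X$ is $\Ani$-locally contractible, so $\omega_X^{\Ani_*}$ is the sheaf of pointed anima
\[ U \mapsto \mathrm{cofib}\bigl(\sh(U\setminus K)\to \sh(U)\bigr), \]
viewed through compactly supported sections; equivalently it is the cosheaf $U\mapsto \sh(X)/\sh(X\setminus \overline{U})$ turned into a sheaf. Its stabilization is $\omega_X$. Since $X$ is locally of constant shape, the stalk at $x$ is
\[ (\omega_X^{\Ani_*})_x \simeq \mathrm{cofib}\bigl(\sh(X\setminus\{x\})\to\sh(X)\bigr), \]
which is $S^{n_x}$ by the homotopy manifold hypothesis. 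Stabilizing and using \cref{lemma:stalkdualizingshapeatx} shows $X$ is an $\Sp$-homology manifold. Hypercompleteness gives the hypotheses of \cref{introthm:cohsmoothchar} once we know that $n_x$ is locally constant. The invariance of the stalk $\Z$-homology degree along connected components comes from Bredon's argument, i.e.\ condition (3) of \cref{prop:locally-constant}. So $\omega_X$ is invertible, hence locally constant, and $n_x=d$ throughout.

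\textbf{Local constancy of $\omega_X^{\Ani_*}$ (the main obstacle).} By the monodromy equivalence for $\Ani$-locally contractible spaces, it suffices to show that for each $x$ and each small neighbourhood $V\subseteq U$ (chosen so that $\sh(V)\to\sh(U)$ is null, via local contractibility), the restriction maps
\[ \Gamma(U;\omega^{\Ani_*})\to (\omega^{\Ani_*})_y, \qquad y\in V, \]
are equivalences after passing to a cofinal system. I would first prove that the comparison maps between stalks are maps $S^d\to S^d$ which become equivalences after stabilization, because $\omega_X$ is locally constant. A map between simply connected spaces that is a stable equivalence is an equivalence by Whitehead, and $S^d$ is simply connected for $d>1$. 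The low-dimensional cases $d\le 1$ need a separate check: homotopy manifolds of dimension $\le1$ should be genuine manifolds.

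The real difficulty is producing actual maps between unstable stalks, not just between stable ones, and showing that the sheaf is locally constant rather than merely having equivalent stalks. Here hypercompleteness is used. A morphism of hypercomplete sheaves that is an equivalence on stalks is an equivalence, so I compare $\omega^{\Ani_*}|_V$ with the constant sheaf on $(\omega^{\Ani_*})_x$ via a map built from a section over $V$. Note that $\sh(V)\to\sh(U)$ being null is exactly what gives the section/constant map. I would check that this comparison is a stalkwise equivalence by the Whitehead argument above.

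\textbf{Functoriality and the compact case.} The construction of $\omega_X^{\Ani_*}$ is natural in open embeddings and homeomorphisms. Applying it to the universal family, i.e.\ to $\Homeo(X)$ acting on $X$, with the shape functor, gives the map $\BHomeo(X)\to\Baut^{\omega_X^{\Ani_*}}(X)$ over $\Baut(X)$. For compact $X$, \cref{thm:loccontrlocconstdualisPD} identifies $\omega_X$, the stabilization of $\omega_X^{\Ani_*}$, with the inverse of the dualizing spectrum, i.e.\ with $T_X$. Because that identification is natural in homotopy equivalences of Poincaré complexes, stabilization gives the commutative square.
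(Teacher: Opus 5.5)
\textbf{Local constancy.} Once cleaned up, your argument for local constancy is the paper's: a local section $s\colon S^d_U\to\omega_U^{\Ani_*}$, reduction to stalks by hypercompleteness, detection of equivalences $S^d\to S^d$ after $\Sigma^\infty$, and the stable input from \cref{prop:locally-constant}. Several justifications need fixing.
\begin{enumerate}
\item The section $s$ exists because $S^d$ is compact in $\Ani_*$ and $(\omega_X^{\Ani_*})_x$ is the filtered colimit of sections over neighbourhoods of $x$. Nullity of $\sh(V)\to\sh(U)$ plays no role, so the monodromy/nullity detour can be dropped.
\item You must choose $U$ connected, which is possible since $X$ is locally connected by \cref{toposlocconn=locconn}. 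Otherwise $\Sigma^\infty s$, a map of locally constant sheaves that is an equivalence at $x$, need not be an equivalence at other points of $U$.
\item Condition (3) of \cref{prop:locally-constant} is a hypothesis of that result, not something Bredon's argument produces. Here it holds trivially because the dimension $d$ is fixed.
\item You need neither Whitehead nor a separate treatment of $d\le 1$. Pointed self-maps of $S^d$ are classified by degree for every $d\ge 0$, so a self-map of $S^d$ that is a stable equivalence is an equivalence. Your claim that homotopy manifolds of dimension $\le 1$ are manifolds is both unproven and unnecessary.
\end{enumerate}

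\textbf{The genuine gap: functoriality and the square.} Naturality of $\omega_X^{\Ani_*}$ under individual homeomorphisms sees $\mathrm{Homeo}(X)$ only as a discrete group. It therefore gives at best a map out of $B(\mathrm{Homeo}(X)^{\delta})$, not out of $\BHomeo(X)$.

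To get the latter, you must construct the dualizing sheaf relative to the base of an arbitrary $X$-bundle $p\colon E\to B$ over a CW complex $B$, and then argue in the homotopy category via Yoneda. Concretely, take $\omega_p^{\Ani_*}=\pfs{(p_1)}\pf{(\Delta_p)}\pb{p}(1_B)$ as in \cref{notation:relative-dualizing-sheaf}. You then need two facts about it:
\begin{itemize}
\item it is locally constant on $E$, so that it defines a functor on $\sh(E)\to B$;
\item it restricts to $\omega_X^{\Ani_*}$ on every fibre.
\end{itemize}
Both follow from local triviality of $p$ together with base change (\cref{lemma:pullback-unstable-dualizing-sheaf}), which identifies the relative sheaf of $X\times B\to B$ with the pullback of $\omega_X^{\Ani_*}$. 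None of this is in your sketch, and ``applying it to the universal family'' does not supply it.

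The same problem affects (2). The single-space identification $\omega_X\simeq T_X$ from \cref{thm:loccontrlocconstdualisPD} does not give a commuting square. Appealing to ``naturality in homotopy equivalences'' also fails, because $\omega_X$ is a sheaf on $X$ and is functorial only in homeomorphisms. What is needed is the parametrized identity $\Sigma^\infty\omega_p^{\Ani_*}\simeq\omega_p\simeq D_p^{-1}=T_p$ for every $X$-bundle $p$. The paper obtains this in two steps:
\begin{itemize}
\item it shows that $\pf{p}=\pfp{p}$ and $\pfs{p}=\pfp{p}(-\otimes\omega_p)$ preserve locally constant sheaves, checking locally on $B$ via K\"unneth for $X\times B\to B$;
\item it then reruns the argument of \cref{thm:loccontrlocconstdualisPD} for the map of anima $\varphi\colon\sh(E)\to B$.
\end{itemize}
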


Combining \cref{introthm:homology=>htpyman} and \cref{introthm:unstablelocalorient}, we obtain the canonical destabilization of the Spivak tangent fibration of hypercomplete compact ANR homology manifolds described in \cref{Main-Theorem}\eqref{item:thma3}.

Finally, we consider more geometric examples of homotopy manifolds, that we call \textit{homotopy manifolds with conical singularities}, see \cref{def:conical-singularities}. This is a natural generalization of the notion of homotopy manifolds studied among others by Siebenmann and Cohen \cite{siebenmann1970non,cohen1970homeomorphisms}, which replaces triangulations with well-behaved stratifications. We then use the topological Poincar\'e conjecture to prove that homotopy manifolds with conical singularities are in fact topological manifolds, generalizing a theorem of Siebenmann.

\begin{introthm}[\cref{thm:htpymanwithconeareman}]
    Let $X\rightarrow P$ be a homotopy manifold with conical singularities. Then $X$ is a topological manifold.
\end{introthm}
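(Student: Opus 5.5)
Let $X\to P$ be a homotopy manifold with conical singularities. I will argue by induction on the depth of the stratification, proving the statement for all such spaces at once. The goal is to show that every point $x\in X$ has a Euclidean neighbourhood.

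\textbf{Local model.} By \cref{def:conical-singularities}, a point $x$ in a stratum $X_p$ has a neighbourhood of the form $\mathbb{R}^k\times C(L)$. Here $C(L)$ is the open cone on a compact link $L$, and $L$ is itself stratified over the poset $P_{>p}$ with conical singularities. The key steps are as follows.

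\begin{enumerate}
\item \emph{The link is a homotopy manifold.} Since $X$ is an $\Ani_*$-homotopy manifold, so is the open subset $\mathbb{R}^k\times C(L)$. Removing the cone point, the open set $\mathbb{R}^k\times (C(L)\setminus\{\ast\})\cong \mathbb{R}^{k+1}\times L$ is again an $\Ani_*$-homotopy manifold. From this I want to deduce that $L$ is one too, with local cofibres $S^{d-k-2}$. The plan is to compute the local cofibre at a point $(t,y)$ of $\mathbb{R}^{k+1}\times L$ as the $(k+1)$-fold suspension of the local cofibre of $L$ at $y$. Then $\Sigma^{k+1}F\simeq S^{d}$, with $F$ a pointed anima. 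To conclude $F\simeq S^{d-k-1}$, I need a desuspension argument: $F$ is the local cofibre of an $\Ani$-locally contractible space, so I would first show it is simply connected when $d-k-1\ge2$ and then use homology Whitehead. The low-dimensional cases must be handled separately, using the classification of low-dimensional homology manifolds; the dimension restriction of \cref{homology=>htpyman} suggests this is where care is needed.
\item \emph{Apply induction.} The link $L$ has strictly smaller depth, so by the induction hypothesis $L$ is a closed topological manifold of dimension $n=d-k-1$.
\item \emph{Identify the link with a sphere.} The local cofibre of $X$ at the cone point is computed by the cone structure as $\Sigma^{k}\,\mathrm{cofib}(\sh(L)\to\ast)\simeq\Sigma^{k+1}\sh(L)_+$ reduced, i.e.\ $\Sigma^{k+1}L$ (unreduced suspension), and this must be $S^d$. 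Hence $\Sigma L\simeq S^{n+1}$, so $L$ is a homology $n$-sphere.
\item \emph{Conclude locally.} If $L$ is simply connected, the topological Poincaré conjecture (Freedman in dimension 4, Perelman in dimension 3, classical otherwise) gives $L\cong S^n$. Then $C(L)\cong\mathbb{R}^{n+1}$, and $x$ has a Euclidean neighbourhood.
\end{enumerate}

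\textbf{Main obstacle.} The hardest part is the case where $L$ is a non-simply-connected homology sphere, e.g.\ the Poincaré sphere. The cone $C(L)$ is not a manifold, but $\mathbb{R}\times C(L)$ is, by the double suspension theorem. So simple connectivity must come from the unstable homotopy manifold condition, which is exactly why the paper uses $\Ani_*$ rather than $\Sp$. I would first try to get it from the unstable local cofibre at points of $\mathbb{R}^{k+1}\times L$ near the cone, via the fact that $\Sigma^{k}$ of the unreduced suspension of $L$ is $S^d$ unstably. This alone does not kill $\pi_1(L)$ when $k\ge1$. So I would instead use the homotopy manifold condition at the cone point of the slice $C(L)$, which I would show is forced by the local product structure. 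That is, I would show $C(L)$ itself is a homotopy manifold, by a Künneth-type computation of local cofibres in the sense of \cref{lemma:stalkdualizingshapeatx}. Applying this at the cone point gives the unreduced suspension $\Sigma L\simeq S^{n+1}$ as anima. Finally, I would show that $L$ being a closed manifold whose complement structure near the cone point is a homotopy manifold forces $\pi_1(L)=1$: deleting the cone point from a small conical neighbourhood gives $\mathbb{R}\times L$, and the local homotopy type computation in \cref{lemma:stalkdualizingshapeatx} relates the pointed fibre to $L$ unstably. If this approach fails, I would fall back on Siebenmann's original argument, which uses links of links, for the case $k=0$.
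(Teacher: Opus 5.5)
There is a genuine gap. You never use the actual defining property in \cref{def:conical-singularities}: for \emph{every} stratified open embedding $\rnum^n\times C(Z)\hookrightarrow X$ with $Z$ compact and $C^0$-stratified, the link $Z$ is itself homotopy equivalent to a sphere. Instead you try to extract everything from the weaker consequence that $X$ is an $\Ani_*$-homotopy manifold, and in particular you hope to get $\pi_1(L)=1$ from the unstable local shapes. This cannot work.

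Take $X=C(P)$, the open cone on the Poincar\'e homology $3$-sphere, stratified by the cone point and its complement. It has the following properties:
\begin{enumerate}
\item It is $\Ani$-locally contractible and finite dimensional, hence hypercomplete.
\item Its link $P$ is a closed manifold, and all points other than the cone point are manifold points.
\item Its local shape at the cone point is $\mathrm{cofib}(P_+\to S^0)$, the unreduced suspension of $P$. This is simply connected with the homology of $S^4$, hence equivalent to $S^4$.
\end{enumerate}
So $C(P)$ is an $\Ani_*$-homotopy manifold that passes your steps 1--3, including the unstable equivalence $\Sigma L\simeq S^{n+1}$ at the cone point of the slice. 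Yet it is not a topological manifold. Hence the closing claim of your obstacle paragraph, that this structure forces $\pi_1(L)=1$, is false.

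This is no accident. Local shapes are suspensions, and by \cref{homology=>htpyman} the unstable condition follows automatically from the stable one in dimension $>1$. So it carries no information about fundamental groups of links, and it is not why the paper's definition is unstable. For the same reason your step 2 is unjustified: the induction hypothesis applies to homotopy manifolds with conical singularities, and knowing that $L$ is an $\Ani_*$-homotopy manifold does not make it one.

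The missing idea is that the defining condition is inherited by links. Suppose $\rnum^n\times C(L)\hookrightarrow X$ is a conical chart and $\rnum^k\times C(T)\hookrightarrow L$ is one in $L$. Cross with $\rnum$, use the open stratified embedding $\rnum\times L\cong C(L)\setminus\{\ast\}\subseteq C(L)$, and then cross with $\rnum^n$. This gives a conical chart $\rnum^{n+k+1}\times C(T)\hookrightarrow X$, so $T$ is homotopy equivalent to a sphere. Thus $L$ is a homotopy manifold with conical singularities of smaller depth, hence a closed topological manifold by induction. It is also homotopy equivalent to a sphere, directly by the definition. The topological Poincar\'e conjecture then gives $L\cong S^m$ and $\rnum^n\times C(L)\cong\rnum^{n+m+1}$. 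With this, the shape computations, the desuspension argument, and the low-dimensional case analysis in your steps 1 and 3 become unnecessary.
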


\subsection{Acknowledgements}
MV thanks Ko Aoki for pointing out \cite{cohen1970homeomorphisms} and Walter Tholen for lending his copy of Engelking's textbook \cite{Engelking}. ML thanks Peter Scholze for correspondence about local to global Poincar\'e duality at very early stages of this project.
	 	
\section{Recollections on six functor formalisms}

\subsection{Correspondences}

This section serves to collect the basic properties and definitions involved in general six functor formalisms, mainly following \cite{scholze2022six}, and expanding on Scholze's notes. In particular, we record a proof of the technically important \cref{functorialcohcorr}, which Scholze takes as given. While we were in the process of completing our paper, \cite{heyer20246} appeared, which contains a proof of \cref{functorialcohcorr} as well as many foundational results on $(\infty,2)$-categories of cohomological correspondences.

We start by recalling some properties of the \emph{$(\infty, 1)$-category of correspondences} $\Corr(\C)$ associated with an $\infty$-category $\C$ which admits pullbacks. We refer to \cite[Section 2]{haugseng2020two} for a modern treatment of this construction, called the $\infty$-category of spans in loc.\ cit., but see also \cite{barwick2017spectral}, \cite{Bachmann2017NormsIM}. In \cite[Section 2]{haugseng2020two}, the authors consider a more general construction of a category of correspondences associated to what they call an \emph{adequate triple}. Any $\infty$-category with pullbacks naturally defines an adequate triple \cite[Example 2.3 (2)]{haugseng2020two}. For our applications, working with $\infty$-categories with pullbacks is sufficient, so we will not recall the general definition of adequate triples here.

Let us denote by $\infcat^{\fp}$ the subcategory of $\infcat$ whose objects are $\infty$-categories with pullbacks and whose morphisms are functors which preserve pullbacks. By the discussion preceding \cite[Proposition C.20]{Bachmann2017NormsIM}, $\infcat^{\fp}$ can be equipped with the structure of an $(\infty,2)$-category with mapping $(\infty,1)$-categories $\Hom{\Cat^\fp_\infty}{\C}{\D}$ given by the subcategory of $\Fun(\C,\D)$ on pullback preserving functors and cartesian transformations between them; For $F, G \colon \C \to \D$ two functors and a natural transformation $\alpha\colon F\Rightarrow G$, recall that $\alpha$ is said to be \emph{cartesian} if for any morphism $X\rightarrow Y$ in $\C$, the square 
\[\begin{tikzcd}
	F(X) \ar[d, "\alpha_X"'] \ar[r] & F(Y) \ar[d, "\alpha_Y"] \\
	G(X) \ar[r] & G(Y)                      
\end{tikzcd}\]
is cartesian in $\D$. An adjunction is called a \emph{cartesian adjunction} if unit and counit are cartesian transformations. The cartesian adjunctions are then precisely the internal adjunctions of the $(\infty,2)$-category $\Cat_\infty^\fp$.

\begin{example}\label[example]{loccartclcartadjunction}
Let $\C$ be a an $\infty$-category with pullbacks, and let $f\colon X\to Y$ be a morphism in $\C$.  Then the adjunction 
\[\begin{tikzcd}
    	\C_{/X} \ar[r,bend left,"{f_{\o}}",""{name=A, below}] & \C_{/Y},\ar[l,bend left,"{\pb{f}}",""{name=B,above}] \ar[from=A, to=B, symbol = \dashv]
\end{tikzcd}\]
where $f_{\o}$ denotes post-composition with $f$ and $\pb{f}$ denotes pullback along $f$, is cartesian.
\end{example}

\begin{proposition}\label[proposition]{prop:corr}
\begin{enumerate}
\item\label{corrmon} Associating to an $\infty$-category with pullbacks $\C$ its $\infty$-category of correspondences $\Corr(\C)$ refines to a 2-functor $\Corr \colon \Cat_\infty^\fp \to \Cat_\infty$. The underlying functor of $(\infty,1)$-categories preserves all small limits. In particular, if $\C$ is in $\Cat_\infty^\fp$ and admits a terminal object (and hence finite products), then $\Corr(\C)$ admits a canonical symmetric monoidal structure, which on underlying objects is the product in $\C$.
\item\label{corrselfdual} There is a natural equivalence of functors $\Corr(-) \simeq \Corr(-)\op$.
\end{enumerate}
\end{proposition}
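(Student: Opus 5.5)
Both parts will be deduced from the construction of $\Corr$ via adequate triples in \cite[Section 2]{haugseng2020two}, combined with the $(\infty,2)$-categorical structure on $\Cat_\infty^\fp$ from \cite[Prop.\ C.20]{Bachmann2017NormsIM}. Concretely, $\Corr(\C)$ is the complete Segal anima whose $n$-th level is the groupoid core of $\Fun^{\mathrm{cart}}(\Sigma^n, \C)$. Here $\Sigma^n$ is the category of spans of length $n$ (the twisted arrow category of $[n]$), and "cart" means that the relevant squares are sent to pullbacks.

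\textbf{Part (1).} Functoriality in pullback-preserving functors is immediate from this description: such a functor preserves cartesian diagrams. For the $2$-functoriality I would use that a cartesian natural transformation $\alpha\colon F\Rightarrow G$ is the same as a pullback-preserving functor $\C \to \Fun([1],\D)_{\mathrm{cart}}$. Here $\Fun([1],\D)_{\mathrm{cart}}$ denotes arrows of $\D$ with cartesian squares as morphisms; it has pullbacks, computed pointwise, and both evaluations preserve them. Applying $\Corr$ then yields a functor $\Corr(\C)\to\Corr(\Fun([1],\D)_{\mathrm{cart}})$. Composing with a comparison map $\Corr(\Fun([1],\D)_{\mathrm{cart}})\to \Fun([1],\Corr(\D))$, which sends an arrow $x\to y$ to the correspondence $x \leftarrow x \to y$ (backwards-forward convention), produces the required natural transformation.

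Making this coherent for all higher cells is the main obstacle. Rather than do it by hand, I would invoke Haugseng's result \cite[Section 2]{haugseng2020two}: spans form a functor from the $(\infty,2)$-category of adequate triples, and the mapping categories of $\Cat_\infty^\fp$ embed compatibly into it. Alternatively, one can appeal directly to \cite[Prop.\ C.20]{Bachmann2017NormsIM}, where this $2$-functor is constructed.

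Limit preservation follows from the levelwise formula. Limits in $\Cat^\fp_\infty$ are computed in $\Cat_\infty$, with pullbacks computed componentwise. Hence $\Fun^{\mathrm{cart}}(\Sigma^n,\lim_i \C_i)\simeq \lim_i \Fun^{\mathrm{cart}}(\Sigma^n,\C_i)$, and taking cores and the Segal-space realization commutes with limits.

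In particular, $\Corr$ preserves finite products. If $\C$ has a terminal object, it is a commutative monoid in $(\Cat^\fp_\infty,\times)$ via $\times\colon \C\times\C\to\C$, because products commute with pullbacks. Applying the product-preserving functor $\Corr$ then gives a commutative monoid in $\Cat_\infty$, that is, a symmetric monoidal structure given on objects by $\times$.

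\textbf{Part (2).} Reversing the order of $[n]$ induces an involution of $\Sigma^n$ that preserves the cartesian squares, and it is natural in $[n]\in\Delta$ up to the reversal involution of $\Delta$. On complete Segal spaces, precomposition with the reversal of $\Delta$ implements $(-)\op$. The involution therefore gives $\Corr(\C)\simeq\Corr(\C)\op$, naturally in $\C$, since it only acts on the source diagram $\Sigma^n$.
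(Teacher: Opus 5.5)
Your proposal is correct and takes essentially the same route as the paper. The paper's proof consists only of citing \cite[Lemma 2.4 \& Theorem 2.18]{haugseng2020two} and \cite[Proposition C.20]{Bachmann2017NormsIM} for (1), and \cite[Lemma 2.14]{haugseng2020two} for (2); you defer the higher coherence of the $2$-functor to the same sources, and your additional sketches via the levelwise model $\Fun^{\mathrm{cart}}(\Sigma^n,\C)^{\simeq}$ are sound (limits in $\Cat_\infty^\fp$ are created in $\Cat_\infty$ and cartesianness is checked componentwise, and reversal of $[n]$ implements $(-)\op$ on complete Segal spaces), being the standard arguments behind the cited lemmas.
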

\begin{proof}
\eqref{corrmon} follows from \cite[Lemma 2.4 \& Theorem 2.18]{haugseng2020two} and \cite[Proposition C.20]{Bachmann2017NormsIM} and \eqref{corrselfdual} from \cite[Lemma 2.14]{haugseng2020two}.
\end{proof}

\begin{corollary}\label[corollary]{corrpreservesadjunctions}
Let $\C$ and $\D$ be $\infty$-categories with pullbacks and $L\colon \C\to \D$ and $R\colon \D\rightarrow \C$ be pullback preserving left and right adjoints of a cartesian adjunction. Then $\Corr(L)$ is left and right adjoint to $\Corr(R)$. 
\end{corollary}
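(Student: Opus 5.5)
Since $L \dashv R$ is a cartesian adjunction between pullback-preserving functors, it is an adjunction internal to the $(\infty,2)$-category $\Cat_\infty^\fp$. By \cref{prop:corr}~\eqref{corrmon}, $\Corr$ is a 2-functor $\Cat_\infty^\fp \to \Cat_\infty$. A 2-functor preserves adjunctions, because it preserves 2-cells and their compositions, and hence the triangle identities. Therefore $\Corr(L) \dashv \Corr(R)$, with unit $\Corr(\eta)$ and counit $\Corr(\epsilon)$. This is the first half of the claim.

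For the other adjunction, $\Corr(R) \dashv \Corr(L)$, I would use the self-duality of \cref{prop:corr}~\eqref{corrselfdual}. Passing to opposite categories reverses 2-cells: an adjunction $F \dashv G$ between $\infty$-categories induces $G\op \dashv F\op$ between the opposites. Apply this to $\Corr(L) \dashv \Corr(R)$ to get $\Corr(R)\op \dashv \Corr(L)\op$ as functors between $\Corr(\D)\op$ and $\Corr(\C)\op$. Now transport along the equivalences $\Corr(\C)\op \simeq \Corr(\C)$ and $\Corr(\D)\op \simeq \Corr(\D)$. Because these equivalences are natural in $\Cat_\infty^\fp$, they intertwine $\Corr(L)\op$ with $\Corr(L)$ and $\Corr(R)\op$ with $\Corr(R)$, which yields $\Corr(R) \dashv \Corr(L)$.

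The main subtlety is the naturality requirement in this transport. We need the equivalence $\Corr(-)\op \simeq \Corr(-)$ to be natural at least on the underlying $(\infty,1)$-category of $\Cat_\infty^\fp$, so that the two squares relating $\Corr(L)\op$, $\Corr(L)$ and $\Corr(R)\op$, $\Corr(R)$ commute up to equivalence. This is exactly what is provided, as a natural equivalence of functors, in the cited \cite[Lemma 2.14]{haugseng2020two}. Commutation of these squares up to equivalence is enough: adjunctions can be transported along equivalences of categories that commute with the functors involved up to natural equivalence.

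Concretely, the unit and counit of the adjunction $\Corr(R) \dashv \Corr(L)$ are the spans obtained from $\eta$ and $\epsilon$ by reading them backwards. The cartesian hypothesis on $\eta$ and $\epsilon$ is what makes them transformations in $\Cat_\infty^\fp$ in the first place, and so what allows $\Corr$ to act on them at all.
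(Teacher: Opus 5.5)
Your proposal is correct and follows the paper's own argument: 2-functoriality of $\Corr$ from \cref{prop:corr}~\eqref{corrmon} gives $\Corr(L)\dashv\Corr(R)$, and the natural self-duality of \cref{prop:corr}~\eqref{corrselfdual} then gives $\Corr(R)\dashv\Corr(L)$. Your explicit check that the equivalence $\Corr(-)\op\simeq\Corr(-)$ must be natural, so that it intertwines $\Corr(L)\op,\Corr(R)\op$ with $\Corr(L),\Corr(R)$, is exactly what the paper's one-line reduction uses implicitly.
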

\begin{proof}
By \cref{prop:corr}\eqref{corrselfdual}, it suffices to prove that $\Corr(L)$ is left adjoint to $\Corr(R)$. By \cref{prop:corr}\eqref{corrmon} $\Corr$ can be promoted to a $2$-functor and consequently preserves adjunctions as needed.
\end{proof}

\begin{corollary}\label[corollary]{corrloccartadj}
Let $\C$ be an $\infty$-category with pullbacks, and let $f\colon X\rightarrow Y$ be a morphism in $\C$. Then $\Corr(f_{\o})\colon \Corr(\C_{/X})\rightarrow\Corr(\C_{/Y})$ is both left and right adjoint to $\Corr(\pb{f})$.
\end{corollary}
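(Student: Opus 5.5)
This is a direct application of \cref{corrpreservesadjunctions} to the adjunction of \cref{loccartclcartadjunction}. The plan has three steps.

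First, we check that both categories lie in $\Cat_\infty^\fp$. Since $\C$ has pullbacks, so do the slices $\C_{/X}$ and $\C_{/Y}$: pullbacks in a slice are computed in $\C$.

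Second, we check that both functors preserve pullbacks, as \cref{corrpreservesadjunctions} requires.
\begin{enumerate}
\item The functor $\pb{f}\colon \C_{/Y}\to \C_{/X}$ is a right adjoint, so it preserves all limits that exist, and in particular pullbacks.
\item The functor $f_{\o}$ is the forgetful-type functor given by post-composition. It preserves pullbacks because a pullback $A\times_B C$ in $\C_{/X}$ has underlying object the pullback in $\C$. Post-composing with $f$ does not change that underlying cone, and a pullback in $\C_{/Y}$ is again computed in $\C$.
\end{enumerate}
The second point is the only place where any care is needed. Note that $f_{\o}$ preserves pullbacks but not products, so one must use pullbacks specifically.

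Third, the adjunction $f_{\o}\dashv \pb{f}$ is cartesian by \cref{loccartclcartadjunction}. Applying \cref{corrpreservesadjunctions} with $L=f_{\o}$ and $R=\pb{f}$ then shows that $\Corr(f_{\o})$ is both left and right adjoint to $\Corr(\pb{f})$.
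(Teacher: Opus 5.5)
Your proposal is correct and follows the paper's proof exactly. Like the paper, you apply \cref{corrpreservesadjunctions} to the cartesian adjunction of \cref{loccartclcartadjunction}, and the one point that needs recording is that pullbacks in slice categories are formed in $\C$, so $f_{\o}$ preserves pullbacks; your remark that $\pb{f}$ preserves pullbacks as a right adjoint just makes explicit what the paper leaves implicit.
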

\begin{proof}
It remains only to record that for all  objects $Z$, the slice category $\C_{/Z}$ admits pullbacks which are formed underlying, and that consequently $\pfs{f}$ preserves pullbacks.
\end{proof}

For the remainder of this section we will fix a category $\C$ with finite limits, thought of as a geometric setup $(\C,\C)$ in the sense of Scholze \cite{scholze2022six}; i.e.\ we assume that the class of morphisms $E$ in loc.\ cit.\  consists of all maps in $\C$.\footnote{Indeed, we will often use that $\Corr(\C) \simeq \Corr(\C)\op$, which is not in general the case if $E \neq \C$.} 
\begin{terminology}\label[terminology]{terminology}
A six functor formalism on a geometric setup $\C$ consists of a functor
\[D \colon \Corr(\C) \rightarrow \Cat_\infty\]
satisfying various axioms. First, $D$ is required to be lax symmetric monoidal, i.e.\ a  three functor formalism in the sense of \cite{scholze2022six}. Moreover, we require all operations to admit right adjoints. More precisely, for all $X$ in $\C$, one requires that the symmetric monoidal structure $\otimes$ on $D(X)$ is closed with internal hom objects $\sHom{D(X)}{-}{-}$, and for all maps $f\colon X \to Y$ in $\C$, one requires the functors $f_!\colon D(X) \to D(Y)$ and $f^*\colon D(Y) \to D(X)$ to admit right adjoints $f^!$ and $f_*$. 
For various examples of six functor formalisms appearing in practice, we refer to the lecture notes of Scholze \cite{scholze2022six}.

For a three functor formalism $D$ on a geometric context $\C$, one obtains two formulas which are used throughout this paper: basechange and the projection formula. To state them, 
consider a pullback square in $\C$ of the form
\begin{equation}\tag{$\square$}\label{pb}
\begin{tikzcd}
	W \ar[r,"v"] \ar[d,"g"'] & X \arrow[d,"f"] \\
	Z \ar[r,"u"] & Y,               
\end{tikzcd}
\end{equation}
Then there is a canonical equivalence of functors 
\[ u^*f_! \simeq g_!v^* \colon D(X) \to D(Z)\]
called \emph{base-change}. The \emph{projection formula} refers to a canonical equivalence of functors
\[ f_!(-)\otimes - \simeq f_!(- \otimes f^*(-)) \colon D(X) \times D(Y) \to D(Y).\]

These equivalences furthermore induce the following natural maps:
\begin{enumerate}
\item\label{twisting-upper-shriek} The map $\omega_g\otimes\pb{g}(-)\rightarrow \pbp{g}(-)$ adjoint to 
	\[\pfp{g}(\omega_g\otimes\pb{g}(-))\simeq\pfp{g}\pbp{g}(1_Z)\otimes(-)\xrightarrow{\text{counit}}(-).\]
\item\label{pullback-dualizing-object} The map $\pb{v}\omega_f\rightarrow\omega_g$ adjoint to 
	\[\pfp{g}\pb{v}\pbp{f}(1_Y)\simeq\pb{u}\pfp{f}\pbp{f}(1_Y)\xrightarrow{\text{counit}}\pb{u}(1_Y).\]
\item\label{comparison-upper-shriek-upper-star} The map $v^*f^! \to g^*u^!$ adjoint to
	\[ f^! \to f^!u_*u^* \simeq v_*g^!u^* \]
	where the equivalence is obtained from the base-change equivalence by passing to right adjoints.
\end{enumerate}
\end{terminology}

Given a three functor formalism $D$ on a geometric setup $\C$, we now recall the definition of the $(\infty,2)$-category $\cohcorr{D}(\C)$ of \emph{$D$-based correspondences}. Its homotopy $2$-category originally appeared in \cite[IV.2.3.3]{fargues2021geometrization}, as a variation of \cite[Construction 2.6]{lu2022categorical} as well as \cite[3.2.8]{cisinski2021cohomological}. The precise construction requires some technical results from the theory of enriched $\infty$-categories, that were proven in \cite{gepner2015enriched}. We refer to \cite[Section 2.2]{zavyalov2023poincar} for details. 

\begin{construction}\label[construction]{constructionofcohcorr}
Recall that every object in the monoidal $\infty$-category $(\Corr(\C), \times)$ is dualizable and self-dual, see e.g.\ \cite[Lemma 2.2.5]{zavyalov2023poincar}. Therefore, it is a closed monoidal $\infty$-category, with internal hom between two objects $X$ and $Y$ in $\Corr(\C)$ given by $X\times Y$. As a consequence, $\Corr(\C)$ is canonically enriched over itself. We can then use the lax symmetric monoidal functor $D\colon (\Corr(\C), \times)\rightarrow(\infcat, \times)$ and obtain an $\Cat_\infty$-enrichment of $\Corr(\C)$.
\end{construction}
In this paper, we think of $(\infty,2)$-categories as $\Cat_\infty$-enriched $\infty$-categories. Hence we define:
\begin{definition}\label{defcohcorr}
We define $(\infty,2)$-category $\cohcorr{D}(\C)$ of \textit{$D$-based correspondences} to be the $(\infty,2)$-category obtained from \cref{constructionofcohcorr}.
\end{definition}

Informally, $\cohcorr{D}(\C)$ is described as follows. The objects in $\cohcorr{D}(\C)$ are the objects of $\C$. For any two objects $X$ and $Y$ in $C$, the $\infty$-category of morphisms between $X$ and $Y$ is given by $D(X\times Y)$. For $X$, $Y$ and $Z$ objects of $\cohcorr{D}(\C)$, the composition functor is given by the composite
\[ D(X\times Y) \times D(Y\times Z) \xrightarrow{\otimes} D(X \times Y \times Y \times Z) \xrightarrow{\pfp{{p_{X,Z}}}} D(X\times Z).\]

\begin{remark}\label[remark]{selfdualcohcorr}
   Note that there is an equivalence of $(\infty,2)$-categories $\cohcorr{D}(\C)\op\simeq\cohcorr{D}(\C)$. A full proof of this claim can be found in \cite[Proposition 4.1.4]{heyer20246}. Very roughly, the claim follows from the fact that the equivalence in \cref{prop:corr} can be upgraded to an equivalence of $\Corr(\C)$-enriched $\infty$-categories (see \cite[Corollary 2.4.2]{heyer20246}), and the fact that transporting the enrichment commutes with taking opposites (see \cite[Lemma C.3.4]{heyer20246}).
\end{remark}

\begin{remark}
Note that for a geometric setup $\C$ and an object $S$ in $\C$, the canonical functor $\C_{/S} \to \C$ preserves pullbacks and is lax symmetric monoidal with respect to cartesian products. In particular, it induces a lax symmetric monoidal functor $\Corr(\C_{/S}) \to \Corr(\C)$ which, together with a three functor formalism $D \colon \Corr(\C) \to \Cat_\infty$ shows that $D$ is naturally also a three functor formalism on $\C_{/S}$, we call this the \emph{induced} three functor formalism.
If $D$ is in fact a six functor formalism on $\C$, then the induced three functor formalism on $\C_{/S}$ is again a six functor formalism.
In the next proposition we give a precise formulation and a proof of \cite[Remark 6.2]{scholze2022six} about the coherence of the formation of $D$-based correspondences of slice categories.
\end{remark}

\begin{proposition}\label[proposition]{functorialcohcorr}
Let $D$ be a three functor formalism on a geometric setup $\C$. Then there exists a canonical functor $\cohcorr{D}\colon \C\op \to \Cat_{(\infty,2)}$ with the following properties: It sends an object $S$ in $\C$ to the $(\infty,2)$-category $\cohcorr{D}(\C_{/S})$, and a morphism $f\colon S' \to S$ to a 2-functor $\cohcorr{D}(\C_{/S}) \to \cohcorr{D}(\C_{/S'})$ informally given by sending an object $X$ in $\C_{/S}$ to $X'=\pb{f}(X)$ in $\C_{/S'}$, and for a pair objects $X,Y \to S$ of objects in $\C_{/S}$, associating the functor 
\[ \pb{g} \colon D(X \times_S Y) \to D(X' \times_{S'} Y') \]
where $g \colon X' \times_{S'} Y' \to X \times_S Y$ is the canonical morphism.
\end{proposition}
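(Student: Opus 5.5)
Since $\cohcorr{D}(\C_{/S})$ is by \cref{defcohcorr} the $\Cat_\infty$-enriched category obtained from the self-enrichment of $\Corr(\C_{/S})$ by transporting along the lax symmetric monoidal functor $D_S\colon \Corr(\C_{/S})\to\Corr(\C)\xrightarrow{D}\Cat_\infty$, the strategy is to make each ingredient of \cref{constructionofcohcorr} functorial in $S$ and then apply the functoriality of change of enrichment from \cite{gepner2015enriched}. The first step is the functor $\C\op\to\Cat_\infty^{\fp}$, $S\mapsto\C_{/S}$, $f\mapsto\pb{f}$, the straightening of the cartesian fibration given by the target map $\Fun(\Delta^1,\C)\to\C$. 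Each $\pb{f}$ preserves pullbacks and finite products, since the product in $\C_{/S}$ is $\times_S$. Composing with $\Corr$ and using \cref{prop:corr}\eqref{corrmon}, which says $\Corr$ preserves limits and hence finite products, gives a functor
\[\C\op\to\CAlg(\Cat_\infty),\qquad S\mapsto(\Corr(\C_{/S}),\times_S).\]

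The second step is to compare this with $D$. The forgetful functors $\C_{/S}\to\C$ do not commute strictly with $\pb{f}$: one has $X'=S'\times_S X\to X$, not an identity. So I would not use $\Corr$ of the forgetful functors directly. Instead, I would encode the family as a single object over $\C\op$. Let $\mathcal{X}\to\C\op$ be the cocartesian unstraightening of $S\mapsto\Corr(\C_{/S})$. I would build a lax symmetric monoidal functor $\mathcal{X}\to\Cat_\infty$ sending a morphism $(X\in\C_{/S})\to(X'\in\C_{/S'})$ over $f$ (cocartesian part $X\mapsto X'$) to $\pb{g}$, where $g\colon X'\to X$ is the induced map. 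The cleanest way to get this is to regard the projection $X'\to X$ itself as a correspondence $X \leftarrow X' = X'$ in $\Corr(\C)$, that is, $\pb{g}=D(\text{backward span})$. Concretely, there is a functor $\mathcal{X}\to\Corr(\C)$ sending cocartesian edges to the spans $X\xleftarrow{g}X'\xrightarrow{=}X'$, and composing with $D$ gives the desired functor.

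To make this functor honest, I would present $\mathcal{X}$ as $\Corr$ of an adequate-triple-type category, namely $\Fun(\Delta^1,\C)$ with the class of cartesian squares over base maps, and use \cite[Section 2]{haugseng2020two}. Checking that this functor is lax symmetric monoidal, with $X\times_S Y$ mapping compatibly to $X'\times_{S'}Y'$, is the main obstacle: it is the real coherence content of \cite[Remark 6.2]{scholze2022six}. The proof of \cite{heyer20246}, via $\Corr(\C)$-enriched categories and their functoriality (\cite[Cor.\ 2.4.2, Lemma C.3.4]{heyer20246}), is the fallback.

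The third step is to apply the enrichment construction. Self-enrichment of a closed symmetric monoidal category whose objects are all self-dual is functorial for strong monoidal functors, and the functors $\Corr(\pb{f})$ are strong monoidal. Pushing the enrichment forward along the lax monoidal family from the second step is functorial by \cite{gepner2015enriched}. Together these yield $\C\op\to\Cat_{(\infty,2)}$. Finally, I would unwind the construction: on mapping categories, $D(X\times_SY)\to D(X'\times_{S'}Y')$ is $D$ applied to the backward span along $g\colon X'\times_{S'}Y'\to X\times_SY$, which is $\pb{g}$, as claimed.
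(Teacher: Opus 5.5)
Your first step matches the paper's. The non-monoidal part of your second step is also fine: the source functor $(X\to S)\mapsto X$ on $\Fun(\Delta^1,\C)$ does induce $\mathcal{X}\to\Corr(\C)$. One correction there: to get fibres $\Corr(\C_{/S})$ rather than $\C_{/S}$, the backward class must be \emph{all} squares and the forward class the fibrewise maps, not the cartesian squares.

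The gap is in what follows. You transport each self-enrichment along its own functor $D_S=D\circ\Corr(\C_{/S}\to\C)$. This forces you to compare enrichments over the varying bases $\Corr(\C_{/S})$, along varying functors that are related only by monoidal transformations $D_S\Rightarrow D_{S'}\circ\Corr(\pb{f})$ with components $\pb{g}$, coherently in $f$. That coherence is the entire content of the proposition. You leave it open, calling it ``the main obstacle'' and falling back on \cite{heyer20246}, which proves the statement itself. Step three does not close it either. \cite{gepner2015enriched} gives functoriality of change of enrichment along lax monoidal functors, not along a family of lax monoidal functors linked by monoidal transformations. You would additionally need a coherent $(\infty,2)$-functoriality of change of enrichment in such transformations, and functoriality of self-enrichment over varying bases. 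As written, you have reduced the statement to a harder, unproven coherence problem rather than proved it.

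The missing idea is to replace the forgetful functors $\C_{/S}\to\C$, which indeed do not commute with $\pb{f}$, by their left adjoints $\pb{a_S}\colon\C\to\C_{/S}$ (base change along $a_S\colon S\to\ast$). These are part of the functor $S\mapsto\C_{/S}$ itself, since $\C=\C_{/\ast}$. Hence $S\mapsto\Corr(\C_{/S})$ lands in symmetric monoidal categories under $\Corr(\C)$, and each $\Corr(\pb{f})$ is $\Corr(\C)$-linear. By \cref{corrloccartadj}, $\Corr(\pb{a_S})$ has a right adjoint, so the $\Corr(\C)$-action on the closed category $\Corr(\C_{/S})$ is closed. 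Its $\Corr(\C)$-valued hom from $X$ to $Y$ is $X\times_S Y$ regarded as an object of $\C$. The forgetful functor reappears only here, as a right adjoint, and $\Corr(\pb{f})$ acts on these homs by the backward span along $g$.

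Heine's equivalence \cite{heine2023equivalence} between closed $\Corr(\C)$-tensored and $\Corr(\C)$-enriched categories then yields $\C\op\to\Cat_\infty^{\Corr(\C)}$. A single change of enrichment along the \emph{fixed} lax symmetric monoidal functor $D$ gives $\C\op\to\inftwocat$. No family of transformations $D_S\Rightarrow D_{S'}\circ\Corr(\pb{f})$ ever has to be built. The value at $S$ agrees with $\cohcorr{D}(\C_{/S})$ as you defined it, because pushing the self-enrichment of $\Corr(\C_{/S})$ forward along the lax monoidal right adjoint $\Corr(\C_{/S})\to\Corr(\C)$ recovers the enrichment coming from the closed action.
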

\begin{proof}
Since $\C$ has pullbacks, there is a functor $\C\op\rightarrow\Cat_\infty^\fp$ which assigns to each object $S$ of $\C$ the slice category $\C_{/S}$, and to each morphism $f\colon S'\rightarrow S$ the basechange along $f$ functor $\pb{f}$. Notice that $\pb{f}\colon \C_{/S}\rightarrow \C_{/S'}$ is a symmetric monoidal functor, when both sides are equipped with the respective cartesian monoidal structures. Therefore, the above upgrades to a functor $\C\op\rightarrow\text{CAlg}_{\C}(\Cat_\infty^\fp)$.
    Since $\Corr$ preserves products by \cref{prop:corr}\eqref{corrmon}, postcomposing with $\Corr$ gives a functor $$\C\op\rightarrow\text{CAlg}_{\Corr(\C)}(\infcat).$$
    By \cref{corrloccartadj}, for each morphism $f$ in $\C$, $\Corr(\pb{f})$ is a left adjoint. Therefore, for each $S\in \C$, $\Corr(\C_{/S})$ admits a closed tensoring over $\Corr(\C)$ induced by basechange along the unique map $S\rightarrow \ast$. By \cite[Theorem 1.1, Proposition 6.10 (5)]{heine2023equivalence}, an $\infty$-category with a closed tensoring over $\Corr(\C)$ can be naturally equipped with a $\Corr(\C)$-enrichment. Hence, we obtain a functor $\C\op\rightarrow\Cat_\infty^{\Corr(C)}$.
Again by transporting the enrichment along the lax symmetric monoidal functor $D\colon\Corr(\C)\rightarrow\Cat_\infty$, one finally obtains a functor $\C\op\rightarrow\inftwocat$. Unravelling the definitions, this functor has all the claimed properties.
\end{proof}

\subsection{Suave objects}
Let $f\colon X\rightarrow Y$ be a map in a geometric setup $\C$ and note that $D(X) = D(X\times_Y Y) = \Hom{\cohcorr{D}(\C_{/Y})}{X}{Y} = \Hom{\cohcorr{D}(\C_{/Y})}{Y}{X}$, so that objects of $D(X)$ may be thought of as morphisms in the $(\infty,2)$-category $\cohcorr{D}(\C_{/Y})$ from $f$ to $\id_Y$ and from $\id_Y$ to $f$. The following definition can be found in \cite[Def.\ 6.1]{scholze2022six}.
\begin{definition}\label[definition]{ULA}%
An object $F$ in $D(X)$ is called \textit{$f$-suave} if $F$ is a left adjoint when viewed as a morphism in the $(\infty,2)$-category $\cohcorr{D}(\C_{/Y})$.
\end{definition}

\begin{lemma}\label[lemma]{ULAstabilityunderbasechange}
Consider a pullback square in $\C$ as in \eqref{pb}
and assume that $F\in D(X)$ is $f$-suave. Then $\pb{v}F\in D(W)$ is $g$-suave.
\end{lemma}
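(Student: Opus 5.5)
The plan is to apply the functoriality of $D$-based correspondences from \cref{functorialcohcorr} to the morphism $u\colon Z\to Y$. It provides a 2-functor
\[ u^\ast\colon \cohcorr{D}(\C_{/Y}) \to \cohcorr{D}(\C_{/Z}), \]
which on objects sends $X \to Y$ to $W = X\times_Y Z \to Z$ and $\id_Y$ to $\id_Z$. Any 2-functor between $(\infty,2)$-categories preserves adjunctions, since it preserves the unit, the counit and the triangle identities up to coherent homotopy. Hence it will suffice to check that the image under $u^\ast$ of the 1-morphism $F\colon f \to \id_Y$ (or $\id_Y \to f$, depending on the convention in \cref{ULA}) is exactly the 1-morphism $\pb{v}F \colon g \to \id_Z$ in $\cohcorr{D}(\C_{/Z})$.

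For this identification, unwind the description in \cref{functorialcohcorr}. For the pair of objects $X \to Y$ and $\id_Y$, the functor on mapping $\infty$-categories is
\[ \pb{h}\colon D(X\times_Y Y) \to D(W \times_Z Z), \]
where $h\colon W\times_Z Z \to X\times_Y Y$ is the canonical map. Under the identifications $X\times_Y Y \simeq X$ and $W\times_Z Z\simeq W$, the map $h$ becomes $v$, and so $F$ is sent to $\pb{v}F$. The right adjoint of $F$ in $\cohcorr{D}(\C_{/Y})$, which is some object $G \in D(X)$ viewed as a morphism in the opposite direction, is sent to $\pb{v}G$. The unit and counit are sent to their $\pb{}$-images, and these still satisfy the triangle identities. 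Therefore $\pb{v}F$ is a left adjoint, i.e.\ $g$-suave.

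The only point requiring care, and the one I expect to be the main obstacle, is checking that the identification $\Hom{\cohcorr{D}(\C_{/Y})}{X}{Y}\simeq D(X)$ used in \cref{ULA} is compatible with the functor of \cref{functorialcohcorr}. Concretely, the unit equivalences $X\times_Y Y\simeq X$ must be carried to $W\times_Z Z\simeq W$ by base change. This holds because $\pb{u}\colon \C_{/Y}\to\C_{/Z}$ is symmetric monoidal for the cartesian structures and so preserves the unit $\id_Y$. The rest of the argument is formal.
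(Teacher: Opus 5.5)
Your proposal is correct and is exactly the paper's argument: apply the 2-functor $\cohcorr{D}(\C_{/Y})\to\cohcorr{D}(\C_{/Z})$ from \cref{functorialcohcorr} for $u$, and use that 2-functors preserve adjunctions. Your explicit check that this 2-functor sends $F$ to $\pb{v}F$ (via $X\times_Y Y\simeq X$, $W\times_Z Z\simeq W$, and the canonical map being $v$) spells out what the paper leaves implicit.
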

\begin{proof}
This follows from \cref{functorialcohcorr} and the fact that $2$-functors preserve adjunctions.
\end{proof}

\begin{definition}
Let $f\colon X\rightarrow Y$ be a map in a geometric setup $\C$ and $D$ a six functor formalism on $\C$. We denote by $1_Y$ the tensor unit of the symmetric monoidal category $D(Y)$ and by $\omega_f \in D(X)$ its exceptional pullback $f^!(1_Y)$. We call $\omega_f$ the \emph{relative dualizing object}. In case $Y$ is a terminal object of $\C$, we write $\omega_X$ instead of $\omega_f$. We write $\bD_f(-)$ for the functor $\sHom{D(X)}{-}{\omega_f} \colon D(X)\op \to D(X)$ and call $\bD_f(F)$ the \emph{relative Verdier dual} of $F$. 
\end{definition}

\begin{proposition}\label[proposition]{propULA}
    Let $F\in D(X)$ be $f$-suave with right adjoint $G\in D(X)$.
    \begin{enumerate}[(i)]
        \item The object $G\in D(X)$ is $f$-suave with right adjoint $F$.
        \item There is a natural isomorphism
        \[G\otimes\pb{f}(-)\simeq \sHom{D(X)}{F}{\pbp{f}(-)}.\]
        In particular, $G\simeq\bD_f(F)$.
        \item The canonical map $F \to \bD_f(\bD_f(F))$ is an equivalence.
        \item For any pullback square as in \eqref{pb}, we have an isomorphism $\pb{v}\bD_f(F)\xrightarrow{\simeq}\bD_g(\pb{v}F)$.
    \end{enumerate}
\end{proposition}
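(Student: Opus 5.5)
We will work directly in the $(\infty,2)$-category $\cohcorr{D}(\C_{/Y})$. We regard $F$ as a 1-morphism $f \to \id_Y$ and $G$ as a 1-morphism $\id_Y \to f$, with unit $\eta\colon \id \Rightarrow G\circ F$ and counit $\epsilon\colon F\circ G\Rightarrow \id$. Each claim will then follow from formal properties of adjunctions, once we unwind what composition means in $\cohcorr{D}(\C_{/Y})$.

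For (i), we use the self-duality $\cohcorr{D}(\C_{/Y})\op\simeq\cohcorr{D}(\C_{/Y})$ of \cref{selfdualcohcorr}. This equivalence reverses 1-morphisms and fixes 2-morphisms. It is the identity on objects, and on mapping categories it is the swap $D(A\times_Y B)\simeq D(B\times_Y A)$. An adjunction $F\dashv G$ therefore becomes an adjunction in which the object $G\in D(X)$, viewed as a morphism $f\to \id_Y$, is left adjoint to $F$, viewed as a morphism $\id_Y\to f$. So $G$ is $f$-suave with right adjoint $F$.

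For (ii), we use the fact that an adjunction $F\dashv G$ induces adjunctions on postcomposition, $F\circ(-)\dashv G\circ(-)$, between mapping categories. We take the source object to be $Y$ itself. Postcomposition with $G$ is then a functor $\Hom{}{Y}{Y}=D(Y)\to \Hom{}{Y}{X}=D(X)$. Unwinding the composition formula, it is given by $M\mapsto G\otimes \pb{f}M$, since the composition is the tensor product on $X\times_Y Y$ followed by pushforward along an isomorphism. Likewise, postcomposition with $F$ is the functor $D(X)\to D(Y)$ given by $N\mapsto \pfp{f}(F\otimes N)$. This functor has right adjoint $M\mapsto \sHom{D(X)}{F}{\pbp{f}M}$, by the tensor–hom adjunction combined with $\pfp{f}\dashv\pbp{f}$. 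Uniqueness of adjoints then gives $G\otimes\pb{f}(-)\simeq \sHom{D(X)}{F}{\pbp{f}(-)}$. Evaluating at $1_Y$ yields $G\simeq\bD_f(F)$. The main care needed here is checking that the identification of postcomposition functors is the correct one, including the unit/counit compatibility, so that the equivalence is natural rather than merely objectwise. I expect this bookkeeping of the composition law to be the only real obstacle.

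For (iii), we apply (ii) to the adjunction $G\dashv F$ from (i), which gives $F\simeq\bD_f(G)\simeq\bD_f\bD_f(F)$. It then remains to check that this equivalence is the canonical biduality map. We will verify this by noting that the evaluation pairing $F\otimes\bD_f F\to\omega_f$ corresponds to the counit of the adjunction under the identification in (ii).

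For (iv), \cref{ULAstabilityunderbasechange} and its proof via \cref{functorialcohcorr} show that the base-change 2-functor sends the adjunction $F\dashv G$ to an adjunction $\pb{v}F\dashv \pb{v}G$ in $\cohcorr{D}(\C_{/Z})$. Applying (ii) over $Z$ gives $\pb{v}\bD_f(F)\simeq \pb{v}G\simeq \bD_g(\pb{v}F)$. Finally, we check that this equivalence is the natural comparison map built from $\pb{v}\omega_f\to\omega_g$. This holds because both maps are adjoint to the pulled-back counit.
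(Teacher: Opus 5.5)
Your proposal is correct and follows essentially the same route as the paper: (i) comes from the self-duality of $\cohcorr{D}(\C_{/Y})$, (ii)--(iii) from the corepresentable/postcomposition argument of \cite[Proposition 6.5]{scholze2022six} (which the paper cites), and (iv) from the base-change 2-functor of \cref{functorialcohcorr} preserving adjunctions. The naturality you flag in (ii) is automatic from uniqueness of adjoints; the only real bookkeeping is identifying the biduality map in (iii) and the comparison map in (iv), and your counit arguments handle both.
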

\begin{proof}
    Point (i) follows from \cref{selfdualcohcorr}. Points (ii) and (iii) are explained in detail in \cite[Proposition 6.5]{scholze2022six}. Point (iv) follows from (ii), \cref{ULAstabilityunderbasechange} and the fact that $2$-functors preserve adjunctions.
\end{proof}

Unravelling the definition of the map in \cref{propULA} (iv) in the special case $F=1_Y$, we obtain the canonical map $v^*\omega_f \to \omega_g$ described in \cref{terminology}. 
In particular, we obtain the following.
\begin{corollary}\label[corollary]{unitULAdualizingsheafstableunderbc}
    Let $f\colon X\rightarrow Y$ be a map in the geometric setup $\C$, and assume that $1_X\in D(X)$ is $f$-suave. Then, for any pullback square in $\C$ of the form \eqref{pb}, the map  $v^*\omega_f \to \omega_g$ is an isomorphism.
\end{corollary}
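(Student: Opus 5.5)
The corollary follows by specializing \cref{propULA}~(iv) to $F = 1_X$; we only have to make sure that the isomorphism produced there is the canonical map of \cref{terminology}. By assumption $1_X$ is $f$-suave. Let $G$ denote its right adjoint in $\cohcorr{D}(\C_{/Y})$. \cref{propULA}~(ii) gives $G \simeq \bD_f(1_X) = \sHom{D(X)}{1_X}{\omega_f} \simeq \omega_f$. In the same way, after base change we have $\bD_g(\pb{v}1_X) = \bD_g(1_W) \simeq \omega_g$, because $\pb{v}$ is symmetric monoidal and so $\pb{v}1_X \simeq 1_W$.

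Next we apply \cref{propULA}~(iv) to the pullback square \eqref{pb}. This gives an isomorphism $\pb{v}\bD_f(1_X) \xrightarrow{\simeq} \bD_g(\pb{v}1_X)$, which under the identifications above is an isomorphism $\pb{v}\omega_f \simeq \omega_g$. This step uses \cref{ULAstabilityunderbasechange} and the 2-functor $\cohcorr{D}(\C_{/Y}) \to \cohcorr{D}(\C_{/Z})$ from \cref{functorialcohcorr}.

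The main obstacle is to check that this abstract isomorphism agrees with the map $\pb{v}\omega_f \to \omega_g$ of \cref{terminology}~\eqref{pullback-dualizing-object}, which is adjoint to $\pfp{g}\pb{v}\pbp{f}1_Y \simeq \pb{u}\pfp{f}\pbp{f}1_Y \to \pb{u}1_Y$. I would argue through the counits, as follows.
\begin{enumerate}
\item In $\cohcorr{D}(\C_{/Y})$, the counit of the adjunction $1_X \dashv \omega_f$ at the object $Y$ is the morphism in $D(Y)$ given by $\pfp{f}(1_X\otimes\omega_f) = \pfp{f}\pbp{f}1_Y \to 1_Y$, which is the usual counit of $\pfp{f} \dashv \pbp{f}$. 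This is the identification made in the proof of \cite[Prop.~6.5]{scholze2022six}.
\item The 2-functor $\pb{u}$ of \cref{functorialcohcorr} acts on 2-morphisms via $\pb{u}$ and base change. It therefore sends this counit to the composite $\pfp{g}\pb{v}\pbp{f}1_Y \simeq \pb{u}\pfp{f}\pbp{f}1_Y \to \pb{u}1_Y$.
\item A 2-functor preserves adjunction data. Hence the image of the adjunction $1_X \dashv \omega_f$ is an adjunction $1_W \dashv \pb{v}\omega_f$ whose counit is the composite in step~2.
\item By uniqueness of right adjoints, the comparison $\pb{v}\omega_f \to \omega_g$ between the two adjunctions $1_W \dashv \pb{v}\omega_f$ and $1_W \dashv \omega_g$ is the map adjoint to this counit under $\pfp{g} \dashv \pbp{g}$. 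That is exactly the map of \cref{terminology}~\eqref{pullback-dualizing-object}.
\end{enumerate}
Since this comparison map is an equivalence, the corollary follows.
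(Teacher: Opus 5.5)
Your proposal is correct and takes the paper's route. The paper also gets the corollary by specializing \cref{propULA}~(iv) to the unit (its ``$F=1_Y$'' is a typo for $1_X$) and asserting that unravelling the resulting isomorphism gives the canonical map of \cref{terminology}~\eqref{pullback-dualizing-object}. Your counit argument spells out that unravelling, which the paper leaves to the reader: the 2-functor $\pb{u}$ sends the counit $\pfp{f}\pbp{f}1_Y\to 1_Y$ to $\pfp{g}\pb{v}\omega_f\simeq\pb{u}\pfp{f}\omega_f\to 1_Z$, and the comparison of right adjoints is its adjoint under $\pfp{g}\dashv\pbp{g}$.
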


Conversely, one can also check that objects are $f$-suave as follows. Consider the pullback square 
\begin{equation}\label{pbsquareULA}
    \begin{tikzcd}
X\times_Y X \arrow[d, "p_2"'] \arrow[r, "p_1"] & X \arrow[d, "f"] \\
X \arrow[r, "f"']                              & Y.               
\end{tikzcd}
\end{equation}
\cite[Proposition 6.6]{scholze2022six} then states:

\begin{proposition}\label[proposition]{charULA}\label[proposition]{charunitULA}
The object $F \in D(X)$ is $f$-suave if and only if the natural map
\[ \pb{p_1}F \otimes \pb{p_2}\bD_f(F) \to \sHom{D(X\times_Y X)}{\pb{p_2}(F)}{\pbp{p_1}F}\]
is an equivalence.
\end{proposition}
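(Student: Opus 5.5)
The plan is to translate $f$-suaveness of $F$ into the existence of a right adjoint in the $(\infty,2)$-category $\cohcorr{D}(\C_{/Y})$, and to construct that adjoint by hand. We regard $F$ as a $1$-morphism $F\colon \id_Y\to f$, i.e.\ an object of $D(Y\times_Y X)=D(X)$. The candidate right adjoint is $G=\bD_f(F)\colon f\to \id_Y$, forced by \cref{propULA}(ii). Write $\Delta\colon X\to X\times_Y X$ for the diagonal. By the composition formula for $D$-based correspondences, the two composites are as follows. The composite $F\circ G\colon f\to f$ is the object $\pb{p_1}F\otimes\pb{p_2}G$ of $D(X\times_Y X)$, with no pushforward, since the middle object is $Y$ over $Y$. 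The composite $G\circ F\colon \id_Y\to\id_Y$ is $\pfp{f}(F\otimes G)\in D(Y)$. The identity of $f$ is $\pfp{\Delta}1_X$ and the identity of $\id_Y$ is $1_Y$. This fixes the data; the proof now splits into the two implications.

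\textbf{Forward direction.} Assume $F$ is suave. By \cref{propULA}(ii) its right adjoint is $G=\bD_f(F)$. Now use the general yoga: in any $(\infty,2)$-category, if $F\dashv G$ then postcomposition $F\circ-$ is left adjoint to $G\circ-$ on mapping categories. Here these mapping categories are $D(X\times_Y T)$ for test objects $T$. Apply this with $T=X$, and test against the object $\pb{p_2}F$ viewed appropriately. Using the formula (ii) with $f$ replaced by its base change $p_2$, via \cref{ULAstabilityunderbasechange} and \cref{propULA}(iv), we get that $\pb{p_1}F$ is $p_2$-suave with dual $\pb{p_1}\bD_f F$. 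Applying (ii) to this suave object with argument $\pb{p_2}$-pulled objects then identifies $\pb{p_1}F\otimes\pb{p_2}\bD_f F$ with the internal Hom in the statement. One must check that this identification is the natural map in question, by unwinding that the map of (ii) is adjoint to the counit. The roles of $p_1,p_2$ may need swapping, using the symmetry of $X\times_Y X$.

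\textbf{Converse.} Assume the map is an equivalence. We construct a unit and a counit. The counit $F\circ G\to \pfp{\Delta}1_X$ is a map $\pb{p_1}F\otimes\pb{p_2}\bD_fF\to\pfp{\Delta}1_X$. Using the assumed equivalence, this is the same as a map
\[\sHom{D(X\times_Y X)}{\pb{p_2}F}{\pbp{p_1}F}\to \pfp{\Delta}1_X.\]
Adjoint counterparts come from $\Delta$: since $p_1\circ\Delta=\id$, we have $\pb{\Delta}\pbp{p_1}\leftarrow \Delta^! p_1^!$ type comparisons. More concretely, the unit $1_Y\to\pfp{f}(F\otimes\bD_fF)$ and the counit should be produced from the evaluation $F\otimes\bD_f F\to\omega_f$ and the identity $\id_F$ viewed as a global section of the internal Hom. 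The identity $F\to F$ corresponds to a map $\pfp{\Delta}1_X\to\sHom{}{\pb{p_2}F}{\pbp{p_1}F}$, obtained by adjunction from $\pb{p_2}F\otimes\pfp{\Delta}1\simeq\pfp{\Delta}F$ (projection formula) and $\Delta_!\to p_1^!$ adjoint to $p_{1!}\Delta_!=\id$. Composing with the inverse of the assumed equivalence gives the unit $\id_f\to F\circ G$. The counit $G\circ F=\pfp{f}(F\otimes\bD_fF)\to 1_Y$ is adjoint to evaluation $F\otimes\bD_fF\to\omega_f=\pbp f 1_Y$. So the directions are: the unit lives on $f$ and the counit on $\id_Y$, consistent with $G\dashv F$ or $F\dashv G$ depending on orientation. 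We fix the orientation matching \cref{ULA}.

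\textbf{Main obstacle.} The hard step is verifying the two triangle identities. This only has to be done in the homotopy $2$-category, since adjunctions in an $(\infty,2)$-category are detected there. The identities become diagram chases mixing base change, the projection formula and the counit $\pfp{p_1}\pbp{p_1}\to\id$. I would reduce each triangle composite, via base change along the square \eqref{pbsquareULA}, to the identity of $F$ (respectively of $\bD_fF$) in $D(X)$. One triangle then reduces to the statement that $\id_F$ maps to itself under evaluation. The other reduces to a consequence of the biduality $F\simeq\bD_f\bD_fF$, which itself follows from the assumed equivalence by applying $\pb\Delta$ and using $\pb\Delta\pbp{p_1}F\otimes\ldots$ compatibility. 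I expect the bookkeeping of these coherences to be the bulk of the work, essentially following \cite[Proposition 6.6]{scholze2022six}.
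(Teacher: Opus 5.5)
Your route is the one of Scholze's Proposition~6.6, which the paper quotes without proof, and most of it is right. The forward direction is correct: apply \cref{ULAstabilityunderbasechange} and \cref{propULA}(ii),(iv) to the $p_2$-suave object $\pb{p_1}F$, then swap factors. Your final unit $\eta\colon\pfp{\Delta}1_X\to\pb{p_1}F\otimes\pb{p_2}\bD_fF$ (adjoint to $\id_F$, transported along the inverse of the assumed equivalence) and counit $\epsilon\colon\pfp{f}(F\otimes\bD_fF)\to 1_Y$ (adjoint to evaluation) are the correct ones. Discard the earlier attempts at a counit into $\pfp{\Delta}1_X$ and a unit out of $1_Y$; these do not exist naturally. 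The zig-zag on $F$ does reduce, as you say, to the map $\pfp{\Delta}F\to\pbp{p_1}F$ being adjoint to $\id_F$. This works because the map in the statement is by construction adjoint to evaluation followed by $\pb{p_1}F\otimes\pb{p_2}\omega_f\to\pb{p_1}F\otimes\omega_{p_1}\to\pbp{p_1}F$.

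The gap is the zig-zag on $G=\bD_f(F)$. Biduality cannot be obtained by applying $\pb{\Delta}$ to the assumed equivalence. $\pb{\Delta}$ does not commute with internal Hom, and $\pb{\Delta}\pbp{p_1}F$ has no usable description unless you already know something like $1_X$ being $f$-suave. In this argument biduality (\cref{propULA}(iii)) is an output of suaveness, not an input, and you do not say how it would yield the zig-zag anyway. The missing idea is that the second zig-zag follows formally from the first. Put $\phi=(\epsilon G)\circ(G\eta)\colon G\Rightarrow G$. The interchange law and the zig-zag for $F$ give
\[\epsilon\circ(\phi F)=\epsilon\circ(GF\epsilon)\circ(G\eta F)=\epsilon\circ G\bigl((F\epsilon)\circ(\eta F)\bigr)=\epsilon.\]
Now $\phi F=\pfp{f}(F\otimes\phi)$ and $\epsilon$ is adjoint to evaluation. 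Hence $\psi\mapsto\epsilon\circ\pfp{f}(F\otimes\psi)$ is the bijection $\pi_0\mathrm{End}(G)\cong\pi_0\mathrm{Map}(F\otimes G,\omega_f)\cong\pi_0\mathrm{Map}(\pfp{f}(F\otimes G),1_Y)$, coming from $G=\sHom{D(X)}{F}{\omega_f}$ and $\pfp{f}\dashv\pbp{f}$. Therefore $\phi\simeq\id_G$. With this replacement, plus the routine identification of whiskering in the $(\infty,2)$-category of $D$-based correspondences with base-change and projection-formula maps, your converse goes through in the homotopy $2$-category.
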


\begin{remark}\label[remark]{remark:f-suave}%
In particular, the object $1_X\in D(X)$ is $f$-suave if and only if the canonical map
$\pb{p_2}\omega_f \rightarrow\omega_{p_1}$ is an equivalence, and hence if and only if for any pullback square in $\C$ of the form \eqref{pb}, the map $v^*\omega_f \to \omega_g$ is an equivalence. Furthermore, if $1_X$ is $f$-suave, the canonical map $1_X \to \sHom{D(X)}{\omega_f}{\omega_f}$ is invertible.
\end{remark}

\subsection{$D$-smoothness}
Next, we recall the notion of cohomologically smooth maps in the sense of Scholze. We refer to this notion as $D$-smooth maps.
\begin{definition}\label[definition]{defcohsmooth}
    Let $f\colon X\rightarrow Y$ be a morphism in a geometric setup $\C$ and $D$ a six functor formalism on $\C$. We say that $f$ is $D$-\emph{smooth} if for any pullback square in $\C$ as in \eqref{pb} the following conditions hold.
\begin{enumerate}[(i)]
	\item The map $\pb{v}\omega_f\rightarrow\omega_g$ is invertible; equivalently, $1_X$ is $f$-suave.
	\item The object $\omega_f \in D(X)$ is $\otimes$-invertible.
\end{enumerate}
When $\C=\LCH$ and $D=\Sh{-}{\E}$, we say that $f$ is $\E$-smooth rather than $\Sh{-}{\E}$-smooth. If moreover $\E=\Mod_R$ for an $\bE_{\infty}$-ring $R$, we say that $f$ is $R$-smooth.
\end{definition}

\begin{remark}
Consider again a pullback square in $\C$ of the form \eqref{pb}.
If $1_X$ is $f$-suave, then $1_W$ is $g$-suave, see \cref{ULAstabilityunderbasechange}. In particular, applying \cref{propULA} part (ii), with $F=1_W$ and $g$ in place of $f$, we obtain that the canonical map $\omega_g \otimes g^* \to g^!$ is an equivalence. In particular, $D$-smooth maps are precisely the cohomologically smooth maps in the sense of Scholze, see \cite[Definition 5.1]{scholze2022six}.
\end{remark}

\begin{remark}\label[remark]{rmk:invertible-vs-dualizable}
In the definition of $D$-smooth maps, we could also replace the condition that $\omega_f$ is invertible with the condition that $\omega_f$ is dualizable. Indeed, recall that in a closed symmetric monoidal category, an object $X$ is dualizable if and only if for all objects $Y$ the canonical map $\sHom{}{X}{1} \otimes Y \to \sHom{}{X}{Y}$ is invertible. Hence, if $\omega_f$ is dualizable and $1_X$ is $f$-suave, then $1 \simeq \sHom{}{\omega_f}{\omega_f} \simeq \sHom{}{\omega_f}{1} \otimes \omega_f$ showing that $\omega_f$ is in fact invertible.
\end{remark}

\subsection{Local contractibility}

Let $\infcat^L$ be the $\infty$-category whose objects are cocomplete $\infty$-categories, and morphisms are cocontinuous functors. Recall that this carries a symmetric monoidal structure (see \cite[Corollary 4.8.1.4]{lurie2017higher}) that we simply denote by $\otimes$. We write $\text{CAlg}(\infcat^L)$ for the $\infty$-category of commutative algebras in $\infcat^L$ with respect to $\otimes$. Objects in $\text{CAlg}(\infcat^L)$ are symmetric monoidal $\infty$-categories whose tensor product preserves colimits in both variables and morphisms are colimit preserving symmetric monoidal functors. 

\begin{definition}
Let $\C$ be any $\infty$-category with pullbacks, and let $D\colon \C\op\rightarrow \text{CAlg}(\infcat^L)$ be a functor. Let $f\colon X\rightarrow Y$ be a morphism in $\C$. We will say that $f$ is \textit{$D$-locally contractible} if $f^*\colon D(Y) \to D(X)$ admits a $D(Y)$-linear left adjoint $\pfs{f}$. It is called $D$-\emph{universally locally contractible}, if for any pullback square in $\C$ of the form \eqref{pb}
\begin{enumerate}[(i)]
	\item $g$ is $D$-locally contractible, and
	\item the natural map 
	\[\pfs{g}\pb{v}\rightarrow \pb{u}\pfs{f}\]
	adjoint to 
	\[\pb{v}\rightarrow\pb{v}\pb{f}\pfs{f}\simeq\pb{g}\pb{u}\pfs{f}\]
	is invertible. 
\end{enumerate}
If $D$ is a cocomplete six functor formalism on a geometric setup $\C$, we say that a morphism $f$ in $\C$ is $D$-(universally) locally contractible if it is so with respect to the underlying pullback functoriality of $D$. In case $Y$ is a terminal object, we will also say that $X$ is $D$-(universally) locally contractible.
When $\C=\LCH$ and $D=\Sh{-}{\E}$, we say that $f$ is $\E$-locally contractible rather than $\Sh{-}{\E}$-locally contractible. If moreover $\E=\Mod_R$ for an $\bE_{\infty}$-ring $R$, we say that $f$ is $R$-locally contractible.
\end{definition}

\begin{corollary}\label[corollary]{1ULA=>loccontr}
Let $D$ be a cocomplete six functor formalism on a geometric setup $\C$. Let $f\colon X\rightarrow Y$ be a morphism in $\C$ and assume that the monoidal unit $1_X\in D(X)$ is $f$-suave. Then $f$ is $D$-universally locally contractible, we have natural equivalences
    \[\pfs{f}(-)\simeq\pfp{f}(-\otimes\omega_f) \quad \text{ and } \quad
    \pbp{f}(-)\simeq\pb{f}(-)\otimes\omega_f\]
and for any pullback diagram in $\C$ of the form \eqref{pb}, the canonical map $v^!f^* \to g^*u^!$ is invertible.
\end{corollary}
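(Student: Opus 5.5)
The plan is to extract everything from the adjunction in $\cohcorr{D}(\C_{/Y})$ furnished by the suaveness of $1_X$, and then to use base change for suaveness.

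First, apply \cref{propULA} with $F=1_X$. Its right adjoint is $G\simeq\bD_f(1_X)=\omega_f$, and part (ii) gives a natural equivalence
\[\omega_f\otimes\pb{f}(-)\simeq\sHom{D(X)}{1_X}{\pbp{f}(-)}=\pbp{f}(-).\]
This is the second displayed formula. By \cref{remark:f-suave}, or directly by unravelling, this equivalence is the canonical map from \cref{terminology}\eqref{twisting-upper-shriek}. For the first formula, I would check that the functor $L:=\pfp{f}(-\otimes\omega_f)$ is left adjoint to $\pb{f}$. Apply \cref{propULA}(i): $\omega_f$ is itself $f$-suave with right adjoint $1_X$. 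Then \cref{propULA}(ii), applied to $F=\omega_f$ and $G=1_X$, gives
\[\pb{f}(-)\simeq\sHom{D(X)}{\omega_f}{\pbp{f}(-)}.\]
Hence
\[\Hom{D(X)}{M}{\pb{f}N}\simeq\Hom{D(X)}{M\otimes\omega_f}{\pbp{f}N}\simeq\Hom{D(Y)}{\pfp{f}(M\otimes\omega_f)}{N},\]
so $\pfs{f}=\pfp{f}(-\otimes\omega_f)$ exists. It is $D(Y)$-linear by the projection formula for $\pfp{f}$:
\[\pfp{f}(M\otimes\omega_f\otimes\pb{f}N)\simeq\pfp{f}(M\otimes\omega_f)\otimes N.\]
Here one has to check that the linearity structure agrees with the canonical one induced by the adjunction. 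I expect this to be a routine mate-calculus verification.

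Next comes universality. Given a pullback square \eqref{pb}, \cref{ULAstabilityunderbasechange} shows that $1_W=\pb{v}1_X$ is $g$-suave, so by the previous paragraph $g$ is $D$-locally contractible with $\pfs{g}\simeq\pfp{g}(-\otimes\omega_g)$. By \cref{unitULAdualizingsheafstableunderbc}, $\pb{v}\omega_f\simeq\omega_g$. Therefore
\[\pfs{g}\pb{v}M\simeq\pfp{g}(\pb{v}M\otimes\pb{v}\omega_f)\simeq\pfp{g}\pb{v}(M\otimes\omega_f)\simeq\pb{u}\pfp{f}(M\otimes\omega_f)=\pb{u}\pfs{f}M,\]
using proper base change. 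The main obstacle is confirming that this composite equivalence is the canonical Beck--Chevalley map $\pfs{g}\pb{v}\to\pb{u}\pfs{f}$. I would try to avoid a hands-on check by using the 2-functor of \cref{functorialcohcorr}. That 2-functor carries the adjunction $1_X\dashv\omega_f$ in $\cohcorr{D}(\C_{/Y})$ to the adjunction $1_W\dashv\omega_g$. Since $\pfs{f}$ and $\pb{f}$ are obtained by composing in the 2-category with these 1-morphisms, the 2-functoriality transports units and counits, and this identifies the mate. As a fallback, one can use that an equivalence between two left adjoints of the same functor pair corresponds to an equivalence of right adjoints. So it suffices to show that the right adjoint of the constructed equivalence is the identity-like map $\pb{g}\pf{u}\leftarrow\pf{v}\pb{f}$. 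This reduces to the analogous statement for $\pbp{}$, which is compatible by construction.

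Finally, for the map $v^!f^*\to g^*u^!$, substitute the formulas $\pbp{g}=\omega_g\otimes\pb{g}$ and $\pbp{f}=\omega_f\otimes\pb{f}$, the latter applied to $g$ and $f$. One does not need $u$ to be nice. Instead, use the base change $\pbp{v}\pb{f}$ versus $\pb{g}\pbp{u}$ obtained by passing to right adjoints in $\pfs{g}\pb{v}\simeq\pb{u}\pfs{f}$, together with $\pfp{}$-base change. Concretely, $\pbp{v}\pb{f}$ is right adjoint to $\pfs{f}\pfp{v}$. Similarly, $\pb{g}\pbp{u}$ is right adjoint to $\pfp{u}\pfs{g}$. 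The two left adjoints agree because
\[\pfp{u}\pfp{g}(-\otimes\omega_g)\simeq\pfp{f}\pfp{v}(-\otimes\pb{v}\omega_f)\simeq\pfp{f}(\pfp{v}(-)\otimes\omega_f)\]
by the projection formula. Passing to right adjoints yields the claimed invertibility, modulo the same compatibility-of-canonical-maps check.
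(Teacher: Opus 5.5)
Your proposal is correct and follows essentially the paper's route: the second formula is \cref{propULA}(ii) with $F=1_X$, universality comes from \cref{ULAstabilityunderbasechange}, \cref{unitULAdualizingsheafstableunderbc} and base change, and the last claim comes from comparing the left adjoints $\pfs{f}\pfp{v}$ and $\pfp{u}\pfs{g}$ via the projection formula. The only real difference is that you derive $\pfs{f}\simeq\pfp{f}(-\otimes\omega_f)$ from \cref{propULA}(i),(ii) applied to the suave object $\omega_f$ together with the tensor--hom adjunction, whereas the paper transports the adjunction between $\omega_f$ and $1_X$ directly along the 2-functor corepresented by $\id_Y$. (One slip in your fallback: the mate of the Beck--Chevalley map is $\pb{f}\pf{u}\to\pf{v}\pb{g}$, not the expression you wrote, which does not typecheck.)
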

\begin{proof}
The second equivalence is simply the special case of \cref{propULA}(ii) with $F=1_X$. 
Let us consider the 2-functor 
\[\Hom{\cohcorr{D}(\C_{/Y})}{Y}{-} \colon \cohcorr{D}(\C_{/Y})\rightarrow\Cat_\infty^L\]
corepresented by $\id_Y$. 
On Hom categories, it sends the morphism $1_X$ in $\Hom{\cohcorr{\D}(\C_{/Y})}{Y}{X}$ to the functor $f^*$ and the morphism $\omega_f$ in $\Hom{\cohcorr{\D}(\C_{/Y})}{X}{Y}$ to $f_!(- \otimes \omega_f)$. Now $1_X$ being $f$-suave implies by \cref{propULA} that $\omega_f$ viewed as a morphism $X \to Y$ is left adjoint to $1_X$, viewed as a morphism $Y \to X$. The above 2-functor perserves adjunctions, so we deduce that $f_!(- \otimes \omega_f)$ is left adjoint to $f^*$. It is $D(Y)$-linear by the projection formula. That the map $\pfs{g}\pb{v}\rightarrow \pb{u}\pfs{f}$ is an equivalence for all pullback diagrams \eqref{pb} then follows from the facts that $1_W$ is $g$-suave, so that $\pfs{g} \simeq g_!(-\otimes \omega_g)$, that the canonical map $v^*\omega_f \to \omega_g$ is invertible, and base-change. Finally, by passing to left adjoints, it finally suffices to show that the map $\pfs{f}\pfp{v} \to \pfp{u} \pfs{g}$ is invertible. This follows from the same arguments.
\end{proof}

\begin{proposition}\label[proposition]{thomiso}
    Let $D$ be a six functor formalism defined on a geometric setup $\C$. Let $f\colon X\rightarrow Y$ be a morphism whose diagonal $\Delta\colon X \to X \times_Y X$ is $D$-cohomologically proper\footnote{See \cite[Definition 6.10]{scholze2022six}. Note also that ``proper'' maps are $D$-cohomologically proper, see \cite[Remark 5.4]{scholze2022six} for the dual statement.} in $\C$, and assume that the monoidal unit $1_X\in D(X)$ is $f$-suave. Then there is a natural equivalence
    $$\pfs{{p_{1}}}\pf{\Delta}\pb{f}(-)\simeq\pbp{f}(-),$$
    where $\Delta$ denotes the diagonal map $X\rightarrow X\times_Y X$.
\end{proposition}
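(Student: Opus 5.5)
We plan to combine the identities of \cref{1ULA=>loccontr} applied to the base change of $f$ along itself, with the fact that $\Delta$ is cohomologically proper, so that $\pf{\Delta}\simeq\pfp{\Delta}$. Consider the pullback square \eqref{pbsquareULA}. Since $1_X$ is $f$-suave, \cref{ULAstabilityunderbasechange} shows that $1_{X\times_Y X}$ is $p_1$-suave. By \cref{1ULA=>loccontr} applied to $p_1$ we then have
\[\pfs{p_1}(-)\simeq \pfp{p_1}(-\otimes\omega_{p_1}).\]
By \cref{unitULAdualizingsheafstableunderbc} (or \cref{remark:f-suave}) we also have $\omega_{p_1}\simeq \pb{p_2}\omega_f$.

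Next we use the properness of $\Delta$. Cohomological properness in the sense of \cite[Definition 6.10]{scholze2022six} gives a natural equivalence $\pfp{\Delta}\simeq\pf{\Delta}$. If the definition is formulated only as existence of such a comparison, we invoke it exactly in that form. With this, for $M\in D(Y)$ we compute
\[
\pfs{p_1}\pf{\Delta}\pb{f}M \simeq \pfp{p_1}\bigl(\pfp{\Delta}\pb{f}M\otimes \pb{p_2}\omega_f\bigr)
\simeq \pfp{p_1}\pfp{\Delta}\bigl(\pb{f}M\otimes \pb{\Delta}\pb{p_2}\omega_f\bigr),
\]
where the second step is the projection formula for $\Delta$. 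Since $p_1\circ\Delta=\id_X$ and $p_2\circ\Delta=\id_X$, the right-hand side is $\pb{f}M\otimes\omega_f$. By the second equivalence of \cref{1ULA=>loccontr}, this is $\pbp{f}M$.

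The main obstacle is the naturality and coherence of these identifications, rather than any single step. One must check that the equivalence $\pf{\Delta}\simeq\pfp{\Delta}$ is compatible with the projection formula, and that the composite is natural in $M$. Since every step above is a natural equivalence of functors, the composite is natural, and this should suffice.

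We would first make sure that the cited notion of $D$-cohomological properness indeed provides $\pfp{\Delta}\simeq\pf{\Delta}$. In \cite{scholze2022six} this is the meaning of $\Delta$ being "cohomologically proper", namely that $1$ is $\Delta$-prim with trivial codualizing object. Should only a twisted version be available, we would carry the twist through the same computation.
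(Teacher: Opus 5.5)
Your proposal is correct and follows the paper's proof essentially step for step. You base-change suaveness of $1_X$ to $p_1$, use $\pfs{{p_1}}\simeq\pfp{{p_1}}(-\otimes\omega_{p_1})$ together with $\omega_{p_1}\simeq\pb{p_2}\omega_f$, replace $\pf{\Delta}$ by $\pfp{\Delta}$ via cohomological properness (the paper cites \cite[Proposition 6.11]{scholze2022six} for this), and then conclude with the projection formula and $p_1\Delta=p_2\Delta=\id_X$ to reach $\pb{f}(-)\otimes\omega_f\simeq\pbp{f}(-)$.
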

\begin{proof}
By \cite[Proposition 6.11]{scholze2022six}, we have an equivalence $\Delta_! \simeq \Delta_*$ by the assumption that $\Delta$ is $D$-cohomologically proper. Moreover, $1_{X\times_Y X}$ is $p_1$-suave by \cref{ULAstabilityunderbasechange}. Hence \cref{1ULA=>loccontr} applies to the map $p_1\colon X \times_Y X \to X$ and to $f$. Finally we recall that the canonical map $p_2^*\omega_f \to \omega_{p_1}$ is invertible, see \cref{charunitULA}. This determines the following sequence of equivalences
\begin{align*}
	\pfs{{p_1}}\pf{\Delta}\pb{f}(-) &\simeq \pfp{{p_1}}(\pfp{\Delta}\pb{f}(-)\otimes\omega_{p_1}) \\
	&\simeq\pfp{{p_1}}\pfp{\Delta}(\pb{f}(-)\otimes\pb{\Delta}\omega_{p_1}) \\
	&\simeq\pb{f}(-)\otimes\pb{\Delta}\omega_{p_1} \\
	&\simeq\pb{f}(-)\otimes\pb{\Delta}\pb{p_2}\omega_f \\
	&\simeq\pb{f}(-)\otimes\omega_f \simeq f^!(-)
\end{align*}
as claimed.
\end{proof}

\begin{remark}\label[remark]{rmk:lowersharpdualizing=thomspTX}
Let $X$ be a topological manifold, and let $a:X\rightarrow\ast$ be the unique map. It follows for example from \cref{charlocalweakcontr} and \cref{kunneth=>1ULA=loccontr} that $\s{X}\in\Sh{X}{\spectra}$ is $a$-suave. As an immediate consequence of \cref{thomiso} and localization sequences, we see that the spectrum $\pfs{a}\omega_X$ coincides with the Thom spectrum of the tangent microbundle of $X$, as defined for example in \cite{holm1967microbundles}. 
\end{remark}

\subsection{K\"unneth}

Let $\C$ be any $\infty$-category with pullbacks and $D \colon \C\op\rightarrow \text{CAlg}(\infcat^L)$ a functor. For a cospan $X\rightarrow Z\leftarrow Y$ in $\C$, there is then a canonical functor 
\begin{equation}\label{kunncomp}
    D(X)\otimes_{D(Z)} D(Y)\rightarrow D(X\times_Z Y).
\end{equation}

\begin{definition}
We say that $D\colon \C\op\rightarrow \text{CAlg}(\infcat^L)$ satisfies \textit{the K\"unneth formula} if
for any cospan $X\rightarrow Z\leftarrow Y$ in $\C$, the functor \eqref{kunncomp} is an equivalence. 

    We say that a cocomplete six functor formalism $D$ satisfies the K\"unneth formula if its underlying pullback functoriality $\C\op\rightarrow \text{CAlg}(\infcat^L)$ satisfies the K\"unneth formula.
\end{definition}

\begin{example}
Let $\LCH$ be the category of locally compact Hausdorff spaces and $\D$ a bicomplete stable, or presentable $\infty$-category, equipped with a closed symmetric monoidal structure (e.g.\ $\D= \Sp$ or, more generally, $\D = \Mod_R$ for some $\bE_\infty$-ring spectrum $R$). Then the six functor formalism $X \mapsto \Sh{X}{\D}$ on $\LCH$, see \cite{volpe2023operations}, satisfies the K\"unneth formula. Indeed, by \cite[Corollary 1.10]{aoki2023sheavesspectrum}, the statement holds true when $\D = \Ani$\footnote{Note here that the locale functor sends pullbacks to pushouts when restricted to locally compact Hausdorff spaces (see \cite[Lemma 7.6]{volpe2023operations}).} and by \cite[Corollaries 2.24 \& 5.16]{volpe2023operations} we have $\Sh{-}{\D} \simeq \Sh{-}{\Ani}\otimes \D$. We warn the reader, that in general, the functor sending a topological space to its locale of open subsets does not preserve pullbacks. Therefore, restricting to locally compact Hausdorff spaces is indeed necessary here.
\end{example}

\begin{lemma}\label[lemma]{lemma:embedding}
Let $D$ be a cocomplete six functor formalism on a geometric setup $\C$ which satisfies the K\"unneth formula. Then the functor corepresented by a terminal object $\ast$ of $\C$ induces a 2-fully faithful embedding 
\[ \Phi\colon \cohcorr{D}(\C) \to \Mod_{D(\ast)}(\Cat_\infty^L),\]
i.e., for any two objects $X$ and $Y$ the induced functor $\Phi_{X,Y}$ on Hom categories is an equivalence.
\end{lemma}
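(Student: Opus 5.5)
The plan is to make the 2-functor $\Phi = \Hom{\cohcorr{D}(\C)}{\ast}{-}$ explicit and then use the K\"unneth formula together with the self-duality of $D(X)$ as a $D(\ast)$-module to identify the induced functors on Hom categories.

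First, I describe $\Phi$ concretely. On objects it sends $X$ to $\Hom{\cohcorr{D}(\C)}{\ast}{X} = D(\ast\times X) = D(X)$, which is a $D(\ast)$-module in $\Cat_\infty^L$ via $\pb{a_X}$ and $\otimes$; here $a_X\colon X\to\ast$ denotes the terminal map. On Hom categories, a kernel $K\in D(X\times Y)$ goes to post-composition with $K$. Unravelling the composition law of $\cohcorr{D}(\C)$ recalled after \cref{defcohcorr}, this is the integral transform
\[ F\mapsto \pfp{{p_Y}}(\pb{p_X}F\otimes K)\colon D(X)\to D(Y). \]
This functor is cocontinuous because $\pfp{}$, $\pb{}$ and $\otimes$ are left adjoints. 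It is $D(\ast)$-linear by the projection formula, since $\pb{p_X}\pb{a_X}=\pb{p_Y}\pb{a_Y}$. So $\Phi_{X,Y}$ lands in $\Fun^L_{D(\ast)}(D(X),D(Y))$, and I must show it is an equivalence $D(X\times Y)\simeq\Fun^L_{D(\ast)}(D(X),D(Y))$.

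Second, I show that $D(X)$ is dualizable and self-dual in $\Mod_{D(\ast)}(\Cat_\infty^L)$. Every object of $\cohcorr{D}(\C)$ is self-dual, because the self-duality of objects of $\Corr(\C)$ from \cref{constructionofcohcorr} transports along $D$. The evaluation is the kernel $\Delta_!1_X$ viewed in $D(X\times X)$, i.e.\ the correspondence $X\times X\leftarrow X\to\ast$, and the coevaluation is the same kernel read in the other direction. Applying $\Phi$, I get candidate maps
\[ \mathrm{ev}\colon D(X)\otimes_{D(\ast)}D(X)\xrightarrow{\text{K\"unneth}}D(X\times X)\xrightarrow{\pb{\Delta}}D(X)\xrightarrow{\pfp{a_X}}D(\ast) \]
and
\[ \mathrm{coev}\colon D(\ast)\xrightarrow{\pb{a_X}}D(X)\xrightarrow{\pfp{\Delta}}D(X\times X)\simeq D(X)\otimes_{D(\ast)}D(X). \]
The triangle identities are images of the triangle identities in $\Corr(\C)$ under the composition rule, computed with base change and the projection formula. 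The K\"unneth formula is exactly what allows $D(X\times X)$ to be treated as $D(X)\otimes_{D(\ast)}D(X)$, so that these functors really are module maps.

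Third, I conclude by duality:
\[ \Fun^L_{D(\ast)}(D(X),D(Y))\simeq D(X)^\vee\otimes_{D(\ast)}D(Y)\simeq D(X)\otimes_{D(\ast)}D(Y)\simeq D(X\times Y). \]
It remains to check that the composite of these equivalences is $\Phi_{X,Y}$. Tracing $K\in D(X\times Y)$ through them, the functor obtained is $F\mapsto (\mathrm{ev}\otimes\id)(F\boxtimes K)$, which is $\pfp{{p_Y}}(\pb{p_X}F\otimes K)$ after base change. This matches the formula from the first step.

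The main obstacle is the coherence in the second step. One must make sure that the triangle identities hold in $\Mod_{D(\ast)}(\Cat_\infty^L)$ and not just up to unspecified equivalences on objects, and that the K\"unneth identifications are compatible with the composition law of $\cohcorr{D}$. I would handle this by noting that $\Phi$ is a 2-functor, so it carries the self-duality data of $X$ in $\cohcorr{D}(\C)$ to duality data. The only input needed is that $\Phi$ is compatible with the monoidal structures, i.e.\ $\Phi(X\times Y)\simeq\Phi(X)\otimes_{D(\ast)}\Phi(Y)$, which is precisely the K\"unneth formula applied to the cospan $X\to\ast\leftarrow Y$. Once $\Phi$ is known to be symmetric monoidal, the identification of Hom categories is formal.
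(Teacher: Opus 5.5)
Your proposal is correct and is essentially the paper's argument. The self-duality of $X$ in $\Corr(\C)$, witnessed by $X\times X\leftarrow X\to\ast$ and transported via K\"unneth, makes $D(X)$ self-dual in $\Mod_{D(\ast)}(\Cat_\infty^L)$ with evaluation $(a_X)_!\,\Delta^*$. Then $\Phi_{X,Y}$ is identified with the chain $D(X\times Y)\simeq D(X)\otimes_{D(\ast)}D(Y)\simeq D(X)^\vee\otimes_{D(\ast)}D(Y)\simeq\Fun_{D(\ast)}(D(X),D(Y))$, which is exactly the paper's commutative square.

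Your closing reformulation via symmetric monoidality of $\Phi$ needs more than objectwise K\"unneth equivalences: the comparison maps must assemble into a lax monoidal structure on $\Phi$. The paper handles this coherence at the same level of detail as you do.
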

\begin{proof}
First, observe that by \cite[Remark 4.1.7]{heyer20246}, the functor corepresented by $\ast$ induces a functor \[ \Phi\colon \cohcorr{D}(\C) \to \Mod_{D(\ast)}(\Cat_\infty^L).\] We show that $\Phi$ is fully faithful.

Let us denote by $\D(X)^\vee = \Fun_{D(\ast)}(D(X),D(\ast))$ the $D(\ast)$-linear dual of $D(X)$ and by $a$ the map $X \to \ast$. The map $D(X) \otimes_{D(\ast)} D(X) \to D(X\times X) \xrightarrow{\Delta^*} D(X) \xrightarrow{a_!} D(\ast)$ is then adjoint to a $D(\ast)$-linear map $D(X) \to D(X)^\vee$. We then consider the following commutative diagram
\[ \begin{tikzcd}
	D(X) \otimes_{D(\ast)} D(Y) \ar[r] \ar[d] & D(X)^\vee \otimes_{D(\ast)} D(Y) \ar[d] \\
	D(X\times Y) \ar[r] & \Fun_{D(\ast)}(D(X),D(Y))
\end{tikzcd}\]
in which the lower horizontal map is induced by the functor corepresented by $\ast$ on Hom categories. Now if $D$ satisfies the K\"unneth formula, the left vertical map is an isomorphism. Moreover, since the object $X$ in $\Corr(\C)$ is self-dual, witnessed  by the span $X \times X \leftarrow X \to \ast $, we find that the above constructed map $D(X) \to D(X)^\vee$ is an isomorphism and that $D(X)$ is $D(\ast)$-dualisable. This implies that in the above diagram also the top horizontal and the right vertical map is an isomorphism, and hence so is the bottom horizontal one.
\end{proof}

\begin{corollary}\label[corollary]{kunneth=>1ULA=loccontr}
Let $D$ be a cocomplete six functor formalism on a geometric setup $\C$ which satisfies the K\"unneth formula and $f\colon X \to Y$ a morphism in $\C$.
Then the following assertions are equivalent:
\begin{enumerate}[(i)]
	\item\label{item:i} $f$ is $D$-locally contractible
	\item\label{item:ii} the monoidal unit ${1}_X\in D(X)$ is $f$-suave;
	\item\label{item:iii} $f$ is $D$-universally locally contractible.
\end{enumerate}
\end{corollary}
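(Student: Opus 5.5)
We prove the cycle of implications (ii)$\Rightarrow$(iii)$\Rightarrow$(i)$\Rightarrow$(ii). The implication (ii)$\Rightarrow$(iii) is exactly \cref{1ULA=>loccontr}, and (iii)$\Rightarrow$(i) is immediate from the definition, taking the identity base change. All the work therefore lies in (i)$\Rightarrow$(ii). The idea is to reduce suaveness in $\cohcorr{D}(\C_{/Y})$ to an adjunction between $D(Y)$-linear functors, using the fully faithful embedding of \cref{lemma:embedding}.

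To set this up, note first that the induced formalism on $\C_{/Y}$ is again cocomplete, and that it satisfies the K\"unneth formula. Indeed, fibre products in $\C_{/Y}$ are computed in $\C$, and the terminal object of $\C_{/Y}$ is $\id_Y$. We may therefore apply \cref{lemma:embedding} with $\C_{/Y}$ in place of $\C$. This gives a 2-functor
\[\Phi\colon \cohcorr{D}(\C_{/Y}) \to \Mod_{D(Y)}(\Cat_\infty^L),\]
corepresented by $\id_Y$, which is fully faithful on Hom categories. As computed in the proof of \cref{1ULA=>loccontr}, $\Phi$ sends $1_X$, viewed as a morphism $Y\to X$, to $f^*$. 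It sends an object $G\in D(X)$, viewed as a morphism $X \to Y$, to $f_!(-\otimes G)$.

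Now assume (i), so that $f^*$ has a $D(Y)$-linear left adjoint $f_\sharp$. Since $\Phi_{X,Y}$ is an equivalence $D(X)\simeq \Fun_{D(Y)}(D(X),D(Y))$, there is an object $G\in D(X)$ with $f_!(-\otimes G)\simeq f_\sharp$; unwinding this, $G\simeq \omega_f$. The unit and counit of $f_\sharp\dashv f^*$ are natural transformations between composites of functors in the image of $\Phi$. Because $\Phi$ is a 2-functor, these composites are the images of the corresponding composites $1_X\circ G\in \Hom{}{X}{X}$ and $G\circ 1_X\in\Hom{}{Y}{Y}$. The Hom functors $\Phi_{X,X}$ and $\Phi_{Y,Y}$ are also equivalences, so the unit and counit lift uniquely to 2-morphisms $\id_X\Rightarrow 1_X\circ G$ and $G\circ 1_X\Rightarrow \id_Y$ in $\cohcorr{D}(\C_{/Y})$. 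The triangle identities can then be checked after applying $\Phi$, because $\Phi$ is faithful on 2-morphisms. Hence $G$ is left adjoint to $1_X$. By \cref{propULA}(i), with the roles of left and right adjoint swapped, $1_X$ is then a left adjoint, i.e.\ $1_X$ is $f$-suave.

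The main obstacle is this lifting step. It requires that $\Phi$ is locally fully faithful as a functor of $(\infty,1)$-categories on every Hom category, and not merely essentially surjective on the $(X,Y)$ Hom. Only then does an adjunction in the target reflect to an adjunction in the source. \cref{lemma:embedding} supplies this for all pairs of objects, and the general principle is that a 2-functor which is fully faithful on Hom categories reflects adjunctions between objects in its image, since adjunction data and the triangle identities live entirely in Hom categories.
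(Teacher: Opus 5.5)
Your proposal is correct and follows the paper's proof. You apply \cref{lemma:embedding} to $\C_{/Y}$, use that $\Phi$ sends $1_X\colon Y\to X$ to $f^*$, and reflect the $D(Y)$-linear adjunction $f_\sharp\dashv f^*$ to an adjunction $G\dashv 1_X$ in $\cohcorr{D}(\C_{/Y})$; (ii)$\Rightarrow$(iii) is \cref{1ULA=>loccontr} and (iii)$\Rightarrow$(i) holds by definition.

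Beyond the paper, you spell out two steps it leaves implicit: lifting the unit, counit and triangle identities through the locally fully faithful $\Phi$, and passing from ``$1_X\colon Y\to X$ is a right adjoint'' to ``$1_X$ is $f$-suave'' via \cref{propULA}(i). Your aside $G\simeq\omega_f$ is not needed for the argument, and it is only justified afterwards via \cref{propULA}(ii).
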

\begin{proof}
We note that the functor $\Phi_{Y,X} \colon D(X) = \Hom{\cohcorr{\D}(\C_{/Y})}{Y}{X} \to \Fun_{D(Y)}(D(Y),D(X))$ sends $1_X$ to $f^*$. \cref{lemma:embedding} applied to $\C_{/Y}$ and the induced six functor formalism implies that the condition that $f^*$ \emph{admits} a $D(Y)$-linear left adjoint is equivalent to the condition that $1_X$, viewed as a morphism $X \to Y$ in $\cohcorr{D}(\C_{/Y})$ \emph{is} a left adjoint, which is the definition of $1_X$ being $f$-suave. Hence (i) implies (ii). By \cref{1ULA=>loccontr}, (ii) implies (iii) even without assuming that $D$ satisfies the K\"unneth formula. Finally, (iii) implies (i) by definition.
\end{proof}

\begin{corollary}\label[corollary]{sharp=shriektensdualwithkunnet}
Let $D$ be a cocomplete six functor formalism on a geometric setup $\C$ which satisfies the K\"unneth formula. Let $f\colon X\rightarrow Y$ be a map in $\C$, and assume that $\pb{f}$ admits a $D(Y)$-linear left adjoint $\pfs{f}$. Then there are natural equivalences of $D(Y)$-linear functors
\[\pfs{f}\simeq\pfp{f}(-\otimes\omega_f), \quad  \pb{f}\simeq\sHom{D(X)}{\omega_f}{\pbp{f}(-)}, \quad \text{ and } \quad  \pfs{f}\pb{f} \simeq \pfp{f}\pbp{f}.\]
\end{corollary}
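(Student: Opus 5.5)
The plan is to reduce everything to \cref{kunneth=>1ULA=loccontr} and \cref{1ULA=>loccontr}. Since $D$ satisfies the K\"unneth formula and $\pb{f}$ admits a $D(Y)$-linear left adjoint, i.e.\ $f$ is $D$-locally contractible, \cref{kunneth=>1ULA=loccontr} shows that $1_X$ is $f$-suave. By \cref{1ULA=>loccontr} we then obtain the first equivalence $\pfs{f}\simeq\pfp{f}(-\otimes\omega_f)$. Its $D(Y)$-linearity comes from the projection formula: both sides are $D(Y)$-linear left adjoints of $\pb{f}$, and the equivalence is induced by uniqueness of adjoints in $\Mod_{D(Y)}(\Cat^L_\infty)$.

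For the second equivalence, I will pass to right adjoints in the first one. The functor $\pfp{f}(-\otimes\omega_f)$ is the composite of $-\otimes\omega_f$ and $\pfp{f}$, so its right adjoint is $\sHom{D(X)}{\omega_f}{\pbp{f}(-)}$. The right adjoint of $\pfs{f}$ is $\pb{f}$. Uniqueness of right adjoints then gives $\pb{f}\simeq\sHom{D(X)}{\omega_f}{\pbp{f}(-)}$. For linearity, $\pb{f}$ is symmetric monoidal. On the other side one uses $\pbp{f}\simeq \pb{f}(-)\otimes\omega_f$ from \cref{1ULA=>loccontr}, which is visibly $D(Y)$-linear, together with the invertibility of $1_X\to\sHom{D(X)}{\omega_f}{\omega_f}$ from \cref{remark:f-suave}. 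Combining these, $\sHom{D(X)}{\omega_f}{\pb{f}(-)\otimes\omega_f}$ identifies with $\pb{f}(-)$ compatibly with the $D(Y)$-module structure.

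For the third equivalence, I compose. We have $\pfs{f}\pb{f}\simeq \pfp{f}(\pb{f}(-)\otimes\omega_f)\simeq\pfp{f}\pbp{f}(-)$, using $\pbp{f}\simeq\pb{f}(-)\otimes\omega_f$ from \cref{1ULA=>loccontr}. Alternatively, by the projection formula both sides are $\pfp{f}(\omega_f)\otimes(-)$, which makes the $D(Y)$-linearity evident.

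The only delicate point is the bookkeeping of $D(Y)$-linear structures, i.e.\ checking that the equivalences are equivalences of $D(Y)$-linear functors rather than just of underlying functors. I plan to handle this uniformly by working inside the 2-category $\cohcorr{D}(\C_{/Y})$ and applying the $D(Y)$-linear 2-functor $\Phi$ of \cref{lemma:embedding}. There the adjunction $\omega_f\dashv 1_X$ is a genuine 2-categorical adjunction, so all three equivalences arise as images of identifications of composites of 1-morphisms in $\cohcorr{D}(\C_{/Y})$. In particular, $\omega_f\circ 1_X$ corresponds both to $\pfs{f}\pb{f}$ and to $\pfp{f}\pbp{f}$, and linearity follows automatically.
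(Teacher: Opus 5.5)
Your proposal is correct and follows the paper's proof. \cref{kunneth=>1ULA=loccontr} shows that $1_X$ is $f$-suave, \cref{1ULA=>loccontr} then gives the first and third equivalences, and the second follows by passing to right adjoints; your extra bookkeeping of the $D(Y)$-linear structures (e.g.\ via $\Phi$ from \cref{lemma:embedding}) goes beyond what the paper spells out but is consistent with it.
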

\begin{proof}
The first and last equivalence follows from \cref{kunneth=>1ULA=loccontr} and \cref{1ULA=>loccontr} and the second follows from the first by passing to right adjoints. 
\end{proof}

The implication $\eqref{item:i} \Rightarrow \eqref{item:iii}$ in \cref{kunneth=>1ULA=loccontr} in fact holds true more generally. It will be convenient to record the following here.

\begin{proposition}\label[proposition]{kunnetheasyloccontr}
Let $\C$ be any $\infty$-category with pullbacks, and let $D \colon \C\op\rightarrow \CAlg(\infcat^L)$ be
a functor which satisfies the K\"unneth formula. Let $f \colon X\rightarrow Y$ be a morphism in $\C$, and assume that $\pb{f}$ admits a $D(Y)$-linear left adjoint $\pfs{f}$. Then $f$ is $D$-locally contractible. 
\end{proposition}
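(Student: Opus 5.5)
The statement should be read as the more general form of the implication \eqref{item:i}$\Rightarrow$\eqref{item:iii} of \cref{kunneth=>1ULA=loccontr}. So I will prove that $f$ is $D$-\emph{universally} locally contractible, working entirely in $\Mod_{D(Y)}(\Cat_\infty^L)$ and using no six functor formalism.

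\textbf{Step 1: the adjunction is internal to $D(Y)$-modules.} The functor $f^*$ is symmetric monoidal and colimit preserving, so it is a morphism in $\Mod_{D(Y)}(\Cat_\infty^L)$. By hypothesis its left adjoint $f_\sharp$ is $D(Y)$-linear, meaning the lax (oplax) linearity structure it inherits as a left adjoint is invertible; this is exactly the projection formula $f_\sharp(-\otimes f^*(-))\simeq f_\sharp(-)\otimes(-)$. I will argue, by the standard doctrinal adjunction argument, that the unit and counit are then $D(Y)$-linear transformations. Hence $f_\sharp\dashv f^*$ is an adjunction in the $(\infty,2)$-category $\Mod_{D(Y)}(\Cat_\infty^L)$. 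I expect this to be the main technical point, and I would cite the relevant statement from Lurie or Heyer--Mann on linear adjoints rather than reprove it.

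\textbf{Step 2: base change the adjunction.} Take a pullback square \eqref{pb} with $u\colon Z\to Y$. The functor $-\otimes_{D(Y)}D(Z)\colon \Mod_{D(Y)}(\Cat_\infty^L)\to\Mod_{D(Z)}(\Cat_\infty^L)$ is a 2-functor, so it preserves adjunctions. It therefore produces a $D(Z)$-linear adjunction
\[ f_\sharp\otimes\id \dashv f^*\otimes \id \]
between $D(Y)\otimes_{D(Y)}D(Z)\simeq D(Z)$ and $D(X)\otimes_{D(Y)}D(Z)$. By the K\"unneth formula the latter is identified with $D(W)$. Under this identification $f^*\otimes\id$ becomes $g^*$, by naturality of the comparison map \eqref{kunncomp} in the cospan. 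Hence $g^*$ has a $D(Z)$-linear left adjoint $g_\sharp\simeq f_\sharp\otimes\id$, which proves condition (i) of universal local contractibility.

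\textbf{Step 3: Beck--Chevalley.} Under the K\"unneth identifications, $u^*$ is $\id_{D(Y)}\otimes_{D(Y)}D(Z)$, i.e.\ the unit map $D(Y)\to D(Z)$, and $v^*$ is the map $D(X)\to D(X)\otimes_{D(Y)}D(Z)$, $M\mapsto M\otimes 1$. Both composites $u^*f_\sharp$ and $g_\sharp v^*$ are then identified with $f_\sharp\otimes(D(Y)\to D(Z))$. The mate $g_\sharp v^*\to u^*f_\sharp$ is built from the unit of $f_\sharp\otimes\id\dashv f^*\otimes\id$, which is $(\text{unit of } f_\sharp\dashv f^*)\otimes\id$, followed by the counit. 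It is therefore the image under the 2-functor $-\otimes_{D(Y)}(D(Y)\to D(Z))$ of the mate of the trivially commuting square. That mate is the identity, so the Beck--Chevalley map is invertible, which is condition (ii).

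The only delicate bookkeeping is checking that the K\"unneth identification of $D(W)$ is compatible with both $v^*$ and $g^*$. This holds because \eqref{kunncomp} is natural in maps of cospans, applied to the cospans $X\to Y\leftarrow Y$ and $Y\to Y\leftarrow Z$ mapping into $X\to Y\leftarrow Z$.
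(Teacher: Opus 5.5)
Your proposal is correct and is essentially the paper's argument: base-change the $D(Y)$-linear adjunction $f_\sharp\dashv f^*$ along the 2-functor $D(Z)\otimes_{D(Y)}(-)$, identify the result with $g_\sharp\dashv g^*$ via K\"unneth, and compare $g_\sharp v^*$ with $u^*f_\sharp$. You are also right that the intended conclusion is \emph{universal} local contractibility, and your Step 3 usefully makes explicit that the canonical Beck--Chevalley map (not merely some equivalence) is invertible, a point the paper's chain of equivalences leaves implicit.
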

\begin{proof}
Consider the pullback square in $\C$ as in \eqref{pb}. The functor 
\[D(Z)\otimes_{D(Y)}(-) \colon \text{Mod}_{D(Y)}(\Cat_\infty^L) \rightarrow\text{Mod}_{D(Z)}(\Cat_\infty^L)\]
preserves adjunctions, and since $D(-)$ sends (\ref{pb}) to a pushout square of symmetric monoidal $\infty$-categories, we obtain an identification $D(Z)\otimes_{D(Y)}\pb{f}\simeq\pb{g}$. Hence, we see that the functor $\pfs{g}\coloneqq D(Z)\otimes_{D(Y)}\pfs{f}$ is a $D(Z)$-linear left adjoint of $\pb{g}$. Moreover, we have equivalences
\begin{align*}
	\pfs{g}\pb{v}&\simeq(D(Z)\otimes_{D(Y)}\pfs{f})(\pb{u}\otimes_{D(Y)}D(Z)) \\
	&\simeq \pb{u}\otimes_{D(Y)}\pfs{f} \\
	&\simeq \pb{u}\pfs{f},
\end{align*}
proving the proposition.
\end{proof}

\section{Sheaves on (LCH) spaces}

The scope of this section is to explore local contractibility and cohomological smoothness in topology. We give multiple examples of $\Ani$-locally contractible spaces, and compare the sheaf theoretical local contractibility with a more familiar notion of local weak contractibility. We thus study general conditions implying that sheaf cohomology and singular cohomology of a topological space agree. Moving on to cohomological smoothness, we highlight how this notion gives bounds to cohomological dimension. We conclude the section by showing that the shape of any locally contractible CH space, whose dualizing sheaf is locally constant, is a Poincar\'e duality complex, with Spivak normal fibration given by the tensor inverse to the dualizing sheaf.

\subsection{Local contractibility}

The goal of this section is to explore the notion of $D$-local contractibility in the special case of $D = \Sh{-}{\D}$, which we recall satisfies the K\"unneth theorem. We start by providing a convenient class of examples of spaces $X$ which are $\Ani$-locally contractible.

\begin{terminology}
Let $X$ be a topological space. We denote by $\shape(X)$ its shape in the sense of $\infty$-topos theory, i.e.\ the shape of $\Sh{X}{\Ani}$. That is, it is the composite $a_*a^* \in \Fun^{\acc,\lex}(\Ani,\Ani) \simeq \Pro(\Ani)$, where $a \colon X \to \ast$ denotes the unique map. A space is called of constant shape if its shape lies in the image of the full inclusion $\Ani \subseteq \Pro(\Ani)$, and locally of constant shape if every open subset is of constant shape. By \cite[Prop.\ A.1.8]{lurie2017higher}, a space is locally of constant shape if and only if the functor $a^* \colon \Ani \to \Sh{X}{\Ani}$ admits a left adjoint $\pfs{a}$, in which case there is a canonical equivalence $\shape(X) \simeq \pfs{a}(\ast)$. More generally, if $\C$ is a closed symmetric monoidal category with unit $1$, and $X$ is $\C$-locally contractible, we write $\sh_\C(X)$ to denote $\pfs{a}^{\C}(1)$.
\end{terminology}

\begin{lemma}
A topological space $X$ is $\Ani$-locally contractible if $X$ is locally of constant shape. If $X$ is LCH then it is $\Ani$-universally locally contractible if and only if it is $\Ani$-locally contractible.
\end{lemma}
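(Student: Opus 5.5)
Since $\Ani$ is cartesian closed, $\Sh{X}{\Ani}$ with the cartesian product is an object of $\CAlg(\infcat^L)$, and $a^*\colon \Ani \to \Sh{X}{\Ani}$ is symmetric monoidal and colimit preserving. So there are two things to show. First, if $X$ is locally of constant shape then $a^*$ has a left adjoint $\pfs{a}$ which is $\Ani$-linear, i.e.\ satisfies the projection formula. Second, for LCH spaces this linear left adjoint exists universally.

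For the first claim, Lurie \cite[Prop.\ A.1.8]{lurie2017higher} recalled above gives that $X$ is locally of constant shape if and only if $a^*$ admits a left adjoint $\pfs{a}$. It then remains to check that the canonical map
\[\pfs{a}(F\times a^*K)\to \pfs{a}F\times K\]
is an equivalence for $F\in\Sh{X}{\Ani}$ and $K\in\Ani$. Both sides preserve colimits in $K$: on the left because $\Sh{X}{\Ani}$ is an $\infty$-topos, so products distribute over colimits and $a^*$, $\pfs{a}$ preserve colimits; on the right because $\Ani$ is cartesian closed. Since every anima is a colimit of copies of $\ast$, it suffices to take $K=\ast$, where the map is the identity. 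Hence $\pfs{a}$ is $\Ani$-linear. The same argument shows that the linearity condition is automatic for any left adjoint of $a^*$ when $D(\ast)=\Ani$, and this is what makes the converse of the first sentence hold as well: $\Ani$-local contractibility coincides with locally constant shape.

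For the second claim, universal local contractibility trivially implies local contractibility, so assume $X$ is LCH and $\Ani$-locally contractible. On $\LCH$ the functor $\Sh{-}{\Ani}$ satisfies the Künneth formula (the example above, via \cite{aoki2023sheavesspectrum}). Apply \cref{kunnetheasyloccontr} to $f=a\colon X\to\ast$: for every pullback square \eqref{pb} in $\LCH$ with $Y=\ast$, the base change $g$ is locally contractible and the Beck--Chevalley map $\pfs{g}\pb{v}\to\pb{u}\pfs{a}$ is invertible. That is exactly universal local contractibility of $X$. The only point needing care is that the Künneth equivalence $\Sh{X}{\Ani}\otimes\Sh{Z}{\Ani}\simeq\Sh{X\times Z}{\Ani}$ holds for all LCH $Z$; this is why we restrict to $\LCH$, where the locale functor preserves these pullbacks. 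For general topological spaces this step is the main obstacle and may fail.
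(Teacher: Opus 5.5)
Your proposal is correct and follows the paper's proof: the first part is Lurie's \cite[Prop.\ A.1.8]{lurie2017higher} as recalled in the terminology paragraph, and the second is \cref{kunnetheasyloccontr} applied to $\Sh{-}{\Ani}$ on $\LCH$, using the K\"unneth formula there. Your explicit check of the projection formula for $\pfs{a}$ (reducing to $K=\ast$ by colimit-preservation in $K$) fills in the $\Ani$-linearity that the paper leaves implicit. You are also right to use \cref{kunnetheasyloccontr} for \emph{universal} local contractibility: that is what its proof establishes, even though the printed statement only says ``locally contractible''.
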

\begin{proof}
The first part was explained above, and the second follows in addition from \cref{kunnetheasyloccontr} applied to the functor $X \mapsto \Sh{X}{\Ani}$.
\end{proof}

We collect the following relevant properties.

\begin{lemma}\label[lemma]{lemma:products}
Let $X$ be a $\Ani$-universally locally contractible space, and let $Y$ be $\Ani$-locally contractible. Then $X\times Y$ is $\Ani$-locally contractible.
\end{lemma}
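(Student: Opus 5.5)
We factor the structure map of the product as
\[ X\times Y \xrightarrow{p_2} Y \xrightarrow{b} \ast, \]
where $p_2$ is the projection and $b$ the unique map, and we build the left adjoint of $(b\circ p_2)^* \simeq p_2^*b^*$ one step at a time. We work with $D = \Sh{-}{\Ani}$ on topological spaces. Here $\Ani$-local contractibility means that pullback admits a left adjoint satisfying the projection formula, which is $D(\ast)$-linearity; since $D(\ast)=\Ani$ is generated under colimits by the unit, this is automatic for colimit-preserving functors.

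\textbf{Step 1.} Consider the pullback square with top map $p_2\colon X\times Y \to Y$, left vertical map $p_1\colon X\times Y\to X$, right vertical map $b\colon Y\to\ast$, and bottom map the unique map $a\colon X\to \ast$. By hypothesis $X$ is $\Ani$-universally locally contractible. So $p_2$, being the base change of $a$ along $b$, is $\Ani$-locally contractible: $p_2^*$ admits a left adjoint $p_{2\sharp}$ satisfying the projection formula over $\Sh{Y}{\Ani}$.

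\textbf{Step 2.} Since $Y$ is $\Ani$-locally contractible, $b^*$ has a left adjoint $b_\sharp$. Composing adjunctions, $b_\sharp p_{2\sharp}$ is left adjoint to $p_2^*b^* \simeq (b p_2)^*$.

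\textbf{Step 3.} We check the projection formula for the composite. For $F\in\Sh{X\times Y}{\Ani}$ and $M\in\Ani$,
\[ b_\sharp p_{2\sharp}(F\times p_2^*b^*M)\simeq b_\sharp(p_{2\sharp}F\times b^*M)\simeq b_\sharp p_{2\sharp}F\times M, \]
using the projection formula for each map in turn. One must also verify that this composite equivalence is the canonical comparison map; this is standard because the comparison maps for a composite of adjunctions compose. Alternatively, the projection formula over $\Ani$ holds automatically, since $-\times M$ commutes with colimits in $\Ani$ and $\Sh{X\times Y}{\Ani}$ and $M$ is a colimit of points.

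\textbf{Main obstacle.} The point needing care is that ``universally'' is used in Step 1 for arbitrary topological spaces, not just LCH ones. The definition of $D$-universally locally contractible applies to any pullback square in $\C$, and here $\C$ is topological spaces with $D=\Sh{-}{\Ani}$. So no Künneth formula is needed, and Step 1 follows directly from the definition.
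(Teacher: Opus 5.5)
Your proof is correct and is the paper's argument: factor $X\times Y\to\ast$ through the projection $p_2\colon X\times Y\to Y$, get a left adjoint of $p_2^*$ from the universal local contractibility of $X$ (since $p_2$ is the base change of $X\to\ast$ along $Y\to\ast$), and compose it with the left adjoint of $b^*$ coming from $Y$. Your check of the projection formula for the composite (automatic over $\Ani$ for colimit-preserving functors) fills in a detail the paper leaves implicit.
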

\begin{proof}
	The lemma is prove by factoring the map $X\times Y\to\ast$ as $X\times Y\xrightarrow{p_2}Y\xrightarrow{a}\ast$, and observing that $\pb{p_2}$ and $\pb{a}$ admit a left adjoint respectively because $X$ is $\Ani$-universally locally contractible and $Y$ is $\Ani$-locally contractible.
\end{proof}

\begin{lemma}\label[lemma]{retractloccontr}
Let $Y$ be $\Ani$-(universally) locally contractible space. Suppose that $X$ is a retract of an open subset of $Y$. Then $X$ is $\Ani$-(universally) locally contractible.
\end{lemma}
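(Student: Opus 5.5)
We reduce to two steps: first pass from $Y$ to an open subset $U\subseteq Y$, and then from $U$ to a retract $X$ of $U$.

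\emph{Open subsets.} Let $j\colon U\hookrightarrow Y$ be an open embedding. Then $\pb{j}\colon \Sh{Y}{\Ani}\to\Sh{U}{\Ani}$ has a left adjoint $\pfs{j}$, namely extension by the empty sheaf. This $\pfs{j}$ satisfies the projection formula $\pfs{j}(F\times \pb{j}G)\simeq \pfs{j}F\times G$: both sides are the sheaf $F\times G|_U$ on $U$ extended by $\emptyset$. It also satisfies base change along any map, since open embeddings are stable under pullback and extension by $\emptyset$ is computed locally.

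Write $a_U=a_Y\circ j$. The composite $\pfs{a_Y}\pfs{j}$ is then a left adjoint of $\pb{a_U}=\pb{j}\pb{a_Y}$. It satisfies the projection formula because both factors do: over a point this is the product with $\pb{a}$ of an anima, and the formula is checked as for composites of linear functors. In the universal version, base change along $Z\to\ast$ turns $U\to\ast$ into $U\times Z\to Z$, which factors as the open embedding $U\times Z\hookrightarrow Y\times Z$ followed by $Y\times Z\to Z$. The Beck--Chevalley maps compose, so the base-change isomorphism holds for the composite. Hence $U$ is $\Ani$-(universally) locally contractible.

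\emph{Retracts.} Let $i\colon X\to U$ and $r\colon U\to X$ satisfy $ri=\id_X$, and write $a=a_U$ and $b=a_X=a\circ i$. The natural candidate is $\pfs{b}:=\pfs{a}\pb{r}$. On its own this is not a left adjoint of $\pb{b}$, so instead we exhibit $\pb{b}$ as a retract of $\pb{a}$ in the category of functors $\Ani\to\Sh{X}{\Ani}$. Explicitly, $\pb{b}$ is the retract of $\pb{r}\pb{a}$ obtained from $\pb{i}\pb{r}=\id$ together with $\pb{b}=\pb{i}\pb{a}$. A cleaner route is Lurie's criterion: a space is locally of constant shape if and only if $\pb{a}$ admits a left adjoint, which by the adjoint functor theorem amounts to $\pb{a}$ preserving limits. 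So it suffices to show that $\pb{b}$ preserves all small limits.

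Both $\pb{i}$ and $\pb{r}$ preserve finite limits, and $\pb{a}$ preserves all limits. We have $\pb{b}\simeq\pb{i}\pb{a}$, and the comparison map $\pb{b}(\lim M_\alpha)\to\lim\pb{b}M_\alpha$ is a retract of the map
\[\pb{i}\big(\pb{a}\lim M_\alpha\to \lim \pb{a}M_\alpha\big)\simeq \pb{i}\pb{r}\pb{i}\big(\cdots\big).\]
We make this precise by observing that $\lim \pb{b}M_\alpha=\lim\pb{i}\pb{r}\pb{b}M_\alpha$. The map $\lim\pb{b}M_\alpha\to\pb{i}\lim\pb{r}\pb{b}M_\alpha$ is a section of the evident map in the other direction. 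Since $\pb{r}\pb{b}=\pb{a}$ and $\pb{a}$ preserves limits, $\lim\pb{r}\pb{b}M_\alpha\simeq\pb{a}\lim M_\alpha$. This exhibits the comparison map for $\pb{b}$ as a retract of an equivalence, hence as an equivalence. Accessibility of $\pb{b}$ is clear, so the adjoint functor theorem gives $\pfs{b}$. The projection formula is automatic over a point for $\Ani$, since $\pb{b}$ commutes with products with constant sheaves.

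For the universal statement we invoke the lemma preceding \cref{lemma:products}, which says that for LCH spaces universal and plain $\Ani$-local contractibility coincide, via \cref{kunnetheasyloccontr}. For non-LCH spaces we repeat the retract argument after base change, applied to the retraction $X\times Z\to U\times Z\to X\times Z$ over $Z$. The main obstacle is this retract step: making the limit-comparison retraction rigorous. If it proves awkward, the fallback is to define $\pfs{b}:=\pb{i}$-compatible data via $\pfs{a}\pfp{i}$ and check adjunction through the natural retraction of mapping spaces.
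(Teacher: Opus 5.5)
Your proof is correct, but the key step differs from the paper's. Both arguments first reduce to an honest retract $X\xrightarrow{i}U\xrightarrow{r}X$. In your notation, the paper then observes that the composite of units
\[ b^*\to r_*r^*b^*\simeq r_*a^*\to r_*i_*i^*a^*\simeq b^* \]
is the identity. So $b^*$ is a retract of $r_*a^*$, which has the left adjoint $a_\sharp r^*$. Since $\Ani$ is idempotent complete, each $\mathrm{Map}(F,b^*(-))$ is a retract of the corepresentable functor $\mathrm{Map}(a_\sharp r^*F,-)$, hence corepresentable, and $b_\sharp$ exists without the adjoint functor theorem.

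This is the correct form of your first, abandoned attempt. As written, ``$b^*$ is a retract of $a^*$'' and ``of $r^*a^*$'' do not typecheck: $a^*$ lands in sheaves on $U$, while $r^*$ takes sheaves on $X$. One has to push forward along $r$.

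Your actual route uses only pullbacks. Writing $c_F$ for the limit-comparison map of a functor $F$, you exhibit $c_{b^*}$ as a retract, in the arrow category, of the equivalence $i^*(c_{a^*})$. On sources the maps are identities; on targets the section is $i^*(c_{r^*})$ and the retraction is $c_{i^*}$. You note $c_{i^*}\circ i^*(c_{r^*})=c_{i^*r^*}=\mathrm{id}$. You should also record the two compatibilities $i^*(c_{r^*})\circ c_{b^*}=i^*(c_{r^*b^*})=i^*(c_{a^*})$ and $c_{i^*}\circ i^*(c_{a^*})=c_{i^*a^*}=c_{b^*}$; both follow from the composition rule for comparison maps.

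The cost of your route is presentability plus the adjoint functor theorem. The gain is uniformity: the same retraction pattern applies verbatim to two further maps.
\begin{itemize}
\item The projection-formula comparison $p^*\underline{\mathrm{Hom}}(G,H)\to\underline{\mathrm{Hom}}(p^*G,p^*H)$.
\item The base-change mate for $X\times Z\to Z$ versus $U\times Z\to Z$.
\end{itemize}
These are what the universal case needs beyond the mere existence of a left adjoint, since over $\Sh{Z}{\Ani}$, unlike over a point, linearity is not automatic. Neither your one-sentence treatment of that case nor the paper's spells this out, so it is worth adding.

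Three small points remain.
\begin{itemize}
\item Over a point, the projection formula holds because colimits in $\Sh{X}{\Ani}$ are universal ($F\times b^*K\simeq\colim_K F$), not for the reason you give.
\item For the LCH shortcut you need $X$ to be LCH. This holds because a retract of the Hausdorff space $U$ is closed in it.
\item Drop the closing fallback via $a_\sharp i_!$: that functor is not a left adjoint of $b^*$, and you do not need it.
\end{itemize}
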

\begin{proof}
Since open subsets of $\Ani$-locally contractible spaces are $\Ani$-locally contractible, we may assume that $X$ is a retract of $Y$. Let $i \colon X \to Y$ and $r \colon Y \to X$ be continuous maps with $ri = \id_X$ and let $b \colon Y \to \ast$ and let $a=bi$. Then we the composite
\[\pb{a}\xrightarrow{\text{unit}}\pf{r}\pb{r}\pb{a}\simeq\pf{r}\pb{b}\xrightarrow{\text{unit}}\pf{r}\pf{i}\pb{i}\pb{b}\simeq\pb{a}\]
 is the identity, so $a^*$ is a retract of the functor $r_*b^*$ which has a left adjoint $\pfs{b}r^*$, and consequently has a left adjoint itself since $\Ani$ is idempotent complete (see \cite[Lemma 21.1.2.14]{lurie2018spectral}). For $\Ani$-universal local contractibility, we observe that, if $Z$ is any topological space, then $Z\times-$ preserves open immersions and retractions.
\end{proof}

\begin{lemma}\label[lemma]{lemma:locally-bla}
Let $\{U_i\}_{i \in I}$ be an open cover of a topological space $X$. Suppose that for every $i \in I$, $U_i$ is $\Ani$-locally contractible. Then $X$ is $\Ani$-locally contractible.
\end{lemma}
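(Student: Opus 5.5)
We use Lurie's characterization quoted above: by \cite[Prop.\ A.1.8]{lurie2017higher}, $X$ is $\Ani$-locally contractible if and only if it is locally of constant shape, i.e.\ every open $W \subseteq X$ has shape in the essential image of $\Ani \subseteq \Pro(\Ani)$. It therefore suffices to show that every open $W\subseteq X$ has constant shape. If $W\subseteq U_i$ for some $i$, this holds because $W$ is open in the $\Ani$-locally contractible space $U_i$. We want to reduce the general case to this one by descent.

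Fix an open $W \subseteq X$. The opens $V\subseteq W$ contained in some $U_i$ form a basis of $W$, closed under passage to smaller opens. Let $\mathcal{B}$ be this collection, viewed as a sieve. Since $\Sh{W}{\Ani}$ only depends on the locale of $W$, restriction gives an equivalence $\Sh{W}{\Ani}\simeq \Sh{\mathcal{B}}{\Ani}$, where $\mathcal{B}$ carries the induced Grothendieck topology. Under this equivalence the constant sheaf functor $a^*$ sends an anima $K$ to the sheafification of the constant presheaf. On $\mathcal{B}$ this presheaf is already given by $V\mapsto \Map(\pfs{a_V}\ast, K)= \Map(\sh(V),K)$, since each $V\in\mathcal{B}$ is of constant shape.

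The key step is to define the candidate left adjoint $\pfs{a}(F) = \colim_{V\in \mathcal{B}} \colim_{F(V)}\sh(V)$. More invariantly, this is the left Kan extension of the functor $\mathcal{B}\to\Ani$, $V\mapsto\sh(V)$, along the Yoneda embedding, restricted to sheaves. We then check that $\Map(\pfs{a}F,K)\simeq \lim_{V}\Map(F(V),\Map(\sh(V),K))\simeq \Map_{\mathrm{PSh}(\mathcal{B})}(F, (V\mapsto \Map(\sh(V),K)))$.

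The main obstacle is to show that $V\mapsto \Map(\sh(V),K)$ is already a sheaf on $\mathcal{B}$, so that it agrees with $a^*K$ there. For this we use that on each $U_i$ the left adjoint $\pfs{}$ exists. For a covering sieve of $V\in\mathcal{B}$ with $V\subseteq U_i$, descent for $a_{V}^*K$ on $V$ is exactly the sheaf property, because $a_V^*K$ is a sheaf on $V$ with sections $\Map(\sh(V'),K)$ on opens $V'\subseteq V$. Since sheaf conditions on $\mathcal{B}$ can be checked on covering sieves of each basic open, the presheaf is a sheaf, and the adjunction follows.

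Equivalently, and more simply: whether $a^*$ preserves limits and is accessible can be checked locally. The functors $\Sh{X}{\Ani}\to \Sh{U_i}{\Ani}$ are jointly conservative, preserve limits and colimits, and $j_i^*a^*=a_{U_i}^*$ preserves limits. Hence $a^*$ preserves limits, and it is accessible. By the adjoint functor theorem, $a^*$ admits a left adjoint on presentable categories, which is the claim. This second argument is the one I would write, with $\Ani$-linearity being automatic.
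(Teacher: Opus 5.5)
Your second argument is correct and complete. It takes a different route from the paper, which gives no argument of its own and simply cites \cite[Corollary A.1.7]{lurie2017higher}, a result from Lurie's treatment of spaces locally of constant shape, translated into the existence of $\pfs{a}$ via \cite[Prop.\ A.1.8]{lurie2017higher}.

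You instead verify the paper's definition directly:
\begin{itemize}
\item The restrictions $\pb{j_i}\colon\Sh{X}{\Ani}\to\Sh{U_i}{\Ani}$ preserve limits and are jointly conservative, because the $U_i$ cover $X$.
\item $\pb{j_i}\pb{a}\simeq\pb{a_{U_i}}$ preserves limits by hypothesis, so the comparison map $\pb{a}(\lim_\alpha K_\alpha)\to\lim_\alpha\pb{a}K_\alpha$ is an equivalence.
\item $\pb{a}$ is accessible because it is itself a left adjoint, so the adjoint functor theorem yields $\pfs{a}$.
\item Linearity is indeed automatic: $F\times\pb{a}K\simeq\colim_K F$ by universality of colimits, whence $\pfs{a}(F\times\pb{a}K)\simeq\pfs{a}(F)\times K$.
\end{itemize}

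Your route is self-contained and needs no pro-objects or shape theory, and the existence part works the same way for any presentable coefficients. It also adapts at once to the basis form in which the lemma is actually invoked in the proof of \cref{infiniteprodloccontr}. There one only knows that the members $B$ of a basis closed under finite intersections have constant shape. In that case $\Gamma(B,\pb{a}K)\simeq\mathrm{Map}(\sh(B),K)$ commutes with limits in $K$, and sections over such a basis detect equivalences of sheaves by \v{C}ech descent. The citation's advantage is brevity.

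Neither the preliminary reduction to ``every open $W$ has constant shape'' nor your first sketch is needed. As written, that sketch is also imprecise: $V\mapsto\colim_{F(V)}\sh(V)$ has mixed variance. The candidate $\pfs{a}F$ should be the colimit of $\sh$ over the category of elements of $F$ (a coend), with an end in place of $\lim_V$ on the mapping-space side.
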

\begin{proof}
See \cite[Corollary A.1.7]{lurie2017higher}.
\end{proof}

\begin{lemma}\label[lemma]{lemma:homotopy-shape-equivalence}
Let $X \to Y$ be a homotopy equivalence. Then it induces an isomorphism upon applying $\shape(-)$.
\end{lemma}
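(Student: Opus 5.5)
The plan is to reduce the statement to homotopy invariance of the shape, $\shape(X \times [0,1]) \simeq \shape(X)$. Concretely, we will show that for an arbitrary topological space $X$ the projection $p\colon X\times[0,1]\to X$ induces an equivalence on shapes. Granting this, let $f\colon X\to Y$ and $g\colon Y\to X$ be mutually inverse up to homotopies $H\colon X\times[0,1]\to X$ and $K\colon Y\times[0,1]\to Y$. The two inclusions $i_0,i_1\colon X\to X\times[0,1]$ are both sections of $p$, so they induce the same map on shapes, namely the inverse of $\shape(p)$. Since $\shape$ is a functor (geometric morphisms induce maps of pro-anima), we get
\[\shape(g\circ f)=\shape(H)\circ\shape(i_1)=\shape(H)\circ\shape(i_0)=\shape(\id_X),\]
and similarly $\shape(f\circ g)=\shape(\id_Y)$. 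Hence $\shape(f)$ is an isomorphism.

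For the key step, that $p^*\colon \Sh{X}{\Ani}\to\Sh{X\times[0,1]}{\Ani}$ induces an equivalence on shapes, we need for every anima $K$ that the map $a_*a^*K\to (ap)_*(ap)^*K$ is an equivalence, and this should hold naturally in $K$. It suffices to show that the unit $\id\to p_*p^*$ is an equivalence on constant sheaves, and in fact on all sheaves pulled back from $X$. Because $[0,1]$ is compact Hausdorff, $p$ is proper. Proper base change for sheaves of anima (Lurie, HTT 7.3.1.16, valid for proper maps to arbitrary spaces) then reduces the claim to stalks: for $x\in X$ we have $(p_*p^*F)_x\simeq \Gamma([0,1];F_x)$, where the right-hand side means global sections of the constant sheaf on $[0,1]$ with value $F_x$. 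We then conclude using HTT 7.1.0.1/7.1.1.1: $[0,1]$ is a contractible paracompact space, so global sections of a constant sheaf on it are $\Fun(\Sing[0,1],F_x)\simeq F_x$.

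The main obstacle is the proper base change step for a non-Hausdorff or non-locally-compact base $X$. If the paper's setting allows arbitrary topological spaces, I would cite HTT 7.3.1.16, which only requires the fibres to be compact Hausdorff. Alternatively, I would invoke Lurie's result that $\Sh{X\times[0,1]}{\Ani}\simeq\Sh{X}{\Ani}\times_{\mathrm{Sh}(\ast)}\Sh{[0,1]}{\Ani}$ as $\infty$-topoi, together with the fact that $[0,1]$ is proper and of trivial shape. If base change is unavailable in that generality, I would restrict to LCH spaces and use the six-functor base change $p_*=p_!$ recalled in \cref{terminology}.
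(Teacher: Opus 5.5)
Your reduction to the cylinder is fine. Once $\shape(p)$ is invertible for the projection $p\colon X\times[0,1]\to X$, the argument with the two sections $i_0,i_1$ shows that homotopic maps induce the same map on shapes, and the lemma follows. (The paper itself gives no argument and simply cites \cite[Remark A.4.6]{lurie2017higher}.)

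The gap is in the key step as you primarily wrote it. From $(p_*p^*F)_x\simeq\Gamma([0,1];F_x)$ you conclude that the unit $F\to p_*p^*F$ is an equivalence because it is one on every stalk. For a general space this only proves that the unit is $\infty$-connective. As the paper recalls at the start of its subsection on hypercompleteness, the stalk functors on $\Sh{X}{\Ani}$ are jointly conservative if and only if $\Sh{X}{\Ani}$ is hypercomplete. Even constant sheaves need not be hypercomplete (see \cref{rmk:counterexampleforlocwc=>shvloccontr}).

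This is not a marginal case. The lemma is used in \cref{infiniteprodloccontr}, and through it in \cref{LCANRloccontrlocsingshape}, for spaces like $\prod_{f\in F}U_f\times\rnum^{\omega\setminus F}$, where hypercompleteness cannot be assumed; already the Hilbert cube has a non-hypercomplete $\infty$-topos of sheaves. So the obstacle you flagged, namely the availability of base change over a general base, is not the real one. Your LCH fallback does not help either, since it still ends in a stalkwise check and compact Hausdorff spaces can fail to be hypercomplete.

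The repair is to base change once, globally, along $a\colon X\to\ast$ rather than along the points of $X$. Let $q\colon X\times[0,1]\to[0,1]$ and $b\colon[0,1]\to\ast$. The square with $ap=bq$ is cartesian and $b$ is proper. Proper base change then gives $a^*b_*\simeq p_*q^*$; equivalently, use properness of $\Sh{[0,1]}{\Ani}$ together with $\Sh{X\times[0,1]}{\Ani}\simeq\Sh{X}{\Ani}\times\Sh{[0,1]}{\Ani}$, as in your ``alternatively''. Hence
\[ p_*p^*a^*K\simeq p_*q^*b^*K\simeq a^*b_*b^*K\simeq a^*K \]
for every anima $K$, using that $[0,1]$ has trivial shape. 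One checks that this composite is the unit of $p^*\dashv p_*$ at $a^*K$.

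No stalks or hypercompleteness enter, the argument works for every topological space $X$, and it is exactly what is needed for $\shape(p)$ to be an equivalence. With this in place of the stalkwise argument, your proof goes through.
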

\begin{proof}
See \cite[Remark A.4.6]{lurie2017higher}.
\end{proof}

\begin{lemma}\label[lemma]{infiniteprodloccontr}
Let $\{X_i\}_{i\in S}$ be a family of topological spaces indexed by a set $S$ and let $X=\prod_{i\in S}X_i$ be their product. Suppose that the family has the following properties.
\begin{enumerate}[(i)]
    \item For each $i\in S$, $X_i$ is $\Ani$-universally locally contractible.
    \item All but finitely many $X_i$ are contractible.
\end{enumerate}
Then $X$ is $\Ani$-locally contractible.
\end{lemma}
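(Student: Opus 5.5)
Let $F\subseteq S$ be the finite set of indices with $X_i$ not contractible, and write $X = X_F \times X_{S\setminus F}$ with $X_F=\prod_{i\in F}X_i$ and $X_{S\setminus F}=\prod_{i\notin F}X_i$. The plan is to reduce to a local statement via \cref{lemma:locally-bla}. Being $\Ani$-locally contractible means every open subset has constant shape, and this is a local condition by that lemma. So it suffices to cover $X$ by open sets which are $\Ani$-locally contractible. The natural candidates are the basic opens $U=\prod_{i\in T}U_i\times\prod_{i\notin T}X_i$, with $T\supseteq F$ finite and $U_i\subseteq X_i$ open.

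First I would handle the finite part. By \cref{lemma:products} and induction on the number of factors, a finite product of $\Ani$-universally locally contractible spaces is again $\Ani$-universally locally contractible. The point is that the base change of $\prod_{i\in T}U_i\to\ast$ along any $Z\to\ast$ factors as a composite of base changes of the individual maps $U_i\to\ast$, and composites of maps whose pullbacks admit linear left adjoints satisfying base change again have this property. Open subsets of universally locally contractible spaces stay such, so $U_T:=\prod_{i\in T}U_i$ is $\Ani$-universally locally contractible.

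The infinite part is the main obstacle. I must show that $U=U_T\times Y$ is $\Ani$-locally contractible, where $Y=\prod_{i\notin T}X_i$ is a product of contractible spaces; in particular $Y$ is contractible. Since $Y$ need not itself be locally contractible in any sheaf sense, I would argue via shapes of opens. Every open $W\subseteq U$ is a union of basic opens, and by \cref{lemma:locally-bla} (applied inside $W$) it suffices to know that basic opens, and their finite intersections, are locally of constant shape. Finite intersections of basic opens are again basic. So the key claim is that every basic open $V=V_{T'}\times Y'$ with $T'\supseteq T$ finite and $Y'=\prod_{i\notin T'}X_i$ is locally of constant shape.

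For this I would use that $V\to V_{T'}$ is a homotopy equivalence: a contraction of $Y'$ (product of contractions) gives a section and a homotopy. By \cref{lemma:homotopy-shape-equivalence}, $\sh(V)\simeq\sh(V_{T'})$, which is constant since $V_{T'}$ is $\Ani$-locally contractible. Applied to all basic opens, this shows that a basis of $X$ closed under finite intersections consists of opens of constant shape. I would then invoke Lurie's criterion \cite[Prop.\ A.1.8]{lurie2017higher}, in the form that it suffices to check constant shape on such a basis (the argument of \cite[Corollary A.1.7]{lurie2017higher} is exactly a descent over a basis). This yields that $X$ is locally of constant shape, hence $\Ani$-locally contractible.

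The delicate point is precisely this basis reduction. If it is not directly available, I would instead express an arbitrary open as a colimit (hypercover) of basic opens and use that shapes of basic opens are constant and compatible, via the pro-left adjoint preserving colimits into $\Pro(\Ani)$ restricted to constant objects.
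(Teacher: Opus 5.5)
Your proof is correct and follows essentially the same route as the paper: since the standard basis of the product topology is closed under finite intersections, it suffices (by the descent argument behind \cref{lemma:locally-bla}) to show that each basic open $\prod_{i\in T}U_i\times\prod_{i\notin T}X_i$ has constant shape; you then discard the contractible factor by \cref{lemma:homotopy-shape-equivalence} and handle the finite factor with \cref{lemma:products}. Your intermediate attempt to show that basic opens are \emph{locally} of constant shape is circular and can be dropped, but the basis criterion you end up using is exactly the one the paper uses, and your justification for it (the pro-left adjoint preserves colimits, and $\Ani\subseteq\Pro(\Ani)$ is closed under colimits) is sound.
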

\begin{proof}
Recall that the product topology on $X$ has a basis $\mathcal{B}$ which consists of sets of the form $\prod_{i\in S}U_i$, where each $U_i$ is open in $X_i$ 
and there exists a finite subset $F\subseteq S$ such that for all $j\in S\setminus F$, we have $U_j = X_j$. Since this basis is closed under finite intersections, using \cref{lemma:locally-bla}, it then suffices to show that every element in the basis has constant shape. Without loss of generality, an element of the basis is given by $\prod_{f \in F} U_f \times \prod_{i \in S \setminus F} X_i$, for $F \subseteq S$ a finite set, with $\prod_{i \in S\setminus F} X_i$ contractible. Hence the map $\sh(\prod_{f \in F} U_f \times \prod_{i \in S \setminus F} X_i) \to \sh(\prod_{f \in F} U_f)$ is an equivalence, see \cref{lemma:homotopy-shape-equivalence}. Hence, we conclude by \cref{lemma:products}.
\end{proof}

\subsection{Hypercompleteness}
In this subsection, we record a useful consequence of $\C$-local contractibility. To that end, we first recall some basic properties of $\Ani$- or $\Mod_R$-valued sheaves, where $R$ is a connective $\bE_\infty$-ring. Let $\C$ be $\Ani$ or $\Mod_R$. Recall that a sheaf is hypercomplete if it is local with respect to $\infty$-connective maps and that a map of sheaves is $\infty$-connective if and only if it induces an isomorphism on stalks. Pullback functors preserve $\infty$-connected maps.

In particular, the stalk functors are jointly conservative if and only if $\Sh{X}{\C}$ is hypercomplete. Moreover, if $X$ is paracompact with finite covering dimension, then $\Sh{X}{\C}$ is hypercomplete \cite[Cor.\ 7.2.1.12 \& Theorem 7.2.3.6]{lurie2009higher}. 
The standard $t$-structure on $\Mod_R$ induces one on $\Sh{X}{\Mod_R}$. In this $t$-structure, for $f\colon X \to Y$, the pullback functor $\pb{f} \colon \Sh{Y}{\Mod_R} \to \Sh{X}{\Mod_R}$ is $t$-exact. Moreover, a morphism in $\Sh{X}{\Mod_R}$ is $\infty$-connective, if and only if its fibre is $\infty$-connective, if and only if the fibre has trivial stalks. We also note that the extension and restriction of scalar adjunction associated to a map of connective $\bE_\infty$-rings $R \to S$ induces an adjunction on the level of sheaves for which both functors are compatible with the pullback functoriality of sheaves. In particular, an $\infty$-connective sheaf of $S$-modules is also $\infty$-connective as sheaf of $R$-modules, and therefore $R$-hypercomplete spaces are also $S$-hypercomplete.

\begin{lemma}\label[lemma]{loccontr=>locconstishyp}
Let $\D$ be either $\Ani$ or $\text{Mod}_R$ for some connective $\bE_\infty$-ring spectrum $R$. Let $X$ be topological space which is $\D$-locally contractible. Then constant $\D$-valued sheaves are hypercomplete.
\end{lemma}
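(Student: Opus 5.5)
We want to show that for $M\in\D$ and $a\colon X\to\ast$, the sheaf $\pb{a}M$ is hypercomplete. The plan is to test it against $\infty$-connective maps. Hypercompleteness of $\pb{a}M$ means: for every $\infty$-connective map $\phi\colon F\to G$ in $\Sh{X}{\D}$, the induced map
\[
\Hom{}{G}{\pb{a}M}\to\Hom{}{F}{\pb{a}M}
\]
is an equivalence. Since $X$ is $\D$-locally contractible, $\pb{a}$ admits a left adjoint $\pfs{a}$. By adjunction, this map identifies with
\[
\Hom{\D}{\pfs{a}G}{M}\to \Hom{\D}{\pfs{a}F}{M}.
\]
So it suffices to show that $\pfs{a}$ sends $\infty$-connective maps to equivalences in $\D$.

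The key step is to compute $\pfs{a}$ via stalks, or at least via a colimit over opens. Since $\pfs{a}$ is a left adjoint, it preserves colimits. Every sheaf is a colimit of (sheafified) representables $h_U$ for $U\subseteq X$ open, and in the stable case, of shifts of $h_U\otimes R$. We also have $\pfs{a}(h_U)=\sh(U)$, with $U$ of constant shape. This description alone does not immediately show that $\infty$-connective maps are inverted, so I would use a cleaner argument via truncations.

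\emph{Case $\D=\Mod_R$.} Take the fibre $K$ of $\phi$; it is $\infty$-connective, i.e.\ $K\in\bigcap_n \Sh{X}{\Mod_R}_{\ge n}$. The right adjoint $\pb{a}$ is $t$-exact, in particular left $t$-exact. Hence its left adjoint $\pfs{a}$ is right $t$-exact and preserves $n$-connective objects for every $n$. Therefore $\pfs{a}K$ is $n$-connective for all $n$ in $\Mod_R$. Since the $t$-structure on $\Mod_R$ is left separated (Postnikov complete), this forces $\pfs{a}K\simeq 0$. By exactness, $\pfs{a}\phi$ is an equivalence.

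\emph{Case $\D=\Ani$.} Here I would argue similarly with $n$-connective maps. The functor $\pb{a}$ preserves $n$-truncated objects and $n$-truncated maps, since it preserves finite limits and is a right adjoint that commutes with the relevant diagonals. Consequently the left adjoint $\pfs{a}$ preserves $n$-connective morphisms: for $n$-connective $f$ and any $n-1$-truncated target $T$ (or truncated maps), the adjunction gives the lifting/orthogonality property. Thus $\pfs{a}\phi$ is $n$-connective for all $n$, hence $\infty$-connective in $\Ani$, hence an equivalence because $\Ani$ is hypercomplete.

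The main obstacle is justifying that $\pb{a}$ preserves $n$-truncated morphisms, which one needs for all slices. One route is to use that $\pb{a}$ preserves finite limits, since $n$-truncatedness of a morphism is defined by iterated diagonals being equivalences. Then orthogonality transfers through the adjunction: a map $f$ is $n$-connective iff it is left orthogonal to all $n$-truncated maps (the factorization system), and $\pfs{a}f\perp g$ iff $f\perp \pb{a}g$. This works provided the adjunction is compatible with the relevant squares, which holds for any adjunction by the standard mate argument. With this, both cases are uniform, and the $\Mod_R$ case can even be deduced the same way using the $t$-structure orthogonality.
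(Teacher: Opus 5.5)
Your proposal is correct and follows essentially the same route as the paper. Both arguments use the adjunction $\pfs{a}\dashv\pb{a}$ to reduce to showing that $\pfs{a}$ inverts $\infty$-connective maps, which holds because a left adjoint of a left exact ($t$-exact for $\Mod_R$) functor preserves connectivity and $\infty$-connective objects and maps in $\D$ are trivial; your orthogonality argument for morphisms in the $\Ani$ case just spells out the detail the paper leaves implicit.
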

\begin{proof}
Let $F\to G$ be an $\infty$-connective morphism in $\Sh{X}{\D}$. To see that constant sheaves are hypercomplete, we need to show that $\Hom{}{G}{a^*(M)} \to \Hom{}{F}{a^*(M)}$ is an equivalence for any $M\in\C$. Since $X$ is $\D$-locally contractible, this is equivalent to showing that $a_\sharp(F) \to a_\sharp(G)$ is an equivalence in $\D$. This follows from the observation that, as a left adjoint to a left exact ($t$-exact when $\D=\text{Mod}_R$) functor, $a_\sharp$ preserves connective objects and that $\infty$-connective objects in $\D$ are trivial.
\end{proof}

\subsection{Comparison between sheaf and singular cohomology}
In this next section, we provide many examples of  $\Ani$-locally contractible spaces. Our approach is based on studying the relation between singular cohomology and sheaf cohomology, by relating the shape to the singular complex. The results in this subsection were also obtained by Philippe Vollmuth in his Master's thesis \cite{Vollmuth}, variants of them have appeared in \cite{Sella}, \cite{petersen2022remark}, \cite{clausen-lectures}, and \cite{haine2023homotopy}. With paracompactness assumption, they can be found in \cite{bredon2012sheaf}. We discuss the precise relations to these previous works in \cref{rmk:comparison}.

Let $X$ be any topological space. Write $\Sing\colon\Op{X}\rightarrow\Ani$ for the functor which associates to each open $U\subseteq X$ its weak homotopy type $\sing{U}\in\Ani$. By \cite[Proposition A.3.2]{lurie2017higher} and \cite[Lemma A.3.10]{lurie2017higher}, $\Sing$ is a hypercomplete cosheaf. 

\begin{construction}
Let $\D$ be a complete $\infty$-category so that $\D$ is cotensored over $\Ani$. 
For each $M\in\C$, the association $U\mapsto M^{\Sing(U)}$
defines a $\D$-valued presheaf on $X$ which we denote by $\pb{a}_{\Sing_{\D}}(M)$. Moreover, since $\Sing$ is a cosheaf, this construction provides a functor
\[\pb{a}_{\Sing_{\D}}\colon \D\rightarrow\Sh{X}{\D}\]
which is readily checked to preserve limits.
If $\D$ is a bicomplete $\infty$-category, then it is in particular tensored over $\Ani$, and so one may define the functor 
\begin{equation}\label{coshsingD}
	\Op{X} \times \D \to \D,\quad (U,M) \mapsto \Sing(U)\otimes M.
\end{equation} Notice that \eqref{coshsingD} is a cosheaf in the first argument and preserves colimits in the second. Therefore, if $\D$ and $X$ are such that the canonical functor 
\begin{equation}\label{tensshaniC}
	\Sh{X}{\Ani}\otimes \D\rightarrow\Sh{X}{\D}
\end{equation}
is an equivalence\footnote{This is the case for example if $\D$ is presentable or $X\in\LCH$ and $\D$ is stable.}, \eqref{coshsingD} extends uniquely to a colimit preserving functor
\[\pfs{a}^{\Sing_{\D}}\colon \Sh{X}{\D}\rightarrow\D.\]
By construction, $\pfs{a}^{\Sing_{\D}}$ is left adjoint to $\pb{a}_{\Sing_{\D}}$. If $\D$ is equipped with a closed symmetric monoidal structure with unit $1$, we will often write $\Sing_{\D}(X)$ to denote $\pfs{a}^{\Sing_{\D}}(1)$.

When the inclusion $\Sh{X}{\D}\hookrightarrow\Fun(\Op{X}\op,\D)$ admits a left adjoint (e.g.\ if $\D$ is presentable or $X\in\text{LCH}$ and $\D$ is stable and bicomplete) called sheafification, the global section functor $\pf{a}^{\D}$ admits a left adjoint $\pb{a}_{\D}$ given by sheafifying the constant presheaf. Notice that the unique map $\Sing(X)\rightarrow\ast$ in $\Ani$ induces a natural transformation $\id_{\D}\rightarrow\pf{a}\pb{a}_{\Sing_{\D}}$. Since $\pb{a}_{\Sing_{\D}}$ is a sheaf, by adjunction we get a natural comparison transformation
\begin{equation}\label{sheafvssingcoh}\tag{$\comp$}
    \comp\colon \pb{a}_{\D}\rightarrow\pb{a}_{\Sing_{\D}}.
\end{equation}
\end{construction}

\begin{remark}\label[remark]{rmk:factcompatstalk}
	Let $x$ be any point in $X$. Let $j:U\hookrightarrow X$ be any open containing $x$, and let $b:U\rightarrow\ast$ be the unique map. By construction, we have $\pb{j}_{\D}\pb{a}_{\Sing_{\D}}\simeq\pb{b}_{\Sing_{\D}}$. Since $\pb{x}_{\D}$ is left inverse to $\pb{b}_{\D}$, we deduce that for any $M\in\D$, the map $\pb{x}_{\D}(\comp_M)$ in $\D$ is homotopic to the composition 
	$$M\rightarrow M^{\Sing(U)}\rightarrow\varinjlim\limits_{x\in V}M^{\Sing(V)},$$
	where the first map is induced by $\Sing(U)\rightarrow\ast$.
\end{remark}

We wish to understand in which situations the map (\ref{sheafvssingcoh}) is invertible. We begin with the following.

\begin{lemma}\label[lemma]{sheafvssingcohwithboundedcoeff}
Let $R$ be a connective $\bE_\infty$-ring and let $M\in\Mod_R$ be bounded above. Assume that the map $\comp_M\colon \pb{a}_{R}(M)\rightarrow\pb{a}_{\Sing_{R}}(M)$ induces an isomorphism on stalks. Then $\comp_M$ is an isomorphism.
\end{lemma}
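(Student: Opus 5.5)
The plan is to show that the fibre $F:=\fib(\comp_M)$ in $\Sh{X}{\Mod_R}$ is zero. By hypothesis $F$ has trivial stalks. The argument rests on two facts: a sheaf with trivial stalks is $\infty$-connective, and a bounded above $\infty$-connective sheaf vanishes. This is essentially the familiar fact that non-hypercompleteness only affects unbounded objects.

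First, I would show that $F$ is $\infty$-connective in the standard $t$-structure on $\Sh{X}{\Mod_R}$. Its homotopy sheaves $\pi_k F$ lie in the heart, i.e.\ they are sheaves of discrete $\pi_0(R)$-modules. Taking stalks is $t$-exact, being a pullback, so $(\pi_k F)_x \cong \pi_k(F_x) = 0$ for every point $x$ and every $k$. For sheaves of discrete modules, i.e.\ $1$-truncated objects, stalks are jointly conservative no matter whether the topos is hypercomplete. Hence $\pi_k F = 0$ for all $k$, and $F$ is $\infty$-connective. Equivalently, as recalled before \cref{loccontr=>locconstishyp}, a morphism is $\infty$-connective if and only if its fibre has trivial stalks.

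Second, I would show that $F$ is bounded above. Suppose $M\in(\Mod_R)_{\le n}$. Since $\pb{a}$ is $t$-exact, $\pb{a}_R(M)$ lies in $\Sh{X}{\Mod_R}_{\le n}$. For the target, $\pb{a}_{\Sing_R}(M)$ is a sheaf whose sections $M^{\Sing(U)} = \lim_{\Sing(U)} M$ are limits of $n$-truncated modules, hence $n$-truncated. A sheaf belongs to the coconnective part $\Sh{X}{\Mod_R}_{\le n}$ as soon as all its sections are $n$-truncated: the coconnective part of the sheaf $t$-structure is detected sectionwise, as the right orthogonal to the connective objects generated by $j_!$-extensions of shifts of $R$. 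Therefore $\pb{a}_{\Sing_R}(M)\in \Sh{X}{\Mod_R}_{\le n}$. Since the coconnective part is closed under fibres up to one shift, $F\in\Sh{X}{\Mod_R}_{\le n+1}$.

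Finally, I would conclude that $F=0$. Being $\infty$-connective, $F$ lies in particular in $\Sh{X}{\Mod_R}_{\ge n+2}$, so $\Hom{}{F}{F}\simeq 0$ by orthogonality of $\ge n+2$ and $\le n+1$. Thus $F\simeq 0$ and $\comp_M$ is an isomorphism. The step requiring the most care is the claim that $\pb{a}_{\Sing_R}(M)$ is coconnective in the sheaf $t$-structure, i.e.\ that truncatedness there is checked on sections. I would verify it by identifying $\Sh{X}{\Mod_R}_{\le n}$ with the sheaves $G$ such that $\Hom{}{j_!R_U[k]}{G}\simeq \Gamma(U;G)[-k]$ is connected for all opens $U$ and all $k>n$.
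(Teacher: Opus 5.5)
Your proof is correct, but it treats the target differently from the paper. The paper argues via hypercompleteness of both ends:
\begin{itemize}
\item $\pb{a}_{\Sing_R}(M)$ is hypercomplete because $\Sing$ is a hypercomplete cosheaf;
\item $\pb{a}_R(M)$ is bounded above by $t$-exactness of $\pb{a}_R$, and hence hypercomplete;
\item a stalkwise equivalence between hypercomplete sheaves is an equivalence.
\end{itemize}

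You instead show that the target is itself bounded above. Its sections $M^{\Sing(U)}$ are limits of $n$-truncated modules, and the coconnective part of the sheaf $t$-structure is detected on sections. The fibre is then bounded above and $\infty$-connective, hence zero. This amounts to proving ``bounded above implies hypercomplete'' by hand.

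What each route buys:
\begin{itemize}
\item Yours needs only the cosheaf property of $\Sing$, which is already required for $\pb{a}_{\Sing_R}(M)$ to be a sheaf, not its hyperdescent. It does, however, rely on the truncatedness of this particular target.
\item The paper's is shorter and works for any hypercomplete target. It also reuses a fact the paper needs anyway for unbounded $M$ in \cref{equivsheafcohhom}.
\end{itemize}

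A few minor imprecisions, none affecting validity:
\begin{itemize}
\item $\Sh{X}{\Mod_R}_{\le n}$ is closed under fibres with no shift, so in fact $F\in\Sh{X}{\Mod_R}_{\le n}$.
\item Sheaves of discrete modules are $0$-truncated, not $1$-truncated.
\item In your final criterion, it is the mapping anima $\Omega^\infty(\Gamma(U;G)[-k])$ that must be connected for all $k>n$, not the module $\Gamma(U;G)[-k]$ itself. Equivalently, $\pi_k\Gamma(U;G)=0$ for $k>n$.
\end{itemize}
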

\begin{proof}
Since $\pb{a}_{\Sing_{R}}(M)$ is hypercomplete, it suffices to show that $\pb{a}_{R}(M)$ is also hypercomplete. Since $\pb{a}_{R}$ is $t$-exact, we see that $\pb{a}_{R}(M)$ is bounded above, and therefore hypercomplete.
\end{proof}

The following is an immediate, yet important, consequence of invertibility of \eqref{sheafvssingcoh}.

\begin{corollary}\label[corollary]{sheafvssingeqCniceimpliesShcloccontr}
Let $X$ be a topological space, $\D$ be a bicomplete $\infty$-category such that the canonical functor \eqref{tensshaniC} is an equivalence. If the natural transformation \eqref{sheafvssingcoh} is invertible, then $X$ is $\D$-locally contractible. 
\end{corollary}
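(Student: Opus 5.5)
The plan is to exhibit the left adjoint of $\pb{a}_{\D}$ explicitly. The construction preceding the statement already gives a colimit preserving functor $\pfs{a}^{\Sing_{\D}}\colon \Sh{X}{\D}\to\D$, left adjoint to $\pb{a}_{\Sing_{\D}}$; it exists precisely because \eqref{tensshaniC} is an equivalence. If \eqref{sheafvssingcoh} is invertible, then $\pb{a}_{\D}\simeq\pb{a}_{\Sing_{\D}}$ as functors $\D\to\Sh{X}{\D}$. Hence $\pfs{a}^{\Sing_{\D}}$ is also a left adjoint of $\pb{a}_{\D}$. Here one should check that $\pb{a}_{\D}$ is the pullback functor of the pullback functoriality of $\Sh{-}{\D}$. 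This holds because, by construction, it is left adjoint to global sections $\pf{a}^{\D}$ and is given by sheafifying the constant presheaf.

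Local contractibility as defined in the paper additionally asks that the left adjoint be $\D$-linear, i.e.\ satisfy the projection formula $\pfs{a}(F\otimes \pb{a}M)\simeq \pfs{a}(F)\otimes M$ when $\D$ carries a closed symmetric monoidal structure. To verify it, I would use that both sides are colimit preserving in $F$. Since \eqref{tensshaniC} is an equivalence, $\Sh{X}{\D}$ is generated under colimits by objects of the form $h_U\otimes N$, where $h_U$ is the sheaf of anima represented by an open $U$ and $N\in\D$. It therefore suffices to check the formula on such generators, where it reads
\[ \Sing(U)\otimes(N\otimes M)\simeq(\Sing(U)\otimes N)\otimes M. \]
This holds because the tensor product of $\D$ commutes with colimits in each variable, so it commutes with the $\Ani$-tensoring. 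One must also make the natural comparison map (the mate of the identification $\pb{a}(\pfs{a}F\otimes M)$ with $\pb{a}\pfs{a}F\otimes\pb{a}M$ composed with the unit) agree with this identification on generators, but that is formal.

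The main obstacle, though mild, is this linearity and coherence step: making sure that $(h_U\otimes N)\otimes\pb{a}M\simeq h_U\otimes(N\otimes M)$ under the equivalence \eqref{tensshaniC}, i.e.\ that $\pb{a}$ is $\D$-linear with respect to the $\D$-module structure coming from the second tensor factor. If $\D$ is not assumed monoidal, only the existence of the left adjoint is needed, and the first paragraph suffices.
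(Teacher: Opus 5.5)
Your proposal is correct and follows the paper's argument, whose proof is exactly your first paragraph: once \eqref{sheafvssingcoh} is invertible, $\pfs{a}^{\Sing_{\D}}$ is a left adjoint of $\pb{a}_{\D}$. Your additional check of the projection formula on generators $h_U\otimes N$ supplies the $\D$-linearity required by the definition of local contractibility, which the paper leaves implicit; it is automatic when $\D$ is generated under colimits by its unit (as for $\Ani$ or $\Mod_R$), and your generator argument covers the general case.
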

\begin{proof}
Indeed, we find that $\pfs{a}^{\Sing_{\D}}$ is a left adjoint of $\pb{a}_\D$.
\end{proof}

We therefore wish to establish point-set topological conditions on $X$ that imply that (\ref{sheafvssingcoh}) is invertible. We axiomatize such a condition in the following definition.

\begin{definition}\label[definition]{singcontractibility}
Let $X$ be any topological space, and let $\D$ be a complete $\infty$-category. We say that $X$ is \emph{$\D$-locally weakly contractible}, if for any $M\in\D$, $x \in X$ and open neighbourhood $x\in U\subseteq X$, there is an open $V \subseteq U$ containing $x$ and a commuting triangle
\[\begin{tikzcd}
	& M^{\sing{\{x\}}} \arrow[rd] & \\
	M^{\sing{U}} \arrow[ru] \arrow[rr] && M^{\sing{V}}
\end{tikzcd}\]
in $\D$. Notice that if $X$ is $\D$-locally weakly contractible, then so is any open subset $U$ of $X$.
\end{definition}

For $\D=\Mod_R$, we will again say $R$-locally weakly contractible rather than $\Mod_R$-locally weakly contractible. \cref{singcontractibility} is intended to ensure that sheaf cohomology agrees with singular cohomology, and likewise that the shape agrees with the weak homotopy type when $\D=\Ani$. However, as we will see, this condition alone does not suffice, as one must additionally impose a mild hypercompleteness assumption.

\begin{remark}\label[remark]{charlocalweakcontr}
Suppose that $\D$ is a bicomplete $\infty$-category equipped with a closed symmetric monoidal structure. Denote by $1$ the monoidal unit in $\D$, by $\otimes$ its tensor product and by $\sHom{\D}{-}{-}$ the internal hom. Then, for any $A\in\Ani$ and $M\in\D$, there is an isomorphism $$M^A\simeq\sHom{\D}{A\otimes 1}{M}$$ in $\D$ which is natural in $M$ and $A$. Therefore, we deduce that a sufficient condition for a topological space $X$ to be $\D$-locally weakly contractible is requiring that, for any $x \in X$ and open neighbourhood $x\in U\subseteq X$, there is an open $V \subseteq U$ containing $x$ and a commuting triangle
\begin{equation}\label{condlocalcontrtens1}
\begin{tikzcd}
	& \Sing_{\D}(\{x\}) \arrow[rd] & \\
	\Sing_{\D}(V) \arrow[ru] \arrow[rr] && \Sing_{\D}(U)
\end{tikzcd}
\end{equation}
in $\D$.
\end{remark}

\begin{remark}
In classical topology references, the condition considered in (\ref{condlocalcontrtens1}) is often referred to as (homological) local weak contractiblity. 
\end{remark}

\begin{example}\label[example]{anrlocweakcontr}
We recall that an ANR (absolute neighborhood retract) is a separable, metrizable space $X$ such that whenever it is embedded as a closed subspace of another metrizable space $Z$, there is an open subset $U$ of $Z$ containing $X$ as a retract. By \cite[Theorem 2]{fox1942characterization} and \cref{charlocalweakcontr}, locally compact ANRs are $\Ani$-locally weakly contractible.
In general, a locally compact ANR is not locally contractible in the stronger sense that every point has a neigbourhood basis consisting of weakly contractible open subsets. Topological manifolds, and more generally ENRs, are ANRs satisfying this stronger local contractibility property.
\end{example}

We now move on to investigate whether the map (\ref{sheafvssingcoh}) is an isomorphism on stalks for $X$ $\D$-locally weakly contractible. We will make use of the following preliminary results. To that end, Let $P$ be a poset and write $P^{[1]}$ for the poset of arrows in $P$, $\mathrm{const} \colon P \to P^{[1]}$ for the functor sending an object to its identity, and $P^{[1]}\setminus P$ for the full subposet of $P^{[1]}$ given by the non-identity arrows. 

\begin{lemma}\label[lemma]{lemma:posetarrowcofinal}
	 The following statements hold true.
	\begin{enumerate}
		\item\label{item:lemmaposet1} The functor $\mathrm{const}\colon P\rightarrow P^{[1]}$ sending an object to the corresponding identity arrow is (co)final.
		\item\label{item:lemmaposet2} Assume that $P$ is cofiltered and has no initial object. Then $P^{[1]}\setminus P$ is cofiltered.
		\item\label{item:lemmaposet3} Assume that $P$ is cofiltered and has no initial object. Then the inclusion $P^{[1]}\setminus P\hookrightarrow P^{[1]}$ is cofinal.\footnote{Here, we follow the terminology of \cite{cisinski}, that is, cofinal functors induce equivalences on limits, while final functors induce equivalences on colimits.}
	\end{enumerate}
\end{lemma}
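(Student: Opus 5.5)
We prove the three items separately. Throughout, $P^{[1]} = \Fun([1],P)$, so an object is a pair $a\le b$ in $P$, and $(a\le b)\le(a'\le b')$ holds iff $a\le a'$ and $b\le b'$. We use Quillen's Theorem A in the form of \cite{cisinski}: a functor $u\colon A\to B$ is cofinal iff, for every $y\in B$, the slice $A\times_B B_{/y}$ is weakly contractible, and final iff $A\times_B B_{y/}$ is. We also use two standard facts: a poset with a terminal or initial object is weakly contractible, and so is any (co)filtered poset.

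For \eqref{item:lemmaposet1}, we compute the two slices directly.
\begin{itemize}
\item For $y=(a\le b)$, the slice relevant for cofinality is $\{c\in P : (c\le c)\le(a\le b)\} = \{c : c\le a\} = P_{/a}$. It has terminal object $a$, hence is contractible.
\item The slice relevant for finality is $\{c : (a\le b)\le(c\le c)\} = \{c : b\le c\} = P_{b/}$. It has initial object $b$, hence is contractible.
\end{itemize}
Equivalently, $\mathrm{const}$ is right adjoint to the target functor and left adjoint to the source functor, and functors with adjoints on the appropriate side are (co)final.

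For \eqref{item:lemmaposet2} and \eqref{item:lemmaposet3}, the key observation is the following.

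\emph{Claim.} Every $d\in P$ admits some $e<d$.

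Indeed, since $d$ is not initial, there is $c$ with $d\not\le c$. Cofilteredness gives $e\le d$ and $e\le c$. We must have $e\ne d$, since otherwise $d\le c$. Hence $e<d$.

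\emph{Proof of \eqref{item:lemmaposet2}.} In a poset, being cofiltered just means being nonempty and every finite family having a common lower bound. Take a finite (possibly empty) family of non-identity arrows $a_i<b_i$.
\begin{itemize}
\item Cofilteredness of $P$ gives $d$ with $d\le a_i$ for all $i$; in the empty case pick any $d$, which exists since $P$ is cofiltered and hence nonempty.
\item The Claim gives $e<d$.
\item Then $(e<d)$ is a non-identity arrow, and $(e<d)\le(a_i<b_i)$ for all $i$, because $e\le d\le a_i$ and $d\le a_i\le b_i$.
\end{itemize}
The empty family also gives nonemptiness of $P^{[1]}\setminus P$.

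\emph{Proof of \eqref{item:lemmaposet3}.} Fix $y=(a\le b)\in P^{[1]}$. By Theorem A, it suffices to show that the slice
\[Q_y=\{(c<d)\in P^{[1]}\setminus P : c\le a,\ d\le b\}\]
is weakly contractible. We show that $Q_y$ is cofiltered, by the same argument as in \eqref{item:lemmaposet2}. Given finitely many $(c_i<d_i)\in Q_y$:
\begin{itemize}
\item choose $d$ with $d\le a$ and $d\le c_i$ for all $i$;
\item choose $e<d$ by the Claim.
\end{itemize}
Then $(e<d)\in Q_y$, since $e\le a$ and $d\le a\le b$. Moreover $(e<d)$ lies below each $(c_i<d_i)$, since $e\le d\le c_i$ and $d\le c_i\le d_i$. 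The empty family shows $Q_y\neq\emptyset$.

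The only non-formal step is the Claim, which is exactly where the hypothesis that $P$ has no initial object is used; everything else reduces to Theorem A and the contractibility of cofiltered posets.
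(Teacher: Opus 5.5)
Your proof is correct and follows essentially the same route as the paper: adjoints (equivalently, slices with terminal/initial objects) for (1), a direct construction of lower bounds for (2), and Quillen's Theorem A with cofiltered slices for (3). Your explicit Claim that every $d\in P$ admits some $e<d$ spells out a step the paper uses tacitly when it chooses $h(0)$ with $h(0)\neq h(1)$, and treating all slices $Q_y$ uniformly, rather than splitting into identity and non-identity arrows, is a harmless simplification.
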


\begin{proof}
(1) follows from the fact that $\mathrm{const}$ admits left and right adjoints given by evaluating at $1,0\in[1]$, respectively. 
	
	Assume now that $P$ is cofiltered and has no initial object. To show (2) pick $f,g\in P^{[1]}\setminus P$. Since $P$ is cofiltered, we may find $h(1)\in P$ with the property that $h(1)\leq f(1), g(1)$. The assumption that $P$ has no initial object implies that one may find $h(0)\leq h(1), f(0), g(0)$ such that $h(0)\neq h(1)$. Therefore one has an arrow $h\in P^{[1]}\setminus P$ with $h\leq f, g$, as desired.
	
	To show (3), by Quillen's Theorem A, we need to show that, for any arrow $f\in P^{[1]}$, the slice $(P^{[1]}\setminus P)_{/f}$ is weakly contractible. If $f$ is not an identity arrow, $(P^{[1]}\setminus P)_{/f}$ has a terminal object, and is therefore weakly contractible. If $f=\id_p$ for some $p\in P$, one checks that the slice $(P^{[1]}\setminus P)_{/f}$ identifies with the full subposet of $P^{[1]}\setminus P$ spanned by the arrows $g$ such that $g(1)\leq p$. This poset is cofiltered, as is shown exactly as in the proof of (2), and therefore it is weakly contractible.
\end{proof}

\begin{remark}
If one drops the assumption that $P$ has no initial object, statements (\ref{item:lemmaposet2}) and (\ref{item:lemmaposet3}) in \cref{lemma:posetarrowcofinal} are no longer true. For example, if  $P$ is the horn $\Lambda_0^2$, $P^{[1]}\setminus P$ is the discrete poset consisting of two elements given by the arrows $0\leq 1$, $0\leq 2$, which is neither cofiltered nor final in $P^{[1]}$, as the latter has an initial object given by $\id_0$. 
\end{remark}

\begin{corollary}\label[corollary]{cor:colimourofarrowconst}
	Let $P$ be a cofiltered poset having no initial object. Let $\D$ be an $\infty$-category and $F\colon (P^{[1]})\op\rightarrow\D$ any functor. Assume that the restriction of $F$ to $(P^{[1]}\setminus P)\op$ is constant. Then for any $f\in P^{[1]}\setminus P$ the map
	\[F(f)\xrightarrow{F(\id_{f(0)}\leq f)}F(\id_{f(0)})\rightarrow\colim\limits_{P\op}F_{|P\op}\]
	is invertible.
\end{corollary}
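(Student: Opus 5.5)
The plan is to compute the colimit over $P\op$ in two steps, using the cofinality statements of \cref{lemma:posetarrowcofinal}.

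First, by \cref{lemma:posetarrowcofinal}(\ref{item:lemmaposet1}), the functor $\mathrm{const}\colon P\to P^{[1]}$ is cofinal. Hence $\mathrm{const}\op\colon P\op\to (P^{[1]})\op$ is final, and the canonical map
\[\colim_{P\op}F_{|P\op}\to\colim_{(P^{[1]})\op}F\]
is an equivalence. Here $F_{|P\op}$ means $F\circ\mathrm{const}\op$.

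Second, by \cref{lemma:posetarrowcofinal}(\ref{item:lemmaposet3}), the inclusion $P^{[1]}\setminus P\hookrightarrow P^{[1]}$ is cofinal. So, passing to opposites, the colimit of $F$ over $(P^{[1]})\op$ is also computed by restricting to $(P^{[1]}\setminus P)\op$. By assumption $F$ is constant there. By \cref{lemma:posetarrowcofinal}(\ref{item:lemmaposet2}), $P^{[1]}\setminus P$ is cofiltered, hence weakly contractible, so the colimit of a constant diagram on it is its value. Concretely, for any $f\in P^{[1]}\setminus P$, the structure map
\[F(f)\to\colim_{(P^{[1]}\setminus P)\op}F\]
is an equivalence. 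A small point to settle is what "constant" means: I read it as $F$ sending every morphism of $(P^{[1]}\setminus P)\op$ to an equivalence. That reading suffices, since a diagram of equivalences over a weakly contractible category factors through its groupoidification, which is contractible, so the colimit is equivalent to each value via the structure maps.

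Combining the two steps, for $f\in P^{[1]}\setminus P$ the structure map $F(f)\to\colim_{(P^{[1]})\op}F$ is an equivalence. It remains to identify this map with the composite in the statement. By naturality of colimit cocones, the structure map at $f$ factors as $F(f)\to F(\id_{f(0)})$ followed by the structure map at $\id_{f(0)}$. This uses that $\id_{f(0)}\le f$ in $P^{[1]}$, so there is a morphism $f\to\id_{f(0)}$ in $(P^{[1]})\op$. The structure map at $\id_{f(0)}=\mathrm{const}(f(0))$ is, under the first equivalence, the structure map of $\colim_{P\op}F_{|P\op}$ at $f(0)$. The composite in the statement is therefore identified with an equivalence.

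The main obstacle is bookkeeping rather than substance. One must check that the direction conventions match: cofinal in the paper means limit-preserving, so the opposite functor is final and preserves colimits over opposites. One must also check that the comparison maps on colimits are compatible with the chosen cocone legs. The step I expect to need the most care is keeping these variances straight.
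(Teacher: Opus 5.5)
Your proposal is correct and follows the paper's own proof. The paper writes down the same commutative square comparing the colimits over $P\op$, $(P^{[1]}\setminus P)\op$ and $(P^{[1]})\op$, uses parts (1), (2), (3) of \cref{lemma:posetarrowcofinal} exactly as you do, and concludes by two-out-of-three; your remarks on the meaning of ``constant'' and on the variance conventions correctly spell out what the paper leaves implicit.
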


\begin{proof}
	Consider the diagram 
	\[\begin{tikzcd}
		F(f) \ar[r, "{F(\id_{f(0)}\leq f)}"] \ar[d] & F(\id_{f(0)}) \ar[r] & \colim\limits_{P\op} F_{|{P\op}} \ar[d] \\
		\colim\limits_{(P^{[1]}\setminus P)\op} F_{|{(P^{[1]}\setminus P)\op}} \ar[rr] && \colim\limits_{(P^{[1]})\op} F
	\end{tikzcd}\]
	in $\D$ which commutes by definition of a colimit. By \cref{lemma:posetarrowcofinal}~\eqref{item:lemmaposet2} and the assumption that $F_{|_{(P^{[1]}\setminus P)\op}}$ is constant, the left vertical map is invertible, and by \cref{lemma:posetarrowcofinal}~\eqref{item:lemmaposet1} and~\eqref{item:lemmaposet3}, the right vertical map and the lower horizontal map are invertible, respectively. Therefore, the upper horizontal map is also invertible, as desired.
\end{proof}

We are now ready to prove our main result of the section. We first treat the following special case.

\begin{lemma}\label[lemma]{lemma:initialopencompiso}
	Let $\D$ be a complete $\infty$-category, and let $X$ be a $\D$-locally weakly contractible space. Let $x\in X$ be a point, and let $\Op{X,x}$ be the poset of all open neighbourhoods of $x$. Assume that $\Op{X,x}$ has an initial object $U$. Then the map \eqref{sheafvssingcoh} induces an equivalence on the stalk at $x$.
\end{lemma}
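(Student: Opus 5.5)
By \cref{rmk:factcompatstalk}, the stalk at $x$ of $\comp_M$ is the composite
\[ M \to M^{\sing{U'}} \to \varinjlim_{x\in V} M^{\sing{V}} \]
for any open $U'\ni x$. Since $U$ is initial in $\Op{X,x}$, the colimit over $\Op{X,x}\op$ is just evaluation at $U$. So the stalk map is identified with the map $c\colon M\to M^{\sing{U}}$ induced by $\sing{U}\to\ast$. The plan is therefore to show that $c$ is an equivalence. Here I read the stalk as the colimit over neighbourhoods; it is computed on the presheaf level, since $\pb{a}_{\Sing_{\D}}(M)$ is already a sheaf and the stalk of the sheafification of a constant presheaf is $M$.

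Next apply $\D$-local weak contractibility at $x$ with the neighbourhood $U$. This gives an open $V\subseteq U$ containing $x$, together with a factorization of the restriction $M^{\sing{U}}\to M^{\sing{V}}$ through $M^{\sing{\{x\}}}\simeq M$. Because $U$ is initial, $U\subseteq V$, so $V=U$ and the restriction map is the identity. Write the factorization as $p\colon M^{\sing{U}}\to M$ followed by $q\colon M\to M^{\sing{U}}$, with $qp\simeq\id$.

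It remains to identify these maps with $c$ and to get the other composite. The map $q$ is induced by $\sing{U}\to\sing{\{x\}}=\ast$, so $q=c$. The map $p$ is restriction along the inclusion $\{x\}\to U$, so $pc$ is induced by the composite $\ast\to\sing{U}\to\ast$, which is the identity; hence $pc\simeq\id_M$. Together with $cp=qp\simeq\id$, this shows $c$ is an equivalence.

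The main obstacle is bookkeeping: one must check that the triangle in \cref{singcontractibility} consists of exactly these canonical maps, namely cotensoring along $\{x\}\hookrightarrow U$ and along $\sing{V}\to\ast$. Once that holds, the argument is formal.
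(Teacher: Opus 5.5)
Your proposal is correct and follows the paper's proof. Both arguments use \cref{rmk:factcompatstalk} to reduce to the map $M\to M^{\sing{U}}$ induced by $\sing{U}\to\ast$. They get a right inverse from the local weak contractibility triangle, which collapses to $V=U$ because $U$ is initial, and a left inverse because $\{x\}\hookrightarrow U$ is a section of $U\to\{x\}$. The paper also implicitly reads the triangle in \cref{singcontractibility} as consisting of these canonical maps, so your bookkeeping caveat is the right one.
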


\begin{proof}
	Let $M$ be any object in $\D$. By \cref{rmk:factcompatstalk}, it suffices to show that the map $M\rightarrow M^{\Sing(U)}$ induced by $\Sing(U)\rightarrow\ast$ is an equivalence. We have a diagram in $\D$ 
\[\begin{tikzcd}
		& M^{\Sing(\{x\})} \ar[d] \ar[rd, "\id"] &  \\
		M^{\Sing(U)} \ar[ru] \ar[r, "\id"'] & M^{\Sing(U)} \ar[r]  & M^{\Sing(\{x\})},
\end{tikzcd}\]
	where the left triangle commutes since $X$ is $\D$-locally weakly contractible and $U$ is initial, and right triangle commutes since $\{x\}\hookrightarrow U$ is a section of $U\rightarrow\{x\}$. Hence we deduce that $\pb{x}(\comp)$ is invertible as desired.
\end{proof}

\begin{proposition}\label[proposition]{Clocwcontr=>sheafvssinginftyconn}
Let $\D$ be a complete $\infty$-category, and 
assume there is a conservative functor $\Phi \colon \D\rightarrow\Ab$ which preserves filtered colimits.\footnote{For instance, if $\D$ is a dualizable presentable stable $\infty$-category, e.g.\ itself compactly generated: In the latter case, $\Phi$ can be chosen to be the sum over all compact objects $d$ and all integers $n$ of the functors $\pi_n\hom_\D(d,-)$ and any dualizable presentable stable $\infty$-category admits a fully faithful left adjoint to a compactly generated one.} Let $X$ be a $\D$-locally weakly contractible topological space. Then the map \eqref{sheafvssingcoh} induces an equivalence on stalks.
\end{proposition}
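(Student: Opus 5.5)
Fix $x\in X$ and $M\in\D$. By \cref{rmk:factcompatstalk}, the stalk of \eqref{sheafvssingcoh} at $x$ is the canonical map
\[ c\colon M\longrightarrow \colim_{V\in\Op{X,x}\op} M^{\Sing(V)}, \]
induced by the maps $\Sing(V)\to\ast$. The stalk of $\pb{a}_{\D}M$ is $M$: the presheaf and its sheafification have the same stalks, at least once sheafification exists. So the task is to show that $c$ is invertible. If $\Op{X,x}$ has an initial object, this is exactly \cref{lemma:initialopencompiso}. We may therefore assume that $P=\Op{X,x}$ has no initial object. It is cofiltered, since it is closed under finite intersections.

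Set $G(V)=M^{\Sing(V)}$, a functor $P\op\to\D$, and $C=\colim_{P\op}G$. The point $x$ gives the constant functor $M=M^{\Sing(\{x\})}$ and a map $G\to M$ by restricting along $\{x\}\hookrightarrow V$. The map $c$ is a section of the induced map $r\colon C\to M$, because $\{x\}\to V\to\ast$ is the identity; hence $rc=\id_M$. It remains to show that $c$ is surjective in an appropriate sense. This is where $\Phi$ enters: since $\Phi$ preserves filtered colimits and $P\op$ is filtered, $\Phi(C)=\colim\Phi G(V)$. Because $\Phi$ is conservative, it suffices to show that $\Phi(cr)=\id$ on $\Phi(C)$; then $\Phi(r)$ and $\Phi(c)$ are inverse isomorphisms and $c$ is an equivalence.

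Take an element $\xi\in\Phi(C)$. It is represented by some $\eta\in\Phi(G(U))$ for some $U\in P$. Local weak contractibility provides $V\subseteq U$ with $x\in V$ such that the restriction $G(U)\to G(V)$ factors as $G(U)\to M\to G(V)$. The first map is restriction to $\{x\}$. The second map is not specified by \cref{singcontractibility}, but it is some map $s\colon M\to G(V)$. We then have to compare $s$ with $c_V\colon M\to G(V)$, the map induced by $\Sing(V)\to\ast$, after passing to the colimit. This comparison is the main obstacle. Composing $s$ with restriction to $\{x\}$ need not give the identity, so we cannot conclude directly that $[s(m)]=c(m)$ in $\Phi(C)$.

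To handle this, I would apply local weak contractibility once more, to $V$, obtaining $W\subseteq V$ with $G(V)\to G(W)$ factoring through $M$. Then $\xi$ is represented at $W$ by an element that factors through $M$ twice. The idea is to use the arrow-category device of \cref{cor:colimourofarrowconst}. For each arrow $W\subseteq V$, consider the image of $M\to G(V)\to G(W)$, or after applying $\Phi$, the image $\Phi(M)\to\Phi(G(W))$ through the various factorizations. Using \cref{lemma:posetarrowcofinal}, this lets us compute $\Phi(C)$ as a colimit over $P^{[1]}\setminus P$ of terms that, after a further shrinking, all become the constant image of $\Phi(M)$ via $c$. Concretely, I expect to show that on non-identity arrows the resulting functor is constant up to cofinal replacement, so that \cref{cor:colimourofarrowconst} identifies $\colim$ with $\Phi(M)$ via $c$. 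That identification gives the surjectivity of $\Phi(c)$, which combined with injectivity (from $rc=\id$) finishes the argument.
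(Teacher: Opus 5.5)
Your reduction is sound. By \cref{rmk:factcompatstalk} the stalk map is $c\colon M\to C=\colim_{P\op}G$, the restriction $r\colon C\to M$ satisfies $rc\simeq\id_M$, $\Phi$ commutes with the filtered colimit, and by conservativity it suffices to prove $\Phi(c)\Phi(r)=\id$. The gap is that this last identity is never proved, and it is equivalent to the surjectivity of $\Phi(c)$, which is the whole content of the proposition. You end with an expectation rather than an argument, and the arrow-category route you point to does not apply as sketched. \cref{cor:colimourofarrowconst} needs an honest functor on $P^{[1]}$ that is constant on \emph{all} non-identity arrows. On $P=\Op{X,x}$ no such functor is available, because a general strict inclusion $W\subsetneq V$ of neighbourhoods of $x$ need not factor through $M$. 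The paper gets around this by first passing to the cofinal, cofiltered subcategory $\mathcal{N}^M(X,x)\subseteq\Op{X,x}$, whose strict inclusions are exactly those admitting such a factorization. Only then does it extend $U\mapsto\Phi(M^{\Sing(U)})$ to $\mathcal{N}^M(X,x)^{[1]}$, and working in the 1-category $\Ab$ is what makes this extension strictly functorial.

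The obstacle you identified is also not real. Write $r_U\colon G(U)\to M$ for restriction along $\{x\}\to U$, $c_V\colon M\to G(V)$ for the map induced by $\Sing(V)\to\ast$, $\rho_{UV}\colon G(U)\to G(V)$ for the restriction, and $[\zeta]_V$ for the class of $\zeta\in\Phi(G(V))$ in $\Phi(C)$. In \cref{singcontractibility} the two maps in the triangle are the canonical ones, $r_U$ and $c_V$. This is how the condition is described in the introduction and how it is used in \cref{lemma:initialopencompiso}. So for $\xi=[\eta]_U$ one gets directly
\[\xi=[\Phi(\rho_{UV})\eta]_V=[\Phi(c_V)\Phi(r_U)\eta]_V=\Phi(c)\bigl(\Phi(r_U)\eta\bigr).\]
Even if the second map were an arbitrary $s$, your claim that $r_Vs$ need not be the identity is wrong. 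Indeed $r_Vsr_U\simeq r_V\rho_{UV}\simeq r_U$, and $r_U$ has the section $c_U$, so $r_Vs\simeq\id_M$. More simply, $\eta'=\eta-\Phi(c_Ur_U)\eta$ lies in $\ker\Phi(r_U)$ because $r_Uc_U\simeq\id_M$. Hence $\Phi(\rho_{UV})\eta'=\Phi(s)\Phi(r_U)\eta'=0$, and so $\xi=[\Phi(c_Ur_U)\eta]_U=\Phi(c)\Phi(r)\xi$. With this element chase inserted, your proof is complete. It needs neither the initial-object case nor \cref{lemma:posetarrowcofinal}, and it is more elementary than the paper's argument.
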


\begin{proof}
    Since $\Phi$ is conservative and preserves filtered colimits, using again \cref{rmk:factcompatstalk}, it suffices to check that, for any $x\in X$ and $M\in\D$, the map in $\E$
\[
    \Phi(M)\rightarrow\Phi(\colim\limits_{U\in\Op{X,x}}M^{\Sing(U)})\simeq\colim\limits_{U\in\Op{X,x}}\Phi(M^{\Sing(U)})
\]
   induced by $\{x\}\to \Sing(U)$ is invertible. 
    

    Denote by $\mathcal{N}^M(X,x)$ the subcategory of $\Op{X,x}$ whose objects are the open neighbourhoods of $x$ and whose morphisms are open inclusions $U\subseteq V$ which are either the identity, or, if the inclusion is strict, such that there is a commuting triangle
\[\begin{tikzcd}
                                     & M^{\sing{\{x\}}} \arrow[rd] &                \\
M^{\sing{V}} \arrow[ru] \arrow[rr] &                           & M^{\sing{U}}
\end{tikzcd}\]
    in $\D$ and denote by $i\colon \mathcal{N}^M(X,x) \to \Op{X,x}$ the inclusion functor. Since $X$ is $\D$-locally weakly contractible, $\mathcal{N}^M(X,x)$ is cofiltered. Since the slices of $i$ over $U$ are given by $\mathcal{N}^M(U,x)$, we deduce that $i$ is cofinal. By \cref{lemma:initialopencompiso}, it suffices to treat the case where $\mathcal{N}^M(X,x)$ has no initial object. The assumption that $X$ is $\D$-locally weakly contractible implies that the restriction of the functor $U \mapsto \Phi(M^{\Sing(U)})$ to $\mathcal{N}^M(X,x)\op$ extends to a functor $(\mathcal{N}^M(X,x)^{[1]})\op\rightarrow\Ab$ as follows. On objects, send $\id_U$ to $\Phi(M^{\Sing(U)})$ and strict inclusions $U \subseteq V$ in $\mathcal{N}^M(X,x)$ to $\Phi(M)$. On morphisms where either the source or the target  are not identities, one has to describe maps of the kind $\Phi(M^{\Sing(U)}) \to \Phi(M)$, $\Phi(M) \to \Phi(M^{\Sing(U)})$, or $\Phi(M) \to \Phi(M)$ and one chooses the ones induced by $\{x\} \to \Sing(U) \to \{x\}$, and the identity of $\ast$. From the assumption that $X$ is $\D$-locally weakly contractible, we find that this construction is indeed a functor, which is constant on the full subcategory of non-identity morphisms. Therefore the proof is concluded by appealing to \cref{cor:colimourofarrowconst}.
\end{proof}


\begin{remark}
The most important case of an $\infty$-category $\D$ which does not satisfy the assumptions of \cref{Clocwcontr=>sheafvssinginftyconn} is $\Ani$. Indeed, if there were such a conservative functor $\Ani \to \Ab$ preserving filtered (even sequential) colimits, it would follow from \cite[Lemma 12.1]{Bachmann2017NormsIM}, that the canonical map $\coprod_n B\Sigma_n \to \Z \times B\Sigma_{\infty}$ is a group completion, which it famously isn't.
As a result, we do not know if the map \eqref{sheafvssingcoh} is an equivalence on stalks for all $\Ani$-locally weakly contractible topological spaces. However, under some further hypothesis on the local topology of such spaces, we can prove this:
%
%
%
\end{remark}

\begin{proposition}\label[proposition]{prop:unstableCLocwcontr=>compiso}
	Let $\D$ be a complete $\infty$-category, and let $X$ be a $\D$-locally weakly contractible topological space which is first countable. Then the map \eqref{sheafvssingcoh} induces an equivalence on stalks.
\end{proposition}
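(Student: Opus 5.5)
By \cref{rmk:factcompatstalk}, it suffices to prove the following: for every $x\in X$ and $M\in\D$, the map
\[ M\rightarrow \colim\limits_{U\in\Op{X,x}} M^{\Sing(U)}, \]
induced by $\Sing(U)\to\ast$, is an equivalence. In \cref{Clocwcontr=>sheafvssinginftyconn} the functor $\Phi$ was used so that we could replace the $\infty$-categorical diagram by a $1$-categorical one in $\Ab$, where the ad hoc extension to the arrow poset is automatically coherent. Here we have no such $\Phi$, so first countability will be used instead, to reduce to a \emph{sequential} diagram. For sequential diagrams no higher coherences need to be built by hand.

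\textbf{Step 1: reduce to a sequence.} Let $W_0\supseteq W_1\supseteq\cdots$ be a countable neighbourhood basis of $x$. Using $\D$-local weak contractibility, we inductively pick opens $U_0=W_0\supseteq U_1\supseteq U_2\supseteq\cdots$ with $U_{n+1}\subseteq U_n\cap W_{n+1}$ containing $x$, together with a commuting triangle $M^{\Sing(U_n)}\to M^{\Sing(\{x\})}\to M^{\Sing(U_{n+1})}$ witnessing that the restriction factors through $M$. A subtlety is that the triangle depends on $M$. Since we fix $M$ first, this is harmless. The sequence $(U_n)$ is cofinal in $\Op{X,x}$, because it refines the countable basis. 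Hence the colimit above is computed as $\colim_n M^{\Sing(U_n)}$ along $\mathbb{N}$, where $\mathbb{N}\to\Op{X,x}\op$ is final.

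\textbf{Step 2: the sequential colimit.} Write $A_n = M^{\Sing(U_n)}$ with transition maps $t_n\colon A_n\to A_{n+1}$. We are given factorizations $t_n \simeq s_n\circ r_n$, where $r_n\colon A_n\to M$ is induced by $\{x\}\to \Sing(U_n)$ and $s_n\colon M\to A_{n+1}$ is induced by $\Sing(U_{n+1})\to\ast$. Moreover $r_{n+1}s_n\simeq \id_M$, since $\{x\}\to\Sing(U_{n+1})\to\ast$ is the identity. A sequential diagram in an $\infty$-category is determined by its objects and maps alone, as $\mathbb{N}$ is free on its generating arrows, so no coherence data are needed. The colimit of $A_0\to A_1\to\cdots$ therefore agrees with the colimit of the interleaved sequence
\[ A_0\xrightarrow{r_0} M\xrightarrow{s_0} A_1\xrightarrow{r_1} M\xrightarrow{s_1} A_2\to\cdots, \]
since the original sequence is a cofinal subsequence of this one. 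The interleaved sequence also contains the subsequence $M\xrightarrow{r_1s_0} M\xrightarrow{r_2s_1}\cdots$, which is cofinal as well. Its transition maps are all equivalent to $\id_M$, so its colimit is $M$. Thus $\colim_n A_n\simeq M$, with the comparison map $M\to\colim_n A_n$ given by $M\xrightarrow{s_n}A_{n+1}\to\colim$.

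\textbf{Step 3: identify the map.} It remains to check that under this identification the map coming from \cref{rmk:factcompatstalk}, namely $M\to A_0\to\colim$, is the equivalence. This holds because $s_0$ and the map $M\to A_0$ are both induced by collapsing to a point, so they are compatible with $t_0$.

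One technical point: the colimit is taken in $\D$, which is only assumed complete, so the stalk colimit must exist. I expect the argument to show existence directly, since the sequential colimit exists and equals $M$, and cofinality then gives the colimit over $\Op{X,x}$. The step I expect to be most delicate is Step 2, specifically justifying that the interleaved sequence is a legitimate diagram, which is where freeness of $\mathbb{N}$ replaces the functor $\Phi$.
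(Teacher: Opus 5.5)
Your proposal is correct and follows essentially the same route as the paper: use first countability to extract a nested cofinal sequence $(U_n)$ whose restriction maps $M^{\Sing(U_n)}\to M^{\Sing(U_{n+1})}$ factor through $M$, then identify the resulting sequential colimit with $M$. Your interleaving argument, which uses that diagrams indexed by $\mathbb{N}$ need no higher coherence data, is a more explicit version of the paper's step of ``composing the triangles''. That step replaces $\colim_n M^{\Sing(U_n)}$ by a colimit of copies of $M$ whose transition maps are induced by $\{x\}\to\Sing(U_n)\to\{x\}$.
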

\begin{proof}
	Let $x$ be a point in $X$, and let $M$ be any object in $\D$. Pick a countable neighbourhood basis $\{W_1,W_2,\dots\}$ at $x$ and define $V_i = \cap_{j\leq i} W_j$ which is now a nested sequence of open neighbourhoods of $x$. By the $\D$-local contractibility assumption on $X$, we can then find a subsequence $\{U_n\}_{n\in \mathbb{N}}$ such that each map $U_{n+1} \to U_n$ has the property that there exists a commuting triangle in $\D$
\[ \begin{tikzcd}
	& M^{\Sing(\{x\})} \ar[dr] & \\
	M^{\Sing(U_n)} \ar[ur] \ar[rr] && M^{\Sing(U_{n+1})}
\end{tikzcd}\]

Now, the sequence $\{U_n\}_{n \in \mathbb{N}}$ determines a functor $(\mathbb{N},\geq) \to \Op{X,x}$ which is final, as its slices over $U$ are given again by $(\mathbb{N},\geq)$ which is a contractible poset, and composing the triangles as above when $n$ increases, we find that 
\[ \colim\limits_{U \in \Op{X,x}} M^{\Sing(U)} \simeq \colim\limits_{n\geq 0} M^{\Sing(U_n)} \simeq \colim\limits_{n\geq 0} M^{\Sing(\{x\})} \simeq M\]
where the transition maps at stage $n$ in the latter colimit are induced by the composites $\{x\} \to \Sing(U_n) \to \{x\}$, so that the colimit canonically identifies with $M$ as needed.
\end{proof}

\begin{corollary}\label[corollary]{equivsheafcohhom}
Let $R$ be a connective $\bE_{\infty}$-ring spectrum and let $X$ be an $R$-locally weakly contractible space. Then $X$ is $R$-locally contractible if and only if all $\Mod_R$-valued constant sheaves on $X$ are hypercomplete. In this case, we have isomorphisms
\begin{align*}
	\sing{X}\otimes M\simeq\pfs{a}^{R}\pb{a}_{R}M  \quad \text{ and } \quad  M^{\sing{X}} \simeq\pf{a}^{R}\pb{a}_{R}M
\end{align*}
for any $M\in\Mod_R$. When $X$ is LCH, we furthermore have
\[\sing{X}\otimes M\simeq\pfp{a}^{\D}\pbp{a}_{\D}M.\]
\end{corollary}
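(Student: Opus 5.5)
The plan is to reduce everything to the comparison map \eqref{sheafvssingcoh}. Since $\Mod_R$ is compactly generated and stable, the footnote to \cref{Clocwcontr=>sheafvssinginftyconn} supplies a conservative, filtered-colimit preserving functor $\Phi\colon\Mod_R\to\Ab$. That proposition then shows that for every $M\in\Mod_R$ the map $\comp_M\colon \pb{a}_R M\to \pb{a}_{\Sing_R}M$ is an isomorphism on stalks, i.e.\ an $\infty$-connective map of sheaves.

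For the ``if'' direction, assume constant sheaves are hypercomplete. The target $\pb{a}_{\Sing_R}M$ is hypercomplete, because $\Sing$ is a hypercomplete cosheaf, so $U\mapsto M^{\Sing(U)}$ is a hypersheaf. A stalkwise equivalence between hypercomplete sheaves is an equivalence, so $\comp$ is invertible. \cref{sheafvssingeqCniceimpliesShcloccontr} then says that $X$ is $R$-locally contractible, with $\pfs{a}^R\simeq\pfs{a}^{\Sing_R}$. The ``only if'' direction is exactly \cref{loccontr=>locconstishyp}.

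In the locally contractible case $\comp$ is an equivalence, so the formulas follow directly. Taking global sections gives $\pf{a}\pb{a}_R M\simeq \pf{a}\pb{a}_{\Sing_R}M = M^{\Sing(X)}$. For the other formula, $\pfs{a}^R\pb{a}_RM\simeq \pfs{a}^{\Sing_R}\pb{a}_{\Sing_R}M$. Since $\pb{a}_R M\simeq \pb{a}_R(R)\otimes M$ and $\pfs{a}^R$ is linear, this equals $\pfs{a}^{\Sing_R}(\pb{a}_R R)\otimes M$. We evaluate $\pfs{a}^{\Sing_R}$ on the constant sheaf $\pb{a}_R R$, which is the sheafification of the representable-type presheaf coming from $X\in\Op{X}$, i.e.\ the tensor of the terminal sheaf with $R$. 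By the construction of $\pfs{a}^{\Sing_\D}$ as the colimit-preserving extension of $(U,M)\mapsto\Sing(U)\otimes M$, this gives $\Sing(X)\otimes R$. Hence $\pfs{a}^R\pb{a}_RM\simeq \Sing(X)\otimes M$.

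When $X$ is LCH, \cref{sharp=shriektensdualwithkunnet} applies because $\Sh{-}{\Mod_R}$ satisfies K\"unneth and $a^*$ has an $R$-linear left adjoint. It gives $\pfs{a}\pb{a}\simeq\pfp{a}\pbp{a}$, which yields the last formula.

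The main obstacle is the hypercompleteness of $\pb{a}_{\Sing_R}M$ as a sheaf. I would argue it through the hypercomplete cosheaf property of $\Sing$: cotensoring $M^{(-)}$ turns hypercovering colimits into limits, so descent holds for all hypercovers, which is hypercompleteness.
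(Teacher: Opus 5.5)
Your proposal is correct and follows the paper's proof. The ``only if'' direction is \cref{loccontr=>locconstishyp}; the ``if'' direction combines the stalkwise equivalence from \cref{Clocwcontr=>sheafvssinginftyconn}, the hypercompleteness of $\pb{a}_{\Sing_R}M$, and \cref{sheafvssingeqCniceimpliesShcloccontr}; and the LCH formula comes from \cref{sharp=shriektensdualwithkunnet}. The only difference is that you spell out the evaluation $\pfs{a}^{\Sing_R}(\pb{a}_R M)\simeq\Sing(X)\otimes M$ (and the hyperdescent argument), which the paper treats as immediate.
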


\begin{proof}
If $X$ is $\D$-locally contractible, then constant sheaves are hypercomplete by \cref{loccontr=>locconstishyp}. The converse follows from \cref{Clocwcontr=>sheafvssinginftyconn} and \cref{sheafvssingeqCniceimpliesShcloccontr}, using that $X$ is $\D$-weakly locally contractible and that $\pb{a}_{\Sing_{\D}}$ is also hypercomplete. 
The first two isomorphisms then follow immediately from \cref{Clocwcontr=>sheafvssinginftyconn} and \cref{sheafvssingeqCniceimpliesShcloccontr} and the third from \cref{sharp=shriektensdualwithkunnet}.
\end{proof}

We note that when $M=R$, the first equivalence specializes to the equivalence $\Sing_R(X) \simeq \sh_R(X)$ relating the singular complex of $X$ with its $R$-shape, and that the second equivalence follows from the first. Passing to homotopy groups, the second equivalence gives an isomorphism between sheaf cohomology and singular cohomology with coefficients $M$ for some $R$-module $M$, i.e.\ we obtain $H^*_{\mathrm{sheaf}}(X;M) \cong H^*_{\mathrm{sing}}(X;M)$. 

\begin{corollary}\label[corollary]{equivshandsing}
	Let $X$ be a locally metrizable $\Ani$-locally weakly contractible space. Then $X$ is $\Ani$-locally contractible if and only if all $\Ani$-valued constant sheaves are hypercomplete. In this case, we have isomorphisms
	\begin{align*}
		\sing{X}\otimes M\simeq\pfs{a}^{\Ani}\pb{a}_{\Ani}M  \quad \text{ and } \quad  M^{\sing{X}} \simeq\pf{a}^{\Ani}\pb{a}_{\Ani}M
	\end{align*}
	for any $M\in\Ani$.
\end{corollary}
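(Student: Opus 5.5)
The argument runs parallel to \cref{equivsheafcohhom}, with \cref{prop:unstableCLocwcontr=>compiso} taking the place of \cref{Clocwcontr=>sheafvssinginftyconn}. The first step is to observe that a locally metrizable space is first countable, since every point has a metrizable open neighbourhood and hence a countable neighbourhood basis. As $\Ani$ is complete and $X$ is $\Ani$-locally weakly contractible, \cref{prop:unstableCLocwcontr=>compiso} then shows that the comparison map $\comp\colon \pb{a}_{\Ani}\to \pb{a}_{\Sing_{\Ani}}$ of \eqref{sheafvssingcoh} is an equivalence on all stalks, i.e.\ it is $\infty$-connective.

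For the forward implication, if $X$ is $\Ani$-locally contractible then constant sheaves are hypercomplete by \cref{loccontr=>locconstishyp}; the case $\D=\Ani$ is explicitly covered there.

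For the converse, assume the constant sheaves $\pb{a}_{\Ani}M$ are hypercomplete. The target $\pb{a}_{\Sing_{\Ani}}M$ is hypercomplete because $\Sing$ is a hypercomplete cosheaf, so $U\mapsto M^{\Sing(U)}$ is a hypercomplete sheaf. An $\infty$-connective map between hypercomplete sheaves is an equivalence, so $\comp$ is invertible. Since $\Ani$ is presentable, \eqref{tensshaniC} is an equivalence, and \cref{sheafvssingeqCniceimpliesShcloccontr} shows that $X$ is $\Ani$-locally contractible, with $\pfs{a}^{\Ani}\simeq\pfs{a}^{\Sing_{\Ani}}$.

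Evaluating at $\pb{a}_{\Ani}M$ and using that $\pfs{a}^{\Sing_{\Ani}}\pb{a}_{\Sing_{\Ani}}M$ is, by construction, the extension of $(U,M)\mapsto\Sing(U)\times M$ evaluated on the constant sheaf, one gets $\pfs{a}^{\Ani}\pb{a}_{\Ani}M\simeq \sing{X}\times M$. The cleanest way to see this is via adjunction: for every $N$ one has $\Hom{}{\pfs{a}\pb{a}M}{N}\simeq\Hom{}{M}{\pf{a}\pb{a}N}$, and $\pf{a}\pb{a}N\simeq \pf{a}\pb{a}_{\Sing}N = N^{\sing{X}}$, which also gives the second isomorphism directly. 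Here the tensoring of $\Ani$ over $\Ani$ is the product, so $\sing{X}\otimes M$ means $\sing{X}\times M$.

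The main subtlety is the step of checking that the invertibility of $\comp$ on stalks upgrades to a global equivalence. This needs the hypercompleteness of $\pb{a}_{\Ani}M$ for all $M$, which is exactly the hypothesis, together with the hypercompleteness of the singular sheaf. No conservative functor to $\Ab$ is available in the unstable setting, so first countability is used precisely to apply \cref{prop:unstableCLocwcontr=>compiso}.
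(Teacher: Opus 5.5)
Your proposal is correct and is essentially the paper's proof. The paper reruns the argument of \cref{equivsheafcohhom}, with \cref{prop:unstableCLocwcontr=>compiso} in place of \cref{Clocwcontr=>sheafvssinginftyconn}: the forward direction uses \cref{loccontr=>locconstishyp}, and the converse uses hypercompleteness of both $\pb{a}_{\Ani}M$ and $\pb{a}_{\Sing_{\Ani}}M$ together with \cref{sheafvssingeqCniceimpliesShcloccontr}. You also make explicit two points the paper leaves implicit: that local metrizability gives the first countability this proposition needs, and the adjunction/Yoneda unpacking of the two displayed isomorphisms.
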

\begin{proof}
	The proof is identical to that of \cref{equivsheafcohhom}, the only difference being that one should cite \cref{prop:unstableCLocwcontr=>compiso} instead of \cref{Clocwcontr=>sheafvssinginftyconn}.
\end{proof}

\begin{remark}\label[remark]{rmk:counterexampleforlocwc=>shvloccontr}
In general, it is not true that a locally metrizable $\Ani$-locally weakly contractible space is $\Ani$-locally contractible, in fact not even $\Z$-locally contractible. To see this we first note that if $X$ is a $\Ani$-locally contractible compact Hausdorff space, then $\sh(X)$ is a compact anima (see \cite[Proposition A.6]{volpe2022verdier}).
In \cite{borsuk1948espace} Borsuk constructs a compact metric space $X$ which is $\Ani$-locally weakly contractible, but has infinitely many non-zero Betti numbers. In particular, this space is \emph{not} $\Z$-locally contractible,\footnote{Equivalently, constant sheaves on $X$ are in general not hypercomplete.} as else $\sh(X) \simeq \Sing(X)$ were compact, contradicting the existence of infinitely many non-zero Betti numbers.
\end{remark}

\begin{remark}\label[remark]{rmk:comparison}
We record here the relation to other previous work. 
\begin{enumerate}
\item In \cite[Corollary 3.31]{haine2023homotopy}, the authors provide a comparison isomorphism between singular and sheaf cohomology of a space $X$ under the assumption that $X$ admits a basis consisting of weakly contractible open subsets. Since such spaces are evidently $\D$-locally weakly contractible, \cref{equivsheafcohhom} gives a generalization of \cite[Corollary 3.31]{haine2023homotopy}.
\item \cref{equivsheafcohhom} also generalizes the main result of \cite{petersen2022remark} and \cite{Sella} from cohomology with coefficients in an abelian group, which was also proven by Clausen in his lectures on algebraic de Rham cohomology \cite{clausen-lectures}, to cohomology with arbitrary (constant) coefficients. In fact, Petersen's assumptions on $X$ are essentially the requirement that the map \ref{sheafvssingcoh} is an equivalence on stalks. 
\end{enumerate}
\end{remark}

\begin{corollary}\label[corollary]{LCANRloccontrlocsingshape}
Let $X$ be a locally compact ANR. Then $X$ is $\Ani$-locally contractible and there is an isomorphism $\shape(X)\simeq\Sing(X)$ in $\Ani$. 
\end{corollary}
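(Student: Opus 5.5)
The plan is to apply \cref{equivshandsing}, so three hypotheses must be checked. First, $X$ is locally metrizable: an ANR is metrizable by definition. Second, $X$ is $\Ani$-locally weakly contractible, which is recorded in \cref{anrlocweakcontr} via Fox's characterization and \cref{charlocalweakcontr}. Third, all $\Ani$-valued constant sheaves on $X$ are hypercomplete. Once these hold, \cref{equivshandsing} shows that $X$ is $\Ani$-locally contractible. Taking $M=\ast$ then gives $\Sing(X)\simeq \pfs{a}^{\Ani}\pb{a}_{\Ani}(\ast)=\sh(X)$, which is the claimed isomorphism in $\Ani$.

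The main obstacle is the hypercompleteness of constant sheaves, since a locally compact ANR need not have finite covering dimension. I would not attack this directly. Instead I would use that $\Ani$-local contractibility is local on $X$ (\cref{lemma:locally-bla}) and is inherited by retracts of open subsets of $\Ani$-locally contractible spaces (\cref{retractloccontr}).

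Step one is an embedding. A separable metrizable space embeds as a closed subset of some metrizable space $Z$ that is $\Ani$-locally contractible. The natural choice is the Hilbert cube $Q=[0,1]^{\mathbb{N}}$, or an open subset of it such as $Q\times[0,1)$, or $\ell^2$. The standard result is that $X$ embeds into $Q$, and for locally compact $X$ the image is locally closed. Replacing $Q$ by a suitable open subset, we may assume $X$ sits as a closed subspace of an open subset $W\subseteq Q$. The ANR property then makes $X$ a retract of an open neighbourhood $U\subseteq W$, which is open in $Q$.

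Step two is to show that $Q$ is $\Ani$-locally contractible. I would apply \cref{infiniteprodloccontr} to the factors $X_i=[0,1]$. Each factor is contractible, so hypothesis (ii) holds. For hypothesis (i), $[0,1]$ is LCH, so universal $\Ani$-local contractibility reduces to $\Ani$-local contractibility. That holds because $[0,1]$ has a basis of contractible opens, hence is locally of constant shape, using \cref{lemma:homotopy-shape-equivalence}. (Alternatively, $[0,1]$ is a finite-dimensional metric space with a basis of contractible opens, so \cref{equivshandsing} applies directly.)

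With both steps done, \cref{retractloccontr} shows that $X$ is $\Ani$-locally contractible. Finally, \cref{loccontr=>locconstishyp} gives that constant sheaves on $X$ are hypercomplete, so the "if" direction of \cref{equivshandsing} delivers $\sh(X)\simeq\Sing(X)$.
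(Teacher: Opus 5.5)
Your proposal is correct and follows essentially the same route as the paper. You embed $X$ in a countable product of contractible, $\Ani$-locally contractible intervals, use the ANR property to make $X$ a retract of an open subset, and conclude with \cref{infiniteprodloccontr}, \cref{retractloccontr} and \cref{equivshandsing}; the only difference is that the paper embeds $X$ as a closed subset of $\rnum^\omega$ via complete metrizability, whereas you use the Hilbert cube and the fact that locally compact subspaces are locally closed, which works equally well.
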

\begin{proof}
First, observe that, for any set $S$, the space $\rnum^S$ is $\Ani$-locally contractible. Indeed, this follows by \cref{infiniteprodloccontr} and the fact that $\rnum$ is contractible and $\Ani$-locally contractible\footnote{Notice that this could be deduced from the results of this paper and \cite[Theorem 7.2.3.6]{lurie2009higher}. Indeed, $\rnum$ is paracompact and of covering dimension $1$ so that their sheaf topoi are hypercomplete, and in particular, constant sheaves are hypercomplete. Therefore, since $\rnum$ is $\Ani$-locally weakly contractible, we get that it is $\Ani$-locally contractible by \cref{equivshandsing}.}.

Now, any separable and metrizable LCH space
is completely metrizable, since it is open in its completion, and open subsets of completely metrizable spaces are completely metrizable \cite{Willard}. Every separable, completely metrizable space can be embedded as a closed subset of $\rnum^\omega$ \cite[Corollary 4.3.25]{Engelking}.\footnote{Likewise, compact ANRs can be embedded as closed subsets of $[0,1]^\omega$.} Hence, a locally compact ANR is a retract of an open subset of $\rnum^\omega$, so the result follows from \cref{retractloccontr} and \cref{equivshandsing}.
\end{proof}

In particular, locally compact ANRs are locally of singular shape in the sense of \cite{lurie2017higher}, see also  \cite{Milnor_ANRS}.

\subsection{$\D$-smoothness and cohomological dimension}
\label{section:cohdim}

Here, we study the relation between  $\D$-smoothness and an appropriate notion of cohomological dimension. We begin with the following inheritance property that we will use on occasion. This is used later to give a detailed comparison of our results with the work of Bredon and Wilder on homology manifolds. Beware that we are using homological grading.

\begin{definition}
Let $R$ be a connective $\bE_{\infty}$-ring, $X$ be a LCH space, and $n\geq 0$ a natural number. We say that $X$ has \emph{$R$-$!$-cohomological dimension at most $n$} if, for any $F\in(\Sh{X}{\text{Mod}_R})_{\geq0}$, we have that $\pfp{a}F$ belongs to $(\text{Mod}_R)_{\geq -n}$. In this case, we write $\dim^!_R(X)\leq n$. We say that $X$ has \emph{$R$-$!$-cohomological dimension equal to $n$} if $\dim^!_R(X)\leq n$ and there exists $G\in \Sh{X}{\text{Mod}_R}_{\geq0}$ such that $\pi_{-n}(\pfp{a}G)\neq 0$.
\end{definition}

We record equivalent charaterizations of the condition $\dim^!_R(X)\leq n$. In particular, we see that our definition of !-cohomological dimension coincides with the classical definition of cohomological dimension on compact metrizable spaces. In what follows, for any closed subset $A\subseteq X$, we denote by $\Sec{X,A}{R}$ the fiber of the restriction map $\Sec{X}{R}\rightarrow\Sec{A}{R}$. 

\begin{proposition}\label[proposition]{prop:charcohdim}
	Let $X$ be any LCH space, and let $R$ be any connective $\bE_{\infty}$-ring. Consider the following statement.
	\begin{enumerate}[(1)]
		\item $\dim^!_R(X)\leq n$.
		\item For any open subset $U\subseteq X$, $\cSec{U}{R}$ belongs to $(\text{Mod}_R)_{\geq -n}$.
		\item For any closed subset $A\subseteq X$, $\Sec{X,A}{R}$ belongs to $(\text{Mod}_R)_{\geq -n}$.
		\item For any closed subset $A\subseteq X$, $\pi_{-n-1}\Sec{X,A}{R}=0$.
	\end{enumerate}
    Then (1) is equivalent to (2). If $X$ is compact and metrizable, then all statements are equivalent.
\end{proposition}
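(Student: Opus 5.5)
The plan is to prove (1)$\Leftrightarrow$(2) directly from the formal properties of $\pfp{a}$, and then, for $X$ compact metrizable, to close the cycle (2)$\Rightarrow$(3)$\Rightarrow$(4)$\Rightarrow$(2).

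For (1)$\Rightarrow$(2), take an open $U\subseteq X$ with inclusion $j$. Then $\cSec{U}{R}\simeq \pfp{a}\pfp{j}R_U$. Since $\pfp{j}$ is extension by zero, $\pfp{j}R_U$ is connective, so (1) gives the claim. For (2)$\Rightarrow$(1), note that $\pfp{a}$ preserves colimits. Moreover, $(\Sh{X}{\Mod_R})_{\geq0}$ is generated under colimits by the sheaves $\pfp{j}R_U[k]$ with $k\geq0$, i.e.\ by the connective objects $\pfp{j}\pb{j}R$. Indeed, for $F$ connective, $\pi_0$ of the stalks of $F$ is covered by sections over opens, and one resolves $F$ by a simplicial/colimit argument using $\tau_{\geq0}$. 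Since $(\Mod_R)_{\geq -n}$ is closed under colimits, the bound on the generators propagates to all of $(\Sh{X}{\Mod_R})_{\geq0}$. The one point to check carefully is the generation claim. I would argue it via the right orthogonal: if $\Hom{}{\pfp{j}R_U[k]}{G}=\pi_k\Sec{U}{G}=0$ for all $U$ and all $k\geq0$, then $\tau_{\geq0}$ of the sheaf $G$ vanishes, so $G\in(\Sh{X}{\Mod_R})_{<0}$.

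Now let $X$ be compact metrizable. For (2)$\Rightarrow$(3), use the localization fibre sequence $\cSec{X\setminus A}{R}\to\Sec{X}{R}\to\Sec{A}{R}$. This uses that $X$ and $A$ are compact, so $\Gamma=\Gamma_c$ on both. It identifies $\Sec{X,A}{R}\simeq\cSec{X\setminus A}{R}$, and (2) then applies. The implication (3)$\Rightarrow$(4) is trivial.

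The main obstacle is (4)$\Rightarrow$(2): we must propagate the vanishing of one homotopy group to all lower ones. Write $U=X\setminus A$, so that the hypothesis says $\pi_{-n-1}\cSec{U}{R}=0$ for every open $U$. Here $\pi_{-m}$ corresponds to compactly supported cohomology $H^m_c$, and we must show $H^m_c(U)=0$ for all $m>n$. I would first reduce to $m=n+2$ by induction on $m$. Next, for an open $U$, write $U$ as an increasing union of opens $U_k$ with $\overline{U_k}\subseteq U_{k+1}$; metrizability of $X$ gives such an exhaustion. Then $\cSec{U}{R}\simeq\colim_k\cSec{U_k}{R}$, which reduces the problem to opens with compact closure. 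Finally, the classical dimension-shifting argument (as in Bredon and Engelking): a class $\xi\in H^{m}_c(U)$ with $m>n+1$ is, via the long exact sequence for $(\overline{V},\partial V)$ for small opens $V$, the coboundary of a class in degree $m-1$ on a closed subset. Using a Mayer--Vietoris argument over finite covers by small opens, together with compactness and a shrinking/limit argument over a countable basis, one shows that the vanishing in degree $n+1$ forces the vanishing in degree $m$. I expect this step, which is essentially the classical theorem that cohomological dimension defined by extension of maps is detected in a single degree, to be the main difficulty. I would try to import it from the literature (e.g.\ Bredon's sheaf theory, II.16) after identifying $\pi_{-m}\cSec{U}{R}$ with $H^m_c(U;\pi_0R)$-type groups via the Postnikov filtration of $R$, since $R$ is connective.
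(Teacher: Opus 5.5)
Your arguments for (1)$\Leftrightarrow$(2) and (2)$\Rightarrow$(3)$\Rightarrow$(4) agree with the paper's. These are the generation of $\Sh{X}{\Mod_R}_{\geq 0}$ by the sheaves $\pfp{j}R_U$, where your right-orthogonal justification is fine, and the localization equivalence $\Sec{X,A}{R}\simeq\cSec{X\setminus A}{R}$.

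The gap is (4)$\Rightarrow$(2), which you flag yourself and leave as a sketch. The dimension-shifting paragraph (coboundaries, Mayer--Vietoris over small opens, shrinking) is not an argument. The fallback of ``identifying $\pi_{-m}\cSec{U}{R}$ with $H^m_c(U;\pi_0R)$-type groups via the Postnikov filtration'' fails for non-discrete $R$.
\begin{itemize}
\item In the descent spectral sequence $H^p_c(U;\pi_qR)\Rightarrow\pi_{q-p}\cSec{U}{R}$, all the groups $H^{n+1+q}_c(U;\pi_qR)$ with $q\geq 0$ sit in total degree $-n-1$.
\item Classes in $H^{n+1}_c(U;\pi_0R)$ can support differentials into $H^{n+1+r}_c(U;\pi_{r-1}R)$.
\item Convergence itself needs hypercompleteness or a dimension bound, which is what you are trying to prove.
\end{itemize}
So (4), a vanishing condition on a single homotopy group of $R$-cohomology, does not formally give $H^{n+1}_c(U;\pi_0R)=0$. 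For $R=\bS$ these groups are compactly supported stable cohomotopy, not integral cohomology. Nor do bounds with $\pi_0R$-coefficients formally climb back up the Postnikov tower.

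There is also no cheap stable shift. If you knew $\pi_{-n-1}\pfp{a}F=0$ for \emph{all} connective $F$, suspending $F$ would finish the job. But you only know it for $F=\pfp{j}R_U$, and vanishing of one homotopy group is not preserved by the colimits that generate connective sheaves.

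For discrete $R$ the step can be closed by Bredon's argument, which uses the heart. Suppose $H^{n+1}_c(V;R)=0$ for all open $V$, and take a c-soft resolution $0\to R\to\mathcal{C}^0\to\cdots\to\mathcal{C}^{n-1}\to\mathcal{Z}^n\to 0$. Then $H^1_c(V;\mathcal{Z}^n)\cong H^{n+1}_c(V;R)=0$ for all $V$. This forces $\mathcal{Z}^n$ to be c-soft, hence $H^m_c(V;R)=0$ for all $m>n$. There is no analogue of this for a single non-discrete $R$.

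The paper handles general connective $R$ differently. It uses Lurie's HTT, Prop.~7.1.0.1, which applies because closed subsets of a compact metrizable space are paracompact. This identifies $\pi_m\Sec{A}{R}$ with homotopy classes of maps from $A$ into a CW model of $\Omega^{\infty+m}R$. Then (3) and (4) become statements about the cohomology theory represented by $R$ on compact metric pairs. There, the equivalence of vanishing in degree $n+1$ with vanishing in all degrees $>n$ is the classical characterization of (generalized) cohomological dimension of compacta (Walsh, Dranishnikov). This is where metrizability genuinely enters. Your exhaustion step does not use it in any essential way, since in compact $X$ every open set already has compact closure.
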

\begin{proof}
	Clearly (1) implies (2). To see that (2) implies (1), one combines the following facts: $\pfp{a}(-)=\cSec{X}{-}$ preserves colimits, $(\text{Mod}_R)_{\geq -n}$ is closed under colimits in $\Mod_R$ and that any sheaf of $R$ modules can be written as a colimit of sheaves of the from $\pfp{j}R_U$, for $j:U\hookrightarrow X$ an open immersion. To see that (2) is equivalent to (3) when $X$ is a compact, observe that, using the localization sequences (see \cite[Section 4]{volpe2023operations}), we have an equivalence $\cSec{X\setminus A}{R}\simeq \Sec{X,A}{R}$. The fact that for compact metrizable spaces (3) is equivalent to (4) is classical. See for example Appendix A in \cite{walsh2006dimension}, \cite{dranishnikov1990generalized} right after Definition 2, and \cite[Proposition 7.1.0.1]{lurie2009higher} for the isomorphism between $\pi_m(\Sec{A}{R})$ and homotopy classes of maps $A\rightarrow \mathcal{R}_m$ for all $A\subseteq X$ closed and $m\leq 0$, where $\mathcal{R}_m$ denotes a CW model of the anima $\Omega^{\infty+m}R$. 
\end{proof}

\begin{remark}
	Classically, a compact metrizable space $X$ is said to have \textit{$R$-cohomological dimension $\leq n$} if condition (4) in \cref{prop:charcohdim} holds for $X$. See for example \cite{walsh2006dimension} and \cite{dranishnikov1990generalized} for more on the cohomological dimension of a compact metrizable space.
\end{remark}

\begin{lemma}\label[lemma]{lemma:finite-dimension-hypercomplete}
Let $R$ be a connective $\bE_{\infty}$-ring, and let $X$ be an LCH space with $\dim^!_R(X)<\infty$. Then $X$ is $R$-hypercomplete. 
\end{lemma}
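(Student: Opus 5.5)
To show that $\Sh{X}{\Mod_R}$ is hypercomplete, I will show that every $\infty$-connective sheaf $F$ vanishes. Here $F$ is $\infty$-connective if it lies in $\Sh{X}{\Mod_R}_{\geq m}$ for every $m$; this is equivalent to having trivial stalks, as recalled in the hypercompleteness subsection. Since the stable case reduces to this, it suffices to treat $\infty$-connective objects: a map is $\infty$-connective iff its fibre is, so locality against such maps reduces to vanishing of $\infty$-connective objects.

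To detect vanishing, I will use sections. A sheaf $F$ is zero iff $\Sec{U}{F}\simeq 0$ for every open $U\subseteq X$. On an LCH space it is more convenient to use compactly supported sections. Compactly supported sections on relatively compact opens suffice, because the sheaf condition together with the localization sequences expresses $\Sec{U}{F}$ as a limit of $\cSec{V}{F}$ type terms. Alternatively, one can check $\Hom{}{\pfp{j}R_U}{F}=0$; the objects $\pfp{j}R_U$ generate $\Sh{X}{\Mod_R}$ under colimits, so this detects vanishing. I will use the cleaner route through $\pfp{a}$ and its right adjoint $\pbp{a}$.

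The key estimate uses the dimension bound. Let $n=\dim^!_R(X)$. By \cref{prop:charcohdim} (the equivalence (1)$\Leftrightarrow$(2)), the same bound holds for every open $U\subseteq X$: each compactly supported $R$-module sheaf on $U$ extends by zero to $X$, and $\pfp{a}$ is compatible with $\pfp{j}$. So for $F\in\Sh{X}{\Mod_R}_{\geq m}$ we get $\cSec{U}{F}\in(\Mod_R)_{\geq m-n}$. If $F$ is $\infty$-connective, then $\cSec{U}{F}$ lies in $(\Mod_R)_{\geq m-n}$ for all $m$, hence vanishes, since $\Mod_R$ is left separated.

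It remains to deduce $F=0$ from $\cSec{U}{F}=0$ for all opens $U$. Take $U$ with compact closure $\bar U\subseteq W$. The restriction $F|_{\bar U}$ then satisfies $\Sec{\bar U}{F|_{\bar U}}=\cSec{\bar U}{F|_{\bar U}}$. The localization sequence $\cSec{U}{F}\to\Sec{\bar U}{F}\to\Sec{\partial U}{F}$ contains the compact space $\partial U$ of the same finite dimension, so it is only partially useful. Instead I will use adjunction: $\Hom{}{\pfp{j}R_U}{F}=\Hom{}{R}{\Sec{U}{F}}$, and I want the left side to vanish. Write $\Sec{U}{F}=\lim_{K\subseteq U}\Sec{K}{F}$ over compacts $K$, and note that for compact $K$ there is an identification $\Sec{K}{F}=\colim_{V\supseteq K}\Sec{V}{F}$. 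This is the main obstacle: passing from compactly supported vanishing to vanishing of sections.

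The cleanest fix is to apply the bound dually. The right adjoint $\pbp{a}$ shifts coconnectivity by $n$, so $\Hom{}{G}{F}=0$ whenever $G\in\Sh{X}{\Mod_R}_{\leq k}$ and $F$ is $\infty$-connective. But $\pfp{j}R_U$ is connective, not bounded above, so this does not directly apply.

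I would therefore instead use the $t$-structure on the compactly generated stable category. The bound $\pfp{a}(\mathrm{Sh}_{\geq0})\subseteq(\Mod_R)_{\geq -n}$, applied to all $U$, gives $\Sec{U}{G}\in(\Mod_R)_{\geq -n}$ for connective $G$, via the limit formula and the Milnor sequence, at the cost of one more degree. That is, $\Gamma(U;-)$ has finite cohomological amplitude. Finite amplitude of sections on a basis of opens implies that Postnikov towers converge, $F\simeq\lim\tau_{\leq k}F$. For an $\infty$-connective $F$ the truncations $\tau_{\leq k}F$ vanish, so $F=0$. This is the argument of Lurie's \cite[Cor.\ 7.2.1.12]{lurie2009higher}, and I would adapt it.
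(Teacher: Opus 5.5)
The first half of your argument is the paper's. For $\infty$-connective $F$ and an open $j\colon U\hookrightarrow X$, the sheaf $j_!j^*F$ has trivial stalks, so the dimension bound forces $\Gamma_c(U;F)=a_!j_!j^*F=0$. The paper then finishes in one line with covariant Verdier duality: $F\mapsto (U\mapsto\Gamma_c(U;F))$ is an equivalence between $\Sh{X}{\Mod_R}$ and $\Mod_R$-valued cosheaves on $X$, so these vanishings already give $F=0$. That is exactly the step you call the main obstacle.

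You could also have closed it by hand, and your worry about $\partial U$ was unfounded. The same estimate kills $a_!$ of any sheaf with trivial stalks, in particular of $i_*i^*F$ for a compact $i\colon K\hookrightarrow X$. Hence $\Gamma(K;F|_K)=a_!i_*i^*F=0$, and then $\Gamma(U;F)=\lim_{K\subseteq U}\Gamma(K;F|_K)=0$ for every open $U$.

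Your actual ending via Postnikov towers is a genuinely different route. It is sound in outline, but one step is overstated. The bound $\Gamma(U;G)\in(\Mod_R)_{\geq -n-1}$ for connective $G$ has to come from three inputs:
\begin{itemize}
\item the formula $\Gamma(U;G)=\lim_{K\subseteq U}\Gamma(K;G|_K)$;
\item the estimate $\Gamma(K;G|_K)=a_!i_*i^*G\in(\Mod_R)_{\geq -n}$;
\item the Milnor sequence.
\end{itemize}
The Milnor sequence needs a countable cofinal family of compacts in $U$, i.e.\ $U$ must be $\sigma$-compact. This fails for general opens in LCH spaces, e.g.\ an uncountable discrete space, so ``applied to all $U$'' is not justified.

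To repair it, restrict to the basis of cozero sets $V=\{f>0\}$ with $f\colon X\to[0,1]$ continuous and compactly supported. This basis is closed under finite intersections, and $\{f\geq 1/i\}_i$ is cofinal among compacts in $V$. On this basis the fibre of $\Gamma(V;F)\to\Gamma(V;\tau_{\leq k}F)$ lies in $(\Mod_R)_{\geq k-n}$. Hence $F\simeq\lim_k\tau_{\leq k}F$, which vanishes when $F$ is $\infty$-connective.

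This route avoids Verdier duality and yields the stronger statement that Postnikov towers in $\Sh{X}{\Mod_R}$ converge. The price is the $\sigma$-compactness bookkeeping; the paper's route is two lines once covariant Verdier duality is available.
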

\begin{proof}
Let $F\in\Sh{X}{\text{Mod}_R}$ be an $\infty$-connective sheaf. We need to show that $F = 0$. By covariant Verdier duality, it suffices to show that, for each $U$ open in $X$, $a_!j_!j^*F= 0$. For each open inclusion $j\colon U\hookrightarrow X$, the sheaf $\pfp{j}\pb{j}F$ again has trivial stalks and is thus $\infty$-connective. Since $\dim^!_R(X)<\infty$, we deduce that $a_!j_!j^*F\in (\Mod_R)_{\geq k}$ for all $k\geq 0$ so that $a_!j_!j^*F = 0$ as needed.
\end{proof}

\begin{lemma}\label[lemma]{lemma:cohsmoothandfin!cdim}
Let $R$ be a connective $\bE_{\infty}$-ring, and let $X$ be a $\Mod_R$-smooth LCH space. Then $\dim^!_R(X)<\infty$, and in particular, $X$ is $R$-hypercomplete. If we additionally assume that there exists $n$ such that, for all $x\in X$, we have $\pb{x}(\omega_X^R)\simeq\Sigma^nR$, then $\dim^!_R(X)=n$.
\end{lemma}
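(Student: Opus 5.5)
The plan is to rewrite $\pfp{a}$ in terms of the left adjoint $\pfs{a}$ and then use that $\pfs{a}$ preserves connectivity. Let $a\colon X\to\ast$ be the unique map and $\omega=\omega_X^R$.

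\textbf{Rewriting $\pfp{a}$.} Since $X$ is $R$-smooth, $1_X$ is $a$-suave. By \cref{1ULA=>loccontr}, $X$ is $R$-locally contractible with $\pfs{a}\simeq\pfp{a}(-\otimes\omega)$. Since $\omega$ is $\otimes$-invertible, this gives, for every $F\in\Sh{X}{\Mod_R}$,
\[\pfp{a}F\simeq\pfs{a}(F\otimes\omega^{-1}).\]
As in the proof of \cref{loccontr=>locconstishyp}, $\pfs{a}$ is left adjoint to the $t$-exact functor $\pb{a}$. Hence it is right $t$-exact, so it sends $(\Sh{X}{\Mod_R})_{\geq -m}$ into $(\Mod_R)_{\geq -m}$. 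The statement therefore reduces to a connectivity bound on $F\otimes\omega^{-1}$.

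\textbf{Connectivity of $\omega^{-1}$.} The stalks $\pb{x}\omega^{-1}$ are invertible $R$-modules, and connectivity of a sheaf is detected on stalks, i.e.\ on homotopy sheaves. For connective $R$, an invertible module is a shift $\Sigma^{k}L$ with $L$ connective and invertible, because its inverse is invertible too and both $\pi_0$ pieces must pair to $\pi_0R$. The degree $k(x)$ is locally constant, since $\omega$ is locally a shift of an invertible module around each point.

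The key step is then the claim that $\omega^{-1}\in(\Sh{X}{\Mod_R})_{\geq -n}$ for some $n$. Given this, $F\otimes\omega^{-1}\in(\Sh{X}{\Mod_R})_{\geq -n}$ for $F$ connective, because tensor products of connective objects are connective. Consequently $\pfp{a}F\in(\Mod_R)_{\geq -n}$, i.e.\ $\dim^!_R(X)\leq n$. Hypercompleteness then follows from \cref{lemma:finite-dimension-hypercomplete}. Hypercompleteness is local, so for that part it even suffices to run this argument on the open pieces where $k$ is bounded.

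\textbf{Main obstacle.} The hard point is the global bound on the degree $k(x)$. I would obtain it from local constancy of $k$ together with the data at hand. Under the extra hypothesis $\pb{x}\omega\simeq\Sigma^nR$ it is immediate: the stalks of $\omega^{-1}$ are $\Sigma^{-n}R$, which gives $\omega^{-1}\in(\Sh{X}{\Mod_R})_{\geq -n}$ and hence $\dim^!_R(X)\leq n$. In general, $k$ is bounded on the finitely many components meeting a compact set. Reducing to such pieces requires care, because $\dim^!_R$ of a disjoint union is the supremum over its pieces.

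\textbf{Equality $\dim^!_R(X)=n$.} Under the stalk hypothesis, the stalks of $\omega$ are $\Sigma^nR$, so $\Sigma^{-n}\omega$ is connective. Choose a nonempty open $j\colon V\hookrightarrow X$ and set $G=\pfp{j}\pb{j}\Sigma^{-n}\omega$, which is connective. Then
\[\pfp{a}G\simeq\Sigma^{-n}\pfs{a}\pfs{j}(R_V)=\Sigma^{-n}\sh_R(V).\]
For any point $x\in V$, the map $\ast\to V\to\ast$ exhibits $R=\sh_R(\{x\})$ as a retract of $\sh_R(V)$. Hence $\pi_0\sh_R(V)\supseteq\pi_0R\neq0$, so $\pi_{-n}(\pfp{a}G)\neq0$. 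This shows $\dim^!_R(X)=n$.
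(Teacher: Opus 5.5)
Your argument is the same as the paper's: rewrite $\pfp{a}\simeq\pfs{a}(-\otimes\omega^{-1})$, bound the connectivity of $\omega^{-1}$ stalkwise, and use that $\pfs{a}$ is right $t$-exact.

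The ``main obstacle'' you isolate is a defect of the statement, not of your method: the unconditional claim $\dim^!_R(X)<\infty$ is false. Take $X=\coprod_{m\geq 0}\rnum^m$. It is $R$-locally contractible, and $\omega_X$ is locally constant with stalks $\Sigma^m R$ on the $m$-th piece, so $X$ is $R$-smooth. But $\pfp{a}\pfp{j}R_{\rnum^m}=\cSec{\rnum^m}{R}\simeq\Sigma^{-m}R$ for every $m$, so $\dim^!_R(X)=\infty$ whenever $R\neq 0$.

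The paper's proof handles this with the reduction ``since hypercompleteness can be checked locally and $\omega_X$ is locally constant, we can assume all stalks of $\omega_X$ equivalent.'' Like yours, it therefore only gives finiteness of $\dim^!_R$ in two situations:
\begin{itemize}
\item on opens where $\omega_X$ is constant;
\item globally, when the stalks of $\omega_X$ are all of one isomorphism type, e.g.\ for $X$ connected or under the extra stalk hypothesis.
\end{itemize}
This is exactly what is needed to get hypercompleteness via \cref{lemma:finite-dimension-hypercomplete}. So your proposal proves everything in the lemma that is true.

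Your lower-bound argument also goes beyond the paper. Taking $G=\pfp{j}\pb{j}\Sigma^{-n}\omega$ and using that $R$ is a retract of $\sh_R(V)$ gives the equality $\dim^!_R(X)=n$, which the paper's proof does not spell out.

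One small point: you justify that invertible $R$-modules are shifts of connective ones by a vague pairing argument. The clean reason is that invertible modules are dualizable, hence perfect, hence bounded below.
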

\begin{proof}
Let $a\colon X\rightarrow\ast$ be the unique map. By \cref{kunneth=>1ULA=loccontr} and \cref{1ULA=>loccontr}, we have a natural equivalence $\pfp{a}(-)\simeq\pfs{a}(-\otimes\omega_X^{-1})$. Since $R$-hypercompleteness can be checked locally and $\omega_X$ is locally constant, we can assume all stalks of $\omega_X$ equivalent. Moreover, they are connective by \cref{thomiso}.  Therefore, there exists $n\in\mathbb{N}$ such that, for any $x\in X$ we have $(\omega_X^{-1})_x\in(\text{Mod}_R)_{\geq -n}$. In particular, whenever $F\in \Sh{X}{\text{Mod}_R}_{\geq0}$, we find $F\otimes\omega_X^{-1}\in \Sh{X}{\text{Mod}_R}_{\geq -n}$. Since $\pfs{a}$ is a left adjoint to a $t$-exact functor, we deduce that $\pfp{a}F\simeq\pfs{a}(F\otimes\omega_X^{-1})$ belongs to $(\text{Mod}_R)_{\geq -n}$, and consequently that $\dim^!_R(X)\leq n <\infty$.
\end{proof}

We have just argued that the invertibility of $\omega_X$ implies, in particular, hypercompleteness of $X$. For this conclusion, however, it often suffices that $\omega_X$ is itself hypercomplete.

\begin{lemma}\label[lemma]{lemma:sufficient-hypercompleteness}
Let $X$ be a LCH space and $R$ and $\bE_\infty$-ring spectrum such that the functor $\homsp{\Mod_R}{-}{R} \colon \Mod_R^\mathrm{\op} \to \Mod_R$ is conservative.\footnote{Examples include certain PID's like the integers, fields, or ring spectra like $\mathrm{KU}$. Counterexamples include CDVR's like $\Z_p$ or ring spectra like $\mathbb{S}$. Indeed, $\hom_{\Z_p}(\mathbb{Q}_p,\Z_p) = 0 = \hom_{\mathbb{S}}(H\mathbb{F}_p,\mathbb{S})$.} If $\omega_X^R$ is $R$-hypercomplete, then $X$ is $R$-hypercomplete.
\end{lemma}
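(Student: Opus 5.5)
The strategy is to show that every $\infty$-connective sheaf $F\in\Sh{X}{\Mod_R}$ vanishes by testing it against the dualizing sheaf. As in the proof of \cref{lemma:finite-dimension-hypercomplete}, it suffices to show that $\pfp{a}\pfp{j}\pb{j}F\simeq 0$ for every open immersion $j\colon U\hookrightarrow X$. Indeed, $F$ is the colimit of the sheaves $\pfp{j}R_U\otimes F\simeq \pfp{j}\pb{j}F$, so the vanishing of all compactly supported sections detects $F=0$ through covariant Verdier duality. Since $\pfp{j}\pb{j}F$ again has trivial stalks, we may replace $F$ by it. It therefore suffices to show that $\pfp{a}F\simeq 0$ for every $\infty$-connective $F$.

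By the assumed conservativity of $\homsp{\Mod_R}{-}{R}$, it is enough to show that $\homsp{\Mod_R}{\pfp{a}F}{R}\simeq 0$. Adjunction gives
\[\homsp{\Mod_R}{\pfp{a}F}{R}\simeq \homsp{\Sh{X}{\Mod_R}}{F}{\pbp{a}R}=\homsp{\Sh{X}{\Mod_R}}{F}{\omega_X^R}.\]
Since $F\to 0$ is an $\infty$-connective map and $\omega_X^R$ is hypercomplete, that is, local with respect to $\infty$-connective maps, the restriction map $\homsp{}{0}{\omega_X^R}\to\homsp{}{F}{\omega_X^R}$ is an equivalence. Hence $\homsp{}{F}{\omega_X^R}\simeq 0$, so $\pfp{a}F\simeq 0$, and therefore $F\simeq 0$.

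The main point to check is the reduction in the first step: that the vanishing of $\pfp{a}\pfp{j}\pb{j}F$ for all opens $U$ forces $F=0$. The cleanest route is to observe that $\homsp{}{\pfp{j}R_U}{F}$ computes the sections $F(U)$, and that these in turn are expressed through $\pfp{a}$ of the sheaves $\pfp{j}\pb{j}F$ via covariant Verdier duality. This is exactly the argument already invoked in \cref{lemma:finite-dimension-hypercomplete}, so I will cite it. One subtlety is the mapping-spectrum version of locality. Hypercompleteness gives an equivalence on mapping anima, but the statement also holds for shifts $\Sigma^kF$, which remain $\infty$-connective, so the mapping spectra vanish as well.
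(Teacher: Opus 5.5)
Your proposal is correct and follows the paper's proof. You use covariant Verdier duality to reduce to showing $a_!F\simeq 0$ for $\infty$-connective $F$, then identify $\hom(a_!F,R)$ with $\hom(F,\omega_X^R)$ via $a_!\dashv a^!$, which vanishes by hypercompleteness, and conclude by conservativity. The one cosmetic difference is that you keep the $\infty$-connective sheaf $j_!j^*F$ on $X$, whereas the paper passes to $U$ and notes that $\omega_U^R=j^*\omega_X^R$ is again hypercomplete; both are equally valid.
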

\begin{proof}
Let $F$ be an $\infty$-connective sheaf on $X$. By the same reasoning as in the proof of \cref{lemma:finite-dimension-hypercomplete}, it suffices to prove that $a_!j_!j^*(F) = 0$ for all open subsets $U \subseteq X$. Note that if $\omega_X$ is $R$-hypercomplete and $U \subseteq X$ is open, then $\omega_U=j^*(\omega_X)$ is also hypercomplete. It therefore in fact suffices to show that $a_!(F) = 0$. To that end, we have
\[ \homsp{\Mod_R}{a_!(F)}{R} = \homsp{\Sh{X}{\Mod_R}}{F}{\omega_X^R} = 0 \]
so that we conclude by assumption.
\end{proof}

\subsection{The dualizing spectrum of the shape}
Let $X$ be a $\Ani$-locally contractible CH space. In this subsection we show that, if the $R$-valued dualizing sheaf $\omega^R_X$ is locally constant, then $\sh(X)$ is a $R$-Poincar\'e duality complex, with $\omega^R_X$ tensor inverse to the Spivak normal fibration of $\sh(X)$. In particular, we deduce that whenever $X$ is additionally $R$-smooth, than $\sh(X)$ is a $R$-Poincar\'e duality complex.

We first recall the following general result, see also \cite[Appendix A]{Land-MJM}. Suppose $A$ is an anima and denote by $r$ the unique map $A \to \ast$. We may then consider for any $\bE_\infty$-ring spectrum $R$ the restriction functor 
\[ r^*\colon \Mod_R \to \Fun(A,\Mod_R)\]
which has a left adjoint (given concretely by forming the colimit of an $A$-shaped diagram) often written as $r_!$ and a right adjoint (given concretely by forming the limit of an $A$-shaped diagram) often written $r_*$. If $A$ is compact, then $r_!$ preserves limits and $r_*$ preserves colimits. It follows from Morita theory that there exists a unique object $D_A^R \in \Fun(A,\Mod_R)$ and an equivalence $r_*(-) \simeq r_!(- \otimes_R D_A^R)$ of $R$-linear functors. Moreover, $r_!$ admits a left adjoint $r^!$ which satisfies $r^!(R) = D_A^R$. In case $R=\bS$ is the sphere spectrum, we simply write $D_A$ instead of $D_A^\bS$. In general, one has $D_A^R = D_A^\bS \otimes R$. A compact anima $A$ is called an \emph{$R$-Poincar\'e duality complex} if $D_A^R$ is an invertible object of $\Fun(A,\Mod_R)$.

With these preliminaries fixed, let us now consider an LCH space $X$ which is $\Ani$-locally contractible, so that its shape $\sh(X)$ is an anima (rather than a pro-object in anima). First, we note that if $X$ is compact, then $\sh(X)$ is a compact anima. Indeed, recall that $\sh(X) = \pfs{a}(\pb{a}(\ast))$, that $\pfs{a}$ preserves compact objects because its right adjoint $\pb{a}$ admits a further right adjoint, and $a^*$ preserves compact objects because $\pf{a}$ preserves filtered colimits since $X$ is compact. Our main aim is to provide formulas for the dualizing spectrum $D_{\sh(X)}^R$ of $\sh(X)$ in terms of the dualizing object $\omega_X^R$ of $X$. 

To that end, we first recall that there is a canonical fully faithful and symmetric monoidal functor $\pb{\eta}\colon \Fun(\sh(X),\Mod_R) \hookrightarrow \Sh{X}{\Mod_R}$ whose image consists of $\Sh{X}{\Mod_R}^{\mathrm{lc}}$, the full subcategory of $\Sh{X}{\Mod_R}$ spanned by the locally constant sheaves. The functor $\iota$ admits both a right adjoint, denoted by $\pf{\eta}$ and a left adjoint, denoted by $\pfs{\eta}$. Moreover, we have the following commutative triangle:
\[\begin{tikzcd}
	& \Fun(\sh(X),\Mod_R) \ar[d,"\pb{\eta}"] \\
	\Mod_R \ar[r,"a^*"'] \ar[ur,"r^*"] & \Sh{X}{\Mod_R}
\end{tikzcd}\]
Passing to left and right adjoints of this diagram, we obtain the triangles
\[\begin{tikzcd}
	& \Fun(\sh(X),\Mod_R) \ar[dl,"r_!"'] && & \Fun(\sh(X),\Mod_R) \ar[dl,"r_*"']\\
	\Mod_R & \Sh{X}{\Mod_R} \ar[l,"\pfs{a}"] \ar[u,"\pfs{\eta}"'] && \Mod_R & \Sh{X}{\Mod_R} \ar[l,"\pf{a}"] \ar[u,"\pf{\eta}"']
\end{tikzcd}\]
i.e.\ we have equivalences $\pfs{a} = r_!\pfs{\eta}$ and $\pf{a} = r_*\pf{\eta}$. For more details, see \cite[Appendix A]{lurie2017higher}.

Let us now assume that $X$ is compact, so that $\pf{a}=\pfp{a}$ and $r_*=r_!(-\otimes D_{\sh(X)}^R)$.
Since $\pfs{\eta}\pb{\eta} \simeq \id \simeq \pf{\eta}\pb{\eta}$, we then obtain the following equivalences:
\[ \pf{a}(\pb{\eta}(-)\otimes\omega_X^R) \simeq \pfs{a}(\pb{\eta}(-)) = r_!(-) \quad \text{ and } \quad \pf{a}(\pb{\eta}(-)) \simeq r_!(-\otimes D_{\sh(X)}^R). \]
On the right hand equivalence, we may pass to right adjoints and obtain an equivalence of functors 
\[ \pf{\eta}\pbp{a} \simeq \sHom{\Fun(\sh(X),\Mod_R)}{D_{\sh(X)}^R}{r^*(-)} \]
which, when evaluated on the unit $R$ of $\Mod_R$ gives an equivalence
\[ \pf{\eta}\omega_X^R \simeq \sHom{\Fun(\sh(X),\Mod_R)}{D_{\sh(X)}^R}{R_X}.\]
This shows that the dualizing spectrum $D_{\sh(X)}^R$ determines $\pf{\eta}\omega_X^R$. Somewhat suprisingly, we were not able to show that $D_{\sh(X)}^R$ can be determined from $\omega_X^R$ in a similar manner in general. However, we have the following result, which is perfectly sufficient for all our applications.
\begin{theorem}\label[theorem]{thm:loccontrlocconstdualisPD}
	Let $X$ be a CH space which is $\Ani$-locally contractible and assume that $\omega_X^R$ is locally constant.
	Then $\sh(X)$ is an $R$-Poincar\'e duality complex and $D_{\sh(X)}^R = (\omega_X^R)^{-1}$.
\end{theorem}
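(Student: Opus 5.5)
We identify $\omega_X^R$ with a local system on $\sh(X)$ and compare the two formulas for $\pf{a}$ obtained just before the statement. Since $\omega_X^R$ is locally constant, there is $L\in\Fun(\sh(X),\Mod_R)$ with $\pb{\eta}L\simeq\omega_X^R$; as $\pb{\eta}$ is fully faithful, $L\simeq\pf{\eta}\omega_X^R$. Because $\pb{\eta}$ is symmetric monoidal, the first displayed equivalence $\pf{a}(\pb{\eta}(-)\otimes\omega_X^R)\simeq r_!(-)$ becomes $\pf{a}\pb{\eta}(-\otimes L)\simeq r_!(-)$. Composing with the second equivalence $\pf{a}\pb{\eta}(-)\simeq r_!(-\otimes D^R_{\sh(X)})$ gives
\[ r_!(-\otimes L\otimes D^R_{\sh(X)}) \simeq r_!(-) \]
as functors $\Fun(\sh(X),\Mod_R)\to\Mod_R$.

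We then upgrade this to $L\otimes D^R_{\sh(X)}\simeq R_{\sh(X)}$. The functor $F\mapsto r_!(-\otimes F)$ should identify $\Fun(\sh(X),\Mod_R)$ with the $R$-linear colimit-preserving functors $\Fun(\sh(X),\Mod_R)\to\Mod_R$: this is the self-duality of $\Fun(A,\Mod_R)$ over $\Mod_R$, with pairing $r_!(-\otimes-)$ (the local-systems analogue of \cref{lemma:embedding}). Concretely, evaluating both sides on the corepresentables $\Sigma^\infty_+\Omega_{a}$ at each point $a\in\sh(X)$ recovers the stalks, naturally in $a$. So it suffices to check that the equivalence above is natural and $R$-linear, which holds because every equivalence used (projection formula, $\pfs{a}\simeq\pf{a}(-\otimes\omega_X^R)$ from \cref{sharp=shriektensdualwithkunnet}, and the Morita description of $r_*$) is $R$-linear.

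Hence $L\otimes D^R_{\sh(X)}\simeq 1$, so both factors are invertible with $D^R_{\sh(X)}\simeq L^{-1}$. This gives that $\sh(X)$ is $R$-Poincaré and that $\omega_X^R\simeq\pb{\eta}(D^R_{\sh(X)})^{-1}$ is invertible in $\Sh{X}{\Mod_R}$. Two details need care. First, the identification $\pfs{a}\simeq\pf{a}(-\otimes\omega_X^R)$ requires $X$ to be $R$-locally contractible; this follows from $\Ani$-local contractibility, since $\Sh{X}{\Mod_R}\simeq\Sh{X}{\Ani}\otimes\Mod_R$ and the left adjoint tensors up. Second, $\pf{a}=\pfp{a}$ because $X$ is compact, and $\sh(X)$ is compact, as noted before the statement.

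The main obstacle is the middle step: deducing the identity $L\otimes D\simeq 1$ of local systems from an equivalence of functors after applying $r_!$. This requires the pairing to be perfect and the functor equivalence to be genuinely $\Fun(\sh(X),\Mod_R)$-compatible. If the abstract route is awkward, I would instead evaluate both sides on $\pfs{\eta}$ applied to $\pfp{j}R_U$ for small contractible-shape opens $U$, i.e.\ on representable local systems, to compute the stalks of $L\otimes D$ directly.
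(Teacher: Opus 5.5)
Your proposal is correct and is essentially the paper's own argument. The paper likewise combines $\pf{a}(\pb{\eta}(-)\otimes\omega_X^R)\simeq r_!(-)$ with $\pf{a}\pb{\eta}(-)\simeq r_!(-\otimes D^R_{\sh(X)})$ to get $r_!(-)\simeq r_!(-\otimes\omega_X^R\otimes D^R_{\sh(X)})$, then concludes $\omega_X^R\otimes D^R_{\sh(X)}\simeq R$ from the Morita uniqueness of the representing object. Your discussion of $R$-linearity and the self-duality of $\Fun(\sh(X),\Mod_R)$ simply spells out the uniqueness step that the paper invokes in one line.
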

\begin{proof}
	By the above, and since $\omega_X$ is locally constant and $\pb{\eta}$ is symmetric monoidal, we find 
	\[ r_!(-) = \pf{a}(\pb{\eta}(- \otimes \omega_X^R)) = r_!(- \otimes \omega_X^R \otimes D_{\sh(X)}^R) \]
	which implies that $\omega_X^R \otimes D_{\sh(X)}^R = R_X$.
\end{proof}

Recall that dualizable (and hence in particular invertible) objects of $\Sh{X}{\Mod_R}$ are precisely the locally constant sheaves with dualizable stalks \cite[Corollary 2.5.4.12]{martini2022presentable}. From this and \cref{thm:loccontrlocconstdualisPD} we immediately deduce the following corollary.

\begin{corollary}\label[corollary]{cptcohsmoothisPDcomplex}
	Let $X$ be $R$-smooth and $\Ani$-locally contractible CH space. Then $\sh(X)$ is an $R$-Poincar\'e duality complex whose dualizing spectrum is given by the inverse of the dualizing sheaf $\omega_X^R$.
\end{corollary}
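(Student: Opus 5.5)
The corollary is obtained by feeding \cref{thm:loccontrlocconstdualisPD} its one missing hypothesis: that $\omega_X^R$ is locally constant. Once that is checked, the theorem gives both that $\sh(X)$ is an $R$-Poincar\'e duality complex and that $D_{\sh(X)}^R\simeq(\omega_X^R)^{-1}$, which is exactly the claimed description of the dualizing spectrum.

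First I would extract invertibility of the dualizing sheaf directly from $R$-smoothness. By \cref{defcohsmooth}~(ii), applied to the unique map $a\colon X\to\ast$ (equivalently, to the trivial pullback square), the object $\omega_X^R=a^!(R)$ is $\otimes$-invertible in $\Sh{X}{\Mod_R}$. No K\"unneth or suaveness input is needed for this step.

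Next I would pass from invertible to locally constant. An invertible object of a symmetric monoidal $\infty$-category is dualizable, with dual its inverse. By the result recalled just before the corollary \cite[Corollary 2.5.4.12]{martini2022presentable}, dualizable objects of $\Sh{X}{\Mod_R}$ are precisely the locally constant sheaves with dualizable stalks. Hence $\omega_X^R$ is locally constant, and in particular it lies in the essential image of $\pb{\eta}$.

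Finally, $X$ is compact Hausdorff and $\Ani$-locally contractible by hypothesis, so all assumptions of \cref{thm:loccontrlocconstdualisPD} hold and the conclusion follows. There is no real obstacle here. The only care needed is that the inverse $(\omega_X^R)^{-1}$ in the theorem is taken in locally constant sheaves, i.e.\ in $\Fun(\sh(X),\Mod_R)$ via the fully faithful symmetric monoidal functor $\pb{\eta}$. This agrees with the inverse in $\Sh{X}{\Mod_R}$, because $\pb{\eta}$ is symmetric monoidal and therefore preserves inverses.
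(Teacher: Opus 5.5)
Your proposal is correct and follows the paper's argument: the paper likewise notes that the invertible (hence dualizable) sheaf $\omega_X^R$ is locally constant by \cite[Corollary 2.5.4.12]{martini2022presentable} and then applies \cref{thm:loccontrlocconstdualisPD}. Your remark that the inverse agrees whether computed in locally constant sheaves or in $\Sh{X}{\Mod_R}$, because $\pb{\eta}$ is symmetric monoidal, is a correct clarification that the paper leaves implicit.
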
 

We interpret the above result that locally constancy of $\omega_X^R$ is sufficient for invertibility as an analog  of Klein's theorem \cite[Theorem A]{Klein} that if $D_A^R$ is dualizable for a compact anima $A$, then it is in fact invertible, see also \cite[Remark A.9]{Land-MJM}.

\section{Homology and homotopy manifolds}

In this section, we introduce \textit{$\C$-homology and homotopy manifolds}. Our main aim is to study conditions implying  $R$-smoothness for a LCH space. More specifically, we prove a spectral generalization of Wilder's local orientability conjecture, and even a new unstable version thereof. At the end of the section, we turn our attention to more geometric examples of homotopy manifolds, that we call \textit{homotopy manifolds with conical singularities}. We prove a generalization of a theorem of Siebenmann, showing that any homotopy manifolds with conical singularities is a topological manifold.

\subsection{Definition of homology and homotopy manifolds}

This subsection is devoted to defining appropriate sheaf theoretical notions of homology and homotopy manifolds. Before giving a proper definition, we start with the observation that, as a consequence of \cref{thomiso}, for nice enough topological spaces one has an unstable lift of the dualizing sheaf.

\begin{lemma}\label[lemma]{lemma:destabdualsheaf}
	Let $\D$ be any pointed presentable $\infty$-category, let $f\colon X \to Y$ be $\D$-locally contractible map between LCH spaces. Denote by $\Sigma^{\infty}\colon \D\rightarrow\D\otimes\Sp$ the canonical functor from $\D$ to its stabilization. Then there is a natural isomorphism
	$$\Sigma^{\infty}\pfs{(p_{1}^f)}\pf{(\Delta_f)}\pb{f}(-)\simeq\pbp{f}\Sigma^{\infty}(-)$$
	of functors $\D\rightarrow \Sh{X}{\D\otimes\spectra}$, where $\Delta_f\colon X \to X \times_Y X$ is the diagonal and $p_1^f \colon X \times_Y X\to X$ is the projection to the first factor.
\end{lemma}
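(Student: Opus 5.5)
We reduce the statement to \cref{thomiso} applied to the stable six functor formalism $\Sh{-}{\D\otimes\Sp}$, and then check that each functor in the unstable composite commutes with $\Sigma^\infty$.

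First we upgrade the hypothesis from $\D$ to $\D\otimes\Sp$. Since $\Sh{Z}{\D\otimes\Sp}\simeq\Sh{Z}{\D}\otimes\Sp$ for LCH spaces $Z$, and tensoring with $\Sp$ is a $2$-functor on $\Cat^L_\infty$, the $\Sh{Y}{\D}$-linear adjunction $\pfs{f}\dashv\pb{f}$ tensors up to a $\Sh{Y}{\D\otimes\Sp}$-linear adjunction. Hence $f$ is also $\D\otimes\Sp$-locally contractible. The formalism $\Sh{-}{\D\otimes\Sp}$ is cocomplete and satisfies the Künneth formula, so \cref{kunneth=>1ULA=loccontr} shows that $1_X$ is $f$-suave. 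The diagonal $\Delta_f$ of a map of Hausdorff spaces is a closed immersion, hence proper, hence cohomologically proper. \cref{thomiso} therefore gives
\[ \pfs{(p_1^f)}\pf{(\Delta_f)}\pb{f}(-)\simeq\pbp{f}(-) \]
as functors $\Sh{Y}{\D\otimes\Sp}\to\Sh{X}{\D\otimes\Sp}$. We will apply this to $\Sigma^\infty(-)$.

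The remaining task is to commute $\Sigma^\infty$ past each of the three unstable functors. For $\pb{f}$ this is immediate: pullback is the left adjoint $\pb{f}\otimes\D$ tensored with $\Sp$, so it commutes with $\Sigma^\infty=\id\otimes\Sigma^\infty_{\Sp}$. For $\pfs{(p_1^f)}$ the same argument works. By \cref{kunnetheasyloccontr}, $p_1^f$ is the base change of $f$ and is $\D$-locally contractible with $\pfs{(p_1^f)}$ a colimit-preserving functor, and its stabilization is the corresponding $\pfs{}$ in the stable setting. Hence $\Sigma^\infty\pfs{(p_1^f)}\simeq\pfs{(p_1^f)}\Sigma^\infty$.

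The main obstacle is $\pf{(\Delta_f)}$, which is a right adjoint and has no a priori reason to commute with $\Sigma^\infty$. Here we use that $\Delta_f$ is a closed immersion, so that $\pf{(\Delta_f)}=\pfp{(\Delta_f)}$. Pushforward along a closed immersion preserves all colimits for sheaves with values in any presentable category: it is extension by the initial object, computed stalkwise, and one checks this via the recollement for the closed/open decomposition. So $\pf{(\Delta_f)}\colon\Sh{X}{\D}\to\Sh{X\times_YX}{\D}$ lies in $\Cat^L_\infty$. Moreover, its tensor with $\Sp$ is again the closed pushforward: the recollement description is compatible with $-\otimes\Sp$ because $\pb{i},\pf{i}$ and $\pb{j},\pfp{j}$ are all colimit-preserving and the recollement is preserved by tensoring. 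This gives $\Sigma^\infty\pf{(\Delta_f)}\simeq\pf{(\Delta_f)}\Sigma^\infty$.

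Composing these three commutations with the equivalence from \cref{thomiso} evaluated on $\Sigma^\infty(-)$ yields the claimed natural isomorphism. Naturality follows because every identification used is an equivalence of functors.
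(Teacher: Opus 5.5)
Your proposal is correct and is essentially the paper's argument: apply \cref{thomiso} in the stabilized formalism, using that $\Delta_f$ is a closed immersion, and commute $\Sigma^\infty$ past $\pb{f}$, $\pfs{(p_1^f)}$ and $\pf{(\Delta_f)}$ because each is colimit-preserving and compatible with $-\otimes\Sp$; the only difference is that the paper first reduces to $\D=\Ani_*$, whereas you work with $\D$ directly. One small correction: $\pf{(\Delta_f)}$ preserves colimits only because $\D$ is pointed (for $\Ani$-valued sheaves, pushforward along a closed immersion does not even preserve the initial object), so ``any presentable category'' should read ``any pointed presentable category''; once that is in place, applying the 2-functoriality of $-\otimes\Sp$ to the adjunction $\pb{(\Delta_f)}\dashv\pf{(\Delta_f)}$ in $\Cat^L_\infty$ gives the compatibility with stabilization more directly than the recollement argument.
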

\begin{proof}
Since all functors involved commute with colimits and are compatible with Lurie's tensor product, one may assume that $\C=\Ani_{\ast}$. The lemma then follows from the observation that $\Sigma^{\infty}$ commutes with all operations involved, combined with the description of $\pbp{f}$ provided by \cref{thomiso}, using that $\Delta_f$ is a closed immersion and hence proper.
\end{proof}

\begin{remark}
The conclusion of \cref{lemma:destabdualsheaf} still holds true if we relax the assumption of $X$ and $Y$ being LCH to the requirement that $f$ is separated and locally proper. See \cite[Appendix B]{Maegawa2024BauerFuruta} for a construction of the shriek operations along separated locally proper maps, building upon the results of \cite{MartiniWolf2025Proper}. 
\end{remark}

\begin{notation}\label[notation]{notation:relative-dualizing-sheaf}
Let $f\colon X \to Y$ be a map of topological spaces and let $\D$ be any $\infty$-category equipped with a closed symmetric monoidal structure. Assume that $f$ is $\D$-locally contractible. The \emph{$\D$-valued relative dualizing sheaf}, denoted by $\omega^{\D}_f$, is given by
\begin{enumerate}
	\item the sheaf $\pbp{f}_{\D}(1_Y)$, if $\D$ is stable and bicomplete;
	\item the sheaf $\pfs{(p_1^X)}\pf{(\Delta^X)}\pb{f}(1_Y)$, if $\D$ is pointed presentable.
\end{enumerate}
When $Y=\ast$, we will write $\omega_X^\D$ for short and refer to it as the dualizing sheaf of $X$, and when $\D=\Mod_R$ for some $\bE_{\infty}$-ring $R$, we write $\omega^{R}_f$ for short. Note that in case $\D$ is stable and presentable and $f$ is $\D$-locally contractible, the two definitions indeed agree, by \cref{thomiso}. 
\end{notation}

In case $\D$ is stable and bicomplete, using \cref{kunneth=>1ULA=loccontr} we have seen in \cref{unitULAdualizingsheafstableunderbc} that $\omega_f^\D$ is compatible with pullbacks. The same holds true in case $\D$ is pointed presentable:
\begin{lemma}\label[lemma]{lemma:pullback-unstable-dualizing-sheaf}
For a pullback diagram \eqref{pb} where $f$ is universally $\D$-locally contractible, the canonical map $v^*\omega_f^\D \to \omega_g^\D$ is invertible.
\end{lemma}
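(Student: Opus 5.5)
The plan is to unwind the unstable definition $\omega_f^\D = \pfs{(p_1^f)}\pf{(\Delta_f)}\pb{f}(1_Y)$ and move $v^*$ past each of the three functors using base change. Write $W = Z\times_Y X$ with $g\colon W\to Z$ and $v\colon W \to X$. The diagonal $\Delta_g\colon W \to W\times_Z W$ and the projection $p_1^g\colon W\times_Z W \to W$ are then pullbacks of $\Delta_f$ and $p_1^f$ along the maps $w := v\times_u v\colon W\times_Z W\to X\times_Y X$ and $v$. This is the standard fact that the square with vertices $W\times_Z W, X\times_Y X, W, X$ (via $p_1$) is cartesian, since $W\times_Z W \simeq W\times_X(X\times_Y X)$. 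Similarly, the square formed by $\Delta_g$, $\Delta_f$, $v$ and $w$ is cartesian, because $\Delta_f$ is a monomorphism (indeed a closed immersion, as all spaces are LCH hence Hausdorff).

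With these squares in hand, the computation runs as follows.

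First, since $f$ is universally $\D$-locally contractible, its base change $p_1^f$ (the pullback of $f$ along $f$) is $\D$-locally contractible. The base change of $p_1^f$ along $v$ is $p_1^g$, so the defining property of universal local contractibility gives $\pfs{(p_1^g)}\pb{w}\simeq \pb{v}\pfs{(p_1^f)}$. Here we use that universality is preserved under composing pullbacks: the pullback of $f$ along $u$ followed by pulling back along $g$ is a pullback of $f$ along $ug$. Hence $v^*\omega_f^\D\simeq \pfs{(p_1^g)}\pb{w}\pf{(\Delta_f)}\pb{f}(1_Y)$.

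Second, we need proper base change $\pb{w}\pf{(\Delta_f)}\simeq \pf{(\Delta_g)}\pb{v}$ for the closed immersion $\Delta_f$. For sheaves with values in a presentable $\infty$-category, this holds for closed immersions, since pushforward along a closed immersion computes stalks by restriction. Alternatively, we can use $\pf{(\Delta)}=\pfp{(\Delta)}$ and base change in the six functor formalism for $\Sh{-}{\D}$ on LCH, which \cite{volpe2023operations} supplies for presentable $\D$.

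Finally, $\pb{v}\pb{f}(1_Y)\simeq \pb{g}\pb{u}(1_Y)\simeq\pb{g}(1_Z)$, since pullback is symmetric monoidal. Assembling these three steps gives $v^*\omega_f^\D\simeq\omega_g^\D$. We must also check that this composite equivalence agrees with the canonical comparison map. That map is built from the same units and counits (the Beck--Chevalley transformations), so the agreement should be a formal diagram chase.

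The main obstacle is the second step: justifying proper base change for $\pf{(\Delta)}$ with coefficients in a merely pointed presentable (non-stable) $\D$. I would first try to reduce to $\D=\Ani_*$ or $\Ani$ via $\Sh{X}{\D}\simeq\Sh{X}{\Ani}\otimes\D$, as in the proof of \cref{lemma:destabdualsheaf}, and then invoke proper base change for closed immersions of $\infty$-topoi. If $\D$ is stable, the statement already follows from \cref{unitULAdualizingsheafstableunderbc} together with \cref{kunneth=>1ULA=loccontr}.
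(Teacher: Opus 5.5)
Your proposal follows the paper's proof: the same stack of pullback squares and the same chain $\pb{v}\pfs{(p_1^f)}\pf{(\Delta_f)}(1)\simeq\pfs{(p_1^g)}\pb{w}\pf{(\Delta_f)}(1)\simeq\pfs{(p_1^g)}\pf{(\Delta_g)}\pb{v}(1)$, using base change for $\pfs{(p_1)}$ and for pushforward along the proper diagonal. One correction to your justification of the first step: pasting pullback squares only makes the base-change map for the square with $p_1^f$ on the right invertible on objects pulled back along the second projection $X\times_Y X\to X$, and $\pf{(\Delta_f)}(1)$ is not of that form; the correct reason is that $p_1^f$ is $\D$-locally contractible as a base change of $f$, and hence universally so by the K\"unneth argument of \cref{kunnetheasyloccontr}, which is what the paper implicitly uses.
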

\begin{proof}
Each of the squares in the following diagram is a pullback square:
\[\begin{tikzcd}
	W \ar[r,"v"] \ar[d,"\Delta_g"] & X \ar[d,"\Delta_f"] & \\ 
	W \times_Z W \ar[d,"p_1^g"] \ar[r,"\bar{v}"] & X \times_Y X \ar[r] \ar[d,"p_1^f"] & X \ar[d,"f"] \\
	W \ar[r,"v"] & X \ar[r,"f"] & Y
\end{tikzcd}\]
Therefore, we may use that $\pfs{(-)}$ and $\pf{(-)}$ are compatible with pullbacks in the present situation as $p_1^f$ is $\D$-locally contractible and $\Delta_f$ is proper:
\begin{align*}
	v^*\omega_f^\D & = v^*\pfs{(p_1^f)}\pf{(\Delta_f)}(1) \\
		& = \pfs{(p_1^g)}\bar{v}^*\pf{(\Delta_f)}(1) \\
		& = \pfs{(p_1^g)} \pf{(\Delta_g)} v^*(1) = \omega_g
\end{align*}
as claimed.
\end{proof}

Next, we give a concrete description of the stalks of the dualizing sheaf.
\begin{notation}
For $X$ a $\D$-locally contractible LCH space and $x \in X$, we will denote by $\sh_\D(X | x)$ the cofibre of the canonical map $\sh_\D(X\setminus\{x \}) \to \sh_\D(X)$ and call it the local shape of $X$ at $x$.
\end{notation}
\begin{lemma}\label[lemma]{lemma:stalkdualizingshapeatx}
Let $\D$ be any stable bicomplete, or pointed presentable, $\infty$-category equipped with a closed symmetric monoidal structure and $X$ a $\D$-locally contractible LCH space. Then the stalk of $\omega_X^\D$ at $x \in X$ is given by the local shape $\sh_\D(X|x)$ of $X$ at $x$.
\end{lemma}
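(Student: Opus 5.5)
We compute the stalk $x^*\omega_X^\D$ from the formula $\omega_X^\D \simeq \pfs{(p_1)}\pf{\Delta}\pb{a}(1)$. In the stable case this is \cref{thomiso}; in the pointed presentable case it is the definition in \cref{notation:relative-dualizing-sheaf}. Here $a\colon X\to\ast$, $p_1\colon X\times X\to X$ is the first projection and $\Delta$ is the diagonal. Since $X$ is $\D$-locally contractible and LCH, $a$ is universally $\D$-locally contractible (by \cref{kunneth=>1ULA=loccontr}, resp.\ \cref{kunnetheasyloccontr} in the unstable case). So $\pfs{}$ commutes with base change along $x\colon \ast\to X$, i.e.\ $x^*\pfs{(p_1)}\simeq \pfs{a}\,\bar{x}^*$, where $\bar{x}\colon X\simeq \{x\}\times X\hookrightarrow X\times X$. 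Equivalently, \cref{lemma:pullback-unstable-dualizing-sheaf} gives the same reduction. We then use proper base change for the closed immersion $\Delta$: the fibre product of $\bar{x}$ and $\Delta$ is the point $\{x\}$, embedded in $X$ via $i_x$. This gives $\bar{x}^*\pf{\Delta}\pb{a}(1)\simeq \pf{(i_x)}(1)$, the skyscraper at $x$. Hence
\[ x^*\omega_X^\D \simeq \pfs{a}\pf{(i_x)}(1). \]

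Next we identify $\pf{(i_x)}(1)$ via the localization sequence for the open–closed decomposition $j\colon U=X\setminus\{x\}\hookrightarrow X$, $i_x\colon\{x\}\hookrightarrow X$. There is a cofibre sequence
\[ \pfs{j}\pb{j}\pb{a}(1) \to \pb{a}(1)\to \pf{(i_x)}\pb{i_x}\pb{a}(1)=\pf{(i_x)}(1), \]
where $\pfs{j}$ is extension by zero, left adjoint to $j^*$. In the stable case this is the usual recollement sequence $j_!j^*\to\id\to i_*i^*$. In the pointed presentable case we check that the square with $\pfs{j}j^*1 \to 1$ and $\ast\to \pf{(i_x)}1$ is a pushout in $\Sh{X}{\D}$. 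We can test this on stalks away from $x$ and at $x$. Alternatively, we note that $\pf{(i_x)}$ preserves colimits and the recollement is still valid for pointed $\D$ (closed points in Hausdorff spaces, $\D\simeq\D\otimes\Ani_*$). We may reduce to $\D=\Ani_*$ as in the proof of \cref{lemma:destabdualsheaf} and then tensor.

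Applying the left adjoint $\pfs{a}$, which preserves cofibres, gives
\[ x^*\omega_X^\D\simeq \mathrm{cofib}\bigl(\pfs{a}\pfs{j}\pb{j}\pb{a}(1)\to \pfs{a}\pb{a}(1)\bigr) = \mathrm{cofib}\bigl(\sh_\D(X\setminus\{x\})\to\sh_\D(X)\bigr). \]
Here we use that $U$ is open in $X$, hence $\D$-locally contractible, with $\pfs{(a j)}=\pfs{a}\pfs{j}$. This is $\sh_\D(X|x)$. We also check that the map is the canonical one induced by $U\subseteq X$, which is the counit $\pfs{j}j^*\to\id$.

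The main obstacle is the unstable case. There we must justify both that $\pfs{}$ commutes with base change along $x$ for $p_1$, and that the localization cofibre sequence holds. For base change, universal local contractibility is available only via \cref{kunnetheasyloccontr}, applied to $X\times X\to X$ as a base change of $a$. For the localization sequence, I would first try checking stalks, using hypercompleteness of nothing. The pushout is a sheaf-level colimit, so stalks suffice only after sheafification. Since stalk functors preserve colimits and $\pfs{j}$ has stalks $0$ at $x$ and $1$ elsewhere, comparing the map from the pushout to $\pf{(i_x)}1$ needs it to be an equivalence without a hypercompleteness assumption. For this I would instead use the recollement description of $\Sh{X}{\D}$ as a gluing of $\Sh{U}{\D}$ and $\D$.
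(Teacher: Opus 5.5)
Your argument is correct, but it is organized differently from the paper's.

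In the stable case the paper does not use the formula $\omega_X^\D\simeq\pfs{(p_1)}\pf{\Delta}\pb{a}(1)$ at all. It applies $\pfp{a}(-)\pbp{a}$ to the localization sequence $\pfp{j}\pbp{j}\to\id\to\pf{i}\pb{i}$ on $X$, obtaining a cofibre sequence $\pfp{b}\pbp{b}(1)\to\pfp{a}\pbp{a}(1)\to\pb{i}\omega_X^\D$ with $b=aj$. It then identifies the first two terms with $\sh_\D(X\setminus\{x\})$ and $\sh_\D(X)$ via $\pfp{f}\pbp{f}\simeq\pfs{f}\pb{f}$ from \cref{sharp=shriektensdualwithkunnet}. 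No base change for $p_1$ or for $\Delta$ is needed.

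Only in the pointed case does the paper use the Thom-type formula. There it localizes first on $X\times X$, writing $\pf{\Delta}\pb{\Delta}$ as the cofibre of $\pfs{j}\pb{j}\to\id$ for $j$ the inclusion of the complement of the diagonal, and base-changes along $x$ afterwards.

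You use the Thom-type formula in both cases and reverse the order. You base-change first ($\sharp$-base change for $p_1$, then proper base change for $\Delta$, landing on the skyscraper $\pf{(i_x)}(1)$), and only then use the localization sequence for the single closed point of $X$. This gives one uniform argument for both cases and a slightly simpler localization step. The cost is an extra detour through \cref{thomiso} and two base-change steps in the stable case, where the paper's argument needs only \cref{sharp=shriektensdualwithkunnet}.

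Two small remarks.
\begin{itemize}
\item Your justification of base change along $x$ for $\pfs{(p_1)}$ is the right one, namely \cref{kunnetheasyloccontr} applied to $p_1$ itself. The aside that \cref{lemma:pullback-unstable-dualizing-sheaf} gives the same reduction is not accurate as stated, since $x\colon\ast\to X$ is not a base change of $a$; only the proof of that lemma uses the same mechanism.
\item The recollement argument you settle on for the pointed localization sequence works without any hypercompleteness. The functors $\pb{j}$ and $\pb{(i_x)}$ preserve colimits and are jointly conservative, and applying them to the square yields trivially cocartesian squares. The paper simply cites \cite[Cor.~4.7]{volpe2023operations} for this step.
\end{itemize}
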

\begin{proof}
We treat the stable case first.
We denote by $i$ the closed inclusion $\{x\} \to X$, by $j$ the open inclusion $X \setminus \{x\} \to X$, by $a\colon X \to \ast$ the unique map and by $b$ the composite $aj$.
By \cite[Corollary 4.7]{volpe2023operations}, the following is a cofibre sequence of functors.
\[ \pfp{j}\pbp{j} \to \id \to \pf{i}\pb{i}.\]
Applying $a_!$ from the left and $a^!$ from the right, and using again that $i_* = i_!$ we obtain a cofibre sequence
\[ \pfp{b}\pbp{b} \to \pfp{a}\pbp{a} \to \pb{i}\pbp{a} \]
which yields the claim by \cref{sharp=shriektensdualwithkunnet}. In the pointed presentable case, we use \cref{thomiso} and the denote by $j$ the open inclusion of $X \times X \setminus \Delta(X) \to X\times X$. By \cite[Cor.\ 4.7]{volpe2023operations} we then find that $\Delta_*\Delta^*$ is a cofibre, from which, using base-change, the claim follows again.
\end{proof}

\begin{definition}\label[definition]{Def:homology-manifold}
Let $\D$ be any stable bicomplete, or pointed presentable, $\infty$-category equipped with a closed symmetric monoidal structure, and write $1$ for the unit in $\D$. A $\D$-locally contractible LCH space $X$ is called a \emph{$\D$-homology manifold of dimension $n$} if for all $x \in X$, the stalk, or equivalently the local shape $x^*\omega_X^\D \simeq \sh_\D(X|x)$ at $x$ is equivalent to $\Sigma^n 1$. 
When $\D=\Mod_R$ for some $\bE_\infty$-ring spectrum $R$, we use the term \emph{$R$-homology manifold} rather than $\Mod_R$-homology manifold and when $\D=\Ani_{\ast}$, we use the term \emph{$\Ani_*$-homotopy manifold} rather than $\Ani_*$-homology manifold.
\end{definition}

We emphasize that in this definition, we do \emph{not} require $X$ to be $\D$-locally weakly contractible, and that being an $R$-homology manifold is a condition on the $R$-homology of the local \emph{shapes} of $X$ at points of $X$, \emph{not} on the $R$-homology of the local singular complexes, i.e.\ simply the cofibre of the map on underlying weak homotopy types of the inclusions $X \setminus \{x\} \to X$. Said yet differently, an $R$-homology manifold requires the local sheaf homology groups to be concentrated in a single degree, rather than the local singular homology groups, and we do not want to impose conditions on $X$ which ensure that these two notions agree.

\begin{remark}\label[remark]{rmk:homotopymannotgriffiths}
We warn the reader that our definition of $\Ani_*$-homotopy manifolds differs from the homotopy manifolds introduced by Griffiths \cite{Griffiths}, and later on considered also by Lacher, Curtis, Wilder and others (see for example \cite[Section 4]{lacher1969cell}). At present, the precise relationship between our definition and Griffiths's is unclear to us.
\end{remark}

\begin{remark}\label[remark]{PIDhommanonpi0}
Suppose $R$ is an $\bE_\infty$-ring such that every invertible $R$-module is of the form $\Sigma^n R$ for some $n\in \Z$, e.g.\ if $R$ is connective and $\pi_0(R)$ is a principal ideal domain. Then a $R$-locally contractible LCH space $X$ is an $R$-homology manifold if and only if all stalks $x^*(\omega_X^R)$ are invertible $R$-modules. In fact, by Hurewicz's theorem, it suffices to assume that all stalks of $\omega_X^{\pi_0(R)}$ are invertible $\pi_0(R)$-modules.
\end{remark}

In Subsections~\ref{subsec:Bredon-Wilder},~\ref{subsec:Wilder}, and~\ref{subsec:Wilder2} and we will discuss the relation of $R$-homology manifolds with Wilder's generalized manifolds, Wilder's local orientability conjecture, and Wilder's monotone mapping theorem. The final two subsections of this paper are then devoted to the study of $\Ani_*$-homotopy manifolds.

\subsection{Comparison with Bredon and Wilder}\label{subsec:Bredon-Wilder}
First, we explain some relation of the above notion with Wilder's generalized manifolds in the sense of Bredon \cite{Bredon}. This is defined relative to a fixed PID $R$. In our notation, a Wilder $n$-manifold over $R$ is a LCH space $X$ such that
\begin{enumerate}
\item $\dim^!_R(X) <\infty$,
\item The stalks $x^*(\omega_X^R)$ are isomorphic to $\Sigma^n K$, and
\item It is $clc^\infty_R$\footnote{$clc$ stands for cohomologically locally connected.}, i.e.\ for all $U \subseteq K \subseteq V$, with $U,V$ open and $K$ compact, and all $n\geq 0$, the induced map $H^n_c(V;R) \to H^n_c(U;R)$ has finitely generated image, see \cite[Prop.\ 17.2]{bredon2012sheaf}.
\end{enumerate}

\begin{remark}\label[remark]{rmk:clcRtoresidue}
	It follows from the universal coefficient theorem that for any maximal ideal $\mathfrak{m} \subseteq R$ with residue field $K=R/\mathfrak{m}$, a Wilder $n$-manifold over $R$ is also a Wilder $n$-manifold over $K$. Indeed, for any $R$-module $M$, we have $a_!(X) \otimes_R M = a_!(X \otimes_R a^*(M))$ and therefore we have a short exact sequence
	\[ 0 \to H^n_c(X;R) \otimes_R M \to H^n_c(X;R) \to \Tor_1^R(H^{n+1}_c(X;R),M) \to 0 \]
	since $R$ is 1-dimensional (compare with \cite[Theorem 15.3]{bredon2012sheaf}).
\end{remark} 

\begin{lemma}\label[lemma]{lemma:clcooandloccontrK}
Let $K$ be a field and $X$ a LCH Hausdorff space. If $X$ is $K$-locally contractible, then it is $clc^\infty_K$. Conversely, if $\dim^!_K(X)<\infty$ and $clc^\infty_K$, then $X$ is $K$-locally contractible.
\end{lemma}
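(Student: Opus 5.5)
The first implication should follow from a compactness argument in the dualizable category $\Sh{X}{\Mod_K}$. The converse should follow from checking that $a^*$ preserves limits on stalks, using hypercompleteness coming from finite dimension.

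\emph{$K$-local contractibility implies $clc^\infty_K$.} Let $a\colon X\to\ast$ be the unique map and fix $U\subseteq C\subseteq V$ with $U,V$ open and $C$ compact. The maps relevant for $clc^\infty_K$ are induced by $\pfp{a}$ applied to the canonical map $\pfp{(j_U)}K_U\to\pfp{(j_V)}K_V$ in $\Sh{X}{\Mod_K}$.
\begin{enumerate}[(1)]
\item Since $\overline U\subseteq C\subseteq V$ with $C$ compact, this map is a compact morphism in the dualizable presentable category $\Sh{X}{\Mod_K}$, i.e.\ it is ``way below''. This is the standard description of $\Sh{X}{\Mod_K}$ for $X$ LCH as a dualizable category generated under colimits by the $\pfp{j}K_U$.
\item Because $X$ is $K$-locally contractible, \cref{sharp=shriektensdualwithkunnet} and \cref{1ULA=>loccontr} give $\pbp{a}\simeq\pb{a}(-)\otimes\omega_X^K$. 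In particular $\pbp{a}$ preserves all colimits, so the left adjoint $\pfp{a}$ preserves compact morphisms.
\item In the compactly generated category $\Mod_K$, a compact morphism factors through a compact, i.e.\ perfect, $K$-module. Hence on each homotopy group the induced map factors through a finite-dimensional $K$-vector space.
\end{enumerate}
So the map between compactly supported cohomologies has finite-dimensional image in every degree, which is the $clc^\infty_K$ condition.

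\emph{The converse.} Assume $\dim^!_K(X)\le n<\infty$ and that $X$ is $clc^\infty_K$. By \cref{lemma:finite-dimension-hypercomplete}, $\Sh{X}{\Mod_K}$ is hypercomplete, so stalks are jointly conservative. By the adjoint functor theorem, $a^*$ has a left adjoint as soon as it preserves limits. Since $a^*$ is exact, this reduces to preserving products. So for a family $(M_i)$ of $K$-modules it suffices to show that the comparison $a^*\prod_i M_i\to\prod_i a^*M_i$ is an isomorphism on every stalk. The $K$-linearity of the resulting $\pfs{a}$ is then automatic by the projection formula for dualizable $\Mod_K$.

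The stalk of the right-hand side at $x$ is $\colim_{U\ni x}\prod_i\Sec{U}{M_i}$, and over a field $\Sec{U}{M_i}\simeq\Sec{U}{K}\otimes_K M_i$ degreewise.
\begin{enumerate}[(1)]
\item Using $clc^\infty_K$ and finite dimension, as in Bredon \cite[Ch.\ II, \S17]{bredon2012sheaf}, show that the pro-system $\{H^*(U;K)\}_{U\ni x}$ is pro-isomorphic to the constant system $K$ concentrated in degree $0$. This uses the finitely generated images together with dualities between $H_c$ and $H$ for $K$-coefficients.
\item Finite dimension bounds the cohomological degrees involved uniformly, so convergence causes no problem.
\item The Mittag-Leffler and pro-constancy of this system then lets the filtered colimit commute with the product. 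This gives $\colim_U\prod_i\Sec{U}{M_i}\simeq\prod_i M_i$, the stalk of the left-hand side.
\end{enumerate}

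\emph{Main obstacle.} I expect the hardest step to be the pro-constancy statement in the converse, namely extracting from the finitely-generated-image condition on $H_c$ a pro-isomorphism for the ordinary cohomology of neighbourhoods. I would try to obtain it by applying the $K$-linear dual $\Hom{K}{-}{K}$, which turns compactly supported cohomology of $U$ into sections of $\omega$. Finite-dimensional images are preserved under dualization, and a sequence of ``factors through finite-dimensional'' maps of bounded length is pro-equivalent to a pro-finite-dimensional system. Comparing the resulting system at $x$ with the constant one via the stalk map then remains a diagram chase.
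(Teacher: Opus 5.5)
Your forward implication is exactly the paper's argument. The converse, however, has a genuine gap. The step you flag as the main obstacle is never carried out, and the reduction built around it fails.

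First, the identification $\Sec{U}{M_i}\simeq\Sec{U}{K}\otimes_K M_i$ is false for open $U$ and infinite-dimensional $M_i$. Already $H^0(U;M)=\prod_{\pi_0(U)}M$ differs from $(\prod_{\pi_0(U)}K)\otimes_K M$ when $U$ has infinitely many components. Without this identification, information about the single system $\{\Sec{U}{K}\}_{U\ni x}$ cannot make $\colim_{U\ni x}$ commute with $\prod_i$; note also that this is an ind-system under restriction, not a pro-system. What you actually need is the following: for each $U\ni x$ there is some $V\subseteq U$ such that $\Sec{U}{\pb{a}M}\to\Sec{V}{\pb{a}M}$ factors through $M$, uniformly in $M$. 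Given hypercompleteness, that is essentially the statement to be proved. Mittag-Leffler, being a condition about $\lim^1$ of towers, does not address it. Extracting this uniform factorization from $clc^\infty_K$ is precisely the Bredon-style work your sketch defers to ``a diagram chase''.

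The missing idea is that the converse is just your forward argument run backwards.
\begin{enumerate}
\item Since $\dim^!_K(X)\le n$ and $\pfp{a}$ preserves coconnective objects, each $\cSec{U}{K}$ has homotopy only in degrees $-n,\dots,0$.
\item Over a field, a map between such objects is the sum of its effects on homotopy groups. So finite-dimensional image in each degree means $\cSec{U}{K}\to\cSec{V}{K}$ factors through a perfect complex, i.e.\ it is a compact morphism in $\Mod_K$.
\item The compact morphisms of $\Sh{X}{\Mod_K}$ are generated as an ideal by the maps $\pfp{(j_U)}K_U\to\pfp{(j_V)}K_V$ with $U\subseteq C\subseteq V$. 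Hence $\pfp{a}$ preserves compact morphisms.
\item For a functor between dualizable categories, preserving compact morphisms is equivalent to its right adjoint $\pbp{a}$ preserving colimits.
\item Covariant Verdier duality then turns a right adjoint of $\pbp{a}$ into a left adjoint of $\pb{a}$.
\end{enumerate}
No stalkwise analysis or hypercompleteness is needed; finite dimension is used only to bound the degrees.
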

\begin{proof}
We recall from \cite{KN} that $\Sh{X}{\Mod_K}$ is a dualizable presentable stable $\infty$-category whose compact morphisms are generated, as an ideal, from the morphisms of sheaves associated to open inclusions $U \to V$ that factor through a compact subset of $X$ (see \cite[Lemma 4.4.13]{KN}). Moreover, in a compactly generated $\infty$-category, compact morphisms are precisely those that factor over a compact object \cite[Example 2.2.6 (2)]{KN}. Hence, if $\dim^!_K(X)<\infty$, and since $K$ is a field, the condition $clc^\infty_K$ is equivalent to the condition that $\cSec{U}{K} \to \cSec{V}{K}$ factors through a perfect complex, i.e. it is a compact morphism. In particular, $a_!$ preserves compact morphisms, which is the case if and only if its right adjoint $a^!$ admits a right adjoint. By covariant Verdier duality, this is the case if and only if $a^*$ admits a left adjoint, which means that $X$ is $K$-locally contractible.
\end{proof}

\begin{corollary}
	Let $K$ be a field, and let $X$ be a Wilder $n$-manifold over $K$. Then $X$ is a $K$-homology manifold of dimension n.
\end{corollary}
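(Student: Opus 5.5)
The corollary should follow by checking the conditions of \cref{Def:homology-manifold} with $\D = \Mod_K$ one at a time.

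\textbf{Local contractibility.} A Wilder $n$-manifold over $K$ satisfies $\dim^!_K(X) < \infty$ by condition (1) and is $clc^\infty_K$ by condition (3). The converse direction of \cref{lemma:clcooandloccontrK} therefore applies and shows that $X$ is $K$-locally contractible. Hence $X$ lies in the class of spaces for which \cref{Def:homology-manifold} is formulated.

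\textbf{Stalks of the dualizing sheaf.} Because $X$ is $K$-locally contractible, \cref{lemma:stalkdualizingshapeatx} identifies the stalk $x^*\omega_X^K$ with the local shape $\sh_K(X|x)$. This is exactly the quantity that \cref{Def:homology-manifold} requires to be equivalent to $\Sigma^n K$. Condition (2) in the definition of a Wilder $n$-manifold says precisely that $x^*(\omega_X^K) \simeq \Sigma^n K$ for every $x \in X$. So the stalk condition holds at every point, and $X$ is a $K$-homology manifold of dimension $n$.

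\textbf{Where care is needed.} The only delicate point is making sure the $\omega_X^K$ in Bredon's condition (2) is the same object as the $\omega_X^K$ in \cref{Def:homology-manifold}. In the Wilder–Bredon setup, $\omega_X^K$ is $a^!K$ for the six functor formalism $\Sh{-}{\Mod_K}$. In \cref{notation:relative-dualizing-sheaf}, in the stable case, $\omega_X^K$ is defined to be $a^!(1)$, which is the same thing. So there is no discrepancy, and no hypercompleteness input is needed beyond what \cref{lemma:clcooandloccontrK} already uses.

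If one instead starts from Bredon's classical formulation, where the local condition is stated via local cohomology groups $H^*_c$ of small neighbourhoods, the stalk identification is one step longer. One would first compute $x^*\omega_X^K$ as the filtered colimit over neighbourhoods $U$ of $x$ of $\homsp{K}{\cSec{U}{K}}{K}$, by duality over the field $K$. One would then match this colimit with the classical local homology of $X$ at $x$. This is routine over a field, and I expect it to be the only step needing any verification.
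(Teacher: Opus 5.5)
Your proof is correct and is essentially the paper's argument: the paper likewise uses the converse direction of \cref{lemma:clcooandloccontrK} to get $K$-local contractibility from conditions (1) and (3), and then notes that condition (2) is exactly the stalk condition of \cref{Def:homology-manifold}. Your closing discussion about matching $\omega_X^K$ with Bredon's classical local (co)homology is not needed, because the paper's definition of a Wilder manifold is already stated in terms of $x^*(\omega_X^R)$.
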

\begin{proof}
	By \cref{lemma:clcooandloccontrK}, condition (1) and (3) in the definition of a Wilder $n$-manifold imply that $X$ is $K$-locally contractible. Requiring additionally condition (2) gives exactly the definition of a $K$-homology manifold.
\end{proof}

In particular, Wilder's generalized manifolds are both more general than our $R$-homology manifolds (they need not be $R$-locally contractible if $R$ is not a field) and less general (they are assumed to satisfy $\dim^!(-)<\infty$ and are hence hypercomplete). 

\subsection{Wilder's local orientability conjecture}\label{subsec:Wilder}

Let $R$ be a connective $\bE_{\infty}$-ring spectrum. In this subsection, we focus on showing that, under appropriate finite dimensionality assumptions, $R$-homology manifolds are $R$-smooth. This provides a version of Wilder's local contractibility conjecture with coefficients in $R$. As a consequence, we'll see that ANR homology manifolds with finite $\mathbb{F}_p$-dimension are $\spectra$-smooth.

We first recall the following immediate consequence of \cite[Corollary 2.5.4.12]{martini2022presentable}.
\begin{lemma}\label[lemma]{charinvertsheaves}
Let $X$ be a topological space and $R$ be an $\bE_{\infty}$-ring spectrum. A sheaf $F\in\Sh{X}{\Mod_R}$ is invertible if and only if it is locally constant with invertible stalks.
\end{lemma}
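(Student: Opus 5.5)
The argument reduces to the cited result of Martini--Wolf, which says that an object of $\Sh{X}{\Mod_R}$ is dualizable if and only if it is locally constant with dualizable stalks. The plan is to pass between dualizability and invertibility using only formal properties of the symmetric monoidal structure: pullbacks are symmetric monoidal, and a dualizable object $F$ is invertible if and only if its evaluation map $F\otimes F^\vee\to 1$ is an equivalence.

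For the forward direction, let $F$ be invertible with inverse $G$. Then $F$ is dualizable with dual $G$, so by the cited result it is locally constant with dualizable stalks. For each point $x$, the stalk functor $x^*$ is symmetric monoidal, so $x^*F\otimes_R x^*G\simeq R$, and hence the stalk $x^*F$ is an invertible $R$-module.

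For the converse, let $F$ be locally constant with invertible stalks. Invertible $R$-modules are dualizable, so the cited result shows that $F$ is dualizable with dual $F^\vee=\sHom{}{F}{R_X}$. It remains to check that the evaluation map $\mathrm{ev}\colon F\otimes F^\vee\to R_X$ is an equivalence. This is local on $X$, so we may restrict to an open $U$ on which $F|_U\simeq \pb{b}M$ is constant, with $b\colon U\to\ast$ and $M$ invertible. Since $\pb{b}$ is symmetric monoidal, it carries the duality data of $M$ to duality data for $\pb{b}M$. By uniqueness of duals, $(F|_U)^\vee\simeq \pb{b}(M^{-1})$, and $\mathrm{ev}$ is identified with $\pb{b}$ applied to $M\otimes M^{-1}\xrightarrow{\simeq}R$, which is an equivalence. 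Since the open sets $U$ cover $X$ and sheaves satisfy descent, $\mathrm{ev}$ is an equivalence globally, and $F$ is invertible.

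The main point needing care is the compatibility of duals with restriction to opens. Restriction is symmetric monoidal and therefore preserves duality data, so $(F^\vee)|_U\simeq (F|_U)^\vee$. We also avoid any appeal to hypercompleteness or to checking equivalences on stalks: the local check is carried out on an honest open cover, which is valid for arbitrary topological spaces.
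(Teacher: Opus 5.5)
Your proposal is correct and follows the same route as the paper. The paper records the lemma as an immediate consequence of the Martini--Wolf characterization of dualizable objects in $\Sh{X}{\Mod_R}$ as the locally constant sheaves with dualizable stalks. Your argument supplies the formal passage between dualizability and invertibility that the paper leaves implicit: symmetric monoidality of stalk and restriction functors, plus checking the evaluation map on an open cover.
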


We also need the following lemma about the preservation of cohomological smoothness under extension of scalars.

\begin{lemma}\label[lemma]{cohsmoothaftertensoring}
	Let $\D\rightarrow\D'$ be any symmetric monoidal functor between stable and bicomplete $\infty$-categories equipped with closed symmetric monoidal structures and $X$ an LCH space. If $X$ is $\D$-smooth, it is also $\D'$-smooth. 
\end{lemma}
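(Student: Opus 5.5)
The plan is to transport the whole six functor formalism along the functor $\D\to\D'$ and check that both conditions of $\D$-smoothness survive. Write $\phi\colon \D\to\D'$. We will assume, as the phrase ``symmetric monoidal functor'' is meant to include, that $\phi$ preserves colimits. Since $X$ is LCH, we have $\Sh{X}{\D}\simeq\Sh{X}{\Ani}\otimes\D$, and likewise for $\D'$, naturally in $X$. Hence $\phi$ induces a symmetric monoidal, colimit preserving functor $\phi_X=\id\otimes\phi\colon \Sh{X}{\D}\to\Sh{X}{\D'}$. This functor commutes with $f^*$ and with $f_!$ for every map $f$ of LCH spaces, because these operations are defined on the $\Sh{-}{\Ani}$ factor. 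In other words, $\phi$ induces a morphism of six functor formalisms on the level of the underlying three functor formalisms.

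\textbf{Step 1: local contractibility.} We first show that $X$ is $\D'$-locally contractible. By \cref{kunneth=>1ULA=loccontr}, $\D$-smoothness implies that $a^*_{\D}$ admits a $\D$-linear left adjoint $\pfs{a}$. This adjunction lives in $\Sh{X}{\Ani}\otimes\D$ and is $\D$-linear, so base changing along $\D\to\D'$ (the argument of \cref{kunnetheasyloccontr}: $\D'\otimes_\D(-)$ preserves adjunctions) gives a $\D'$-linear left adjoint to $a^*_{\D'}$. Applying \cref{kunneth=>1ULA=loccontr} again, $1_X$ is $a$-suave for $\D'$. Alternatively, and more cleanly: $\phi$ induces a 2-functor $\cohcorr{\Sh{-}{\D}}(\LCH)\to\cohcorr{\Sh{-}{\D'}}(\LCH)$ (and similarly over slices), which sends $1_X$ to $1_X$. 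Since 2-functors preserve adjunctions, $1_X$ stays suave.

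\textbf{Step 2: the dualizing sheaves.} The 2-functor also sends the right adjoint of $1_X$ to the right adjoint of its image. By \cref{propULA}(ii), the right adjoint of $1_X$ is $\omega_X^{\D}$, so we get $\phi_X(\omega_X^{\D})\simeq\omega_X^{\D'}$. Because $\phi_X$ is symmetric monoidal, it preserves $\otimes$-invertible objects. Hence $\omega_X^{\D'}$ is invertible, and both conditions of \cref{defcohsmooth} hold for $X\to\ast$.

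The main obstacle is making Step 2 rigorous, that is, producing the 2-functor on $D$-based correspondences from a natural transformation of lax symmetric monoidal functors $\Corr(\LCH)\to\Cat_\infty$. We would obtain it by transporting the $\Corr$-enrichment of \cref{constructionofcohcorr} along the natural transformation $\Sh{-}{\D}\Rightarrow\Sh{-}{\D'}$. Should this be too heavy, there is a direct fallback. Apply $\phi_X$ to the equivalence $\omega_X^\D\otimes a^*(-)\simeq a^!(-)$ restricted to the adjunction data: the counit $a_!(\omega_X^\D\otimes a^*M)\to M$ exhibiting $a_!(-\otimes\omega_X^\D)\dashv a^*$ is $\D$-linear. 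Its unit and counit are natural transformations of functors built from $a^*$, $a_!$ and $\otimes$, all of which commute with $\phi$. Applying $\phi$ therefore yields triangle identities exhibiting $a_!(-\otimes\phi_X\omega_X^\D)$ as left adjoint to $a^*$ in the $\D'$-setting. By uniqueness of adjoints and \cref{sharp=shriektensdualwithkunnet}, this forces $\phi_X\omega^\D_X\simeq\omega^{\D'}_X$, which is invertible.
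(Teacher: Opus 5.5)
Your proposal is correct. Step 1 is exactly the paper's argument: base change the $\D$-linear adjunction $\pfs{a}\dashv\pb{a}$ along $\D\to\D'$ and then invoke \cref{kunneth=>1ULA=loccontr}.

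For the invertibility of $\omega_X^{\D'}$ the paper takes a shorter path than your main route. Since $\pbp{a}_{\D}\simeq\pb{a}_{\D}(-)\otimes\omega_X^{\D}$ is $\D$-linear and cocontinuous, the adjunction $\pfp{a}\dashv\pbp{a}$ itself lives in $\D$-modules. Base changing it gives $\pbp{a}_{\D'}\simeq\pbp{a}_{\D}\otimes_{\D}\D'$. Hence $\omega_X^{\D'}$ is the image of $\omega_X^{\D}$ under the symmetric monoidal functor $\Sh{X}{\D}\to\Sh{X}{\D'}$, and is therefore invertible.

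Your 2-functor $\cohcorr{\Sh{-}{\D}}(\LCH)\to\cohcorr{\Sh{-}{\D'}}(\LCH)$ is conceptually cleaner and more general, since it transports every suave object together with its dual, including for relative maps. The cost is that it requires two pieces of machinery the paper never needs:
\begin{enumerate}
\item promoting $\phi$ to a morphism of three-functor formalisms, in particular a coherent compatibility with $f_!$;
\item functoriality of transport of enrichment.
\end{enumerate}
On the first point, note that $f_!$ is not defined on the $\Sh{-}{\Ani}$-factor. One should use $\Sh{X}{\D}\simeq\Sh{X}{\Sp}\otimes\D$ instead.

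Your fallback is essentially the paper's mechanism applied to $\pfs{a}\dashv\pb{a}$ rather than to $\pfp{a}\dashv\pbp{a}$. Two points need to be phrased correctly.
\begin{enumerate}
\item Applying $\phi$ objectwise to the unit and counit only produces data on the essential image of $\phi_X$, not on all of $\Sh{X}{\D'}$. What you actually need, as in your Step 1, is to apply the 2-functor $\D'\otimes_{\D}(-)$ to the $\D$-linear adjunction. You then identify $\pfp{a}(-\otimes\omega_X^{\D})\otimes_{\D}\D'\simeq\pfp{a}(-\otimes A)$, where $A=\phi_X(\omega_X^{\D})$.
\item Uniqueness of adjoints together with \cref{sharp=shriektensdualwithkunnet} only gives $\pfp{a}(-\otimes A)\simeq\pfp{a}(-\otimes\omega_X^{\D'})$. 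This does not by itself yield $A\simeq\omega_X^{\D'}$. One more line is needed. Since $A$ is invertible, in the $\D'$-setting you have $\pfp{a}\simeq\pfs{a}(-\otimes A^{-1})$, whose right adjoint is $A\otimes\pb{a}(-)$. Hence $\omega_X^{\D'}=\pbp{a}(1)\simeq A$. Alternatively, invoke the 2-fully faithful embedding of \cref{lemma:embedding}.
\end{enumerate}
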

\begin{proof}
	First, by \cite[Corollary 5.16]{volpe2023operations} we have $\Sh{X}{\D'}\simeq\Sh{X}{\D}\otimes_{\D}\D'$.
	We then note that $a^*_{\D'} = a^*_\D \otimes_\D \D'$ and since $\omega^\D$ is invertible, $a^!_\D = a^*_\D \otimes \omega_\D$ is $\D$-linear and cocontinuous. It follows that $a^!_{\D'} = a^!_\D \otimes_\D \D'$ as well, and hence, that $\omega_X^{\D'} = \omega_X^\D \otimes_\D \D'$ is invertible. Moreover, $\pfs{a}^\D \otimes_\D \D'$ is left adjoint to $a^*_{\D'}$, so $X$ is indeed $\D'$-smooth by \cref{kunneth=>1ULA=loccontr}.
\end{proof}

The proof of the following theorem closely follows the argument of \cite[Proposition 7.9]{scholze2022six} and, in fact, also Bredon's proof of Wilder's local orientability conjecture \cite{Bredon}. In what follows, for a connective $\bE_\infty$-ring, we shall refer to its residue fields as the rings $K=\pi_0(R)/\mathfrak{m}$ where $\mathfrak{m} \subseteq \pi_0(R)$ is a maximal ideal.
\begin{theorem}\label[theorem]{prop:locally-constant}
Let $R$ be a connective $\bE_\infty$-ring and $X$ a $R$-locally contractible LCH space. Assume that $X$ is $K$-hypercomplete for all residue fields $K$ of $R$.\footnote{Equivalently, that $\omega_X^K$ is $K$-hypercomplete by \cref{lemma:sufficient-hypercompleteness}.}
Then $\omega_X^R$ is invertible if and only if for all $x \in X$, its stalk $x^*(\omega_X^{\pi_0(R)})$ is invertible and for all $y,z\in X$ lying in the same connected component, we have $y^*(\omega_X^R)\simeq z^*(\omega_X^R)$.
\end{theorem}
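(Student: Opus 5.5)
The plan is to reduce invertibility of $\omega_X^R$ to local constancy via \cref{charinvertsheaves}. A sheaf of $R$-modules is invertible if and only if it is locally constant with invertible stalks.

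\emph{Only if.} If $\omega_X^R$ is invertible, it is locally constant with invertible stalks. The set of points whose stalk is isomorphic to a given module is then open, so the isomorphism class of $y^*(\omega_X^R)$ is constant on connected components. For the stalks of $\omega_X^{\pi_0(R)}$: since $X$ is $R$-locally contractible, \cref{kunneth=>1ULA=loccontr} and \cref{1ULA=>loccontr} give $a^!(-)\simeq a^*(-)\otimes\omega_X^R$. Hence $a^!$ is $\Mod_R$-linear and colimit preserving, and $\omega_X^{\pi_0(R)}\simeq\omega_X^R\otimes_R\pi_0(R)$, as in the proof of \cref{cohsmoothaftertensoring}. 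Its stalks are therefore invertible.

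\emph{If: stalks.} Fix $x\in X$ and set $M=x^*(\omega_X^R)$. By \cref{thomiso} this is connective, and $M\otimes_R\pi_0(R)\simeq x^*(\omega_X^{\pi_0(R)})$ is invertible. An invertible $\pi_0(R)$-module is a shift of a finitely generated projective rank one module, and such a module lifts to a finitely generated projective $R$-module. The resulting map has connective cofibre which vanishes after $-\otimes_R\pi_0(R)$, so the cofibre is zero. Hence $M$ is invertible, and in particular perfect.

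\emph{If: local triviality.} Working locally and replacing $R$ by $R$ localized at a maximal ideal is awkward, so instead I follow Scholze's retraction trick directly.
\begin{enumerate}
\item Choose germs of maps $f\colon M_U\to\omega|_U$ and $g\colon\omega|_U\to M_U$ on some open $U\ni x$ inducing inverse isomorphisms on stalks at $x$. For $f$ this is possible because the stalk is a filtered colimit of sections and $M$ is compact. For $g$, I would use that $1_X$ is $a$-suave, so that $\sHom{}{\omega_X}{a^*N}$ is computed in terms of $\omega_X^{-1}$-type objects via \cref{propULA}(ii). I expect that making this precise is the main obstacle: one needs $x^*\sHom{}{\omega}{M_X}\simeq\sHom{}{\omega_x}{M}$, which should follow from compatibility of suave duals with the pullback $x^*$ (\cref{propULA}(iv)).
\item The composite $g\circ f$ is a section of the constant, hence hypercomplete (\cref{loccontr=>locconstishyp}), sheaf $\sHom{}{M_U}{M_U}$, and it is a unit at $x$. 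Shrinking $U$, it becomes invertible. Thus $\omega|_U\simeq M_U\oplus C$.
\item For every $y$ in the connected component of $x$ meeting $U$, the hypothesis gives $\omega_y\simeq M$, so $M\oplus C_y\simeq M$. Since $C_y$ is a perfect connective retract, tensoring with each residue field $K$ and counting $K$-dimensions shows $C_y\otimes K=0$. By Nakayama, $C_y=0$.
\item To conclude $C=0$, write $\id_C$ as an idempotent $e$ of $\sHom{}{\omega|_U}{\omega|_U}\simeq R_U$; this identification uses \cref{remark:f-suave}. All germs of $e$ vanish, and $R_U$ is hypercomplete, so $e=0$. Should this step fail, I would fall back on $K$-hypercompleteness: $C\otimes K$ has vanishing stalks, hence is zero, and $C$ is then a connective summand killed by all residue fields.
\end{enumerate}
Granting these steps, $\omega_X^R$ is locally isomorphic to $M_U$, which is locally constant with invertible stalks, so \cref{charinvertsheaves} finishes the proof.
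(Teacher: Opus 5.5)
\textbf{The gap is Step~1.} Your ``only if'' direction is fine. So is your argument that $M=x^*\omega_X^R$ is an invertible, hence compact, $R$-module; it spells out a point the paper uses without comment.

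Step~1 asks for $g\colon\omega|_U\to M_U$ with $g_x=f_x^{-1}$, and your justification does not work. \cref{propULA}(iv) is about base change along maps into the \emph{target} of $a\colon X\to\ast$. It says nothing about restricting to a point of $X$. Moreover, the commutation $x^*\sHom{}{\omega_X}{M_X}\simeq\mathrm{Hom}_R(x^*\omega_X,M)$ that you need is false for general $R$-locally contractible $X$:
\begin{enumerate}
\item By \cref{thomiso}, $\omega_X\simeq\pfs{(p_1)}\pf{\Delta}1_X$, hence $\sHom{}{\omega_X}{a^*N}\simeq\pbp{\Delta}p_1^*a^*N$.
\item Take $X$ the union of the two coordinate axes in $\mathbb{R}^2$ and $R=N=\mathbb{Z}$. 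The germ at the origin is $\colim_V\mathrm{Hom}_{\mathbb{Z}}(\mathrm{cofib}(\sh_{\mathbb{Z}}(V\times V\setminus\Delta)\to\mathbb{Z}),\mathbb{Z})$ over small crosses $V$.
\item This germ has $\pi_{-1}=0$, because the configuration space of two points in a cross is connected.
\item The stalk of $\omega_X^{\mathbb{Z}}$ at the origin is $\Sigma\mathbb{Z}^3$, whose dual $\Sigma^{-1}\mathbb{Z}^3$ has $\pi_{-1}\neq0$.
\end{enumerate}
Under the theorem's hypotheses the commutation does hold, but only a posteriori, once $\omega_X^R$ is known to be locally constant with perfect stalks. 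So Step~1 presupposes the conclusion.

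You can also see that Step~1 carries the whole theorem. Given $g$, neither $K$-hypercompleteness nor the hypothesis on connected components would be needed. After normalizing $gf=\mathrm{id}$ near $x$, one has $\omega|_V\simeq M_V\oplus C$. The retraction $\Gamma(V;R_V)\simeq\mathrm{End}(\omega|_V)\to\mathrm{End}(M_V)\simeq\Gamma(V;R_V)$, $\phi\mapsto g\phi f$, is the identity. Hence the complementary summand, which contains $\mathrm{End}(C)$, vanishes. This is also how Step~4 should be argued, since its current justification is wrong on two counts:
\begin{itemize}
\item a section of $R_U$ with vanishing germs need not be zero when $R$ is not discrete (e.g.\ classes from $H^1(U;\pi_1R)$);
\item a connective sheaf killed by all residue fields need not vanish (e.g.\ $\mathbb{Q}$ over $\mathbb{Z}$).
\end{itemize}

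\textbf{The missing idea.} The paper never constructs $g$. It proves directly, by a support argument, that $s=f$ is an isomorphism on all stalks of a connected $U$.
\begin{enumerate}
\item Connectivity of $L$ and Nakayama reduce this to the case $R=K$ a residue field.
\item The support $Z=\{y\in U: y^*s\neq0\}$ is closed and contains $x$.
\item $\omega_U^K$ is hypercomplete, and by the component hypothesis all its stalks are $\Sigma^nK$. So it is a shifted object of the heart.
\item Hence $s$ vanishes on $\pfp{j}\pb{j}L_U$ and factors through $\pf{i}L_Z$. This gives a splitting $\omega_U^K\simeq\pf{i}L_Z\oplus\pfp{j}\pb{j}\omega_U^K$.
\item Then $\sHom{U}{\omega_U^K}{\omega_U^K}\simeq K_U$ and connectedness of $U$ force $Z=U$.
\item Finally, $L_U$ is hypercomplete, so the $\infty$-connective fibre $F$ of $s$ splits off as $\omega_U^R\simeq L_U\oplus\Sigma F$. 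Then $\sHom{U}{\omega_U^R}{\omega_U^R}\simeq R_U$ gives $F=0$.
\end{enumerate}
This is exactly where the two extra hypotheses of the theorem enter.
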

\begin{proof}
The ``only if'' is immediate as the functors $x^*$ are symmetric monoidal. Let us prove the converse and write $L = x^*\omega_X^R$. Since invertible $R$-modules are compact in $\Mod_R$, there is an open neighborhood $U$ of $x$ and a map $s\colon L_U \to (\omega_X^R)_{|U}=\omega_U^R$ inducing the previous equivalence on the stalk at $x$; here $L_U$ denotes the constant sheaf. By \cref{Sploccontr=>Setlocconn} and \cref{toposlocconn=locconn}, we know that $X$ is locally connected, and therefore we can assume that $U$ is connected. We aim to show that $s$ is an equivalence and hence that $\omega_X^R$ is invertible by \cref{charinvertsheaves}. We claim that it suffices to show that $s$ is an equivalence on stalks. Indeed, the fibre $F$ of $s$ is $\infty$-connective, so since $L_U$ is hypercomplete by \cref{loccontr=>locconstishyp}, we deduce that $\omega_U^R \simeq L_U \oplus \Sigma F$. By \cref{1ULA=>loccontr} we have $\sHom{U}{\omega^{R}_U}{\omega^{R}_U}\simeq R_U$. Since $U$ is connected, it follows that $F=0$ as claimed. 

 Now we show that $s$ induces an isomorphism on stalks.
So let $y\in U$ and let $y^*(s)\colon L \to L$ be the map induced by $s$ on stalks at $y$. We want to show that $y^*(s)$ is an isomorphism. Since $L$ is connective by \cref{lemma:stalkdualizingshapeatx}, it suffices to check this after base-change to $\pi_0(R)$. Since $L\otimes_R\pi_0(R)$ is assumed to be an invertible $\pi_0(R)$-module, we may check this after base-change along $R \to S$ where $S= \pi_0(R)/\mathfrak{m}$ and $\mathfrak{m}$ is a maximal ideal. In other words, we may assume that $R=K$ is a field. 
Now denote by $Z$ the support of $s$, i.e.\ the points $z \in U$ such that $z^*(s) \neq 0$. Note that $Z$ is closed and non-empty since $x\in Z$. We denote by $i$ the inclusion $Z \to U$ and by $j$ the inclusion of the open complement $U \setminus Z \to U$. For any point $y\in U\setminus Z$, the map $\pb{y}(s):L\to\pb{y}(\omega^K_U)$ is, up to shift, an endomorphism of $K$. By definition of $Z$, $\pb{y}(s)$ is not invertible, and therefore $0$ since $K$ is a field. Observe that, by assumption, $\omega^K_U$ is hypercomplete and has isomorphic invertible stalks, and hence, up to shift, lies in the heart of $\Sh{X}{\Mod_K}$. 
Thus, we find that the composite $j_!j^*(L_U) \to L_U \to \omega_U^K$ is canonically trivialized, so that $s$ factors as the composite $L_U \to i_*i^*(L_U) =i_*L_Z \to \omega_U^K$ in which the latter map induces an isomorphism on stalks for points in $Z$. 
Consequently, the composition $i_*L_Z \to \omega_U^K\to\pf{i}\pb{i}\omega_U^K$ is an isomorphism, so that we get an isomorphism $i_*(L_Z) \oplus j_!j^*(\omega_U^K) \cong \omega_U^K$. 
Similarly as before, since $U$ is connected, $\sHom{U}{\omega_U^K}{\omega_U^K} = K_U$ and $i_*(L_Z)$ is non-trivial, we deduce that $j_!j^*(\omega_U^R)$ is trivial and hence that $Z=U$.
\end{proof}

\begin{remark}\label[remark]{rmk:wilder=assumptioninprop}
    Notice that, when $R=K$ a field, the requirements in the hypothesis of \cref{prop:locally-constant} are a priori less restrictive than those in the definition of a Wilder manifold over $K$, as we only demand $K$-hypercompleteness as opposed to finiteness of cohomological dimension. However, as a consenquence of \cref{prop:locally-constant} and \cref{lemma:cohsmoothandfin!cdim}, it turns out that the assumptions of \cref{prop:locally-constant} are in fact equivalent to being a Wilder manifold over $K$.
\end{remark}

\begin{remark}
	The above proof works in essentially the same way if we assume that $X$ is $R$-hypercomplete and is $K$-locally contractible for all residue fields $K$ of $R$, in particular for Wilder manifolds over $R$. In particular, it generalizes simultaneously \cite[Proposition 7.9]{scholze2022six} and \cite{bredon2012sheaf}. Indeed, in the above proof, we have used that $X$ is $R$-locally contractible only in two situations: Once to deduce that $X$ is locally connected. For this, it would be sufficient to assume that $X$ is $K$-locally contractible for some residue field $K$ of $R$. 
	
	Moreover, when we have argued that it suffices to show that the map $s$ is an equivalence on stalks, we have used that $\sHom{U}{\omega_U^R}{\omega_U^R} \simeq R_U$ which follows if $X$ is $R$-locally contractible. However, if instead we assume that $X$ is $R$-hypercomplete (for instance if $\dim^!_R(X)<\infty$) it again suffices to show that $s$ is an equivalence on stalks. 
	
	Finally, we have used again the equivalence $\sHom{U}{\omega_U^K}{\omega_U^K} = K_U$ for which, as indicated above, it suffices that $X$ is $K$-locally contractible.
\end{remark}

We observe that \cref{prop:locally-constant} gives the following characterization of $R$-smoothness.

\begin{corollary}\label[corollary]{cohsmoothcharPID}
	Let $R$ be a connective $\bE_\infty$-ring with $\pi_0(R)$ a PID, and $X$ be a $R$-locally contractible LCH space, which is $K$-hypercomplete for all $K=\pi_0(R)/\mathfrak{m}$ as above. Then $X$ is $R$-smooth if and only if $X$ is a $\pi_0(R)$-homology manifold. Moreover, if $R=K$ a field, then $X$ is $K$-smooth if and only if it is a Wilder manifold over $K$.
\end{corollary}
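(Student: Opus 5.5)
Since $X$ is $R$-locally contractible, \cref{kunneth=>1ULA=loccontr} shows that $1_X$ is $a$-suave for $a\colon X\to\ast$. Condition (i) of \cref{defcohsmooth} therefore holds automatically, and $R$-smoothness of $X$ is equivalent to invertibility of $\omega_X^R$. The plan is to feed \cref{prop:locally-constant} the right hypotheses. By assumption $X$ is $K$-hypercomplete for every residue field $K$. Hence $\omega_X^R$ is invertible if and only if every stalk $x^*\omega_X^{\pi_0(R)}$ is invertible and stalks of $\omega_X^R$ at points of a common connected component agree.

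For the forward direction, suppose $X$ is $R$-smooth. Extension of scalars along $R\to\pi_0(R)$ preserves smoothness by \cref{cohsmoothaftertensoring}, so $\omega_X^{\pi_0(R)}$ is invertible. Its stalks are invertible $\pi_0(R)$-modules, and since $\pi_0(R)$ is a PID these are of the form $\Sigma^{n_x}\pi_0(R)$. Invertibility gives local constancy, so $n_x$ is locally constant. The definition of a homology manifold of dimension $n$ asks for a single $n$, so I would either read the statement componentwise or restrict to the connected components, which are open because $X$ is locally connected (as in the proof of \cref{prop:locally-constant}). This yields that $X$ is a $\pi_0(R)$-homology manifold. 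Note also that $X$ is $\pi_0(R)$-locally contractible, by base change of the left adjoint as in \cref{cohsmoothaftertensoring}, using $\Sh{X}{\Mod_{\pi_0 R}}\simeq\Sh{X}{\Mod_R}\otimes_{\Mod_R}\Mod_{\pi_0R}$.

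For the converse, assume the stalks $x^*\omega_X^{\pi_0(R)}\simeq\Sigma^n\pi_0(R)$. These are invertible, which is the first condition of \cref{prop:locally-constant}. The second condition, that stalks of $\omega_X^R$ agree along connected components, is where care is needed. First, every stalk $x^*\omega_X^R$ is connective, by \cref{lemma:stalkdualizingshapeatx} and local contractibility. Its base change to $\pi_0(R)$ is $\Sigma^n\pi_0(R)$. A connective $R$-module whose reduction to $\pi_0 R$ is $\Sigma^n\pi_0R$ should be $\Sigma^nR$: pick a generator $\Sigma^n R\to L$ of the bottom homotopy group, and check via Hurewicz and Nakayama-type connectivity arguments that its cofibre, whose base change vanishes, is zero. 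This is the content of \cref{PIDhommanonpi0}, which I would cite. Consequently all stalks of $\omega_X^R$ are $\Sigma^nR$, so the second condition holds, and \cref{prop:locally-constant} yields invertibility of $\omega_X^R$. The main obstacle is keeping the dimension $n$ consistent across components. With a fixed $n$ the conditions are immediate; I would handle a varying $n$ by working one connected component at a time.

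For the final statement, take $R=K$ a field. The first part shows that $K$-smoothness is equivalent to being a $K$-homology manifold, for $K$-locally contractible and $K$-hypercomplete $X$. A Wilder manifold over $K$ satisfies $\dim^!_K<\infty$ and $clc^\infty_K$. By \cref{lemma:clcooandloccontrK} it is therefore $K$-locally contractible, and by \cref{lemma:finite-dimension-hypercomplete} it is hypercomplete, so with its stalk condition it is $K$-smooth. Conversely, suppose $X$ is $K$-smooth. Then $\dim^!_K(X)<\infty$ by \cref{lemma:cohsmoothandfin!cdim}, $X$ is $clc^\infty_K$ by \cref{lemma:clcooandloccontrK}, and the stalks are shifts of $K$. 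Hence $X$ is a Wilder manifold, again componentwise in the dimension.
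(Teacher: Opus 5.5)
Your proposal is correct and follows the paper's route. The paper's proof just combines \cref{PIDhommanonpi0} with \cref{prop:locally-constant}, and for the field case appeals to \cref{rmk:wilder=assumptioninprop}, whose content you spell out via \cref{lemma:clcooandloccontrK}, \cref{lemma:finite-dimension-hypercomplete} and \cref{lemma:cohsmoothandfin!cdim}; your componentwise treatment of the dimension is a sensible repair of an imprecision in the statement that the paper's one-line proof glosses over, since an $R$-smooth $X$ need not have the same dimension on all components.
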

\begin{proof}
	Combine \cref{PIDhommanonpi0} and \cref{prop:locally-constant}. The last part of the corollary follows by \cref{rmk:wilder=assumptioninprop}.
\end{proof}

We can finally prove $\spectra$-smoothness for many topological spaces of interest in geometric topology.

\begin{theorem}
	Let $X$ be any locally compact ANR $\mathbb{Z}$-homology manifold $X$ such that, for any prime $p$, $X$ is $\mathbb{F}_p$-hypercomplete. Then $X$ is $\spectra$-smooth. When $X$ is additionally compact, we have that $\Sing(X)$ is a Poincar\'e duality complex. Moreover, the inverse of of the Spivak normal fibration of $\Sing(X)$ is given by the dualizing sheaf of $X$. 
\end{theorem}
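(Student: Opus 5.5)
Since $X$ is a locally compact ANR, \cref{LCANRloccontrlocsingshape} gives that $X$ is $\Ani$-locally contractible with $\sh(X)\simeq\Sing(X)$. Local contractibility then passes to every coefficient category: the left adjoint $\pfs{a}^{\Ani}$ tensored with $\Mod_R$ is a left adjoint of $a^*_R$ (because $\Sh{X}{\Mod_R}\simeq\Sh{X}{\Ani}\otimes\Mod_R$), and it is $\Mod_R$-linear. In particular $X$ is $\bS$-, $\Z$- and $\mathbb{F}_p$-locally contractible. By \cref{kunneth=>1ULA=loccontr}, $1_X$ is therefore $a$-suave in $\Sh{-}{\Sp}$. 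Condition (i) of $\Sp$-smoothness thus holds, and what remains is to show that $\omega_X^{\bS}$ is invertible.

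For invertibility I would apply \cref{prop:locally-constant} with $R=\bS$, so $\pi_0(R)=\Z$ and the residue fields are the $\mathbb{F}_p$. The $\mathbb{F}_p$-hypercompleteness hypothesis is exactly the one required there. The condition that the stalks $x^*\omega_X^{\Z}$ are invertible is the $\Z$-homology manifold hypothesis, via \cref{lemma:stalkdualizingshapeatx}. The remaining hypothesis is the stalk comparison: $y^*\omega_X^{\bS}\simeq z^*\omega_X^{\bS}$ whenever $y,z$ lie in the same component.

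I expect this stalk comparison to be the main obstacle, and I would handle it in two steps. First, I would show that each stalk of $\omega_X^{\bS}$ is a sphere. The stalk $\sh_{\bS}(X|x)$ is connective, and it is the cofibre of suspension spectra of anima, so its homology is $\sh_{\Z}(X|x)\simeq\Sigma^{n_x}\Z$. By the Hurewicz theorem, the stalk is then $\Sigma^{n_x}\bS$ modulo an identification of the bottom cell. Concretely, I would take a map $\Sigma^{n_x}\bS\to\sh_{\bS}(X|x)$ hitting a generator of $\pi_{n_x}$; it is a $\Z$-homology isomorphism between connective spectra, hence an equivalence. Second, I would show that the dimension $n_x$ is locally constant. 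By the argument at the start of the proof of \cref{prop:locally-constant}, run for $R=\Z$, the sheaf $\omega_X^{\Z}$ is locally constant near $x$, which makes $n_x$ locally constant. Since $X$ is locally connected, $n_x$ is then constant on components, which gives the stalk comparison. Alternatively, one can observe that the invertibility proof only uses $\Z$-level information to reach this point. With this in place, \cref{prop:locally-constant} shows that $\omega_X^{\bS}$ is invertible, and so $X$ is $\Sp$-smooth.

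In the compact case, \cref{cptcohsmoothisPDcomplex} shows that $\sh(X)$ is a Poincaré duality complex whose dualizing spectrum $D_{\sh(X)}$, i.e.\ its Spivak normal fibration, is $(\omega_X)^{-1}$. Transporting this along $\sh(X)\simeq\Sing(X)$ gives the final two claims.
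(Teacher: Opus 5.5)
Your overall route is the paper's: \cref{LCANRloccontrlocsingshape} gives local contractibility and $\sh(X)\simeq\Sing(X)$. Then \cref{prop:locally-constant} with $R=\bS$ gives invertibility of $\omega_X^{\bS}$; the paper packages this step as \cref{cohsmoothcharPID} together with the Hurewicz observation of \cref{PIDhommanonpi0}. Finally \cref{cptcohsmoothisPDcomplex} gives the Poincar\'e duality statement.

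The one step that fails as written is your argument that $x\mapsto n_x$ is locally constant. The proof of \cref{prop:locally-constant} already presupposes this stalk comparison:
\begin{itemize}
\item it treats $y^*(s)$ as an endomorphism of $L$;
\item in the field case it uses that $\omega_U^K$ has isomorphic invertible stalks to place it in a single shifted heart.
\end{itemize}
That second point is what forces the composite $j_!j^*L_U\to L_U\to\omega_U^K$ to vanish, and hence gives the splitting $\omega_U^K\simeq i_*L_Z\oplus j_!j^*\omega_U^K$. Without knowing $n_y=n_x$ for $y$ near $x$, neither step is available. So running that argument for $R=\Z$ to deduce local constancy of $n_x$ is circular, and your ``alternatively'' remark does not supply a substitute.

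The step is, however, unnecessary. By \cref{Def:homology-manifold}, a $\Z$-homology manifold comes with a single dimension $n$, i.e.\ $\sh_{\Z}(X|x)\simeq\Sigma^n\Z$ for every $x\in X$. Your Hurewicz argument then gives $x^*\omega_X^{\bS}\simeq\Sigma^n\bS$ for all $x$. So the stalk-comparison hypothesis of \cref{prop:locally-constant} holds trivially, and the rest of your proof goes through unchanged and agrees with the paper's. If you really wanted to allow the degree $n_x$ to vary from point to point, local constancy of $n_x$ would need a genuinely separate argument, which the proposal does not contain.
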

\begin{proof}
	This follows combining \cref{LCANRloccontrlocsingshape}, \cref{cohsmoothcharPID} and \cref{cptcohsmoothisPDcomplex}.
\end{proof}

\subsection{Wilder's monotone mapping theorem}\label{subsec:Wilder2}

We now turn to studying how cohomological smoothness behaves under pushforward along \textit{$R$-cell-like maps}. Thanks to \cref{prop:locally-constant}, we'll deduce that the $\s{}$-cell-like image of a $\spectra$-smooth LCH space is $\spectra$-smooth.

\begin{definition}
	Let $\D$ be an $\infty$-category. A continuous and proper map $f\colon X\rightarrow Y$ between LCH spaces is called \emph{$\D$-cell-like} if $\pb{f}_{\D}\colon \Sh{Y}{\D}\rightarrow\Sh{X}{\D}$ is fully faithful. When $\D= \text{Mod}_R$ for a ring spectrum $R$, say \emph{$R$-cell-like} rather than $\Mod_R$-cell-like.
\end{definition}

\begin{remark}
	A map of topological spaces $f\colon X\rightarrow Y$ if $\Ani$-cell-like if and only if the induced geometric morphism $f_*\colon \Sh{X}{\Ani}\rightarrow\Sh{Y}{\Ani}$ is cell-like in the sense of \cite[Definition 7.3.6.1]{lurie2009higher}. When $X$ and $Y$ are both $\Ani$-locally contractible, $f$ is $\Ani$-cell-like if and only if it is a cell-like mapping in the sense of Lacher \cite{lacher1969cell}. For a proof, we refer to \cite{kremer2025approximate}.
\end{remark}

\begin{lemma}\label[lemma]{lemma:vietoris}
	Let $f\colon X \to Y$ be a proper map between LCH spaces, let $\D$ be a stable bicomplete $\infty$-category equipped with symmetric monoidal structure, and let $R$ be a ring spectrum such that the constant sheaves $R_X$ and $R_Y$ are hypercomplete. Then
	\begin{enumerate}
		\item $f$ is $\D$-cell-like if and only if $\pbp{f}$ is fully faithful,
		\item $f$ is $\D$-cell-like if and only if the map $1_Y \to \pf{f}(1_X)$ is an equivalence, and
		\item $f$ is $R$-cell-like if and only if for each $y\in Y$, the map $R\rightarrow \Sec{f^{-1}(y)}{R_X}$ is invertible.
	\end{enumerate}
\end{lemma}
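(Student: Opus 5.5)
For (1), I will use that $f$ is proper, so $\pfp{f}\simeq\pf{f}$. The chain of adjunctions is then $\pb{f}\dashv\pf{f}=\pfp{f}\dashv\pbp{f}$. In any adjoint triple $L\dashv M\dashv R$, the left adjoint $L$ is fully faithful if and only if the right adjoint $R$ is. Indeed, both conditions are equivalent to $M$ being a localization: the unit $\id\to ML$ is invertible if and only if the counit $MR\to\id$ is, since these two transformations are mates of each other. Applying this with $L=\pb{f}$ and $R=\pbp{f}$ gives (1) formally.

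For (2), full faithfulness of $\pb{f}$ means the unit $G\to\pf{f}\pb{f}G$ is an equivalence for all $G\in\Sh{Y}{\D}$. The forward direction is immediate by taking $G=1_Y$. For the converse, I will use the projection formula for the proper map $f$: $\pf{f}\pb{f}G\simeq\pfp{f}(1_X\otimes\pb{f}G)\simeq\pfp{f}(1_X)\otimes G=\pf{f}(1_X)\otimes G$. One then checks that under this identification the unit is the map $1_Y\otimes G\to\pf{f}(1_X)\otimes G$ induced from $1_Y\to\pf{f}1_X$. The unit is therefore invertible for all $G$ once it is invertible for $1_Y$. The only real work in (2) is checking this compatibility of the unit with the projection-formula equivalence, which is a standard naturality verification. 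Note that (1) and (2) do not need the hypercompleteness hypothesis.

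For (3), with $\D=\Mod_R$, (2) reduces the question to whether $R_Y\to\pf{f}R_X$ is an equivalence. By proper base change, the stalk of $\pf{f}R_X=\pfp{f}R_X$ at $y$ is $\Sec{f^{-1}(y)}{R}$. The stalkwise condition in (3) is therefore exactly the statement that this map induces an isomorphism on all stalks. I expect the main obstacle to be passing from a stalkwise equivalence to an actual equivalence, and this is where hypercompleteness enters. $R_Y$ is hypercomplete by assumption. The sheaf $\pf{f}R_X$ is hypercomplete because $\pf{f}$ is a right adjoint of the $t$-exact functor $\pb{f}$, which preserves $\infty$-connective maps, so $\pf{f}$ preserves hypercomplete (local) objects; here I use that $R_X$ is hypercomplete. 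A map between hypercomplete sheaves that is an isomorphism on stalks has $\infty$-connective fibre between local objects, hence is an equivalence. The converse direction of (3) is just taking stalks.
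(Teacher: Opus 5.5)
Your proposal is correct and follows essentially the same route as the paper. Part (1) comes from the adjoint triple $\pb{f}\dashv\pf{f}=\pfp{f}\dashv\pbp{f}$; part (2) from the projection formula, which identifies the unit at $G$ with $G\otimes\eta_{1_Y}$; and part (3) from a stalkwise check, justified because $\pf{f}$ preserves hypercomplete sheaves and proper base change identifies the stalks with $\Sec{f^{-1}(y)}{R_X}$.
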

\begin{proof}
	In general, given a functor $\varphi$ with left adjoint $\psi$ and right adjoint $\vartheta$, we have that $\psi$ is fully faithful if and only if $\vartheta$ is fully faithful, and this is the case precisely if $\varphi$ is a Dwyer-Kan localization. This shows (1).
	By the projection formula, the unit map $\eta_F \colon F \to \pf{f}\pb{f}(F)$ is equivalent to the map $F \otimes \eta_{1_Y} \colon F \otimes 1_Y \to F \otimes \pf{f}(1_X)$. Consequently, $\eta_F$ is an equivalence for all $F$ if and only if $\eta_{1_Y}$ is an equivalence, so that (2) follows. To see (3), we note that $\pf{f}$ preserves hypercomplete sheaves (since its left adjoint is $t$-exact and hence preserves $\infty$-connective sheaves). In particular, to see that $R_Y \to \pf{f}(R_X)$ is an isomorphism, it suffices to know that the induced map on stalks are isomorphisms. By base-change, these maps are given by the maps $R \to \Sec{f^{-1}(y)}{R_X}$ so the lemma follows.
\end{proof}

\begin{remark}
	When $R$ is a discrete ring, the constant sheaves $R_X$ and $R_Y$ are hypercomplete. \cref{lemma:vietoris}(3) hence implies that $f$ is $R$-cell-like if and only if it is a Vietoris map in the sense of \cite[Theorem 16.33]{bredon2012sheaf}.
\end{remark}

We now aim to prove a generalization of \cite[Theorem 16.33]{bredon2012sheaf}. First, we have:

\begin{lemma}\label[lemma]{loccontrundercelllikeimage}
	Let $\D$ be an $\infty$-category which is either presentable or stable and bicomplete. Let $f\colon X\rightarrow Y$ be a $\D$-cell-like map. If $X$ is $\D$-locally contractible then $Y$ is also $\D$-locally contractible.
\end{lemma}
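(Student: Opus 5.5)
The plan is to write the left adjoint of $\pb{b}$, for $b\colon Y\rightarrow\ast$, as a composite of a left adjoint built from $X$ with $\pb{f}$. Let $a\colon X\rightarrow\ast$, so $a=bf$. By assumption $\pb{a}=\pb{f}\pb{b}$ has a left adjoint $\pfs{a}$. Since $f$ is $\D$-cell-like, $\pb{f}$ is fully faithful, so the unit $\id\rightarrow\pf{f}\pb{f}$ is an equivalence. The candidate left adjoint is
\[ \pfs{b}:=\pfs{a}\pb{f}\colon \Sh{Y}{\D}\rightarrow\D. \]
For $F\in\Sh{Y}{\D}$ and $M\in\D$, full faithfulness of $\pb{f}$ gives
\[ \Hom{\D}{\pfs{a}\pb{f}F}{M}\simeq\Hom{\Sh{X}{\D}}{\pb{f}F}{\pb{f}\pb{b}M}\simeq\Hom{\Sh{Y}{\D}}{F}{\pb{b}M}, \]
naturally in $F$ and $M$. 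So $\pfs{b}$ is left adjoint to $\pb{b}$. I would verify this at the level of $\infty$-categories by writing the unit as $F\simeq\pf{f}\pb{f}F\rightarrow\pf{f}\pb{a}\pfs{a}\pb{f}F\simeq\pb{b}\pfs{a}\pb{f}F$, using the unit of $\pfs{a}\dashv\pb{a}$ followed by the equivalence $\pf{f}\pb{f}\pb{b}\simeq\pb{b}$, and then checking the triangle identities.

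It remains to check the $\D(\ast)$-linearity (projection formula) that the definition of $D$-local contractibility requires. The functor $\pb{f}$ is symmetric monoidal, so $\pb{f}(F\otimes\pb{b}M)\simeq\pb{f}F\otimes\pb{a}M$. Hence
\[ \pfs{b}(F\otimes\pb{b}M)\simeq\pfs{a}(\pb{f}F\otimes\pb{a}M)\simeq\pfs{a}(\pb{f}F)\otimes M, \]
using the projection formula for $\pfs{a}$. One should confirm that this equivalence is the canonical comparison map, which follows from naturality of the adjunction constructed above.

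Alternatively, in the presentable or stable bicomplete setting one could argue more directly: $\pb{b}$ is a retract of $\pf{f}\pb{a}$ (indeed equivalent to it), which has the left adjoint $\pfs{a}\pb{f}$, in the same spirit as \cref{retractloccontr}.

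I expect the only real subtlety to be the linearity requirement. If $\D$ carries no monoidal structure, the statement just means that $\pb{b}$ admits a left adjoint, and the first step already proves it. Either way, the key input is just the equivalence $\pf{f}\pb{f}\simeq\id$. Properness of $f$ is not needed beyond this.
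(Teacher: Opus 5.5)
Your proof is correct and follows the paper's argument. The paper likewise observes that full faithfulness of $\pb{f}$ makes the unit $\pb{b}\to\pf{f}\pb{f}\pb{b}=\pf{f}\pb{a}$ an equivalence, so that $\pb{b}$ inherits the left adjoint $\pfs{a}\pb{f}$. Your extra verification of the projection formula, via symmetric monoidality of $\pb{f}$ and linearity of $\pfs{a}$, is a detail the paper leaves implicit.
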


\begin{proof}
	Let us denote by $a \colon X \to \ast$ and $b\colon Y \to \ast$ the unique maps. Then $\pb{b} \to \pf{f}\pb{f}\pb{b} = \pf{f}\pb{a}$ is an equivalence since $\pb{f}$ is fully faithful. Hence, $\pb{b}$ admits a left adjoint if $\pb{a}$ does, showing the desired conclusion.
\end{proof}

\begin{theorem}[Wilder's monotone mapping theorem]\label[theorem]{thm:wildermonotone}
	Let $R$ be a connective $\bE_{\infty}$-ring, and let $f\colon X\rightarrow Y$ be an $R$-cell-like map, where $X$ is $R$-smooth. Suppose that there exists a $\otimes$-invertible $R$-module $M$ such that one of the following two conditions hold
	\begin{enumerate}
		\item $\omega_X^R \simeq M_X$ is equivalent to the constant sheaf on $M$;
		\item $\pi_0(R)\cong\mathbb{Z}$ and all stalks of $\omega_X^{\mathbb{Z}}$ are isomorphic to $M\otimes_{R}\mathbb{Z}$.
	\end{enumerate}
    Then $Y$ is $R$-smooth.
\end{theorem}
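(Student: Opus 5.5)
The approach is to realize $\omega_Y^R$ as a pushforward of $\omega_X^R$, using that $f$ is proper and cell-like. In case (1) this gives invertibility of $\omega_Y^R$ directly; in case (2) it reduces the statement to \cref{prop:locally-constant} applied to $Y$. Throughout, write $a\colon X\to\ast$ and $b\colon Y\to\ast$ for the unique maps.

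\textbf{Local contractibility and $\omega_Y^R$ as a pushforward.} Since $X$ is $R$-smooth, it is $R$-locally contractible. By \cref{loccontrundercelllikeimage}, $Y$ is then $R$-locally contractible. By \cref{kunneth=>1ULA=loccontr}, the unit $1_Y$ is $b$-suave. So by \cref{defcohsmooth} it remains only to show that $\omega_Y^R$ is $\otimes$-invertible.

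Next, since $f$ is proper we have $\pf{f}=\pfp{f}$. By \cref{lemma:vietoris}(1), $\pbp{f}$ is fully faithful, so the counit $\pfp{f}\pbp{f}\to\id$ is an equivalence. Hence
\[\omega_Y^R\simeq \pfp{f}\pbp{f}\omega_Y^R\simeq \pf{f}\,\omega_X^R .\]

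\textbf{Case (1).} Here $\omega_X^R\simeq M_X=\pb{f}M_Y$. Since $\pb{f}$ is fully faithful, the unit $M_Y\to\pf{f}\pb{f}M_Y$ is an equivalence, so $\omega_Y^R\simeq M_Y$. This is invertible by \cref{charinvertsheaves}, and we are done.

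\textbf{Case (2).} I would verify the three hypotheses of \cref{prop:locally-constant} for $Y$.

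First, $K$-hypercompleteness. Here $\pi_0(R)=\Z$, so each residue field $K=\mathbb{F}_p$ has $\homsp{}{-}{K}$ conservative. The sheaf $\omega_Y^K\simeq\pf{f}\omega_X^K$ is hypercomplete, because $\pf{f}$ preserves hypercomplete sheaves (as in the proof of \cref{lemma:vietoris}) and $\omega_X^K$ is hypercomplete (it is invertible by \cref{cohsmoothaftertensoring} and $X$ is $K$-hypercomplete by \cref{lemma:cohsmoothandfin!cdim}). Then \cref{lemma:sufficient-hypercompleteness} gives that $Y$ is $K$-hypercomplete.

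Second, stalks. By proper base change, $y^*\omega_Y^R\simeq\Sec{F}{\omega_X^R|_F}$ with $F=f^{-1}(y)$, and likewise over $\Z$. By hypothesis $\omega_X^{\Z}$ is an invertible sheaf with all stalks $\Sigma^n\Z$, hence a local system classified by a class in $H^1(X;\{\pm1\})$. By \cref{lemma:vietoris}(3) the map $\Z\to\Sec{F}{\Z}$ is invertible, so $H^1(F;\Z)=H^2(F;\Z)=0$. The universal coefficient sequence (as in \cref{rmk:clcRtoresidue}) then gives $H^1(F;\Z/2)=0$. Hence $\omega_X^{\Z}|_F\simeq\Sigma^n\Z_F$, and the stalk of $\omega_Y^{\Z}$ at $y$ is $\Sigma^n\Z$, which is invertible.

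Third, agreement of the $R$-stalks $y^*\omega_Y^R$ across points $y$. I would show that $\omega_X^R|_F$ is trivial with fiber $M$, so that every stalk is $\Sec{F}{M_F}\simeq M$ by cell-likeness. This is the step I expect to be the main obstacle. The plan is to restrict $\omega_X^R$ to a small neighborhood $f^{-1}(V)$ of $F$, on which it is locally constant and hence classified (via monodromy over the shape) by a map to $\mathrm{B}\aut_R(M)$. The homotopy groups are $\pi_1=\Z^\times$ and $\pi_{k+1}=\pi_k(R)$. The first obstruction is exactly the $\Z$-orientation class, which was killed above. The higher obstructions lie in the sheaf cohomology of $F$ with coefficients in the discrete groups $\pi_k(R)$, and these vanish because cell-likeness gives $\Sec{F}{N}\simeq N$ for every discrete $\pi_0(R)$-module $N$. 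If this obstruction-theoretic argument proves awkward in the sheaf setting, the fallback is to use only the invertibility of the $\pi_0$-stalks together with connectivity of $L$. One would then run the argument of \cref{prop:locally-constant} locally around the map $M_V\to\omega_V^R$ produced by compactness of $M$, to show directly that it is a stalkwise equivalence.
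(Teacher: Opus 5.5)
Your argument is correct, and in case (1) it is cleaner than the paper's. The paper handles both cases through \cref{prop:locally-constant}. It first shows that $\omega_Y^K\simeq\pf{f}\omega_X^K$ is hypercomplete, so that $Y$ is $K$-hypercomplete by \cref{lemma:sufficient-hypercompleteness}. It then only computes stalks, in case (1) $(\omega_Y^R)_y\simeq\Sec{f^{-1}(y)}{M}\simeq M$. You instead identify $\omega_Y^R\simeq\pf{f}\omega_X^R\simeq\pf{f}\pb{f}M_Y\simeq M_Y$ globally. This gives invertibility at once, so in case (1) you need neither the hypercompleteness discussion nor \cref{prop:locally-constant}.

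In case (2) you follow the paper: hypercompleteness comes from $\pf{f}$, and the orientation local system is trivial on fibres because $H^1(f^{-1}(y);\Z/2)=0$. However, you misplace the difficulty. The third hypothesis of \cref{prop:locally-constant} needs no obstruction theory. The stalks of $\omega_Y^R$ are connective by \cref{lemma:stalkdualizingshapeatx}. Moreover $y^*\omega_Y^R\otimes_R\Z\simeq y^*\omega_Y^{\Z}\simeq\Sigma^n\Z$, with $n$ independent of $y$. A connective $R$-module with this property is $\Sigma^nR$: lift a generator of its bottom homotopy group and apply Nakayama to the cofibre. So all $R$-stalks are $\Sigma^nR$, which is exactly how the paper concludes, via \cref{PIDhommanonpi0} and \cref{cohsmoothcharPID}.

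Your primary plan for that step would also need more care than you indicate. Classifying $\omega_X^R|_{f^{-1}(V)}$ by a map out of the shape uses a monodromy equivalence, and that requires $\Ani$-local contractibility, which is not assumed; $X$ is only $R$-smooth. Your fallback is essentially the right argument, provided you use the uniform shift $n$. That uniformity is what lets the field-coefficient support argument in the proof of \cref{prop:locally-constant} go through.
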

\begin{proof}
Using \cref{loccontrundercelllikeimage}, we already know that $Y$ is $R$-locally contractible. Therefore, by \cref{prop:locally-constant} and \cref{lemma:sufficient-hypercompleteness}, it suffices to show that $\omega_Y^K$ is hypercomplete for any residue field $K$ of $R$, and that the stalks of $\omega_Y^{R}$ are invertible. 
	
By \cref{lemma:vietoris}(1) $\pbp{f}$ is fully faithful. In particular, the map $\pfp{f}(\omega_X^K) \simeq \pfp{f}(\pbp{f}(\omega_Y^K)) \to \omega_Y^K$ is an isomorphism. As above, $\pfp{f} = \pf{f}$ preserves hypercomplete sheaves, showing that $\omega_Y^K$ is hypercomplete.

We first deal with the case when $\omega_X^{R}$ is constant. Fix $y\in Y$. Since $f$ is $R$-cell-like and $\omega^R_X$ is constant, by taking stalks at $y\in Y$ we obtain equivalences
\[M\simeq\Sec{f^{-1}(y)}{M}\simeq\Sec{f^{-1}(y)}{\omega^R_X}\simeq(\pf{f}\omega^R_X)_y\simeq(\omega^R_Y)_y.\]
Therefore, we deduce that $(\omega^R_Y)_y$ is invertible.
	
Let us now consider the case $\pi_0(R)=\Z$. By assumption, there exists $n\in\mathbb{N}$ such that $\Omega^n\omega^{\Z}_X$ is locally constant with stalks isomorphic to $\Z$. Each fiber of a $\Z$-cell-like map is $M$-acyclic for any abelian group $M$, so $H^1_{\mathrm{sheaf}}(f^{-1}(y), \Z/2)= 0$. Since this group classifies locally constant sheaves on $f^{-1}(y)$ with infinite cyclic stalks, we deduce that $\Omega^n(\omega^{\Z}_X)_{|f^{-1}(y)}$ is constant. The same argument as in the previous case then shows that for any $y\in Y$, the stalk $(\omega^{\mathbb{Z}}_Y)_y$ is invertible. Hence we can conclude by \cref{cohsmoothcharPID}.
\end{proof}

\subsection{Homotopy manifolds}

In this section we focus on pointed unstable coefficients.
Our first result is an analogue of \cref{prop:locally-constant} with coefficients in $\Ani_{\ast}$.

\begin{theorem}\label[theorem]{thm:unstablelocalorient}
    Let $X$ be a hypercomplete $\Ani_*$-homotopy manifold. Then $\omega_X^{\Ani_{\ast}}$ is locally constant.
\end{theorem}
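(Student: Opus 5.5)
Throughout, $X$ is a hypercomplete $\Ani_*$-homotopy manifold of dimension $d$, and the goal is to show that $\omega_X^{\Ani_*}$ is locally constant. The plan is to mimic the proof of \cref{prop:locally-constant}, but now in $\Sh{X}{\Ani_*}$.

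\textbf{Constructing a local comparison map.} Fix $x\in X$. By \cref{lemma:stalkdualizingshapeatx}, the stalk $x^*\omega_X^{\Ani_*}$ is $\sh(X|x)\simeq S^d$. Since $S^d$ is a compact pointed anima, the equivalence $S^d \simeq x^*\omega_X^{\Ani_*}$ is induced, on some open neighbourhood $U$ of $x$, by a map of sheaves
\[ s\colon (S^d)_U \to (\omega_X^{\Ani_*})_{|U}. \]
Here stalks commute with the filtered colimit over neighbourhoods, and maps out of compact objects commute with that colimit. The sheaf $(\omega_X^{\Ani_*})_{|U}$ agrees with $\omega_U^{\Ani_*}$ by \cref{lemma:pullback-unstable-dualizing-sheaf}. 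As in the stable case, $X$ is locally connected, so we may shrink $U$ to be connected.

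\textbf{Reducing to homology.} Because $X$ is hypercomplete, $s$ is an equivalence as soon as it is an equivalence on all stalks. For $y\in U$, the stalk map $y^*(s)\colon S^d\to S^d$ is a pointed self-map of a sphere. If $d\ge 1$, such a map is an equivalence if and only if it is a $\Z$-homology isomorphism, i.e.\ has degree $\pm1$, by Whitehead and Hurewicz; the case $d=0$ is handled separately and directly. So it suffices to show that $\Sigma^\infty s\otimes\Z$ is a stalkwise equivalence.

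Now use \cref{lemma:destabdualsheaf} to identify $\Sigma^\infty\omega_X^{\Ani_*}$ with $\omega_X^{\Sp}$, compatibly with stalks. Tensoring with $\Z$, or with $\mathbb{F}_p$, this becomes $\omega_X^{\Z}$, respectively $\omega_X^{\mathbb{F}_p}$. In particular $X$ is a $\Z$-homology manifold with all stalks $\Sigma^d\Z$. The map $\Sigma^\infty s\otimes \mathbb{F}_p$ is a map $\Sigma^d\mathbb{F}_{p,U}\to\omega_U^{\mathbb{F}_p}$ which is an isomorphism at $x$. Hypercompleteness of $X$ gives $\mathbb{F}_p$-hypercompleteness, and $X$ is $\mathbb{F}_p$-locally contractible by base change of $\pfs{a}$. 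Hence the support argument from the proof of \cref{prop:locally-constant}, run verbatim, shows that $\Sigma^\infty s\otimes\mathbb{F}_p$ is an equivalence on $U$ for every prime $p$. Equivalently, each degree $\deg y^*(s)\in\Z$ is prime to every $p$, so it is $\pm1$.

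\textbf{Conclusion and main obstacle.} Each $y^*(s)$ is therefore an equivalence, so $s$ is an equivalence and $\omega_X^{\Ani_*}$ is locally constant near $x$. The main obstacle is the passage from stalkwise statements to statements about sheaves in $\Sh{X}{\Ani_*}$. Hypercompleteness of the unstable topos is exactly what makes stalkwise equivalence suffice, so this hypothesis must be used there. We also have to check that $\Sigma^\infty$ really commutes with taking stalks and with the identification of \cref{lemma:destabdualsheaf}. Finally, the low-dimensional case $d=0$, where the Hurewicz argument fails, requires a separate check; there the stalks are discrete two-point sets and the argument is direct.
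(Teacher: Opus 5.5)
Your proposal is correct and follows the paper's proof. The steps match: a local section $s\colon (S^d)_U\to\omega_U^{\Ani_*}$ on a connected $U$ obtained from compactness of $S^d$, reduction to stalks by hypercompleteness, reduction to degrees via $\Sigma^\infty$ and \cref{lemma:destabdualsheaf}, and then the argument of \cref{prop:locally-constant} for $R=\bS$, whose reduction to the residue fields $\mathbb{F}_p$ you have simply made explicit. One citation is off: $(\omega_X^{\Ani_*})_{|U}\simeq\omega_U^{\Ani_*}$ is not an instance of \cref{lemma:pullback-unstable-dualizing-sheaf}, since restricting to an open is not a base change of $X\to\ast$, but the identification is standard and the paper uses it without comment.
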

\begin{proof}
We argue similarly as in \cref{prop:locally-constant}. Fix a point $x\in X$, so that we have an isomorphism $S^n\simeq (\omega_X^{\Ani_{\ast}})_x$. By compactness of $S^n$, the isomorphism above extends to a local section $s: S^n_U\rightarrow\omega_U^{\Ani_{\ast}}$, and by \cref{toposlocconn=locconn} we can assume that $U$ is connected. We want to show that $s$ is an isomorphism. Since $X$ is assumed to be hypercomplete, it suffices to show that for any $y\in U$, the induced map of pointed animae 
\[s_y\colon S^n\rightarrow (\omega_U^{\Ani_{\ast}})_y\simeq S^n\]
    is invertible. The invertibility of an endomorphism of $S^n$ is determined by the degree, and therefore it suffices to show that $s_y$ induces an isomorphism after applying $\Sigma^{\infty}$. 

    Notice that since $X$ is a $\Ani_*$-homotopy manifold, it is in particular an $\Sp$-homology manifold. Hence we deduce by the proof of \cref{prop:locally-constant} that any local section on a connected open extending the isomorphism $\Sigma^n\s{}\simeq(\omega_X^{\Sp})_x$ must be invertible. In particular, we deduce that $\Sigma^{\infty}(s_y)$ must be invertible for all $y\in U$, and so our proof is concluded.
\end{proof}

We now observe that many $\Sp$-homology manifolds are in fact $\Ani_*$-homotopy manifolds. 

\begin{proposition}\label[proposition]{homology=>htpyman}
\label[proposition]{prop:unstable-fibration}
    Let $X$ be a locally compact Hausdorff space which is $\Ani$-locally contractible. Assume that $X$ is a $\Sp$-homology manifold of dimension $>1$. Then $X$ is a $\Ani_*$-homotopy manifold.
\end{proposition}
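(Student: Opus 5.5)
We need to show that for each $x\in X$ the local shape $\sh_{\Ani_*}(X|x)$ is equivalent to $S^n$. Here $n>1$ is the dimension, and by \cref{lemma:stalkdualizingshapeatx} this local shape is the stalk $x^*\omega_X^{\Ani_*}$. The plan is to identify $\sh_{\Ani_*}(X|x)$ with the cofibre $C$ of the map of anima $\sh(X\setminus\{x\})\to\sh(X)$, with basepoints adjoined. Then show that $C$ is simply connected and has the integral homology of $S^n$, and conclude by Hurewicz and Whitehead. Since both $X$ and $X\setminus\{x\}$ are $\Ani$-locally contractible (open subsets inherit this), both shapes are honest anima. Because $\Sigma^\infty_+$ commutes with $\pfs{a}$, we have $\Sigma^\infty C\simeq \sh_{\Sp}(X|x)\simeq x^*\omega_X^{\Sp}\simeq \Sigma^n\mathbb{S}$ by hypothesis. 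In particular $\tilde H_*(C;\Z)$ is $\Z$ concentrated in degree $n$.

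The crux is that $C$ is simply connected. To arrange this, I would localize first. By excision, the local shape at $x$ only depends on a neighbourhood of $x$. Indeed, $\sh(U|x)\simeq\sh(X|x)$ for any open $U\ni x$, either via the stalk description of \cref{lemma:stalkdualizingshapeatx} or via the pushout of shapes coming from the cover $X=U\cup(X\setminus\{x\})$, using that shapes of locally contractible spaces take open covers to pushouts by \cite[A.1]{lurie2017higher}. By \cref{toposlocconn=locconn} we may take $U$ connected, so $\sh(U)$ is connected. Van Kampen for the homotopy pushout $C=\mathrm{cofib}(\sh(U\setminus x)\to\sh(U))$ gives $\pi_1(C)=\pi_1(\sh(U))/\langle\langle \text{images of }\pi_1\text{ of the components of } \sh(U\setminus x)\rangle\rangle$, where each component of $U\setminus\{x\}$ is also glued to the cone point.

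Two things are therefore needed. First, $U\setminus\{x\}$ is connected. This holds because $\tilde H_0(C)=0$ (as $n\geq 1$), and the long exact sequence then shows that $\pi_0\sh(U\setminus x)\to\pi_0\sh(U)=\ast$ is bijective. Second, $\pi_1(\sh(U\setminus x))\to\pi_1(\sh(U))$ is surjective for suitably small $U$. This second point is the main obstacle. My first attempt would be to use the unstable statement directly: choose $V\subseteq U$ such that $\sh(V)\to\sh(U)$ is nullhomotopic on $\pi_1$. That such $V$ exists would follow from local contractibility, namely from the fact that the stalk colimit of $\sh(V)$ is a point, as in \cref{prop:unstableCLocwcontr=>compiso}. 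Then I would compare the two cofibres $C_V\simeq C_U$. The map $C_V\to C_U$ is an equivalence by excision, and it factors $\pi_1$ through the quotient of $\pi_1(\sh(U))$ by the image of $\pi_1(\sh V)$, which is trivial. Naturality of the van Kampen description should then force $\pi_1(C_U)$ to be a quotient of something hit trivially, hence trivial. If this is too slick, the fallback is the dimension bound $n\geq 2$: the local homology vanishing in degrees $\leq 1$ shows that removing a point does not disconnect small neighbourhoods and that loops can be pushed off $x$. I would make this precise via the Mayer–Vietoris/van Kampen description of the pushout $\sh(U)\simeq\sh(U\setminus x)\cup_{\sh(V\setminus x)}\sh(V)$.

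Once $\pi_1(C)=0$ and $\tilde H_*(C)=\Z[n]$ with $n\geq2$ are established, Hurewicz provides a map $S^n\to C$ that is an isomorphism on integral homology. Whitehead's theorem for simply connected anima then gives $C\simeq S^n$, so every stalk of $\omega_X^{\Ani_*}$ is $S^n$ and $X$ is an $\Ani_*$-homotopy manifold.
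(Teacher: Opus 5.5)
Your ``first attempt'' is already a complete argument, so you can drop the fallback, but state it cleanly. Choose $V\subseteq U$ connected with $\sh(V)\to\sh(U)$ nullhomotopic. Such $V$ exists because the pro-object $\text{``}\lim\text{''}_{W\ni x}\sh(W)$ corepresents $M\mapsto x^*a^*M\simeq M$ and is therefore contractible. Hence the class of $\mathrm{id}_{\sh(U)}$ in $\colim_{W\ni x}\mathrm{Map}(\sh(W),\sh(U))\simeq\sh(U)$ is that of a constant map, so $\sh(W)\to\sh(U)$ is nullhomotopic for some $W\subseteq U$, and you may shrink $W$ to the component of $x$. The relevant input is $x^*a^*\simeq\mathrm{id}$, not \cref{prop:unstableCLocwcontr=>compiso}, which concerns $\Sing$ of first countable spaces.

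Since $V\setminus\{x\}$ is shape-connected, van Kampen gives a surjection $\pi_1(\sh(V))\to\pi_1(C_V)$. Composed with the isomorphism $\pi_1(C_V)\to\pi_1(C_U)$ from excision, this is the map $\pi_1(\sh(V))\to\pi_1(\sh(U))\to\pi_1(C_U)$, which is trivial, so $\pi_1(C_U)=0$.

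One correction: connectedness of $U\setminus\{x\}$ does not follow from $\tilde H_0(C)=0$ ``as $n\geq 1$''. Injectivity of $H_0(\sh(U\setminus\{x\}))\to H_0(\sh(U))$ needs $H_1(C)=0$, i.e.\ $n\geq 2$; for $X=\rnum$ the punctured neighbourhoods are disconnected. This is exactly where the dimension hypothesis enters.

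Compared with the paper: it makes the same reduction to simple connectivity and uses the same $H_1$ argument to get $\sh(V\setminus\{x\})$ connected for connected $V$. It replaces your van Kampen/excision step by a pro-categorical one. It forms $\mathrm{holink}(x,X)=\text{``}\lim\text{''}_{U\ni x}\sh(U\setminus\{x\})$. From the pullback square of functors with vertices $\mathrm{id}$, $j_*j^*$, $i_*i^*$, $i_*i^*j_*j^*$ it obtains a pushout square in $\Pro(\Ani)$ with corner $\ast$, hence $\sh(X|x)\simeq\Sigma\,\mathrm{holink}(x,X)$. It then notes that the constant pro-object $\sh(X|x)$ must be a retract of some stage $\Sigma\sh(V\setminus\{x\})$, which is simply connected.

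That corner $\ast$ (i.e.\ $i^*a^*\simeq\mathrm{id}$) is exactly your pro-contractibility of $\{\sh(V)\}$. Your nullhomotopic $\sh(V)\to\sh(U)$ together with $C_V\simeq C_U$ is the stage-wise form of that retraction. The paper's version yields the more structural statement that the local shape is the suspension of the pro-homotopy link. Yours stays inside $\Ani$ and needs only van Kampen for a connected intersection.
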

\begin{proof}
    It suffices to show that $\sh(X|x)$ is simply connected for all $x\in X$. Let $j\colon X\setminus \{x\}\hookrightarrow X$ and $i\colon\{x\}\hookrightarrow X$ be the inclusions, and let $a\colon X\to\ast$ be the unique map. Write $\mathrm{holink}(x,X)$ for the pro-anima corepresenting the left exact accessible functor $\pb{i}\pf{j}\pb{j}\pb{a}\colon\Ani\to\Ani$. Explicitly, this is given by $\text{``}\varprojlim\limits_{x\in U}\text{''}\sh(U\setminus\{x\})$. 
    
    We claim that there is a pushout square
    \begin{equation}\label{eq:squareholink}
    \begin{tikzcd}
    	\mathrm{holink}(x,X)\ar[r]\ar[d] & \sh(X\setminus \{x\})\ar[d] \\
    	\ast\ar[r] & \sh(X) 
    \end{tikzcd}
    \end{equation}
    in $\Pro(\Ani)$. To see this, one observes that, after passing to the respective corepresentable functors, \eqref{eq:squareholink} is obtained from the pullback square 
    $$
    \begin{tikzcd}
    	\id\ar[r]\ar[d] & \pf{j}\pb{j}\ar[d] \\
    	\pf{i}\pb{i} \ar[r] & \pf{i}\pb{i}\pf{j}\pb{j}
    \end{tikzcd}
    $$
    in $\Fun(\Sh{X}{\Ani}, \Sh{X}{\Ani})$ by precomposing with $\pb{a}$ and postcomposing with $\pf{a}$. Passing to vertical cofibers, we get an equivalence $$\Sigma(\mathrm{holink}(x,X))\simeq\sh(X|x)$$ in $\Pro(\Ani)$.
    
    To conclude the proof we only need to show that there is a cofinal system $\mathcal{F}$ of open neighbourhoods of $x$ with the property that, for any $V\in\mathcal{F}$, $\sh(V\setminus\{x\})$ is connected. Indeed, if this were the case, since finite colimits in $\Pro(\Ani)$ commute with cofiltered limits, $\sh(X|x)$ would be a retract of $\Sigma(\sh(V\setminus\{x\}))$ for some $V\in\mathcal{F}$. But since $V\in\mathcal{F}$, $\Sigma(\sh(V\setminus\{x\}))$ is simply connected, and therefore so would be $\sh(X|x)$. 
    
    We claim that $\sh(U\setminus\{x\})$ is connected whenever $U$ is a connected open neighbourhood of $x$. Indeed, since $X$ is a $\Sp$-homology manifold of dimension greater than $1$, using excision we get an isomorphism $H_1(\sh(X|x),\mathbb{Z})\cong H_1(\sh(U|x),\mathbb{Z})\cong 0$. Therefore, by the long exact sequence in homology, we see that $H_0(\sh(U\setminus \{x\}),\mathbb{Z})\to H_0(\sh(U),\mathbb{Z})$ is injective. Thus we deduce that $\sh(U\setminus \{x\})$ must be connected by connectedness of $\sh(U)$. The proof is finished by observing that, since $X$ is $\Ani$-locally contractible, it is also locally connected by \cref{toposlocconn=locconn}, and so we may pick $\mathcal{F}$ to be the family of connected open neighbourhoods of $x$.
\end{proof}

\begin{remark}
The argument of \cref{homology=>htpyman} does not work if the dimension of $X$ is $\leq 1$. Nevertheless, it is natural to wonder whether the concusion of \cref{homology=>htpyman} still holds in this case.
    
    If $X$ is an $\Sp$-homology manifold of dimension $0$, then it is automatically discrete, and hence a topological manifold. Indeed, it is in particular locally connected, so for each $x\in X$, the connected component $U$ where $x$ is contained has the property that the $0$-th homology group of $U\setminus\{x\}$ is isomorphic to $0$, i.e. $U\setminus\{x\}=\emptyset$. If $X$ is of dimension $1$, and it is additionally required to be metrizable, then one can still show that $X$ is a topological manifold (see \cite[Theorem 16.32]{bredon2012sheaf}). We do not know if the conclusion of \cref{homology=>htpyman} holds if we do not require $X$ to be metrizable.
\end{remark}

Next, we discuss to what extent the unstable dualizing sheaf exists in families. For $X$ a hypercomplete $\Ani_*$-homotopy manifold, let us denote by $\Baut^{\omega_X^{\Ani_*}}(X)$ the full subgroupoid of $\Ani_{/\Baut_*(S^d)} \subseteq \Ani_{/\Ani_*}$ containing  $(X,\omega_X^{\Ani_*})$. 

\begin{theorem}\label[theorem]{thm:functorialunstabledual}
	Let $X$ be a hypercomplete $\Ani_*$-homotopy manifold. Then the unstable dualizing sheaf is functorial in homeomorphisms, i.e. there exists a map in $\Ani$
	\[\BHomeo(X)\to \Baut^{\omega_X^{\Ani_*}}(X).\]
\end{theorem}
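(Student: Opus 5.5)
The plan is to make the whole construction natural in $X$ by working in the $\infty$-groupoid of hypercomplete $\Ani_*$-homotopy manifolds homeomorphic to $X$, not with a single space. Let $\mathcal{M}$ be the $\infty$-groupoid (really a 1-groupoid) whose objects are such spaces and whose morphisms are homeomorphisms. The full subgroupoid on $X$ is $\BHomeo(X)$, where $\Homeo(X)$ is regarded as a discrete group (or, if one prefers, as the topological group, via the same argument with families; I will aim for the discrete version, which is what the notation $\BHomeo$ plus a map in $\Ani$ suggests here). It suffices to build a functor
\[\Phi\colon \mathcal{M} \to \Ani_{/\Ani_*}, \qquad Y \mapsto (\sh(Y), \omega_Y^{\Ani_*}),\]
and to check that it lands in $\Ani_{/\Baut_*(S^d)}$ and that its composite with the forgetful functor to $\Ani$ is $Y\mapsto \sh(Y)$. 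Restricting $\Phi$ to the component of $X$ then gives the desired map $\BHomeo(X)\to \Baut^{\omega_X^{\Ani_*}}(X)$.

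\textbf{Step 1: the sheaf-level functor.} The assignment $Y\mapsto (\Sh{Y}{\Ani_*}, \omega_Y^{\Ani_*})$ is functorial in homeomorphisms. Indeed, $\Sh{-}{\Ani_*}$ is a functor on $\LCH$, and a homeomorphism $h\colon Y\to Y'$ gives an equivalence $\pb{h}$. By \cref{notation:relative-dualizing-sheaf}, $\omega_Y^{\Ani_*}=\pfs{(p_1)}\pf{\Delta}\pb{a}(S^0)$ is built from operations that are natural in the space, so \cref{lemma:pullback-unstable-dualizing-sheaf}, applied to the pullback square along $h$ (an isomorphism, hence universally locally contractible), identifies $\pb{h}\omega_{Y'}^{\Ani_*}\simeq \omega_Y^{\Ani_*}$ coherently. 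To make the coherence precise, I would view $\omega^{\Ani_*}$ as a natural transformation: it is a point of $\Sh{Y}{\Ani_*}$ defined uniformly from the functor $\Sh{-}{\Ani}$ on the groupoid $\mathcal{M}$. Formally, this gives a section of the cocartesian fibration classifying $Y\mapsto \Sh{Y}{\Ani_*}$ over $\mathcal{M}$.

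\textbf{Step 2: passing to locally constant sheaves.} By \cref{thm:unstablelocalorient}, $\omega_Y^{\Ani_*}$ is locally constant. The monodromy equivalence $\Fun(\sh(Y),\Ani_*)\simeq \Sh{Y}{\Ani_*}^{\mathrm{lc}}$ (\cite[Appendix A.1]{lurie2017higher}) is natural in $Y$ for geometric morphisms: it is induced by $\pfs{a}$, and homeomorphisms commute with $\pfs{a}$ and $\pb{a}$. Hence the section from Step 1 yields, naturally in $Y\in\mathcal{M}$, an object $\omega_Y^{\Ani_*}\in \Fun(\sh(Y),\Ani_*)=\mathrm{Map}(\sh(Y),\Ani_*)$. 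Straightening, a compatible family of pairs (anima $A$, functor $A\to\Ani_*$) over $\mathcal{M}$ is exactly a functor $\mathcal{M}\to \Ani_{/\Ani_*}$. Its stalks are $S^d$, so it factors through the full subcategory $\Ani_{/\Baut_*(S^d)}$. By construction the underlying anima is $\sh(Y)$, which gives the compatibility with $\BHomeo(X)\to\Baut(X)$.

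The main obstacle is the coherence in Steps 1 and 2: producing $\omega^{\Ani_*}$ as an honest natural transformation of functors on $\mathcal{M}$, rather than just isomorphisms $\pb{h}\omega\simeq\omega$ for each $h$ separately, and making the monodromy equivalence natural. My first approach is to express everything through the functor $\Sh{-}{\Ani}\colon \mathcal{M}^{\op}\to \CAlg(\infcat^L)$. Both $\omega$ and the unit $S^0$ are then images of a single natural construction applied to the functor $Y\mapsto (Y, Y\times Y, \Delta, p_1)$ with values in diagrams in $\LCH$, and the operations $\pf{(-)},\pfs{(-)}$ are obtained by passing to adjoints in the functorial six functor formalism.
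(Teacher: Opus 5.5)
\textbf{Gap: you prove the statement for the discrete group, which is weaker than the theorem.}

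In this paper $\BHomeo(X)$ is the classifying space of the \emph{topological} group $\mathrm{Homeo}(X)$. It classifies fibre bundles $E\to B$ over CW complexes with fibre homeomorphic to $X$. It is also the source of the map $\BHomeo(X)\to\Baut(X)$ in \cref{cor:functorialunstabledual}, which sends an $X$-bundle to its underlying fibration. A map in $\Ani$ is perfectly compatible with this reading, since the classifying space of a topological group is an anima.

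Your groupoid $\mathcal{M}$ only produces a map out of $B\mathrm{Homeo}(X)^{\delta}$. The comparison $B\mathrm{Homeo}(X)^{\delta}\to\BHomeo(X)$ is far from an equivalence: on $\pi_1$ it is the quotient $\mathrm{Homeo}(X)\to\pi_0\mathrm{Homeo}(X)$. So your map need not factor through $\BHomeo(X)$. For instance, nothing in your argument shows that isotopic homeomorphisms induce homotopic automorphisms of $(\sh(X),\omega_X^{\Ani_*})$, let alone the higher coherences over families. Steps 1--2 use only naturality of $\omega$ under isomorphisms, and that cannot see the topology of $\mathrm{Homeo}(X)$. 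What you have is plausibly a correct proof of the discrete variant, and nothing more.

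\textbf{What is missing is the families argument, which is not ``the same argument''.} For an $X$-fibre bundle $p\colon E\to B$, take the relative unstable dualizing sheaf $\omega_p^{\Ani_*}$ of \cref{notation:relative-dualizing-sheaf}.
\begin{enumerate}
\item Show it is locally constant on $E$. By local triviality of $p$ and \cref{lemma:pullback-unstable-dualizing-sheaf}, reduce to the projection $X\times B\to B$. There the sheaf is $\pb{p_1}\omega_X^{\Ani_*}$, which is locally constant by \cref{thm:unstablelocalorient}.
\item Show its restriction to every fibre is $\omega_X^{\Ani_*}$; the same lemma gives this.
\item Apply the monodromy equivalence on $E$. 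This yields the fibration $\sh(E)\to\sh(B)$ together with a functor $\sh(E)\to\Ani_*$, i.e.\ a point of the functor represented by $\Baut^{\omega_X^{\Ani_*}}(X)$.
\end{enumerate}

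\textbf{The coherence obstacle you single out can be sidestepped, as the paper does.} Producing a map in $\Ani$ only requires a morphism in the homotopy category. By Yoneda on CW complexes, it suffices to give an assignment, natural in $B$ up to homotopy, from isomorphism classes of $X$-bundles over $B$ to isomorphism classes of such pairs. No fully coherent natural transformation over $\mathcal{M}$ is needed.
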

\begin{proof}
	It suffices to prove that the arrow exists in the homotopy category of $\Ani$, for which it suffices to argue that the arrow  exists after applying the Yoneda embedding. Since the homotopy category of $\Ani$ is the 1-categorical localization of the 1-category CW complexes at homotopy equivalences, it suffices to now show that the arrow exists after considering homotopy classes of maps from CW complexes $B$, natural in $B$. To proceed, we recall what the functors are that the objects in the displayed map represent. Namely, $\BHomeo(X)$ represents  fibre bundles $E \to B$ with fibre homeomorphic to $X$, and $\Baut^{\omega_X^{\Ani_*}}(X)$ represents fibrations $E \to B$ together with a functor $\omega\colon E \to \Ani_*$ whose restriction to any fibre is equivalent to $\omega_X^{\Ani_*}$ (in the slice $\Ani_{/\Ani_*}$).
	
	To construct the desired arrow, we associate to an $X$-fibre bundle $p \colon E \to B$ the underlying fibration of $p$  by applying the functor $\sh$\footnote{Note that, since $X$ is in particular $\Ani$-locally contractible, for any $X$-fibre bundle $E\to B$ with $B$ a CW-complex, the space $E$ is $\Ani$-locally contractible, and in particular $\sh(E)\in\Ani$ and the monodromy equivalence holds for $E$. We also warn the reader that, by a slight abuse of notation, we henceforth suppress the use of $\sh$ throughout the remainder of this proof.} together with the relative dualizing sheaf $\omega_p^{\Ani_*}$ from \cref{notation:relative-dualizing-sheaf}. To see that this is well-defined, we first note that $\omega_p^{\Ani_*}$ is a locally constant sheaf (and hence equivalently described by a functor $E \to \Ani_*$): Indeed, it suffices to show this locally on $B$, but since $p$ is locally trivial, we see that it suffices to show the claim in case $p$ is globally trivial, i.e.\ a projection $p_2\colon X \times B \to B$. In this case, $\omega_{p_2}^{\Ani_*}$ is given by $p_1^*(\omega_X^{\Ani_*})$ as a consequence of \cref{lemma:pullback-unstable-dualizing-sheaf}. The same result also implies that the restriction of $\omega_p^{\Ani_*}$ to any fibre is then equivalent to $\omega_X^{\Ani_*}$. Hence, we obtain the dashed arrow as claimed. 
\end{proof}

Finally, we discuss the compatibility of the versions in families of the unstable dualizing sheaf and the Spivak tangent fibration of a hypercomplete compact ANR homology manifold. We recall first that the dualizing spectrum of a Poincar\'e duality complex exists in families. To that end, let $\PD \subseteq \Ani^\simeq$ be the full subcategory of the groupoid core of anima consisting of the Poincar\'e duality complexes. Then the canonicity of the dualizing spectrum implies that the inclusion $\PD \to \Ani$ admits a canonical lift as indicated in the following diagram:
\[ \begin{tikzcd}
	& \Ani_{/\Sp} \ar[d] \\
	\PD \ar[r] \ar[ur,dashed]& \Ani
\end{tikzcd}\]
Here, the dashed arrow records the Spivak tangent fibration $T_X\colon X \to \Pic(\bS) \subseteq \Sp$ of a Poincar\'e duality complex $X$.
For $X$ a PD complex let us denote by $\aut^{T_X}(X)$ the group of automorphisms of $(X,T_X) \in \Ani_{/\Sp}$. Then the upper dashed diagonal map induces a map 
\[ \Baut(X) \to \Baut^{T_X}(X)\]
which has the following interpretation upon applying the Yoneda embedding: First note that the source classifies maps $E 
\to B$ whose fibres are equivalent to $X$ while the target classifies maps $E\to B$ together with a functor $E \to \Sp$ whose restriction to the fibres is equivalent, in the slice $\Ani_{/\Sp}$, to the Spivak tangent fibration $T_X$ of $X$. The above displayed map then sends a map $p\colon E \to B$ with fibres equivalent to $X$ to the same map $p$ together with the pointwise inverse $T_p$ of the relative dualizing spectrum $D_p$ of $p$. Recall here that $D_p$ is the unique object of $\Fun(E,\Sp)$ such that $p_* \simeq p_!(-\otimes_E D_p)$ as $p^*$-linear functors; this object is compatible with pullbacks in the sense that for a pullback diagram
\[\begin{tikzcd}
	E' \ar[r,"g"] \ar[d,"p'"] & E \ar[d,"p"] \\
	B' \ar[r,"f"] & B
\end{tikzcd}\]
there is a canonical equivalence $g^*(D_p) \simeq D_{p'}$. In particular, if $X$ is a PD complex, $D_p$ is pointwise invertible, so that $T_p=D_p^{-1}$ is again invertible, see e.g.\ \cite[Section 3.1]{Cnossen} for more about the relative dualizing spectrum.

\begin{corollary}\label[corollary]{cor:functorialunstabledual}
	Let $X$ be a compact hypercomplete $\Ani_*$-homotopy manifold.
	There exists a commutative diagram in $\Ani$:
	\[\begin{tikzcd}
		\BHomeo(X) \ar[r] \ar[d] & \Baut^{\omega_X^{\Ani_*}}(X) \ar[d] \\
		\Baut(X) \ar[r] & \Baut^{T_X}(X).
	\end{tikzcd}\]
\end{corollary}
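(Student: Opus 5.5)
We prove commutativity after applying the Yoneda embedding, using the same reduction as in the proof of \cref{thm:functorialunstabledual}: it suffices to compare, naturally in CW complexes $B$, the two composites that send an $X$-fibre bundle $p\colon E\to B$ to an object of $\Ani_{/\Sp}$ over $B$. The upper composite produces $(\sh(E)\to B,\ \Sigma^\infty\omega_p^{\Ani_*})$. Here $\Baut^{\omega_X^{\Ani_*}}(X)\to\Baut^{T_X}(X)$ is induced by postcomposition with $\Baut_*(S^d)\to\Pic(\bS)\subseteq\Sp$, i.e.\ by $\Sigma^\infty$. For this map to land in $\Baut^{T_X}(X)$ we need $\Sigma^\infty\omega_X^{\Ani_*}\simeq T_X$, which is the absolute case $B=\ast$ of the argument below. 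The lower composite produces $(\sh(E)\to B,\ T_p=D_p^{-1})$. So the task is to give a natural equivalence $\Sigma^\infty\omega_p^{\Ani_*}\simeq D_p^{-1}$ in $\Fun(E,\Sp)$, compatible with pullback in $B$.

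First, \cref{lemma:destabdualsheaf} applied to $p$ gives $\Sigma^\infty\omega_p^{\Ani_*}\simeq\omega_p^{\Sp}=p^!\bS_B$, naturally and compatibly with base change (\cref{lemma:pullback-unstable-dualizing-sheaf}, \cref{unitULAdualizingsheafstableunderbc}). By \cref{thm:unstablelocalorient}, applied fibrewise together with local triviality of $p$, the sheaf $\omega_p^{\Sp}$ is locally constant and invertible, so it defines an object of $\Fun(\sh(E),\Sp)$ via monodromy. Next I would run the argument of \cref{thm:loccontrlocconstdualisPD} relative to $B$. Since $X$ is compact, $p$ is proper and $p_*=p_!$ on sheaves. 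By \cref{1ULA=>loccontr}, $p_\sharp\simeq p_!(-\otimes\omega_p)$, where $1_E$ is $p$-suave by \cref{kunneth=>1ULA=loccontr} because $p$ is locally a base change of $X\to\ast$. Restricted to locally constant sheaves, the functors $p_\sharp$ and $p_*$ become the parametrized $p_!$ and $p_*$ on $\Fun(\sh(E),\Sp)\to\Fun(\sh(B),\Sp)$; this is the relative version of the triangles with $\eta$ before \cref{thm:loccontrlocconstdualisPD}. The defining property $p_*\simeq p_!(-\otimes D_p)$ then yields $p_!(-)\simeq p_!(-\otimes\omega_p\otimes D_p)$ as $p^*$-linear functors. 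Uniqueness in parametrized Morita theory gives $\omega_p\otimes D_p\simeq\bS_E$, that is, $T_p\simeq\omega_p^{\Sp}$. All identifications are built from base-change-compatible data, so this equivalence is natural in $B$ and yields the commuting square.

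The main obstacle is making the relative monodromy step precise. We need that, for an $X$-bundle $E\to B$ over a CW complex, the equivalence $\Fun(\sh(E),\Sp)\simeq\Sh{E}{\Sp}^{\mathrm{lc}}$ intertwines sheaf-theoretic $p_\sharp,p_*$ with parametrized left and right Kan extension along $\sh(E)\to\sh(B)$. For this I would first use that $B$ is locally contractible, so locally $E\simeq X\times U$ with $U$ contractible. Then I would apply \cref{lemma:products} and Künneth to reduce, by descent in $B$, to the absolute statement for $X$, which is the computation in \cref{thm:loccontrlocconstdualisPD}. The uniqueness of $D_p$ up to contractible choice then makes the equivalence $T_p\simeq\Sigma^\infty\omega_p^{\Ani_*}$ coherent in $B$. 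This coherence is exactly what the Yoneda reduction at the homotopy-category level needs.
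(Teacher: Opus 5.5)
Your proposal is correct and is essentially the paper's proof. After the Yoneda reduction, both arguments reduce commutativity to a relative version of \cref{thm:loccontrlocconstdualisPD}, namely $\Sigma^\infty\omega_p^{\Ani_*}\simeq\omega_p\simeq D_p^{-1}=T_p$. Both obtain this from $p_*=p_!$ (properness), $p_\sharp\simeq p_!(-\otimes\omega_p)$ (smoothness), the monodromy identification, and uniqueness of $D_p$. The only difference is in how your ``main obstacle'' is handled. The paper shows that $p_\sharp$ preserves locally constant sheaves; by base change this is local on $B$, so it reduces to $X\times B\to B$, which a K\"unneth diagram settles. The identification with $\varphi_!$ then follows formally by adjunction, whereas you phrase the same local-triviality reduction as descent in $B$.
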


\begin{proof}
	As in \cref{thm:functorialunstabledual}, it suffices to show that the diagram exists in the homotopy category of $\Ani$, and we argue again appealing to the Yoneda embedding. We recall what the functors are that the objects in the displayed square represent. Namely, $\Baut(X)$ represents fibrations $E \to B$ with fibre equivalent to $X$, and $\Baut^{T_X}(X)$ represents fibrations $E \to B$ together with a functor $T \colon E \to \Sp$ whose restriction to any fibre is equivalent to $T_X$ (in the slice $\Ani_{/\Sp}$). The left vertical map simply sends an $X$-fibre bundle to the underlying $X$-fibration, and the lower horizontal arrow associates to an $X$-fibration $p\colon E \to B$ the same $X$-fibration together with the relative tangent spectrum of $p$. The proof of the corollary is hence finished once we show that $\Sigma^\infty \omega_p^{\Ani_*} = \omega_p^{\Sp}=\omega_p$ is canonically equivalent to $T_p$, that is, a relative form of \cref{thm:loccontrlocconstdualisPD}. 
	
	To that end, we first note that for a $X$-fibre bundle $p \colon E \to B$, the following three conditions are equivalent and valid, namely, that either of $\pfp{p},\pf{p},\pfs{p}$ preserve locally constant sheaves. Indeed, since $X$ is assumed compact, $p$ is proper and we have $\pfp{p}=\pf{p}$, moreover, $p$ is $\Sp$-smooth, so we have $\pfs{p} = \pfp{p}(-\otimes \omega_p)$ and $\omega_p$ is invertible. Now, to see that $\pfs{p}$ preserves locally constant sheaves, by base-change it suffices to show this locally. Since $p$ is locally trivial, we may hence assume that $p$ is the projection $X \times B \to B$. Under the K\"unneth isomorphism, we have the following commutative diagram
	\[\begin{tikzcd}
		\Sh{X\times B}{\Sp}^{\mathrm{lc}} \ar[r,hook, "\pb{\eta}\otimes\id"] \ar[d,hook,"\pb{\eta}"] & \Sh{X}{\Sp}\otimes \Sh{B}{\Sp}^{\mathrm{lc}} \ar[r,"\pfs{a}\otimes \id"] \ar[r] &  \Sh{B}{\Sp}^{\mathrm{lc}} \ar[d,"\pb{\eta}", hook] \\
		\Sh{X\times B}{\Sp} \ar[rr,"\pfs{p}"] && \Sh{B}{\Sp}
	\end{tikzcd}\]
	where $a\colon X \to \ast$ is the unique map. This shows that $\pfs{p}$ indeed preserves locally constant sheaves. 
	
	Having this, similarly as for the proof of \cref{thm:loccontrlocconstdualisPD}, and denoting by $\varphi \colon E \to B$ the map of anima underlying $p$ we then find equivalences of $\varphi^*$-linear functors $\Fun(E,\Sp) \simeq \Sh{E}{\Sp}^{\mathrm{lc}} \to \Fun(B,\Sp) \simeq \Sh{B}{\Sp}^{\mathrm{lc}}$ as follows
	\[ \pf{p}(\pb{\eta}(-)\otimes\omega_p) \simeq \pfs{p}(\pb{\eta}(-)) = \varphi_!(-) \quad \text{ and } \quad \pf{p}(\pb{\eta}(-)) \simeq \varphi_!(-\otimes D_p). \]
	Since $\omega_p$ is locally constant and $\eta$ is symmetric monoidal, we again find
	\[ \varphi_!(-) \simeq \pf{p}(\eta^*(-)\otimes \omega_p) = \pf{p}(\eta^*(-\otimes \omega_p)) = \varphi_!(-\otimes \omega_p \otimes D_p)\]
	so by uniqueness, we deduce $\omega_p = D_p^{-1} = T_p$ as claimed.
\end{proof}

%

\subsection{Homotopy manifolds with conical singularities}

In this subsection, we focus our attention on a more geometric family of $\Ani_*$-homotopy manifolds, that we call \textit{homotopy manifolds with conical singularities}. We prove a generalization of a theorem of Siebenmann, stating that any such homotopy manifold is in fact a topological manifold. In what follows, we make use of the notion of a \textit{$C^0$-stratified space}. We refer to \cite[Definition 2.1.15]{AyalaFrancisTanaka2017} for a definition of $C^0$-stratified spaces.

\begin{definition}\label[definition]{def:conical-singularities}
    A \textit{homotopy manifold with conical singularities} is a second countable Hausdorff $C^0$-stratified space $X\rightarrow P$ such that, for each compact $C^0$-stratified space $Z\rightarrow Q$ and stratified open embedding $\rnum^n\times C(Z)\hookrightarrow X$, we have that $Z$ is homotopy equivalent to a sphere. 
\end{definition}


\begin{lemma}
    Any homotopy manifold with conical singularities is an $\Ani_*$-homotopy manifold.
\end{lemma}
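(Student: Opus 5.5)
We must show two things for a homotopy manifold with conical singularities $X\rightarrow P$: that $X$ is an $\Ani$-locally contractible LCH space, and that for every $x\in X$ the local shape $\sh(X|x)$ is equivalent to a sphere $S^m$.

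\emph{Step 1: $X$ is LCH and $\Ani$-locally contractible.} By the definition of a $C^0$-stratified space (\cite[Definition 2.1.15]{AyalaFrancisTanaka2017}), every point $x$ has a basic neighbourhood, namely a stratified open embedding $\rnum^n\times C(Z)\hookrightarrow X$ onto an open set containing $x$, with $x$ corresponding to $(0,\ast)$, where $\ast$ is the cone point and $Z$ is a compact $C^0$-stratified space. Moreover, such basic opens form a basis of the topology; shrinking the cone coordinate and the euclidean ball keeps them of the same form. Each basic open is locally compact Hausdorff and contractible, since $\rnum^n\times C(Z)$ deformation retracts to the point $(0,\ast)$. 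By \cref{lemma:homotopy-shape-equivalence} each basic open therefore has constant shape $\ast$. The basis is not closed under intersections, so instead I argue inductively on the depth of the stratification. Open subsets of lower-depth stratified spaces are locally of constant shape by induction, and \cref{lemma:locally-bla} glues this along open covers. The base of the induction is depth $0$, where $X$ is a topological manifold and the claim is standard (e.g. via \cref{LCANRloccontrlocsingshape}). Alternatively one can note that $X$ is locally an ANR (second countable, locally compact, locally contractible, finite-dimensional) and apply \cref{LCANRloccontrlocsingshape} directly. I would choose this second route if the point-set facts are citable, since it avoids the induction.

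\emph{Step 2: computing the local shape.} Fix $x$ with a basic neighbourhood $U\cong \rnum^n\times C(Z)$. By excision (the localization sequence used in \cref{lemma:stalkdualizingshapeatx}, which is local at $x$), we have $\sh(X|x)\simeq \sh(U|x)=\mathrm{cofib}(\sh(U\setminus\{x\})\to\sh(U))$. Now $\sh(U)\simeq\ast$. Also $U\setminus\{x\}$ deformation retracts, via radial rescaling in $\rnum^n\times C(Z)=C(S^{n-1}\ast Z)$ minus the cone point, onto the join $S^{n-1}\ast Z$, which is a homotopy equivalence. Hence $\sh(U\setminus\{x\})\simeq \Sing(S^{n-1}\ast Z)$, by \cref{lemma:homotopy-shape-equivalence} together with the comparison of shape and singular type from Step 1. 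Therefore
\[\sh(X|x)\simeq \Sigma(S^{n-1}\ast Z)\simeq \Sigma^{n}Z \quad(\text{unreduced suspensions}).\]
By the defining hypothesis $Z\simeq S^k$ for some $k$, so $\sh(X|x)\simeq S^{n+k+1}$. The case $Z=\emptyset$, i.e. $x$ a manifold point, is treated the same way with $C(\emptyset)=\ast$, giving $S^n$.

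\emph{Main obstacle.} The main difficulty is Step 1, specifically making precise that $X$ is $\Ani$-locally contractible. One needs to know that a neighbourhood basis of contractible basic opens suffices, or else to verify the ANR property, which requires that $C^0$-stratified spaces are locally compact and finite dimensional. The identification $\rnum^n\times C(Z)\cong C(S^{n-1}\ast Z)$ and the excision step are routine by comparison.
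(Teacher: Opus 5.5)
Your proposal is correct and has the same skeleton as the paper's proof: establish $\Ani$-local contractibility, use excision to pass to a chart $\rnum^n\times C(Z)$ centred at $x$, identify the shape of the punctured chart with the join $S^{n-1}\ast Z\simeq S^{n+k}$, and take the cofibre. Both main inputs, however, are justified differently.

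\textbf{Local contractibility.} The paper simply cites \cite[Corollary A.3]{volpe2022verdier}. Your ANR route works and even gives $\sh\simeq\Sing$ on every open subset. It goes: second countable LCH implies separable metrizable; basics are finite-dimensional by induction on depth; they are locally contractible; hence they are ANRs by the classical criterion for finite-dimensional spaces; then apply \cref{LCANRloccontrlocsingshape}. The price is these classical dimension-theoretic inputs. Your first, inductive route does not close as sketched. Induction on depth only controls $\rnum^n\times\rnum_{>0}\times Z$, but every neighbourhood of a cone point contains cone points, so there you need exactly the statement being proved. The worry that motivated the induction is also unfounded. A basis of contractible opens already suffices: $a^*M$ and the hypercomplete sheaf $U\mapsto M^{\Sing(U)}$ are both determined by their restrictions to the basis, and on the basis the constant presheaf is already a sheaf.

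\textbf{The computation.} The paper covers the punctured chart by $(\rnum^n\setminus\{0\})\times C(Z)$ and $\rnum^n\times\rnum_{>0}\times Z$. It then gets $\sh(X\setminus\{x\})$ as the pushout of $S^{n-1}\leftarrow S^{n-1}\times\sh(Z)\to\sh(Z)$ directly in $\Ani$. This stays entirely within formal properties of shape (descent for open covers, products, homotopy invariance). It needs neither the homeomorphism $\rnum^n\times C(Z)\cong C(S^{n-1}\ast Z)$, nor homotopy invariance of topological joins, nor any shape--$\Sing$ comparison. Your version trades this for a radial deformation retraction plus the shape--$\Sing$ comparison. You could drop the comparison by applying \cref{lemma:homotopy-shape-equivalence} directly to $U\setminus\{x\}\simeq S^{n-1}\ast Z\simeq S^{n+k}$.

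A minor slip: $\Sigma(S^{n-1}\ast Z)\simeq\Sigma^{n+1}Z$, not $\Sigma^{n}Z$. Your final answer $S^{n+k+1}$ is nevertheless correct.
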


\begin{proof}
	By \cite[Corollary A.3]{volpe2022verdier}, we know that $X$ is $\Ani$-locally contractible. Hence we need to show that, for any $x\in X$,  $\sh(X | x)$ is homotopy equivalent to a sphere. By excision and \cite[Lemma 2.2.2, Remark 2.2.3]{AyalaFrancisTanaka2017}, we may assume that $X=\rnum^n\times C(Z)$, where $Z\to Q$ is compact and $C^0$-stratified, and that $x$ lies in the initial stratum. Since $\sh(X)\simeq\ast$, to conclude the proof it suffices to show that $\sh(X\setminus\{x\})\simeq S^m$ for some $m\in\mathbb{N}$. Covering $X\setminus\{x\}$ by $\rnum^n\setminus\{0\}\times C(Z)$ and $\rnum^n\times \rnum_{>0}\times Z$, we see that $\sh(X\setminus\{x\})$ is the pushout in $\Ani$ of the span $\sh(S^{n-1})\leftarrow \sh(S^{n-1})\times \sh(Z)\rightarrow \sh(Z)$. By assumption, there exists $d\in\mathbb{N}$ such that $Z$ is homotopy equivalent to $S^d$. Therefore we see that $\sh(X\setminus\{x\})\simeq\sh(S^{n+d})$ as desired.
\end{proof}

\begin{proposition}\label[proposition]{htpymanwithcones:sufficescovering}
    Let $X\rightarrow P$ be a locally compact second countable Hausdorff stratified space. Then $X$ is a homotopy manifold with conical singularities if and only if $X$ admits an open covering $\{U_i\}_{i\in I}$ such that $U_i$ is isomorphic as a stratified space to one of the form $\rnum^{n_i}\times C(Z_i)$, where $Z_i$ is compact, $C^0$-stratified and homotopy equivalent to a sphere.
\end{proposition}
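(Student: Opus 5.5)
The forward direction is essentially the definition of a $C^0$-stratified space together with \cref{def:conical-singularities}. By \cite[Definition 2.1.15]{AyalaFrancisTanaka2017}, every point $x$ of a $C^0$-stratified space has a basic open neighbourhood. This is a stratified open embedding $\rnum^{n}\times C(Z)\hookrightarrow X$ whose image contains $x$, where $Z\to Q$ is a compact $C^0$-stratified space and $x$ is the image of $(0,\ast)$ with $\ast$ the cone point. I take the images of these embeddings, for $x$ ranging over $X$, as the covering $\{U_i\}$. Each $U_i$ is isomorphic to some $\rnum^{n_i}\times C(Z_i)$. If $X$ is a homotopy manifold with conical singularities, then by definition every such $Z_i$ is homotopy equivalent to a sphere, which gives the desired cover.

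For the converse, suppose given such a covering. I first check that $X$ is $C^0$-stratified. The stratified spaces $\rnum^{n_i}\times C(Z_i)$ with $Z_i$ compact $C^0$-stratified are themselves $C^0$-stratified, and being $C^0$-stratified is local for open covers (\cite[Lemma 2.2.2, Remark 2.2.3]{AyalaFrancisTanaka2017}). Together with second countability and the Hausdorff property, this makes $X$ a $C^0$-stratified space. Next, let $e\colon\rnum^n\times C(Z)\hookrightarrow X$ be an arbitrary stratified open embedding with $Z\to Q$ compact. I must show that $Z$ is homotopy equivalent to a sphere. Let $x=e(0,\ast)$ and choose $i$ with $x\in U_i$. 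Intersecting the two charts and using basics around $x$, I want to relate $Z$ to $Z_i$ through local shapes at $x$.

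The key invariant is the local shape. From the computation in the proof of the preceding lemma, the cone point $(0,\ast)$ of $\rnum^n\times C(Z)$ satisfies
\[ \sh\big((\rnum^n\times C(Z))\setminus\{(0,\ast)\}\big)\simeq \Sigma^{n}\sh(Z), \]
where $\Sigma^n$ denotes the $n$-fold unreduced suspension (the join with $S^{n-1}$). So $\sh(X|x)\simeq\Sigma^{n+1}\sh(Z)$, using excision in the form $\sh(X|x)\simeq\sh(U|x)$ for any open $U\ni x$. Running the same computation in the chart $U_i$ gives $\sh(X|x)\simeq \Sigma^{m}S^{d}$ for a point $x$ in a stratum of $\rnum^{n_i}\times C(Z_i)$, so $\sh(X|x)$ is a sphere. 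The subtlety is that $x$ need not be the cone point of $U_i$. In that case I use the standard fact that near a non-cone point, $\rnum^{n_i}\times C(Z_i)$ is locally $\rnum^{n_i+1}\times C(L)$, with $L$ the link of the corresponding stratum of $Z_i$. For such points I would instead argue by induction on the depth of the stratification, whose strata are finite-dimensional, that all links appearing in $X$ are spheres. This reduces to the case where $x$ is a cone point, where the preceding lemma applies directly.

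Granting this, I obtain $\Sigma^{n+1}\sh(Z)\simeq S^N$, and also $\sh(Z)\simeq Z$ since compact $C^0$-stratified spaces are $\Ani$-locally contractible (\cite[Corollary A.3]{volpe2022verdier}) and shape agrees with homotopy type for such spaces. The main obstacle is then to desuspend: to conclude that $Z$ is a homotopy sphere from the fact that an iterated suspension of it is a sphere. For $\dim Z\ge1$ I would show $Z$ is simply connected, which needs a separate argument. The fundamental group of the link should be detectable from the local shape at $x$ computed via a different local model, or again via the induction on depth. Once $Z$ is simply connected, homology computations together with Whitehead's theorem yield $Z\simeq S^{N-n-1}$. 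The low-dimensional cases, where $Z$ is empty, a finite set, or $0$-dimensional, I would handle by hand. I expect the simple-connectivity step, and the bookkeeping in the induction on depth, to be the hardest part.
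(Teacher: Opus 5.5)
Your forward direction is fine. The converse has a genuine gap, and in fact it cannot be proved as stated.

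\textbf{The non-cone-point case.} You defer to an ``induction on depth'' the case of a point $x\in U_i$ that is not on the cone stratum $\rnum^{n_i}\times\{\ast\}$. That case has no usable input. Such a point has a basic neighbourhood $\rnum^{n_i+1+k}\times C(L)$ with $L$ the link of a stratum of $Z_i$, and knowing that $Z_i$ is a homotopy sphere says nothing about $L$.

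Here is a concrete counterexample. Take $Z=[0,1]\sqcup[0,1]$, with its four endpoints as the closed stratum. It is compact, $C^0$-stratified and homotopy equivalent to $S^0$, so $X=C(Z)$ is covered by the single chart $\rnum^0\times C(Z)$. Yet a point on a ray over an endpoint has the basic neighbourhood $\rnum\times C(\mathrm{pt})\cong\rnum\times[0,1)$, and a point is not homotopy equivalent to a sphere. So $X$ is not a homotopy manifold with conical singularities; it is not even an $\Ani_*$-homotopy manifold, let alone a manifold.

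The converse does hold if one also demands that every $x\in X$ lie on the cone stratum of some $U_i$. That is exactly what your forward direction produces. It is also what the application to Cohen's homotopy manifolds provides, since there the charts are open stars of simplices. The paper's own argument tacitly makes the same assumption: it uses topological invariance of cones to obtain a stratified homeomorphism $\rnum^n\times C(Z)\cong\rnum^{n_i}\times C(Z_i)$, and in the example above no such homeomorphism exists.

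\textbf{The cone-point case.} Even at a cone point, your invariant is too coarse. The local shape only records $\sh(X|x)\simeq\Sigma^{n+1}\sh(Z)$, and desuspension genuinely fails. For the Poincar\'e homology sphere $H$, the cone $C(H)$ has $\sh(C(H)|\ast)\simeq\Sigma H\simeq S^4$, although $H$ is not a homotopy sphere. So no argument that extracts $\pi_1(Z)$ from local shapes at $x$ can work, whichever chart you compute them in.

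The right move, which is what ``removing the initial stratum'' accomplishes in the paper, is to delete the whole stratum $S$ through $x$ rather than the point $x$.
\begin{enumerate}
\item In a chart $\rnum^n\times C(Z)$ centred at $x$, the sets $B_r\times C_\epsilon(Z)$ form a neighbourhood basis of $x$. Here $B_r$ is a ball and $C_\epsilon(Z)$ the open sub-cone of radius $\epsilon$.
\item Their complements of $S$ are $B_r\times(0,\epsilon)\times Z\simeq Z$, and all inclusions among them are homotopy equivalences.
\item Interleaving these systems for two charts centred at $x$ gives $Z\simeq Z_i$ directly.
\end{enumerate}
This needs no desuspension, and not even the full topological invariance of cones.
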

\begin{proof}
    Clearly, any homotopy manifold with conical singularities admits an open covering as the one in the statement of \cref{htpymanwithcones:sufficescovering}. 
    
    Conversely, suppose that $X\rightarrow P$ is a stratified space admitting such an open covering. Then $X$ is in particular $C^0$-stratified. Let $\rnum^n\times C(Z)\hookrightarrow X$ be any stratified open embedding, with $Z$ compact and $C^0$-stratified. By the topological invariance of cones (see \cite{kwun1964uniqueness}, \cite[Corollary 4.12]{siebenmann1972deformation}), there exists an $i\in I$ and a stratified homeomorphism $$\rnum^n\times C(Z)\cong\rnum^{n_i}\times C(Z_i).$$ By removing the initial stratum on both sides, we obtain that $Z$ is homotopy equivalent to $Z_i$, and therefore homotopy equivalent to a sphere. Thus, we conclude that $X\rightarrow P$ is a homotopy manifold with conical singularities.
\end{proof}

\begin{corollary}
    Suppose $X$ is a homotopy manifold in the sense of \cite{cohen1970homeomorphisms}. Then $X$ is a homotopy manifold with conical singularities.
\end{corollary}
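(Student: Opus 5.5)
The plan is to reduce to \cref{htpymanwithcones:sufficescovering}. It suffices to exhibit an open cover of $X$ by stratified opens of the form $\rnum^{n}\times C(Z)$ with $Z$ compact, $C^0$-stratified, and homotopy equivalent to a sphere, and to check that $X$ is a locally compact, second countable, Hausdorff stratified space.

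First recall Cohen's definition from \cite{cohen1970homeomorphisms}. A homotopy manifold is a locally finite simplicial complex (polyhedron) $X$ in which the link $\mathrm{lk}(\sigma,X)$ of every simplex $\sigma$ is homotopy equivalent to a sphere of the appropriate dimension. A locally finite simplicial complex is locally compact, Hausdorff and metrizable. Second countability holds if the complex is countable; if not, we pass to connected components, each of which is countable, since a connected locally finite complex is countable.

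Next we stratify $X$ so that each simplex interior, or a coarser union of such, is a stratum. The cleanest choice is to take as poset $P$ the face poset of the triangulation, with the stratum of $\sigma$ equal to its open simplex $\mathring\sigma$. The standard PL fact is that the open star of $\sigma$ is PL-homeomorphic to $\mathring\sigma\times C(\mathrm{lk}(\sigma,X))\cong\rnum^{\dim\sigma}\times C(\mathrm{lk}(\sigma,X))$. This homeomorphism is compatible with the stratifications, where $\mathrm{lk}(\sigma,X)$ carries its own simplicial stratification and the cone point corresponds to $\mathring\sigma$. Links are compact finite complexes, and triangulated spaces are $C^0$-stratified in the sense of \cite{AyalaFrancisTanaka2017}; this is their basic example, proved by induction on dimension using exactly this conical chart. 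Open stars of all simplices cover $X$, and each link is homotopy equivalent to a sphere by hypothesis.

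Applying \cref{htpymanwithcones:sufficescovering} then gives the claim. The main obstacle is verifying that the simplicial stratification genuinely makes $X$, and each link $Z$, a $C^0$-stratified space, and that the star homeomorphism is a stratified open embedding. I would handle this by induction on the dimension of links, citing the corresponding example in \cite{AyalaFrancisTanaka2017}. A minor subtlety is that Cohen may allow the empty link for top simplices, that is, $S^{-1}$. In that case $C(\emptyset)$ is a point and the chart is simply $\rnum^{n}$. This still fits \cref{def:conical-singularities}, provided one accepts $\emptyset\simeq S^{-1}$ as a sphere; otherwise those points are handled as manifold points by taking $Z$ to be a sphere and using $\rnum^{n}\cong\rnum^{n-k-1}\times C(S^{k})$.
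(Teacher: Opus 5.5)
Your proposal is correct and is essentially the paper's proof. Both stratify by the face poset, cover $X$ by the open stars $\mathring\sigma\times C(\mathrm{lk}\,\sigma)$, obtain the $C^0$-condition by induction on the dimension of links, and conclude with \cref{htpymanwithcones:sufficescovering}. Your Euclidean factor $\rnum^{\dim\sigma}$ is the correct one: the chart $\rnum^{n-k-1}\times C(\text{Link}_{\sigma})$ displayed in the paper for a $k$-simplex appears to be a slip for $\rnum^{k}\times C(\text{Link}_{\sigma})$.
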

\begin{proof}
    By definition, $X$ has a triangulation, and so it admits a conical stratification given by the poset of faces (see \cite[Definition A.6.7, Proposition A.6.8]{lurie2017higher}). More explicitely, for each $k$-simplex $\sigma$ of $X$, one checks that there is a stratified open embedding 
    $$\rnum^{n-k-1}\times C(\text{Link}_{\sigma})\hookrightarrow X,$$
    where $n$ is the dimension of $X$ and $\text{Link}_{\sigma}$ denotes the link of the simplex $\sigma$. These open subsets cover $X$, and each link is again triangulated and of smaller dimension. Therefore, one sees that the given stratification of $X$ is $C^0$. By assumption, we also know that $\text{Link}_{\sigma}$ is homotopy equivalent to a sphere. Hence, by \cref{htpymanwithcones:sufficescovering}, we deduce that $X$ is a homotopy manifold with conical singularities.
\end{proof}

The following theorem is a generalization of \cite[Theorem A']{siebenmann1970non}.

\begin{theorem}\label[theorem]{thm:htpymanwithconeareman}
    Let $X\rightarrow P$ be a homotopy manifold with conical singularities. Then $X$ is a topological manifold.
\end{theorem}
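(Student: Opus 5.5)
We argue by induction on the depth of the stratification, following Siebenmann's strategy. Being a topological manifold is local, so by \cref{htpymanwithcones:sufficescovering} it suffices to show that every chart $\rnum^n\times C(Z)$ with $Z$ compact, $C^0$-stratified and homotopy equivalent to a sphere $S^d$ is homeomorphic to $\rnum^{n+d+1}$. Since $\rnum^n\times C(Z)\cong \rnum^{n}\times \rnum\times\ldots$ only through the cone, the heart of the matter is to show that $Z$ itself is homeomorphic to $S^d$. Then $C(Z)\cong \rnum^{d+1}$, and the chart is Euclidean.

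The first step is to show that $Z$ is a topological manifold. Any point $z\in Z$ has a conical neighbourhood $\rnum^k\times C(W)\hookrightarrow Z$. Crossing with $\rnum_{>0}$ gives a stratified open embedding $\rnum^{n+k+1}\times C(W)\hookrightarrow \rnum^n\times (C(Z)\setminus\{\mathrm{cone\ pt}\})\subseteq X$. Hence, by \cref{def:conical-singularities}, $W$ is homotopy equivalent to a sphere. Thus $Z$ is again a homotopy manifold with conical singularities, and its depth is strictly smaller than that of $X$ at the cone point. It is also compact. By induction on depth, $Z$ is a closed topological manifold; the base case of depth $0$ is that of an honest manifold.

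The second step is to identify $Z$ with a sphere. $Z$ is a closed topological manifold homotopy equivalent to $S^d$, so the topological Poincaré conjecture gives $Z\cong S^d$: for $d\geq5$ by Newman/Smale, for $d=4$ by Freedman, for $d=3$ by Perelman, and classically for $d\leq 2$. Here one checks that $\dim Z=d$. This follows from the local homology computation of \cref{lemma:stalkdualizingshapeatx} together with the lemma preceding \cref{htpymanwithcones:sufficescovering}: the local shape of a $d'$-manifold is $S^{d'}$, and it must agree with the homotopy type of $Z$, forcing $d'=d$.

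The main obstacle is making the induction well-founded and uniform. The depth of a $C^0$-stratified space need not be bounded globally, although it is locally finite in the charts of \cite{AyalaFrancisTanaka2017}. We therefore induct on the local depth of the basic $\rnum^n\times C(Z)$, using that $Z$ has strictly smaller depth than the cone point. A second, more delicate, point is the low-dimensional case: when $\dim Z\le 2$ everything is classical, but the 3-dimensional case uses Perelman. We also need to make sure that we never need manifold recognition for non-manifold links, which is guaranteed because links are themselves handled by the inductive hypothesis.
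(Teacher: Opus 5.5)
Your proof is correct and follows essentially the same route as the paper: induction on depth, crossing a conical chart $\rnum^k\times C(W)$ of the link $Z$ with $\rnum_{>0}$ and $\rnum^n$ to see that $Z$ is again a compact homotopy manifold with conical singularities of smaller depth, then the inductive hypothesis plus the topological Poincar\'e conjecture give $Z\cong S^d$, hence $\rnum^n\times C(Z)\cong\rnum^{n+d+1}$. Inducting on the (finite) depth of the compact link rather than on the global depth of $X$ is a harmless refinement. The check $\dim Z=d$ is automatic from homology: every component of a closed $d'$-manifold has $H_{d'}(-;\mathbb{Z}/2)\neq 0$, and all homology above degree $d'$ vanishes. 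So you do not need the local-shape argument you sketch for it, and that argument is not valid as stated, since nothing forces the local shape of $Z$ at a point to agree with the homotopy type of $Z$.
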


\begin{proof}
    Let $d$ be the depth of $X\rightarrow P$. We will prove the theorem by induction on $d$.

    Assume that $d=0$. Then, by \cite[Lemma 1.23]{nocera2023whitney}, $X$ is a topological manifold, and so there is nothing to prove.

    Assume that $d>0$. Let $\rnum^n\times C(Z)\hookrightarrow X$ be a stratified open embedding, with $Z$ compact $C^0$-stratified. We claim that $Z$ is itself a homotopy manifold with conical singularities. Indeed, let $\rnum^k\times C(T)$ be a conical chart in $Z$. By crossing with $\rnum$, we get a stratified open embedding $\rnum^{k+1}\times C(T)\hookrightarrow\rnum\times Z\hookrightarrow C(Z)$. By further crossing with $\rnum^{n-k}$, we get stratified open embeddings
    $$\rnum^{n+1}\times C(T)\hookrightarrow\rnum^{n+1}\times Z\hookrightarrow\rnum^n\times C(Z)\hookrightarrow X.$$
    Therefore, since $X$ is a homotopy manifold with conical singularities, we get that $T$ must be homotopy equivalent to a sphere, and hence $Z$ is a homotopy manifold with conical singularities.

    Since the depth of $Z$ is strictly smaller than $d$, by the inductive hypothesis we get that $Z$ is a topological manifold. Therefore, by the topological Poincar\'e conjecture (see \cite{newman1966engulfing} for dimension $>4$, \cite{freedman1982topology}, \cite{behrens2021disc} for dimension $4$ and \cite{perelman2002entropy} for dimension $3$), $Z$ must be homeomorphic to a sphere. Thus we have a homeomorphism $\rnum^n\times C(Z)\cong\rnum^{n+1+\text{dim}(Z)}$, which provides the sought euclidean chart for $X$.
\end{proof}

\appendix

\section{Local connectedness}
In this appendix, we explore a truncated version of local contractiblity, which, in the context of ordinary topoi, was introduced by Johnstone \cite{Johnstone, Johnstone2}. We show that this toposic notion of local connectedness agrees with the point-set topological one. This result is surely known, but we include a proof because we were unable to find a reference dealing with it. We use this result to deduce that $R$-locally contractible spaces are locally connected, an observation that we need in the main body of the paper.

We recall that for a topological space $Y$, one may consider the equivalence relation on $Y$ defined by declaring that two points are equivalent if and only if they lie in the same connected component. The quotient topology then endows the set $\pi(Y)$ of equivalence classes (i.e.\ of connected components of $Y$) with a topology. This turns out to be totally disconnected, and $Y \to \pi(Y)$ is the initial map from $Y$ to a totally disconnected space. Moreover, $Y$ is locally connected if and only if for all opens $V \subseteq Y$, the totally disconnected topology on $\pi(V)$ is in fact the discrete topology. 

\begin{proposition}\label[proposition]{toposlocconn=locconn}
	Let $X$ be a topological space. Then the functor $\pb{a}_{\Set}\colon \Set \to \Sh{X}{\Set}$ admits a left adjoint if and only if $X$ is locally connected.
\end{proposition}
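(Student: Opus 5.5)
Both directions compare the global sections of constant sheaves with the discrete topology on the set of connected components of each open. For a set $S$, the constant sheaf $a^*_{\Set}S$ is the sheafification of the constant presheaf. Concretely, its sections over an open $V$ are the continuous maps $V \to S$ with $S$ discrete, i.e.\ the locally constant functions on $V$. Set-valued sheaves are $0$-truncated, so no hypercompleteness issues arise and everything reduces to point-set statements.

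If $X$ is locally connected, I will construct the left adjoint directly. Since $\Sh{X}{\Set}$ is generated under colimits by the representables $h_V$ for $V$ open, it suffices to show that each functor $S \mapsto \Hom{}{h_V}{a^*S} = C(V,S)$ is corepresentable, naturally in $V$, and that the resulting assignment extends. For locally connected $V$, each connected component of $V$ is open. Hence a locally constant function on $V$ is the same as an arbitrary function on $\pi_0(V)$, giving $C(V,S) \cong \Hom{\Set}{\pi_0(V)}{S}$, natural in $V$. Define $a_\sharp$ as the left Kan extension of $V \mapsto \pi_0(V)$ along the Yoneda embedding, i.e.\ $a_\sharp F = \colim_{(V,s)} \pi_0(V)$ over the category of elements. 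For a sheaf $F = \colim h_V$ we then get
\[ \Hom{}{F}{a^*S} \cong \lim \Hom{}{h_V}{a^*S} \cong \lim \Hom{\Set}{\pi_0(V)}{S} \cong \Hom{\Set}{a_\sharp F}{S}. \]
This uses that $a^*S$ is a sheaf, so maps out of a sheaf agree with maps out of the underlying presheaf colimit. Alternatively, $a^*$ preserves limits (finite ones since it is inverse image, infinite products since $C(V,\prod S_i) = \prod C(V,S_i)$ as each point has a connected neighbourhood), and one applies an adjoint functor theorem. I prefer the explicit construction.

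Conversely, suppose $a^*$ has a left adjoint $a_\sharp$. Restricting to the open $V$ (the restriction $j^*$ has left adjoint $j_!$, and $a_V^* = j^* a^*$), the functor $S \mapsto C(V,S)$ is corepresented by $T_V := a_\sharp j_! (\ast)$. Thus $C(V,S) \cong \Hom{\Set}{T_V}{S}$ naturally in $S$. Taking $S = \{0,1\}$ shows that clopen subsets of $V$ correspond to subsets of $T_V$. The key step is to show that this forces the components of $V$ to be open. The universal element $u\colon V \to T_V$ is locally constant, so its fibres are clopen and cover $V$. Each fibre $u^{-1}(t)$ must be connected: if $u^{-1}(t) = A \sqcup B$ with $A$, $B$ clopen and nonempty, the map to $\{0,1\}$ that is $1$ on $A$ and $0$ elsewhere is locally constant but does not factor through $u$, a contradiction. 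So the fibres are connected clopen sets, and hence they are exactly the components of $V$, which are therefore open. Since this holds for every open $V$, $X$ is locally connected.

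The main obstacle is the converse step, specifically handling nonsurjectivity of $u$ and the naturality of the bijection. Both are routine: the uniqueness of factorization through $u$ forces $u$ to be surjective onto $T_V$, by comparing two maps $T_V \to \{0,1\}$ that agree on the image.
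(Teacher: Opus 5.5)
Your proof is correct and follows essentially the same route as the paper: the forward direction extends $V\mapsto\pi_0(V)$ by colimits, and the converse corepresents locally constant functions on each open via $a_\sharp j_!$ and uses $S=\{0,1\}$ to show that the fibres of the universal locally constant map are connected. The only difference is packaging: the paper shows that the induced map $p\colon\pi(X)\to I^\delta$ is a homeomorphism, so that $\pi(X)$ is discrete, whereas you read off directly that the nonempty fibres of $u$ are the components (so, contrary to your last paragraph, surjectivity of $u$ is not actually needed).
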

\begin{proof}
	If $X$ is locally connected, the cosheaf $U \mapsto \pi(U)$ on $X$ extends by colimits to a functor $\pfs{a}\colon \Sh{X}{\Set} \to \Set$ which one checks to be left adjoint to $\pb{a}$.

	Conversely, by the discussion above, it will suffice to prove that for any open inclusion $j\colon U\to X$, the quotient topology on $\pi(U)$ is discrete. However, since $j^*\colon \Sh{X}{\Set}\to \Sh{U}{\Set}$ admits a left adjoint, in fact it suffices to prove that $\pi(X)$ is discrete. By assumption, there exists a set $I$\footnote{Namely, the left adjoint of $a^*$ applied to $a^*(\ast)$.} and a bijection
	$$\Sec{X}{\pb{a}_{\Set}(M)}\cong\Hom{\Set}{I}{M}$$
	which is natural in $M$. Recall that there is a natural isomorphism $$\Sec{X}{\pb{a}_{\Set}(M)}\cong\Hom{\text{Top}}{X}{M^{\delta}},$$ where $M^{\delta}$ is the set $M$ endowed with the discrete topology. Therefore, since any discrete topological space is totally disconnected, we get an isomorphism
	$$\alpha_M\colon \Hom{\Set}{I}{M}\xrightarrow[]{\cong}\Hom{\text{Top}}{\pi(X)}{M^{\delta}}$$
	which is natural in $M$. The naturality of $\alpha$ implies in particular that, for any function $f\colon I\rightarrow M$, the following square commutes.
	$$
	\begin{tikzcd}
		\Hom{\Set}{I}{I} \arrow[d, "f\o-"'] \arrow[r, "\alpha_I"] & \Hom{\text{Top}}{\pi(X)}{I^{\text{disc}}} \arrow[d, "f\o-"] \\
		\Hom{\Set}{I}{M} \arrow[r, "\alpha_M"]                   & \Hom{\text{Top}}{\pi(X)}{M^{\text{disc}}}.                 
	\end{tikzcd}
	$$
	As a consequence, by evaluating $\alpha_I$ at the identity of $I$, we obtain a continuous function $$p\colon \pi(X)\rightarrow I^{\delta}$$ such that the map
	\[-\o p \colon \Hom{\Set}{I}{M}\rightarrow\Hom{\text{Top}}{\pi(X)}{M^\delta}\]
	given by precomposing with $p$ coincides with $\alpha_M$. We will show that $p$ is a homeomorphism.
	
	Since any continuous map with discrete target is automatically open, it suffices to show that $p$ is bijective. By taking $M=\{0,1\}$ in the above bijection, we see that the function
	$$
	\begin{tikzcd}[row sep=tiny]
		\{J\subseteq I\} \arrow[r] & \{W\subseteq\pi(X)\mid \text{$W$ is closed and open}\} \\
		J \arrow[r, maps to]       & p^{-1}(J)                                               
	\end{tikzcd}
	$$
	is an order preserving bijection. In particular, $p^{-1}(J)=\emptyset$ if and only if $J=\emptyset$, which implies that $p$ is surjective.
	
	To show that $p$ is injective, i.e.\ that $|p^{-1}(\{i\})|=1$ for every $i \in I$, it suffices to show that $p^{-1}(\{i\})$ is connected, since $\pi(X)$ is totally disconnected. To see this, suppose $A \subseteq p^{-1}(\{i\})$ is a non-empty, closed and open subset. Then $A \subseteq \pi(X)$ is again non-empty, closed and open. Under the above bijection, $A$ corresponds to a non-empty subset of $\{i\}$, so it follows that $A = p^{-1}(\{i\})$ as needed.
\end{proof}

The next result shows that $R$-locally contractible spaces are locally connected.

\begin{lemma}\label[lemma]{Sploccontr=>Setlocconn}
	Let $X$ be a topological space and $R$ a connective $\bE_1$-ring spectrum such that the functor $\pb{a}_{R}\colon \Mod_R \to \Sh{X}{\Mod_R}$ admits a left adjoint. Then $\pb{a}_{\Set}\colon \Set \to \Sh{X}{\Set}$ also admits a left adjoint.
\end{lemma}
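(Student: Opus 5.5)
The plan is to pass from $\Mod_R$ to $\Set$ through the heart of the $t$-structure, using the fact that $\Set$-valued sheaves are $0$-truncated $\Ani$-valued sheaves. Write $L\colon \Sh{X}{\Mod_R}\to\Mod_R$ for the left adjoint of $\pb{a}_R$. By the argument of \cref{loccontr=>locconstishyp}, $L$ is left adjoint to a $t$-exact functor and so preserves connective objects. I would compose with $\pi_0$ to obtain $\pi_0 L\colon \Sh{X}{\Mod_R}_{\geq 0}\to \Mod_{\pi_0(R)}^\heartsuit$. For a connective sheaf $F$ and a discrete $\pi_0(R)$-module $M$ we have
\[\Hom{}{\pi_0 L F}{M}\cong \pi_0\Hom{}{LF}{M}\cong \pi_0\Hom{}{F}{\pb{a}_R M}\cong \Hom{}{\pi_0 F}{\pb{a}_{\pi_0 R}M},\]
where the last step uses that $\pb{a}_R M$ lies in the heart, since $\pb{a}$ is $t$-exact. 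It follows that $\pb{a}\colon \Mod_{\pi_0 R}^\heartsuit\to \Sh{X}{\Mod_{\pi_0 R}}^\heartsuit$ has a left adjoint $\pi_0 L(-)$.

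Next I reduce from modules to sets. Let $j\colon U\to X$ be an open subset and take the free sheaf of $\pi_0(R)$-modules $\pi_0(R)[h_U]$ on the representable sheaf of sets $h_U$. Then $\Gamma(U;\pb{a}M)\cong \Hom{}{\pi_0(R)[h_U]}{\pb{a}M}\cong \Hom{}{A_U}{M}$, where $A_U:=\pi_0 L(\pi_0(R)[h_U])$, naturally in $M$ and contravariantly natural in $U$. Here $\Gamma(U;\pb{a}M)$ is the module of locally constant $M$-valued functions on $U$. The key step is to show that $U\mapsto \Gamma(U;\pb{a}_{\Set}S)$, the locally constant $S$-valued functions, is corepresentable in $S$. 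The question is which sets can be recovered from modules.

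I would run the clopen-subset argument of \cref{toposlocconn=locconn} directly, and I expect this to be the main obstacle. The idea is to take $M=\pi_0(R)/\mathfrak m=K$ for a maximal ideal, or more simply to take $M=\prod_S K$ and $M=K^{(S)}$ and compare them. Locally constant functions $U\to S$ are the same as idempotent-valued locally constant functions into $K^S$ that form partitions of unity. Concretely, I use $\Gamma(U;\pb{a}K)=$ locally constant functions $U\to K$, whose idempotents are exactly the clopen subsets of $U$. Since $\Gamma(U;\pb{a}K)\cong \Hom{}{A_U}{K}$ and $\Gamma(U;\pb{a}\prod_S K)\cong\prod_S\Hom{}{A_U}{K}$, a locally constant map $U\to S$ is a family of pairwise disjoint clopen subsets covering $U$.

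To finish, I want to show that $U$ has a finest decomposition, namely that $\pi(U)$ is discrete. The approach is to use the isomorphism $\Gamma(U;\pb{a}K^{(S)})\cong \Hom{}{A_U}{K^{(S)}}$ for all $S$. Applying the Boolean-algebra-of-clopens description, this shows that the Boolean algebra of clopen subsets of $U$ is the power set of a set $I$, namely the set of "atoms" defined through $A_U\otimes K$. At this point the final argument of \cref{toposlocconn=locconn} applies: there is a continuous surjection $p\colon\pi(U)\to I^\delta$ inducing a bijection on clopens, and its fibers are connected, so $p$ is a homeomorphism. Hence $\pi(U)$ is discrete for all open $U$, $X$ is locally connected, and \cref{toposlocconn=locconn} provides the left adjoint. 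The delicate point is the compatibility with infinite direct sums $K^{(S)}$. I expect it to hold because $L$, as a left adjoint, preserves colimits and $\pb{a}$ commutes with direct sums of constant sheaves.
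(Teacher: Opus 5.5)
\textbf{There is a genuine gap: the passage from modules to sets.}

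Your first two steps are sound. The left adjoint $L$ induces a left adjoint $\pi_0L$ of $\pb{a}$ on hearts. Hence $M\mapsto\Gamma(U;\pb{a}M)$ is corepresented by $A_U$ on discrete $\pi_0(R)$-modules, and in particular $\pb{a}_{\pi_0(R)}$ preserves products on hearts. This is exactly the input the paper uses.

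The gap is the step from modules to sets, which is the whole content of the lemma.
\begin{itemize}
\item Your remark that a locally constant map $U\to S$ is a partition of $U$ into clopen sets holds in every space. It extracts nothing from the product isomorphism.
\item The claim that $\mathrm{Clop}(U)$ is a power set ``via atoms of $A_U\otimes K$'' is asserted, not derived.
\item The ``delicate point'' about $K^{(S)}$ is either automatic or false. The isomorphism $\Gamma(U;\pb{a}K^{(S)})\cong\Hom{}{A_U}{K^{(S)}}$ is just corepresentability. On the other hand, neither side commutes with infinite direct sums, already for $U$ an infinite discrete space. That $L$ preserves colimits of sheaves is irrelevant here.
\item Even $\mathrm{Clop}(U)\cong\mathcal{P}(I)$ would not suffice: $\mathrm{Clop}(\beta\mathbb{N})\cong\mathcal{P}(\mathbb{N})$, yet $\pi(\beta\mathbb{N})=\beta\mathbb{N}$ is not discrete.
\item In \cref{toposlocconn=locconn} the map $p\colon\pi(X)\to I^\delta$ comes from the Yoneda lemma applied to a bijection $\Gamma(X;\pb{a}_{\Set}M)\cong\Hom{\Set}{I}{M}$ natural in all \emph{sets} $M$. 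That is the very set-level statement you are trying to prove, so you cannot invoke that argument ``at this point''.
\end{itemize}

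\textbf{How the paper closes it.} Since $\pb{a}_{\Set}$ is accessible and left exact, by the adjoint functor theorem it suffices that it preserves products. For non-empty $S_i$, each $S_i$ is a retract in $\Set$ of the underlying set of the free $\pi_0(R)$-module $M_i$ on $S_i$. Because $\pb{a}$ commutes with the forgetful functors, the comparison map $\pb{a}(\prod_i S_i)\to\prod_i\pb{a}(S_i)$ is a retract of the one for the $M_i$, hence an isomorphism. The case of an empty $S_i$ is checked by hand.

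\textbf{How to rescue your own route.} Use surjectivity of $\Gamma\bigl(U;\pb{a}\prod_{C}\pi_0(R)\bigr)\to\prod_{C}\Gamma(U;\pb{a}\pi_0(R))$, where $C$ ranges over all clopen subsets of $U$. Apply it to the family of indicator functions $1_C$. The resulting locally constant function shows that every $x\in U$ has an open neighbourhood contained in every clopen set containing $x$. So the quasi-components of $U$ are open, hence clopen, hence connected, hence equal to the components. Thus every open $U$ has open components, $X$ is locally connected, and \cref{toposlocconn=locconn} applies.

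This also avoids residue fields. For an $\bE_1$-ring, $\pi_0(R)/\mathfrak m$ need not be a field, so your description of idempotents as clopen sets would need care anyway.
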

\begin{proof}
	Since $\pb{a}_\Set$ preserves finite limits and all categories are presentable, it suffices to show that $\pb{a}_\Set$ preserves arbitrary products.
	First, we note that the functor $\pb{a}_{R} \colon \Mod_R \to \Sh{X}{\Mod_R}$ it $t$-exact and induces the constant sheaf functor $\pb{a}_{\pi_0(R)}\colon \Mod_{\pi_0(R)}^\heartsuit \to \Sh{X}{\Mod_{\pi_0(R)}^\heartsuit}$ on hearts. Since the hearts $\Mod_{\pi_0(R)}^\heartsuit$ and $\Sh{X}{\Mod_{\pi_0(R)}^\heartsuit}$ are closed under products in $\Mod_R$ and $\Sh{X}{\Mod_R}$ respectively, we deduce that $\pb{a}_{\pi_0(R)}$ preserves products. Moreover, from the commutative diagram
	\[\begin{tikzcd}
		\Mod_{\pi_0(R)}^\heartsuit \ar[r,"\pb{a}_{\pi_0(R)}"] \ar[d] & \Sh{X}{\Mod_{\pi_0(R)}^\heartsuit} \ar[d] \\
		\Set \ar[r,"\pb{a}_\Set"] & \Sh{X}{\Set}
	\end{tikzcd}\]
	whose vertical maps are the forgetful maps, we deduce that the functor $\pb{a}_{\Set}$ preserves products of sets which underlie a discrete $\pi_0(R)$-module. 
	
	Now, let $I$ be a set and for each $i \in I$, let $S_i$ be another set. We wish to show that the canonical map $\pb{a}(\prod_{i \in I} S_i) \to \prod_{i \in I} \pb{a}(S_i)$ is an isomorphism. First, we argue that this is the case if there exists an $i \in I$ such that $S_i$ is empty. In this case, the left hand side is $\pb{a}(\emptyset)$, which takes the value $\emptyset$ on any non-empty open of $X$ and a singleton on the empty subset of $X$. Since sheaves are closed under products in presheaves, the right hand side takes the same values, and consequently the map is necessarily an isomorphism. 
	
	We may finally assume that for each $i \in I$, the set $S_i$ is not empty. In this case, consider for $i \in I$ the free $\pi_0(R)$-module $M_i$ on the set $S_i$. Since all $S_i$ are non-empty, we find that $S_i$ is a retract of $M_i$. Consequently, we deduce that the map in question is a retract of the map $\pb{a}(\prod_{i \in I} M_i) \to \prod_{i \in I} \pb{a}(M_i)$ which is an isomorphism as we have discussed above. This finishes the proof of the lemma.
\end{proof}

\bibliographystyle{amsalpha}
\bibliography{pdhm-2}

\end{document}